\newtheorem{mainthm}{Main Theorem}
\newtheorem{thm}{Theorem}[section]
\newtheorem{prop}[thm]{Proposition}
\newtheorem{lmm}[thm]{Lemma}
\newtheorem{defn}[thm]{Definition}
\newtheorem{remark}[thm]{Remark}
\newtheorem{cry}[thm]{Corollary}
\newtheorem{ntn}{Notation}[section]
\newtheorem{Not}[thm]{Notation}
\numberwithin{equation}{section}
\newcommand{\pr}{\noindent{\bf [Proof]} \qquad}
\newcommand{\prend}{\hfill \qed \medskip}
\newcommand{\Comm}{{\rm Comm}}
\newcommand{\LCM}{{\rm LCM}}
\newcommand{\rank}{{\rm rank}}
\newcommand{\wt}{{\rm wt}}
\newcommand{\Aut}{{\rm Aut}}
\newcommand{\GCD}{{\rm GCD}}
\newcommand{\CG}{{\cal G}} 
\newcommand{\CH}{{\cal H}}
\newcommand{\CP}{{\cal P}} 
\newcommand{\CT}{{\cal T}}
\newcommand{\bC}{{\mathbb C}} 
\newcommand{\bZ}{{\mathbb Z}} 
\newcommand{\bQ}{{\mathbb Q}} 
\newcommand{\bR}{{\mathbb R}} 
\newcommand{\g}{\mathfrak{g}}
\newcommand{\clrblue}{\textcolor{blue}}
\definecolor{dark}{rgb}{0.1,0.1,0.1}
\definecolor{skyblue}{rgb}{0.5,0.5,1}
\definecolor{fadegreen}{rgb}{0.9,1,0.9}
\newcommand{\Com}{{\rm Com}}
\begin{document}

\title{A lattice theoretical interpretation of generalized deep holes of the Leech lattice
vertex operator algebra}
\author{
\begin{tabular}{c}
Ching Hung Lam$^{(1)}$
\footnote{Partially supported by grant AS-IA-107-M02 of Academia Sinica  and MoST grant  110-2115-M-001-011-MY3  of Taiwan}  \
and Masahiko Miyamoto$^{(2)}$
\footnote{Partially supported 
by the Grants-in-Aids
for Scientific Research, No.18K18708, The Ministry of Education,
Science and Culture, Japan.}
\cr 
{\small $(1)$  Institute of Mathematics, Academia Sinica, Taipei 10617, Taiwan} \cr
{\small $(2)$ Institute of Mathematics, University of Tsukuba, Tsukuba, Ibaraki 305-8571, Japan }
\end{tabular}
}
\date{}
\maketitle

\begin{abstract}
We give a lattice theoretical interpretation of generalized deep holes of the Leech lattice VOA $V_\Lambda$.  We show that a generalized deep hole defines a ``true" automorphism invariant deep hole of the Leech lattice. We also show that there is  a correspondence between the set of isomorphism classes of holomorphic VOA $V$ of central charge $24$ having non-abelian $V_1$ and 
	the set of equivalence classes  of  pairs $(\tau, \tilde{\beta})$ satisfying certain conditions, where $\tau\in Co_0$ and $\tilde{\beta}$ is a $\tau$-invariant deep hole of squared length $2$.  It provides a new combinatorial approach towards the classification of holomorphic VOAs of central charge $24$. In particular, we give an explanation for an observation of G. H\"ohn, which relates the weight one Lie algebras of holomorphic VOAs of central charge $24$ to certain codewords associated with the glue codes of Niemeier lattices.  
\end{abstract}

\section{Introduction}
The classification 
of strongly regular holomorphic vertex operator algebras (abbreviated as VOA) of central 
charge $24$  with non-trivial weight one space is basically completed. 
It has been shown that 
there are exactly $70$ strongly regular holomorphic VOAs  
of CFT-type with central charge $24$ and non-zero weight one space and their VOA structures are uniquely determined by the Lie algebra structures of their weight one space. Moreover, 
the possible Lie algebra structures for their weight one subspaces have been given 
in Schellekens' list \cite{Schel}. 
The main tool is the so-called orbifold construction \cite{CMi,DGM,EMS,FLM,MC,Mi3,M}. Nevertheless, their constructions and the uniqueness proofs were  done by  case by case analysis \cite{Dong,DGM,DM2,EMS,EMS1, FLM,KLL, Lam,LL,LS,LS2,LS3,LS4, LS19,LS20,LS5, Mi3,SS}. 
A simplified and uniform construction and proof of uniqueness is somehow expected. Recently, it is proved in \cite{ELMS} (see also \cite{CLM}) that for any holomorphic VOA of central charge $24$ with a semisimple weight one Lie algebra, the VOA obtained by an orbifold construction by an inner automorphism $g$ defined by a $W$-element is always isomorphic to the Leech lattice VOA $V_\Lambda$.  
By taking its reverse automorphism $\tilde{g}$ of $V_{\Lambda}$, there 
is a direct orbifold construction from the Leech lattice VOA $V_{\Lambda}$ to $V$. 
As a consequence, a relatively simpler proof for the Schellekens' list is obtained. M\"{o}ller and Scheithauer \cite{MoScheit} called an automorphism $\tilde{g}\in {\rm Aut}(V_{\Lambda})$ \emph{a generalized deep hole} with a slightly different definition.  
They obtained a classification of generalized deep holes and gave a new proof for the classification of holomorphic VOA of central charge 24 with non-trivial weight one spaces;   we do not need these classification results in this article.   

As it is well-known, a deep hole of a lattice $L$ is an element $v$ of $\bR L$ such that 
a distance $\min\{ |\!|\alpha-v|\!|\mid \alpha\in L\}$ from $L$ is the largest (covering radius) among elements $v$ in $\bR L$ and so it has a geometrical meaning.  
Furthermore, deep holes of the Leech lattice $\Lambda$ have many interesting 
geometrical and algebraic meanings. Therefore, it is natural to expect some geometrical properties for generalized deep holes. 

Recall that any automorphism $\tilde{g}\in \Aut(V_{\Lambda})$ can be written as 
\[\tilde{g}=\widehat{\tau}\exp(2\pi i\beta(0)),
\] 
where $\tau\in Co.0=O(\Lambda)$, $\beta\in \bR\Lambda^{\tau}$ and $\widehat{\tau}$ denotes a standard lift of $\tau$ in $O(\widehat{\Lambda})$ \cite{B,DN}. 
In this paper, we will show that 
a holomorphic VOA of central charge  $24$ with non-abelian weight one Lie algebra naturally defines 
a pair $(\tau, \tilde{\beta})$, where $\tau\in Co_0$ is the same isometry as defined by $\tilde{g}$ and $\tilde{\beta}$ is a ($\tau$-invariant) deep hole of the Leech lattice related to $\beta$. In particular, 
a generalized deep hole for the Leech lattice VOA defines a ``true" deep hole of the Leech lattice. It provides a new combinatorial approach towards the classification of holomorphic VOAs of central charge $24$.  The key observation is that there is a ``duality" associated with the lattice $\Lambda^\tau$ which changes the level; we call it $\ell$-\emph{duality}.  

\begin{mainthm}[see Theorem \ref{dual}] \label{ellD}
If $\tilde{g}=\widehat{\tau}\exp(2\pi i\beta(0))$ is a generalized deep hole, 
then there is an isometry: 
$$ \varphi_{\tau}: \sqrt{\ell}(\Lambda^{\tau})^{\ast} \to \Lambda^{\tau}, $$
where $\ell=|\widehat{\tau}|$. 
\end{mainthm}

Using this $\ell$-duality $\varphi$ (extended to $\bC\Lambda^{\tau}$), it is easy to show that
$\langle \varphi_{\tau}(\sqrt{\ell}\beta), \varphi_{\tau}(\sqrt{\ell}\beta) \rangle\in 2\bZ$ and its neighbor lattice  
 $N=\Lambda_{\varphi_{\tau}(\sqrt{\ell}\beta)}+
\bZ\varphi_{\tau}(\sqrt{\ell}\beta)$ defines a Niemeier lattice with $N\ncong \Lambda$.
 
Let $V=(V_{\Lambda})^{[\tilde{g}]}$ be the VOA obtained by an orbifold construction from $V_{\Lambda}$ 
by $\tilde{g}$. Let $\CH$ be a Cartan subalgebra of $V_1$. 
Then there is an even lattice $L$ such that 
$$\Comm(\Comm(M(\CH),V),V)\cong V_L.$$ The lattice $L$ (or $L^*$) encoded the information 
of the root system of $V_1$. 

\begin{mainthm} [see Theorem \ref{duality}]
	Via $\ell$-duality, we have 
	$ N^{\tau}\cong \sqrt{\ell}\varphi_{\tau}(L^{\ast})$, where 
$L^{\ast}$ denotes the dual lattice of $L$. 
\end{mainthm}

In addition, we prove the following theorem.

\begin{mainthm}[see Theorem \ref{deephole} and Proposition \ref{P0}]
Let $\varphi$ be the isometry defined in the Main Theorem \ref{ellD}. Then 
we can choose $\beta\in \bC\Lambda^{\tau}$ such that 
$\tilde{g}=\widehat{\tau}\exp(2\pi i\beta(0))$ and 
$\tilde{\beta}=\sqrt{\ell}\varphi(\beta)$ is a deep hole of $\Lambda$ of squared length $2$. 
\end{mainthm} 

The classification of generalized deep holes is thus equivalent to the classification of the pairs $(\tau, \tilde{\beta})$ with $\tau\in O(\Lambda)$ and  $\tilde{\beta}$ 
a $\tau$-invariant deep hole of squared length $2$ satisfying certain conditions and up to some equivalence. 

Let $\CT$ be the set of pairs $(\tau, \tilde{\beta})$ 
satisfying certain conditions (see Section \ref{S:7}, (C1)--(C3)).
We define a relation $\sim$  on $\CT$ as follows:
$(\tau,\tilde{\beta})\sim (\tau',\tilde{\beta}')$ if and only if \\
(1)  
$\tilde{\beta}$ and $\tilde{\beta}'$ are equivalent deep holes of the Leech lattice $\Lambda$, i.e., there are $\sigma\in O(\Lambda)$ and $\lambda\in \Lambda$ such that $\tilde{\beta}'= \sigma( \tilde{\beta}-\lambda)$;\\
(2) $\tau$ is conjugate to $\sigma^{-1} \tau' \sigma$ in $O(N)$.

The following is another main theorem of this article. 

\begin{mainthm}[see Theorem \ref{Main}]
	There is a one-to-one correspondence between the set of isomorphism classes of holomorphic VOA $V$ of central charge $24$ having non-abelian $V_1$ and 
	the set $\CT/\sim$ of equivalence classes  of  pairs $(\tau, \tilde{\beta})$ by $\sim$. 
\end{mainthm}

Since a deep hole determines a unique Niemeier lattice, up to isometry and there are only $23$ Niemeier lattices  with non-trivial root system, it is straightforward to list all possible choices for $(\tau, \tilde{\beta})$ and to determine the corresponding Lie algebra structures for $V_1$. 
Therefore, one can complete the classification of 
holomorphic VOAs of central charge 24 with non-abelian $V_1$   
by a purely combinatorial method. Indeed, we will provide an explanation for an observation of H\"ohn, which relates the weight one Lie algebras of holomorphic VOAs of central charge $24$ to certain codewords associated with the glue codes of Niemeier lattices \cite[Theorem 3.1 and Table 3]{Ho} (see Section \ref{S:8} for details).


\section{Some previous results}\label{S:2}
We first  recall several results from \cite{CLM}. 
Let $V$ be a holomorphic VOA of 
central charge 24 with $V_1\not=0$. 
Suppose  $V_1=\oplus_{j=1}^t \CG_{j,k_j}$ is a semisimple Lie algebra. We use 
$k_j$ (resp. $h^{\vee}_j$ and $r_j$) to denote the level (resp. the dual Coxeter number and the lace number) of $\CG_j$, $j=1, \dots, t$.  
Let $\CH$ be a Cartan subalgebra of $V_1$. 
It is easy to see that 
$\Comm(\Comm(M(\CH),V),V)$ is isomorphic to a lattice 
VOA $V_L$ for an even lattice $L\subseteq \CH$. 
Let $\rho_j$  be a Weyl vector of $\CG_{j,k_j}$ and set $\alpha=\sum_{j=1}^t \rho_j/h_j^{\vee}$, which we call \textit{a W-element}.  
From the property of lattice VOA, we have 
a decomposition 
$$V=\oplus_{\delta\in L^{\ast}} U(\delta)\otimes M(\CH)e^{\delta},$$
where $U(\delta)$ are $\Comm(M(\CH),V)$-modules and 
$M(\CH)e^{\delta}$ are $M(\CH)$-module. 
Since $V$ is holomorphic, we have $U(\delta)\not=0$ for every $\delta\in L^{\ast}$.

Set $g=\exp(2\pi i\alpha(0))$. 
In \cite{CLM} (see also \cite{ELMS}), it is proved  the VOA $V^{[\alpha]}$ obtained by an orbifold construction 
 from $V$ by the inner automorphism $g$ is isomorphic to the Leech lattice VOA, i.e.,  $V^{[\alpha]}\cong V_{\Lambda}$.  
Namely, the integer weights submodule $T^1_{\bZ}$ 
of the simple $g$-twisted $V$-module $T^1$ is nonzero and 
$V^g\oplus T^1_{\bZ}\oplus  \cdots \oplus (T^1_{\bZ})^{\boxtimes (|g|-1)} \cong V_{\Lambda}$.

In this case, there is an automorphism $\tilde{g}$ of $V_{\Lambda}$ acting on  
$(T^1_{\bZ})^{\boxtimes m}$ as $e^{2\pi i m/n}$, $n=|g|$. Moreover, the orbifold construction  from $V_{\Lambda}$ 
by an automorphism $\tilde{g}$ gives $V$, i.e., $V_{\Lambda}^{[\tilde{g}]}\cong V$.  That $\tilde{g} \in \Aut(V^{[g]})$ is called the reverse automorphism of $g$. 

Since $\tilde{g}\in \Aut(V_{\Lambda})$, we have $\tilde{g}=\hat{\tau}\exp(2\pi i\beta(0))$ for some $\tau\in O(\Lambda)=Co.0$ and $\beta\in \bC\Lambda$ \cite{B,DN}. By choosing a suitable (standard) lift $\hat{\tau}$ of  
$\tau$, we may choose $\beta\in \bC\Lambda^{\tau}$ \cite{EMS,LS}. 
 
\begin{remark}\label{og}
	By the definition of W-element, it is clear that the order of $g$ on $V_1=\oplus_{j=1}^t \CG_{j,k_j}$ is $\LCM(\{r_ih_i^{\vee} \mid i\}$; indeed, one can show that $|g|=\LCM(\{r_ih_i^{\vee} \mid i\}$ on $V$ \cite[Proposition 5.1]{ELMS}.  
\end{remark}	
 
 Set  $n=|g|=|\tilde{g}|$ and let $K_0,N_0\in \bZ$ with $\GCD(K_0,N_0)=1$ such that $\langle \alpha,\alpha\rangle=\frac{2K_0}{N_0}$. 
As we have shown in \cite{CLM}, 
$n=|\tau|N_0$ and $N_0|h^{\vee}_j$; thus, $|\tau|=\LCM(\{r_ih_j^{\vee}/N_0\mid j \})$. 

It is also proved in \cite[Proposition 4.2]{CLM} that $k_i/h_i^{\vee}=\frac{K_0-N_0}{N_0}$; therefore we have 
\[
\frac{2}{r_ik_i}=\frac{2}{r_ih^{\vee}_i}\frac{N_0}{K_0-N_0}=
\frac{2}{(K_0-N_0)r_ih^{\vee}/N_0} 
\]and 
\[
{\LCM(\{r_ik_i\mid i\})}
={(K_0-N_0)\LCM(\{r_ih^{\vee}/N_0\mid i\})}
={(K_0-N_0)|\tau|}. 
\]
As a consequence, we have the following result.

\begin{lmm} \label{rk}
$\LCM(\{r_ik_i\mid i\}) =(K_0-N_0)|\tau|$.  
\end{lmm}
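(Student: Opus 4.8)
The plan is to reduce the statement to the elementary behaviour of $\LCM$ under scaling by a common integer, starting from the uniform level-to-dual-Coxeter ratio already established in \cite[Proposition 4.2]{CLM}. First I would recall that identity in the form
$$k_i = \frac{K_0-N_0}{N_0}\,h_i^\vee \qquad \text{for every } i,$$
so that the ratio $k_i/h_i^\vee$ is the \emph{same} rational number for all $i$. Multiplying by the lace number $r_i$ then exhibits each $r_i k_i$ as a product of the fixed factor $K_0-N_0$ with the quantity $r_i h_i^\vee/N_0$:
$$r_i k_i = (K_0-N_0)\,\frac{r_i h_i^\vee}{N_0}.$$

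Next I would record the two elementary facts needed to pass to the least common multiple. Since $N_0\mid h_i^\vee$ (noted above) and each $r_i$ is a positive integer, every $r_i h_i^\vee/N_0$ is a positive integer; and since the levels $k_i$ and the $h_i^\vee$ are positive while $N_0>0$, the common factor $K_0-N_0$ is a positive integer. With these in hand, the scaling property $\LCM(\{c\,a_i\mid i\}) = c\,\LCM(\{a_i\mid i\})$, valid for any positive integer $c$ and positive integers $a_i$, lets me pull $K_0-N_0$ out of the least common multiple:
$$\LCM(\{r_i k_i\mid i\}) = (K_0-N_0)\,\LCM(\{r_i h_i^\vee/N_0\mid i\}).$$

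Finally I would invoke the identity $|\tau| = \LCM(\{r_i h_i^\vee/N_0\mid i\})$ recorded just above the statement and substitute it into the previous display, giving $\LCM(\{r_i k_i\mid i\}) = (K_0-N_0)|\tau|$, as claimed. There is essentially no obstacle here: the entire content is the uniformity of the ratio $k_i/h_i^\vee$ imported from \cite{CLM}, together with the divisibility $N_0\mid h_i^\vee$; everything else is the scaling property of $\LCM$. The only point deserving a word of care is the positivity of $K_0-N_0$, which is what guarantees that the scaling identity applies, and this follows at once from the positivity of the levels and of $N_0$.
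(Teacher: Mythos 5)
Your proof is correct and follows essentially the same route as the paper: both use the identity $k_i/h_i^\vee=(K_0-N_0)/N_0$ from \cite[Proposition 4.2]{CLM} to write $r_ik_i=(K_0-N_0)\,r_ih_i^\vee/N_0$, pull the constant $K_0-N_0$ out of the $\LCM$, and conclude via $|\tau|=\LCM(\{r_ih_i^\vee/N_0\mid i\})$. Your explicit remarks on the integrality of $r_ih_i^\vee/N_0$ and positivity of $K_0-N_0$ only make precise what the paper leaves implicit.
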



\medskip

Next we recall an important result from \cite{CLM}.  
\begin{prop}[{\cite[Propositions 3.25 and 4.3]{CLM}}]\label{Comm}
We have $N_0\alpha\in L^{\ast}$. Moreover,  \[ 
\Comm(M(\CH),V^{[g]})=\oplus_{j=0}^{|\tau|-1}U(jN_0\alpha)
\]
and 
$\CH+\Comm(M(\CH),V)_1$ is a Cartan subalgebra of $V^{[g]}\cong V_\Lambda$. 
\end{prop}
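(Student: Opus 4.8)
The plan is to compute the Heisenberg commutant $\Comm(M(\CH),V^{[g]})$ directly, using the explicit realization of the $g$-twisted modules for the \emph{inner} automorphism $g=\exp(2\pi i\alpha(0))$ together with the known isomorphism $V^{[g]}\cong V_\Lambda$. Since $\alpha\in\CH$, the operator $\alpha(0)$ kills the Heisenberg vacuum module, so $g$ fixes $M(\CH)$ pointwise; thus $M(\CH)$ survives as a subVOA of $V^{[g]}$ lying in the untwisted sector $V^g$, and the commutant question inside $V^{[g]}$ is meaningful.

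First I would record the structure of the twisted sectors. For an inner automorphism given by $\alpha(0)$, Li's $\Delta$-operator realizes the $g^m$-twisted module $T^m$ on the same multiplicity spaces $U(\delta)$ appearing in $V=\oplus_{\delta\in L^{\ast}}U(\delta)\otimes M(\CH)e^{\delta}$, but with the $\CH$-charge shifted by $-m\alpha$ and the conformal weight shifted accordingly. Feeding this into $V^{[g]}=\oplus_{m=0}^{n-1}T^m_{\bZ}$ with $n=N_0|\tau|$, a vector acquires $\CH$-charge $0$ in the $m$-th sector exactly when it comes from $U(m\alpha)$; a short computation with the weight shift shows such a vector then sits at conformal weight $h_{U(m\alpha)}$, and the holomorphy relation $h_{U(m\alpha)}+\tfrac12\langle m\alpha,m\alpha\rangle\in\bZ$ together with $\langle\alpha,\alpha\rangle=2K_0/N_0$ forces $h_{U(m\alpha)}\in\bZ$, so the whole space $U(m\alpha)$ lands in $T^m_{\bZ}$. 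Hence $\Comm(M(\CH),V^{[g]})=\oplus_{m}U(m\alpha)$, the sum running over those $0\le m<n$ with $U(m\alpha)\neq0$, i.e.\ with $m\alpha\in L^{\ast}$.

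It then remains to pin down the index set $\{m\in\bZ:m\alpha\in L^{\ast}\}$, the claim being that it equals $N_0\bZ$. Granting this, membership $N_0\alpha\in L^{\ast}$ is the first assertion of the proposition, and intersecting with $0\le m<n=N_0|\tau|$ leaves exactly the $|\tau|$ values $m=jN_0$, $j=0,\dots,|\tau|-1$, giving the displayed decomposition $\oplus_{j=0}^{|\tau|-1}U(jN_0\alpha)$. I would prove $\{m:m\alpha\in L^{\ast}\}=N_0\bZ$ by a direct computation factor by factor: writing $\alpha=\sum_j\rho_j/h_j^{\vee}$ and using the description of $L^{\ast}$ as a level-rescaled weight lattice, the condition $\langle m\alpha,\mu\rangle\in\bZ$ for every generator $\mu$ of $L$ (the root charges) reduces, on each simple factor, to an integrality condition governed by the level identity $k_i=h_i^{\vee}(K_0-N_0)/N_0$ and $\GCD(K_0,N_0)=1$; this simultaneously yields membership and minimality. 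I expect this arithmetic — reconciling the VOA normalization of the form on $\CH$, the scaling of $L^{\ast}$ by the levels, and the value of $\langle\alpha,\alpha\rangle$ — to be the main obstacle, since it is precisely here that both the appearance of $N_0$ and the \emph{exactness} of the index set are decided.

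Finally, for the Cartan assertion I would argue inside the lattice VOA $V_\Lambda$ itself. Identifying $\CH$ with a rank-$r$ subspace of the $24$-dimensional Cartan $\FH=(V_\Lambda)_1$ and setting $B=\CH^{\perp}$ in $\FH$ (the form being positive definite, $\FH=\CH\oplus B$), the commutant of this rank-$r$ Heisenberg in a lattice VOA is the lattice-type VOA $M(B)\otimes\bC[\Lambda\cap B]$. Because $\Lambda$ is the Leech lattice, it has no vector of squared length $2$, so $\Lambda\cap B$ is rootless and the weight-one space of the commutant contains no root vectors and equals exactly the abelian part $B$. Therefore $\CH+\Comm(M(\CH),V^{[g]})_1=\CH\oplus B=\FH$ is the full rank-$24$ Cartan subalgebra of $V^{[g]}\cong V_\Lambda$, completing the proof.
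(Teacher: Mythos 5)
First, a remark on the comparison itself: the paper does not prove this proposition at all — it is imported verbatim from \cite[Propositions 3.25 and 4.3]{CLM} — so your attempt can only be judged on its own merits, not against an internal argument. On those merits, your third part (the Cartan claim) is correct and clean: under $V^{[g]}\cong V_\Lambda$ the commutant of $M(\CH)$ is the lattice-type VOA on $\Lambda\cap\CH^{\perp}$, rootlessness of the Leech lattice kills all candidate vectors $e^{\lambda}$ of weight one, and since $(V_\Lambda)_1=\bC\Lambda$ is itself the $24$-dimensional abelian algebra, $\CH\oplus B$ is the whole weight-one space. The reduction of the commutant computation, via Li's $\Delta$-operator, to the arithmetic statement $\{m\in\bZ : m\alpha\in L^*\}=N_0\bZ$ is also essentially sound, with one ordering slip: the charge-zero vector coming from $U(m\alpha)$ sits in sector $m$ at weight $h_{U(m\alpha)}$, and the holomorphy relation $h_{U(m\alpha)}+\tfrac{m^2K_0}{N_0}\in\bZ$ forces $h_{U(m\alpha)}\in\bZ$ only when $N_0\mid m^2$; so integrality of weights must come \emph{after} the index set is identified, not before.

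The genuine gap is the membership half of that arithmetic statement, $N_0\alpha\in L^*$ — precisely the first assertion of the proposition, and the step you yourself defer as ``the main obstacle.'' Your plan is to verify $\langle m\alpha,\mu\rangle\in\bZ$ ``for every generator $\mu$ of $L$ (the root charges),'' but the root charges do not generate $L$. By the Dong--Wang decomposition, $L$ contains the orthogonal sum of the lattices $\sqrt{k_j}Q^j_{l}$ (spanned by $k_j u$ for long roots $u$ of $\CG_j$), but in general strictly: e.g.\ for $V_1=A_{1,2}^{16}$ one has $\det(L)=\det(\Lambda^{\tau})\cdot|\tau|^2=2^{10}$ while $\det\bigl((\sqrt{2}A_1)^{16}\bigr)=2^{32}$, so the glue group has order $2^{11}$. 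Pairing factor by factor against $\perp_j \sqrt{k_j}Q^j_{l}$ does prove the \emph{minimality} direction ($m\alpha\in L^*\Rightarrow N_0\mid m$, using $\langle\alpha,u\rangle=1/h_j^{\vee}$ for long simple roots, $k_j/h_j^{\vee}=(K_0-N_0)/N_0$ and $\GCD(K_0-N_0,N_0)=1$), because that sublattice sits inside $L$; but it says nothing about integrality of $\langle N_0\alpha,\cdot\rangle$ against the glue vectors, which is where the entire content of $N_0\alpha\in L^*$ lies (already in the $A_{1,2}^{16}$ example one needs evenness of $L$, i.e.\ a characteristic-vector-type congruence, and in general the argument of \cite{CLM} rests on such global VOA input rather than on per-factor root arithmetic). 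As it stands, the first assertion — and with it the exactness of the index set in the displayed decomposition — is not established.
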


Since $U(jN_0\alpha)\otimes e^{jN_0\alpha}\in V$, 
$U(jN_0\alpha)$ appears in $g^{-jN_0}$-twisted 
$V$-module and so 
we may choose $\beta$ and $\hat{\tau}$ 
so that 
$\langle \beta,\alpha\rangle\equiv 1/N_0|\tau| \pmod{\bZ}$ and 
$\widehat{\tau}$ acts on $U(jN_0\alpha)$ as a multiple by 
$e^{-2\pi ij/|\tau|}$.

\medskip

As we discussed, a $W$-element of a holomorphic VOA of central charge $24$ with a semisimple weight one Lie algebra defines an automorphism $\tilde{g}=\hat{\tau}\exp(2\pi i\beta(0))$ for some $\tau\in \Aut(\Lambda)=Co.0$ and $\beta\in \bC\Lambda^{\tau}$ such that $V= V_\Lambda^{[\tilde{g}]}$.   One main question is to determine the isometry $\tau\in Co.0$ arising in this manner.

Denote 
$$\CP=\left\{ \tau\in Co.0\, \left | \begin{array}{l}
	{}^\exists \beta\in \bQ\Lambda^{\tau} \mbox{ s.t. }
	\widehat{\tau}\exp(2\pi i\beta(0)) 
	\mbox{ can be realized as the reverse }\\
	\mbox{ automorphism of  some orbifold construction 
		given by a $W$-element} 
\end{array}\right. \right\}. $$
In Section \ref{sec:6}, we will show  that $\CP \subset \CP_0 = \{1A,2A,2C,3B,4C,5B,6E,6G,7B,8E,10F\}$. 
The main idea is to analyze the conformal weights of the irreducible $\hat{\tau}$-twisted modules and $\tilde{g}$-twisted modules.

\section{Irreducible twisted modules for lattice VOAs}\label{Sec:twist}       

First  we review some basic properties about the irreducible twisted modules for lattice VOAs. 
Let $P$ be an even unimodular lattice.
Let $\tau\in O(P)$ be of order $n$ and $\hat{\tau} \in O(\hat{P})$ be a standard lift of $\tau$.
We also use $\pi=\pi_{\tau}: P \to  (P^{\langle \tau\rangle})^*$ to denote the natural projection. 
More preciously, 
\begin{equation}
\pi(x)=\pi_\tau(x) =\frac{1}{n} \sum_{i=0}^{n-1} \tau^i (x).
\end{equation}
By \cite{DLM2}, $V_P$ has a unique irreducible $\hat{\tau}$-twisted $V_P$-module, up to isomorphism.
Such a module $V_P[\hat{\tau}]$ was constructed in \cite{DL} explicitly; as a vector space,
$$V_P[\hat{\tau}]\cong M(1)[\tau]\otimes\bC[\pi(P)]\otimes T,$$
where $M(1)[\tau]$ is the ``$\tau$-twisted" free bosonic space, $\bC[\pi(P)]$ is the group algebra of $\pi(P)$ and $T$ is an irreducible module for a certain ``$\tau$-twisted" central extension of $P$. (see \cite[Propositions 6.1 and 6.2]{Le} and \cite[Remark 4.2]{DL} for detail).
Recall that $$\dim T=|P_\tau/(1-\tau)P|^{1/2}$$ and that the conformal weight $\phi(\tau)$ of $T$ is given by  
\begin{equation}
\phi(\tau):=\frac{1}{4n^2}\sum_{j=1}^{n-1}j(n-j)\dim \mathfrak{h}_{(j)},\label{Eq:rho}
\end{equation}
where $\mathfrak{h}_{(j)}=\{x\in\mathfrak{h}\mid \tau(x)=\exp((j/n)2\pi\sqrt{-1})x\}$.
Note that $M(1)[\tau]$ is spanned by vectors of the form
$$ x_1(-m_1)\dots x_s(-m_s)1,$$
where $m_i\in(1/n)\bZ_{>0}$ and $x_i\in\mathfrak{h}_{(nm_i)}$ for $1\le i\le s$.

In addition, the conformal weight of $x_1(-m_1)\dots x_s(-m_s)\otimes e^\alpha\otimes t\in V_P[\hat{\tau}]$ is given by 
$$
\sum_{i=1}^s m_i+\frac{(\alpha|\alpha)}{2}+\phi(\tau),
$$
where $x_1(-m_1)\dots x_s(-m_s)\in M(1)[\tau]$, $e^\alpha\in\bC[\pi(P)]$ and $t\in T$.
Note that $m_i\in(1/n)\bZ_{>0}$ and that the conformal weight of $V_P[\hat{\tau}]$ is $\phi(\tau)$.

Since $\sum_{j=1}^{n-1} j(n-j)= n(n^2-1)/6$,  we have 
\[
\phi(\tau)=\frac{1}{24}\sum_{i=1}^d a_i\frac{(n_i^2-1)}{n_i}
=\frac{1}{24}\{\sum a_in_i-\sum_{i=1}^d \frac{a_i}{n_i}\}=
1-\frac{1}{24}\sum_{i=1}^d\frac{a_i}{n_i}  
\]
if  $\tau\in O(P)$ has the the frame shape $\prod_{j=1}^d n_j^{a_j}$ by \eqref{Eq:rho}. 

\begin{remark} \label{Twg}
Let $v\in\bQ\otimes_\bZ P^\tau \subset \mathfrak{h}_{(0)}$.
Then $\exp(2\pi i v(0))$ has finite order on $V_P$ and commutes with $\hat{\tau}$.
Set $g= \hat{\tau} \exp(2\pi i v(0))$.  Then the unique irreducible $g$-twisted for $V_P$ is given 
by 
\[
V_P[g]\cong M(1)[\tau]\otimes\bC[-v+\pi(P)]\otimes T
\]
as a vector space \cite{BK}. 
In this case, the conformal weight of $V_P[g]$ is given by 
$$
\frac{1}2\min\{(\beta|\beta)\mid \beta\in -v+\pi(P)\}+\phi(\tau).
$$
\end{remark}

\section{Leech lattice and $\ell$-duality}

\subsection{$\ell$-duality}
In this subsection,  we
 study the properties of the lattice $\Lambda^\tau$ for 
 \[
 \tau \in  \CP_0= \{1A,2A,2C,3B,4C,5B,6E,6G,7B,8E,10F\}. 
 \]
 
\begin{lmm}
	For $\tau\in \CP_0$, we have $\det(\Lambda^{\tau})=\ell^{\rank(\Lambda^{\tau})/2}$.
\end{lmm}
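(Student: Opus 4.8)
The plan is to verify the identity class by class over the eleven elements of $\CP_0$, using the frame shape of $\tau$ together with the standard comparison between the invariant and coinvariant lattices. Write $\Lambda^\tau$ for the fixed lattice and $\Lambda_\tau=(\Lambda^\tau)^\perp\cap\Lambda$ for the coinvariant lattice. Since $\Lambda$ is unimodular and both sublattices are primitive, a standard argument gives $\det(\Lambda^\tau)=\det(\Lambda_\tau)=|\Lambda/(\Lambda^\tau\oplus\Lambda_\tau)|$, and the projection $\pi_\tau$ of Section~\ref{Sec:twist} identifies this glue group with $\pi_\tau(\Lambda)/\Lambda^\tau=(\Lambda^\tau)^\ast/\Lambda^\tau$ (using $\pi_\tau(\Lambda)=(\Lambda^\tau)^\ast$ for unimodular $\Lambda$). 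In particular $\det(\Lambda^\tau)$ equals the order of the discriminant group of $\Lambda^\tau$, so it suffices to compute, for each $\tau$, the three quantities $r:=\rank(\Lambda^\tau)$, $\ell=|\hat\tau|$, and $|(\Lambda^\tau)^\ast/\Lambda^\tau|$, and then to check $|(\Lambda^\tau)^\ast/\Lambda^\tau|=\ell^{r/2}$.

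First, from the frame shape $\prod_j n_j^{a_j}$ of $\tau$ (read off from the cycle data of $Co.0$ on $\Lambda$, as already used for \eqref{Eq:rho}), the characteristic polynomial of $\tau$ on $\bR\Lambda$ is $\prod_j(x^{n_j}-1)^{a_j}$, so the multiplicity of the eigenvalue $1$ — hence $r=\rank(\Lambda^\tau)$ — is $\sum_j a_j$. Next, $\ell=|\hat\tau|$ is determined from $|\tau|$ by the standard-lift rule: $\ell=|\tau|$ when $|\tau|$ is odd, while for even order one must decide whether the lift doubles the order. I would record $\ell$ for each of the eleven classes and observe that in every case the frame shape is \emph{balanced of level} $\ell$ (symmetric under $d\mapsto \ell/d$), which is the structural reason behind the asserted power of $\ell$.

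It then remains to pin down the discriminant-group order in each case. The most economical route is to recognize $\Lambda^\tau$ as a classical $\ell$-modular lattice: for instance $\Lambda^{1A}=\Lambda$ (unimodular, $r=24$), while the rank-$16$ lattice $\Lambda^{2A}$ and the rank-$12$ lattice $\Lambda^{3B}$ are the Barnes--Wall lattice $BW_{16}$ and the Coxeter--Todd lattice $K_{12}$, which are $2$- and $3$-modular with $\det 2^8$ and $3^6$ respectively; each satisfies $\det=\ell^{r/2}$ on the nose. For the remaining classes I would either quote the explicit fixed-point sublattices from the classification of invariant lattices of $Co.0$ (H\"ohn--Mason), or compute $|(\Lambda^\tau)^\ast/\Lambda^\tau|$ directly from a Gram matrix of $\Lambda^\tau$ obtained by applying $\pi_\tau$ to a basis of $\Lambda$. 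Either way each determinant is a finite, explicit calculation.

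The main obstacle is the bookkeeping in the last two steps for the composite-order classes $6E,6G,8E,10F$: correctly deciding whether the standard lift doubles the order (so that $\ell$, rather than the naive $|\tau|$, is the value making the formula balanced), and then verifying that the discriminant group has order \emph{exactly} $\ell^{r/2}$. The only clean a priori bound is that $|\tau|$ annihilates $(\Lambda^\tau)^\ast/\Lambda^\tau$, since $|\tau|\,\pi_\tau(\Lambda)=\sum_{i}\tau^i(\Lambda)\subseteq\Lambda^\tau$ shows $|\tau|\,(\Lambda^\tau)^\ast\subseteq\Lambda^\tau$; this controls only the exponent, not the order. Extracting the exact order is therefore where the real work lies, and it is settled case by case either by the identification with known modular lattices or by a Smith-normal-form computation of the projected Gram matrix.
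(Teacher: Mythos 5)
Your proposal is correct, and like the paper's own proof it ultimately rests on a finite, class-by-class verification; but the mechanism in the middle is genuinely different. The paper first checks, for each class, the identity $(1-\tau)(\Lambda_\tau^{*})=\Lambda_\tau$ (this is Lemma \ref{Lem:conjclass0} in the appendix, verified by MAGMA), which yields the determinant uniformly from the frame shape alone: $\det(\Lambda^{\tau})=\det(\Lambda_\tau)=[\Lambda_\tau^{*}:(1-\tau)\Lambda_\tau^{*}]=\det\bigl((1-\tau)|_{\bR\Lambda_\tau}\bigr)=\prod_m m^{a_m}$, and then compares $\prod_m m^{a_m}$ with $\ell^{\rank(\Lambda^{\tau})/2}$ by direct calculation against Table 1. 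You never use this identity; instead you compute $|(\Lambda^{\tau})^{*}/\Lambda^{\tau}|$ class by class, either by recognizing $\Lambda^{\tau}$ as a known modular lattice ($BW_{16}$ for $2A$, $K_{12}$ for $3B$, etc.) or by Smith normal form of an explicit Gram matrix (the needed data is in Harada--Lang \cite{HaLa}, which the paper itself invokes, for the stronger Theorem \ref{dual}). Your ``balanced frame shape'' observation is exactly the arithmetic content hidden in the paper's phrase ``direct calculation'': if $a_d=a_{\ell/d}$ for all $d$, then $\bigl(\prod_m m^{a_m}\bigr)^{2}=\prod_m (m\cdot \ell/m)^{a_m}=\ell^{\sum_m a_m}=\ell^{\rank\Lambda^{\tau}}$, so the frame-shape product equals $\ell^{r/2}$ on the nose; making this explicit is a gain in transparency, although in your write-up it remains a heuristic unless you also prove $\det(\Lambda^{\tau})=\prod_m m^{a_m}$, which you bypass by computing determinants directly. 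The trade-off between the two routes: the paper needs only one subgroup identity per class and lets the frame shape do all the numerics, so no Gram matrices enter this lemma at all; your route needs the explicit models, but in exchange the modular-lattice identifications come close to proving the stronger isometry $\sqrt{\ell}(\Lambda^{\tau})^{*}\cong\Lambda^{\tau}$ of Theorem \ref{dual}, of which the present lemma records only the determinant consequence. Finally, your caveat is well placed: the containment $|\tau|(\Lambda^{\tau})^{*}\subseteq\Lambda^{\tau}$ bounds only the exponent of the discriminant group, not its order, and indeed $\ell\neq|\tau|$ for the classes $2C$, $6G$, $10F$, so the order really must be pinned down case by case, exactly as you say.
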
 

\pr  For $\tau\in \CP_0$, it is easy to check that $(1-\tau) (\Lambda^*_\tau )=\Lambda_\tau$. It follows that $\det(\Lambda^{\tau})=\det(\Lambda_\tau)=\prod m^{a_m}$, where $\prod m^{a_m}$ is the frame shape of $\tau$. The result then follows by a direct calculation (cf. Table 1 in Section \ref{sec:6}). 
\prend

Below is one of our key observations.  

\begin{thm}\label{dual}
If $\tau\in \CP_0$, then there is an isometry 
$$\varphi=\varphi_\tau: \sqrt{\ell}(\Lambda^{\tau})^{\ast} \to \Lambda^{\tau},$$
where $\ell=|\hat{\tau}|$. 
\end{thm}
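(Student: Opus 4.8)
The statement is exactly that $\Lambda^{\tau}$ is an $\ell$-\emph{modular} lattice, i.e.\ that it is isometric to its rescaled dual $\sqrt{\ell}(\Lambda^{\tau})^{\ast}$. The plan is to separate a routine numerical verification from the genuine content, which is the production of the isometry itself. Writing $r=\rank(\Lambda^{\tau})$, the first step is to match ranks and determinants. Rescaling a rank-$r$ lattice by $\sqrt{\ell}$ multiplies its Gram determinant by $\ell^{r}$, while dualizing inverts it, so using the preceding Lemma ($\det(\Lambda^{\tau})=\ell^{r/2}$) I get
$$\det\!\left(\sqrt{\ell}\,(\Lambda^{\tau})^{\ast}\right)=\ell^{r}\det\!\left((\Lambda^{\tau})^{\ast}\right)=\frac{\ell^{r}}{\det(\Lambda^{\tau})}=\frac{\ell^{r}}{\ell^{r/2}}=\ell^{r/2}=\det(\Lambda^{\tau}).$$
Thus both lattices are positive definite of the same rank $r$ and the same determinant $\ell^{r/2}$, and in particular their discriminant groups $(\Lambda^{\tau})^{\ast}/\Lambda^{\tau}$ have the same order.

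The second step is to show that $\Lambda^{\tau}$ and $\sqrt{\ell}(\Lambda^{\tau})^{\ast}$ lie in the same genus. Here I would first check that $\sqrt{\ell}(\Lambda^{\tau})^{\ast}$ is even, which amounts to verifying $\ell\langle\pi_{\tau}(x),\pi_{\tau}(x)\rangle\in 2\bZ$ for $x\in\Lambda$, using $\pi_{\tau}(\Lambda)=(\Lambda^{\tau})^{\ast}$. I would then compare discriminant forms: since $\Lambda^{\tau}\perp\Lambda_{\tau}$ glue to the unimodular lattice $\Lambda$, the forms on $(\Lambda^{\tau})^{\ast}/\Lambda^{\tau}$ and on $\Lambda_{\tau}^{\ast}/\Lambda_{\tau}$ are anti-isometric, and the relation $(1-\tau)\Lambda_{\tau}^{\ast}=\Lambda_{\tau}$ used in the proof of the Lemma controls the latter. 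The operation ``rescale the dual by $\ell$'' is an involution on discriminant forms whose exponent divides $\ell$; the constraint $\det=\ell^{r/2}$ (exponent and order both governed by $\ell$) should force this form to be invariant under that involution, which is what places the two lattices in a common genus.

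The third and final step is to upgrade ``same genus'' to ``isometric,'' i.e.\ to produce $\varphi_{\tau}$. Since for positive definite lattices the genus can carry several classes, this cannot be purely formal, and I would do it along the eleven classes of $\CP_{0}$ using Table~1: for each $\tau$ the frame shape $\prod m^{a_{m}}$ is \emph{balanced} ($a_{m}=a_{\ell/m}$), $\Lambda^{\tau}$ is identified with a known $\ell$-modular lattice, and one either exhibits the isometry directly or checks that the genus contains a single class. An alternative, more uniform route would be to build $\varphi_{\tau}$ explicitly by transporting the isomorphism $(1-\tau)\colon\Lambda_{\tau}^{\ast}\to\Lambda_{\tau}$ across the unimodular glue $(\Lambda^{\tau})^{\ast}/\Lambda^{\tau}\cong\Lambda_{\tau}^{\ast}/\Lambda_{\tau}$ back to $\Lambda^{\tau}$.

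The main obstacle is precisely this last step. The determinant bookkeeping is routine and the genus comparison is largely formal, but the actual content of the theorem is that for exactly these eleven (balanced) classes the rescaled dual returns to the isometry class of $\Lambda^{\tau}$ rather than merely to its genus. I expect that to rest either on the explicit lattice identifications recorded in Table~1 or on a class-number-one argument case by case, and this is where the real work lies.
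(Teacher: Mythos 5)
You have correctly identified both the content of the statement ($\ell$-modularity of $\Lambda^{\tau}$) and where the real work lies, and your workable fallback coincides exactly with the paper's proof: the paper simply cites Harada--Lang, who determined $\Lambda^{\tau}$ for all $\tau\in Co.0$ and gave its Gram matrix explicitly, and then checks class by class that this Gram matrix is equivalent to $\ell$ times the Gram matrix of $(\Lambda^{\tau})^{\ast}$ (i.e.\ to $\ell$ times the inverse matrix). That direct verification over the eleven classes is the entire proof; your determinant bookkeeping and genus comparison are scaffolding the paper bypasses.

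Two of the shortcuts you propose in place of the direct check would, however, genuinely fail, so it is worth recording why they cannot replace it. First, the class-number-one route already breaks down for $\tau\in 2A$: there $\Lambda^{\tau}$ is the Barnes--Wall lattice $BW_{16}$, and its genus also contains $E_8\perp\sqrt{2}\,E_8$ (both even of rank $16$, determinant $2^8$, with the same discriminant form), so the genus does not single out the isometry class and one must exhibit the isometry. Second, the ``uniform'' route via $(1-\tau)\colon\Lambda_{\tau}^{\ast}\to\Lambda_{\tau}$ cannot produce $\varphi_{\tau}$: the map $(1-\tau)$ is a similarity only when $\tau+\tau^{-1}$ acts as a scalar on $\Lambda_{\tau}$, which fails for instance for $\tau\in 5B$, where $\langle(1-\tau)x,(1-\tau)x\rangle=2\langle x,x\rangle-2\langle x,\tau x\rangle$ is not proportional to $\langle x,x\rangle$; moreover, an isomorphism of discriminant groups (or even of discriminant forms) does not lift to an isometry of the lattices themselves, so ``transporting across the glue'' is not a defined operation. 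Finally, your step-two assertion that the determinant constraint ``should force'' the discriminant form to be invariant under rescaled duality is claimed rather than proved; it is true for these eleven classes, but only as a consequence of the stronger case-by-case verification, so it cannot serve as an input to it.
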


\begin{remark}
It is known that $\ell=|\tau|$ or $2|\tau|$ and $\ell=2|\tau|$ if and only if 
$<\tau>$ contains a $2C$-element, see \cite{B}.  
\end{remark}

In \cite{HaLa},  Harada and Lang have determined the structure of 
$\Lambda^{\tau}$ for $\tau\in Co.0$. In particular, the Gram matrix for the lattice $\Lambda^\tau$ has been 
given explicitly. It is  straightforward to check that Theorem \ref{dual} holds for 
$$\tau\in \CP_0=\{1A,2A,2C,3B,4C,5B,6E,6G,7B,8E,10F\}.$$
Note that the Gram matrix of the dual lattice $L^*$ is equal to the inverse of the Gram matrix of $L$. Therefore, it suffices to check that the Gram matrix of $\Lambda^\tau$ is equal to  $\ell$ times the Gram matrix of $(\Lambda^\tau)^*$ for any $\tau\in \CP_0$.

\begin{remark}
	Let $\hat{\tau}$ be a standard lift of $\tau$ in $\Aut(V_\Lambda)$ and let $\phi(\tau)$ be the conformal weight of the irreducible $\widehat{\tau}$-twisted module of $V_\Lambda$. 
	By direct calculations, it is straightforward to verify that for any $\tau\in \CP_0$, we have 
	\[ 
	\phi(\tau^m)= 1-\frac{1}{|\widehat{\tau^m}|} \qquad  \mbox{ for all } m| |\tau|.
	\] 
	In other words,  $\widehat{\tau^m}$ is of type $0$ for any $\tau\in \CP_0$ and $m \in \bZ$ as defined in \cite{EMS}. 
\end{remark}

\subsection{Reverse automorphisms and associated Niemeier lattices}\label{S:4.2}

From now on, we assume that 
$\tilde{g}=\widehat{\tau}\exp(2\pi i\beta(0))\in \Aut(V_{\Lambda})$ is the  
reverse automorphism associated with an orbifold construction 
defined  by a W-element and assume that $\tau\in \CP_0$ 
and $\tau(\beta)=\beta$.

First we discuss some relations between  the lattice $L$ (see Section \ref{S:2}) associated with the roots of $V_1$ and the fixed point sublattice $\Lambda^\tau$ of the Leech lattice.  

\begin{ntn}
	Let $X$  be an even lattice and let $\beta\in \bQ \otimes_\bZ X$. 
Denote 	$X_{\beta} = \{ x\in X \mid \langle x, \beta \rangle \in \bZ\}$. Now suppose that $\langle \beta,\beta\rangle =2k/n$ for some positive integers $k$ and $n$ with $(k,n)=1$. 
	Let $\bar{X}=\{x\in X\mid \langle x,n \beta \rangle\in \bZ\}$. 
	For $x\in \bar{L}$, we
	define $x^{[\beta]} = x- m\beta$ if $\langle x, n\beta\rangle \equiv mk \mod n$ and $0\leq m\leq n-1$ and $\bar{X}^{[\beta]} = \{ x^{[\beta]}\mid  x \in \bar{X}\}$.
Then $\bar{X}^{[\beta]}$ is also an even lattice and $\det(\bar{X})=\det(\bar{X}^{[\beta]})$.  
\end{ntn}

Recall that  $\langle \alpha, \alpha\rangle  =2K_0/N_0$ and $N_0\alpha \in L^*$ for a W\clrblue{-}element $\alpha$ (cf. Proposition \ref{Comm}). Thus we have $\bar{L}=L$. 
Since $V^{[g]}\cong V_\Lambda$, we have  $V^g \cong (V_\Lambda)^{\tilde{g}}$ and  
 \[
V_{L_\alpha}= \Com(\Com( M(\widehat{\CH}),V^g), V^g)  \cong 
 \Com(\Com( M(\widehat{\CH}),(V_\Lambda)^{\tilde{g}}), (V_\Lambda)^{\tilde{g}}) = V_{\Lambda_\beta^\tau}.
 \] 
 It implies $L_\alpha\cong \Lambda^\tau_\beta$. Moreover, $\Lambda^\tau  = L^{[\alpha]}+ \bZ N_0\alpha$.

Now consider the irreducible $\tilde{g}$-twisted module 
\[
V_\Lambda[\tilde{g}]\cong \bC[-\beta +(\Lambda^\tau)^*]\otimes M(1)[\tau]\otimes T.
\] 
Since $(V_\Lambda[\tilde{g}])_\bZ \neq 0$ and $\phi(\tau)=1-1/|\widehat{\tau}|$ for $\tau\in \CP_0$, there exist\clrblue{s} $\beta'\in -\beta +(\Lambda^\tau)^*$ such that $\langle \beta', \beta'\rangle / 2\equiv 1/|\widehat{\tau}| \mod \bZ$.  Without loss, we may assume  $\beta = -\beta'$. In this case, $\beta \in L^*$.   
By a general result (cf. \cite{Ho, Lam20}), we also have  

(1) $(\mathcal{D}(L), q)\cong (\mathrm{Irr}(V_{\Lambda_\tau}^{\hat{\tau}}), -q')$ as quadratic spaces; 
	
(2) $\det(L) = \det(\Lambda^\tau) \times |\tau|^2$. 
	
The quadratic space structure of $(\mathrm{Irr}(V_{\Lambda_\tau}^{\hat{\tau}}), q')$ has also been determined in \cite{Lam20}. In particular, it has proved that the exponent of $L^*/L$ is $\ell =|\widehat{\tau}|$ and $q(L^*)\subset \frac{1}{\ell}\bZ$ .  Thus we have the following lemma.  

\begin{lmm} \label{Lbeta}
We have $\ell \beta \in L$ and $L = \Lambda_\beta^\tau + \bZ \ell \beta$.  
Moreover, $\sqrt{\ell} L^*$ is an even lattice. 
\end{lmm}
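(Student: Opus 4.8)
The plan is to settle the three assertions in turn, leaning on the facts assembled immediately above: $\beta\in L^*$, the exponent of $L^*/L$ is $\ell$, $q(L^*)\subseteq\frac1\ell\bZ$, the identification $L_\alpha=\Lambda^\tau_\beta$, and $\det(L)=\det(\Lambda^\tau)\,|\tau|^2$.

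The first and third assertions are immediate. Since the exponent of $L^*/L$ equals $\ell$, we have $\ell L^*\subseteq L$; as $\beta\in L^*$, this gives $\ell\beta\in L$. For evenness, every $x\in L^*$ satisfies $\langle\sqrt\ell x,\sqrt\ell x\rangle=\ell\langle x,x\rangle=2\ell\, q(x)\in 2\bZ$ because $q(x)\in\frac1\ell\bZ$; as all norms of $\sqrt\ell L^*$ are then even, the bilinear form is integral by polarization, so $\sqrt\ell L^*$ is an even lattice.

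The substance is the identity $L=\Lambda^\tau_\beta+\bZ\ell\beta$. The inclusion $\Lambda^\tau_\beta+\bZ\ell\beta\subseteq L$ is clear, since $\Lambda^\tau_\beta=L_\alpha\subseteq L$ and $\ell\beta\in L$ by the first part. For the reverse inclusion I would argue by indices. The sublattice $L_\alpha$ is the kernel of the homomorphism $x\mapsto\langle x,\alpha\rangle+\bZ$ from $L$ to $\bR/\bZ$, so $L/L_\alpha$ is cyclic; its order is exactly $N_0$, this being the order of the inner automorphism $g=\exp(2\pi i\alpha(0))$ on the lattice subalgebra $V_L$ and reflecting that $N_0\alpha\in L^*$ is the minimal such multiple by $\GCD(K_0,N_0)=1$. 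The image of $\ell\beta$ in $L/L_\alpha$ is $\langle\ell\beta,\alpha\rangle+\bZ=\frac{\ell}{N_0|\tau|}+\bZ$, using the normalization $\langle\alpha,\beta\rangle\equiv\frac{1}{N_0|\tau|}\pmod{\bZ}$ recorded in Section \ref{S:2}. When $\ell=|\tau|$ this image equals $\frac{1}{N_0}+\bZ$, a generator of the cyclic group $L/L_\alpha$, so $\Lambda^\tau_\beta+\bZ\ell\beta$ already surjects onto $L/L_\alpha$ and the reverse inclusion follows.

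The main obstacle is precisely this last index computation in the remaining case $\ell=2|\tau|$, i.e. when $\langle\tau\rangle$ contains a $2C$-element. There the image of $\ell\beta$ is $\frac{2}{N_0}+\bZ$, which generates $\bZ/N_0$ only when $N_0$ is odd; I expect this parity condition to hold throughout $\CP_0$ and would confirm it by a finite check against Table 1, after which the two inclusions yield $L=\Lambda^\tau_\beta+\bZ\ell\beta$. As an independent cross-check on the index, one may instead combine $L_\alpha=\Lambda^\tau_\beta$ with $\det(L)=\det(\Lambda^\tau)\,|\tau|^2$ to obtain $[L:L_\alpha]=[\Lambda^\tau:\Lambda^\tau_\beta]/|\tau|$, and then verify that $\ell\beta$ realizes this full index modulo $\Lambda^\tau_\beta$; here $\langle\ell\beta,\beta\rangle\in\bZ$ (indeed in $2\bZ$, from $\langle\beta,\beta\rangle/2\equiv\frac1\ell\pmod\bZ$), so the only genuine constraint is the location of $\ell\beta$ inside $\Lambda^\tau$.
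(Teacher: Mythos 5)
Your treatment of the first and third assertions is correct and is exactly what the paper intends: $\ell\beta\in L$ follows from $\beta\in L^*$ and $\ell L^*\subseteq L$, and evenness of $\sqrt{\ell}L^*$ follows from $q(L^*)\subseteq\frac{1}{\ell}\bZ$ by polarization; the inclusion $\Lambda^\tau_\beta+\bZ\ell\beta\subseteq L$ is likewise fine. The genuine gap is the reverse inclusion when $\ell=2|\tau|$, i.e. precisely for $\tau\in\{2C,6G,10F\}$, which you identify but do not close: there your computation only shows that $\Lambda^\tau_\beta+\bZ\ell\beta$ has index at most $2$ in $L$. Note also that your preliminary claim $[L:L_\alpha]=N_0$ suffers from the same factor-of-two ambiguity rather than resolving it: writing $d=\min\{m\in\bZ_{>0}\mid m\alpha\in L^*\}$, the exponent bound gives $\ell d\alpha\in L$, hence $N_0|\tau|$ divides $\ell d$, and combined with $d\mid N_0$ this forces $d=N_0$ only when $\ell=|\tau|$ or $N_0$ is odd; your appeal to "$\GCD(K_0,N_0)=1$" is not a proof of minimality. (A smaller fixable point: the congruence $\langle\alpha,\beta\rangle\equiv 1/N_0|\tau|\pmod{\bZ}$ was arranged in Section \ref{S:2} for a $\beta$ that has since been re-chosen within its coset modulo $(\Lambda^\tau)^*$ so as to lie in $L^*$; since $\langle\alpha,(\Lambda^\tau)^*\rangle\subseteq\frac{1}{N_0}\bZ$, the congruence must be re-derived for the new $\beta$, e.g. from the fact that $e^{\beta}\otimes t\in T^1_1$ is a root vector on which $g$ acts by $e^{2\pi i/n}$.)

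Your proposed repair of the $\ell=2|\tau|$ case cannot work as stated. Table \ref{Table:main} records invariants of the conjugacy class of $\tau$ alone (frame shape, $\dim\CH_0$, $\phi(\tau)$, $\Lambda^\tau$), whereas $N_0$ is defined by $\langle\alpha,\alpha\rangle=2K_0/N_0$ for the $W$-element of the particular VOA $V$; it is not a function of $\tau$, so no finite check against that table can decide its parity. The parity facts the paper does establish are genuine theorems proved later with different tools: Lemma \ref{order2} ($N_0$ odd when $|\tau|=2$) by a case analysis of the possible Lie algebras $V_1$ via dimension formulas, and Lemma \ref{odd} in Section \ref{S:7} ($h/|\tau|$ odd for $\tau\in 2C, 6G, 10F$) by classifying the Niemeier lattices containing $L_A(c_\tau)$; both rest on material (Lemma \ref{ell=n}, Corollary \ref{N0=1}, the deep-hole results) unavailable at the point of Lemma \ref{Lbeta}, so invoking them here is a forward reference, and deducing the parity from the known classification would be circular. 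The paper itself does not run your index argument: it derives the lemma from the results quoted from \cite{Ho} and \cite{Lam20}, namely the full determination of the quadratic space $(\mathrm{Irr}(V_{\Lambda_\tau}^{\hat{\tau}}),q')\cong(\mathcal{D}(L),-q)$ — the group $\mathcal{D}(L)$ and its form, not merely its exponent — together with $\det(L)=\det(\Lambda^\tau)|\tau|^2$ and $L_\alpha=\Lambda^\tau_\beta$. That stronger input is what rules out the residual index-$2$ possibility, and it is exactly what your argument is missing.
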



By Theorem \ref{dual}, there is an isometry $\varphi=\varphi_\tau: \sqrt{\ell} (\Lambda^\tau)^* \to \Lambda^\tau$ with $\ell =|\widehat{\tau}|$ and it induces an isometry from $\bC \Lambda^{\tau}  \to  \bC \Lambda^{\tau}$.

\begin{defn}\label{N=Lbeta}
	Set $\tilde{\beta}=\sqrt{\ell}\varphi(\beta)$ and $N:= \Lambda^{[\tilde{\beta}]}
	= \Lambda_{\tilde{\beta}} +\bZ\tilde{\beta}$. 
	
	By our assumption, $\langle \tilde{\beta}, \tilde{\beta}\rangle \in 2\bZ$ and $N$ is an even unimodular lattice. 
\end{defn}

\begin{thm} \label{indexn}
Let $n=|g|=|\tilde{g}|$. Suppose $m\varphi(\sqrt{\ell}\beta)\in \Lambda$. Then  we have $n|m$.  Moreover, $[ \Lambda^{[\tilde{\beta}]}: \Lambda_{\tilde{\beta}}]=n$.
\end{thm}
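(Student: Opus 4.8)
The plan is to reduce the statement to an index computation on the fixed-point lattice $\Lambda^\tau$, transported through the $\ell$-duality $\varphi$, and then to evaluate that index by a determinant comparison.

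First I would treat the index directly. By Definition \ref{N=Lbeta}, $\Lambda^{[\tilde\beta]}=\Lambda_{\tilde\beta}+\bZ\tilde\beta$, so $[\Lambda^{[\tilde\beta]}:\Lambda_{\tilde\beta}]$ is precisely the order of $\tilde\beta$ in the cyclic quotient $(\Lambda_{\tilde\beta}+\bZ\tilde\beta)/\Lambda_{\tilde\beta}$, that is, the least $m\ge 1$ with $m\tilde\beta\in\Lambda_{\tilde\beta}$. Since $\langle\tilde\beta,\tilde\beta\rangle\in 2\bZ$, the condition $\langle m\tilde\beta,\tilde\beta\rangle\in\bZ$ is automatic, so $m\tilde\beta\in\Lambda_{\tilde\beta}\iff m\tilde\beta\in\Lambda$. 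I would then transport this through $\varphi$. As $\tilde\beta=\sqrt\ell\varphi(\beta)\in\bR\Lambda^\tau$ and $\Lambda\cap\bR\Lambda^\tau=\Lambda^\tau$, we have $m\tilde\beta\in\Lambda\iff m\tilde\beta\in\Lambda^\tau$; applying $\varphi^{-1}$, which carries $\Lambda^\tau$ to $\sqrt\ell(\Lambda^\tau)^*$ and $m\tilde\beta$ to $m\sqrt\ell\beta$, this becomes $m\beta\in(\Lambda^\tau)^*$. Hence
\[
[\Lambda^{[\tilde\beta]}:\Lambda_{\tilde\beta}]=\min\{m\ge 1\mid m\beta\in(\Lambda^\tau)^*\}=[\Lambda^\tau:\Lambda^\tau_\beta],
\]
the last equality because $x\mapsto\langle x,\beta\rangle\bmod\bZ$ identifies $\Lambda^\tau/\Lambda^\tau_\beta$ with a finite cyclic subgroup of $\bQ/\bZ$ whose order is exactly this least $m$.

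Next I would compute $[\Lambda^\tau:\Lambda^\tau_\beta]$ by comparing determinants. From Section \ref{S:4.2} we have the isometry $\Lambda^\tau_\beta\cong L_\alpha$ and the relation $\det(L)=\det(\Lambda^\tau)\,|\tau|^2$. Using $\det(M)=[X:M]^2\det(X)$ for a full-rank sublattice $M\subseteq X$, applied to $\Lambda^\tau_\beta\subseteq\Lambda^\tau$ and $L_\alpha\subseteq L$ together with $\det(\Lambda^\tau_\beta)=\det(L_\alpha)$, gives
\[
[\Lambda^\tau:\Lambda^\tau_\beta]^2\det(\Lambda^\tau)=[L:L_\alpha]^2\det(L)=[L:L_\alpha]^2\,|\tau|^2\det(\Lambda^\tau),
\]
so $[\Lambda^\tau:\Lambda^\tau_\beta]=|\tau|\,[L:L_\alpha]$. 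Now $[L:L_\alpha]$ is the order of $\alpha$ modulo $L^*$; since $N_0\alpha\in L^*$ (Proposition \ref{Comm}) and, by the analysis of $W$-elements in \cite{CLM}, $N_0$ is the least such multiple, we obtain $[L:L_\alpha]=N_0$ and therefore $[\Lambda^{[\tilde\beta]}:\Lambda_{\tilde\beta}]=|\tau|N_0=n$, proving the second assertion. The first assertion is then immediate from the chain above: $m\tilde\beta=m\varphi(\sqrt\ell\beta)\in\Lambda$ is equivalent to $m\beta\in(\Lambda^\tau)^*$, i.e. to $n\mid m$, because $\beta$ has order exactly $n=[\Lambda^\tau:\Lambda^\tau_\beta]$ in $\bQ\Lambda^\tau/(\Lambda^\tau)^*$.

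The step I expect to be the real obstacle is the identity $[L:L_\alpha]=N_0$, namely that the order of the $W$-element $\alpha$ modulo $L^*$ is exactly the reduced denominator $N_0$ of $\langle\alpha,\alpha\rangle/2$. The inclusion $N_0\alpha\in L^*$ yields only that this order divides $N_0$; the reverse inequality is the primitivity of $N_0\alpha$ along $\alpha$ in $L^*$ and genuinely uses the structure of $\alpha$. A clean way to close it is to combine $\det(L)=\det(\Lambda^\tau)|\tau|^2$ with the order of $\alpha$ modulo $L$ being $n=|\tau|N_0$ and the fact that $L^*/L$ has exponent $\ell$; this settles the case $\ell=|\tau|$ at once, while the case $\ell=2|\tau|$ (when $\langle\tau\rangle$ contains a $2C$-element) is most safely handled using the discriminant-form data $q(L^*)\subseteq\frac1\ell\bZ$ from Lemma \ref{Lbeta} or by the explicit tables of Section \ref{sec:6}. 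A secondary point is to verify that the duality translation in the first paragraph respects the relevant lattices, i.e. that $\varphi^{-1}(\Lambda^\tau)=\sqrt\ell(\Lambda^\tau)^*$, which is exactly the content of Theorem \ref{dual}.
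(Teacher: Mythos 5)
Your reduction of both assertions to the single equality $[\Lambda^\tau:\Lambda^\tau_\beta]=n$ is sound, and its first half coincides with the paper's own argument: the paper likewise passes from $m\varphi(\sqrt{\ell}\beta)\in\Lambda$ to $m\beta\in(\Lambda^\tau)^*$ via $\Lambda\cap\bR\Lambda^\tau=\Lambda^\tau$ and Theorem \ref{dual}, and likewise uses that $[\Lambda^\tau:\Lambda^\tau_\beta]$ is the order of $\beta$ modulo $(\Lambda^\tau)^*$. Where you part ways is in how this index gets pinned to $n$. The paper does two separate things: for the divisibility $n\mid m$ it invokes $[\Lambda^\tau:\Lambda^\tau_\beta]=n$ as an already established fact (it is implicit in Proposition \ref{Comm}/[CLM]: the weight-zero parts $U(jN_0\alpha)$ of the twisted sectors occur \emph{exactly} for $j\equiv 0\pmod{N_0}$, i.e.\ $j\alpha\in L^*$ iff $N_0\mid j$, which is precisely the exactness $[L:L_\alpha]=N_0$ you flag as the obstacle); and for the second assertion it proves the upper bound $n\tilde{\beta}\in\Lambda^\tau$ directly from $\tilde{g}^n=1$, writing $\widehat{\tau}^{|\tau|}=\exp(2\pi i\delta(0))$ with $\delta\in(\Lambda^\tau)^*$ (standard lift) and pushing $\beta$ and $\delta$ through $\varphi$. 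Your determinant identity $[\Lambda^\tau:\Lambda^\tau_\beta]=|\tau|\,[L:L_\alpha]$ is correct, given $L_\alpha\cong\Lambda^\tau_\beta$ and $\det(L)=|\tau|^2\det(\Lambda^\tau)$ from Section \ref{S:4.2}, and is a legitimate alternative to the paper's explicit computation --- \emph{provided} $[L:L_\alpha]=N_0$ is available.

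That exactness is where your proposal has a genuine gap, and neither of your two proposed patches closes it. Write $d=[L:L_\alpha]$, so $d\mid N_0$. Your exponent argument gives $n\mid \ell d$; when $\ell=|\tau|$ this forces $d=N_0$, but when $\ell=2|\tau|$ (i.e.\ $\tau\in\{2C,6G,10F\}$) it gives only $N_0\mid 2d$, leaving $d=N_0/2$ open when $N_0$ is even. The discriminant-form datum $q(L^*)\subseteq\frac{1}{\ell}\bZ/\bZ$ is vacuous in exactly this residual case: $q(d\alpha)=d^2K_0/N_0=N_0K_0/4$, and since $|\tau|$ is even for all three classes, $4\mid\ell$, so $\ell\cdot N_0K_0/4\in\bZ$ automatically and no contradiction arises. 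Appealing to ``the explicit tables'' is circular: the parity facts that would rescue you (e.g.\ Lemma \ref{order2}, that $N_0$ is odd when $|\tau|=2$, or $N_0=1$ for $10F$ via Lemma \ref{3}) are established later and are part of the classification this theorem feeds into. Note that the gap infects both claims: if $d=N_0/2$ were possible, your chain would yield index $n/2$, and the divisibility assertion would fail as well. The repair is simply to cite the exactness $[L:L_\alpha]=N_0$ (equivalently $[\Lambda^\tau:\Lambda^\tau_\beta]=n$) from Proposition \ref{Comm} and the orbifold structure, as the paper does, rather than re-derive it from determinants, the exponent of $L^*/L$, and values of $q$ alone --- that numerical data provably cannot distinguish $d=N_0$ from $d=N_0/2$ in the $\ell=2|\tau|$ case.
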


\pr 
If $m\varphi(\sqrt{\ell}\beta)\in \Lambda$, then $m\varphi(\sqrt{\ell}\beta)\in \Lambda^{\tau}=\varphi(\sqrt{\ell}(\Lambda^{\tau})^*)$, which is equivalent to 
 $m\sqrt{\ell}\beta\in \sqrt{\ell}(\Lambda^{\tau})^{\ast}$ (or equivalently,  
 $m\beta\in (\Lambda^{\tau})^{\ast}$). 

Since $[\Lambda^\tau: \Lambda^\tau_\beta]=n ( = [ (\Lambda^\tau_\beta)^* : (\Lambda^\tau)^*]) $ and $(\Lambda^\tau_\beta)^* = (\Lambda^\tau)^* +\bZ \beta$,  
$m\beta\in (\Lambda^{\tau})^{\ast}$ implies $n$ divides $m$. 

For the second statement, it suffices to show $n\sqrt{\ell} \varphi(\beta) \in \Lambda^\tau$. Let $k=|\tau|$. Then  $\widehat{\tau}^k = \exp(2\pi i\delta(0))$ for some $\delta\in \bC \Lambda^\tau$.  Since $n=|\tau| N_0=kN_0$
and $\tilde{g}^n= \widehat{\tau}^n\exp(2\pi i\beta(0))^n=1$,  
we have $n\beta -N_0 \delta \in \Lambda^\tau$. 

As $\widehat{\tau}$ is a standard lift, $\widehat{\tau}(e^\gamma)=e^\gamma$ for any $\gamma \in \Lambda^\tau$.  Thus, $\langle \delta, \Lambda^\tau\rangle \subset \bZ$ and $\delta \in (\Lambda^\tau)^*$.  By using the isometry $\varphi$, we have 
$n\sqrt{\ell} \varphi(\beta) - N_0 \sqrt{\ell} \varphi(\delta) \in \sqrt{\ell}  \varphi(\Lambda^\tau) < \sqrt{\ell}  \varphi((\Lambda^\tau)^*)= \Lambda^\tau$. 
Since $\delta \in (\Lambda^\tau)^*$, we have $\sqrt{\ell} \varphi(\delta) \in \Lambda^\tau$ and $n\sqrt{\ell} \varphi(\beta) \in \Lambda^\tau$ as desired.
\prend

\begin{thm}\label{duality}
Let $N= \Lambda^{[\tilde{\beta}]}
= \Lambda_{\tilde{\beta}} +\bZ\tilde{\beta}$.  Then $\varphi$ induces an isometry 
from $\sqrt{\ell}L^*$  to $N^\tau$. In particular, we have $N^\tau\cong \sqrt{\ell}L^*$.  
\end{thm}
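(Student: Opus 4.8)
The plan is to prove that $\varphi$ maps $\sqrt{\ell}L^*$ isometrically onto $N^\tau$, and since $\varphi$ is already known to be an isometry from $\sqrt{\ell}(\Lambda^\tau)^*$ onto $\Lambda^\tau$ by Theorem \ref{dual}, the real content is purely set-theoretical: I must show the \emph{image} $\varphi(\sqrt{\ell}L^*)$ coincides exactly with $N^\tau = N \cap \bC\Lambda^\tau$. First I would unwind the two lattices in question. On one side, Lemma \ref{Lbeta} gives $L = \Lambda^\tau_\beta + \bZ\ell\beta$, so $L^* = (\Lambda^\tau_\beta)^* \cap (\ell\beta)^{\perp\text{-dual}}$; more usefully, I would record the explicit description of $L$ and its dual in terms of $\Lambda^\tau$, $\beta$, and the index data $[\Lambda^\tau : \Lambda^\tau_\beta] = n$ from Theorem \ref{indexn}. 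On the other side, $N = \Lambda_{\tilde\beta} + \bZ\tilde\beta$ with $\tilde\beta = \sqrt{\ell}\varphi(\beta)$, so $N^\tau$ is the $\tau$-fixed part of this Niemeier lattice.

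The key computation is to identify $N^\tau$ concretely. Since $\tau$ fixes $\tilde\beta$ (because $\tau(\beta)=\beta$ and $\varphi$ intertwines the $\tau$-actions on $\bC\Lambda^\tau$, where $\tau$ acts trivially), the sublattice $\bZ\tilde\beta$ lies in $N^\tau$. The nontrivial part is $(\Lambda_{\tilde\beta})^\tau = \Lambda_{\tilde\beta} \cap \bC\Lambda^\tau = \{x \in \Lambda^\tau \mid \langle x, \tilde\beta\rangle \in \bZ\} = (\Lambda^\tau)_{\tilde\beta}$. Applying the inverse isometry $\varphi^{-1}$ and using $\tilde\beta = \sqrt{\ell}\varphi(\beta)$, I expect $\varphi^{-1}((\Lambda^\tau)_{\tilde\beta})$ to translate into a condition $\langle \varphi^{-1}(x), \beta\rangle \in \ell^{-1}\bZ$ after tracking the $\sqrt{\ell}$ factors, i.e. into a sublattice of $\sqrt{\ell}(\Lambda^\tau)^*$ defined by an integrality pairing against $\beta$. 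The plan is then to match this term-by-term with the analogous description of $\sqrt{\ell}L^*$ coming from $L = \Lambda^\tau_\beta + \bZ\ell\beta$, using $\ell\beta \in L$ (Lemma \ref{Lbeta}) and the $\ell$-duality scaling $\langle \varphi(x), \varphi(y)\rangle = \ell\langle x,y\rangle$.

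The main obstacle I anticipate is bookkeeping the $\sqrt{\ell}$ and $n$ factors correctly so that the integrality/divisibility conditions defining $\sqrt{\ell}L^*$ (relative to $\Lambda^\tau$ and $\beta$) line up exactly with those defining $N^\tau$ (relative to $\tilde\beta$), rather than merely up to finite index. Theorem \ref{indexn} is the crucial quantitative input here: the statement $[\Lambda^{[\tilde\beta]} : \Lambda_{\tilde\beta}] = n$ pins down the index of $\bZ\tilde\beta$ relative to the glue, and the fact that $n$ is the smallest multiple of $\varphi(\sqrt{\ell}\beta)$ landing in $\Lambda$ (equivalently $n\beta \in (\Lambda^\tau)^*$ is the minimal such multiple) controls the contribution of the $\tilde\beta$-direction to $N^\tau$. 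I would organize the proof as a double inclusion: first show $\varphi(\sqrt{\ell}L^*) \subseteq N^\tau$ by checking the two generating pieces $\varphi(\sqrt{\ell}(\Lambda^\tau)^*_\beta)$ and $\varphi(\sqrt{\ell}\cdot\tfrac{1}{\ell}(\ell\beta)) = \tilde\beta$-type glue vectors land in $N$ and are $\tau$-fixed; then obtain the reverse inclusion by a determinant/index count, comparing $\det(\sqrt{\ell}L^*) = \ell^{\rank}\det(L^*)$ against $\det(N^\tau)$ and invoking $\det(L) = \det(\Lambda^\tau)\cdot|\tau|^2$ together with the index $n = |\tau|N_0$, so that equality of covolumes forces the containment to be an equality.
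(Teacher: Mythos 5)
Your strategy shares the paper's key ingredients -- Lemma \ref{Lbeta} for $L=\Lambda^\tau_\beta+\bZ\ell\beta$, the identification $N^\tau=\Lambda^\tau_{\tilde\beta}+\bZ\tilde\beta$ (valid since $\tau$ fixes $\tilde\beta$), and transport of integrality conditions through the $\ell$-duality -- but it diverges in how the reverse inclusion is handled. The paper never splits into two inclusions: it writes $L^*=\langle\{x\in(\Lambda^\tau)^*\mid \langle x,\ell\beta\rangle\in\bZ\},\,\beta\rangle$ (dual of a sum is an intersection of duals, then one uses $\langle\beta,\ell\beta\rangle\in 2\bZ$ to absorb $\beta$), and then applies $\sqrt{\ell}\varphi$, under which the condition $\langle x,\ell\beta\rangle\in\bZ$ becomes exactly $\langle y,\tilde\beta\rangle\in\bZ$ for $y=\sqrt{\ell}\varphi(x)\in\Lambda^\tau$; this chain of set equalities gives $\sqrt{\ell}\varphi(L^*)=\Lambda^\tau_{\tilde\beta}+\bZ\tilde\beta=N^\tau$ in one stroke. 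In other words, once you have the explicit description of $L^*$ needed for your forward inclusion, the reverse inclusion is free by symmetry (apply $\varphi^{-1}$, noting $\psi^{-1}(\Lambda^\tau_{\tilde\beta})\subseteq\{x\in(\Lambda^\tau)^*\mid\langle x,\ell\beta\rangle\in\bZ\}$ and $\psi^{-1}(\tilde\beta)=\beta$, where $\psi(x)=\varphi(\sqrt{\ell}x)$), so the covolume comparison buys you nothing.

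Two cautions if you insist on the determinant route. First, circularity: the identity $\det(N^\tau)=\det(\Lambda^\tau)/|\tau|^2$ is \emph{not} available before this theorem -- in the paper it appears in Section 5.1 as a consequence of Theorem \ref{duality}. To compute $\det(N^\tau)$ independently you need both $[N^\tau:\Lambda^\tau_{\tilde\beta}]=n$ (which Theorem \ref{indexn} does give) \emph{and} $[\Lambda^\tau:\Lambda^\tau_{\tilde\beta}]=N_0$; the latter requires a separate argument (via $\ell$-duality, $m\tilde\beta\in(\Lambda^\tau)^*$ iff $m\ell\beta\in\Lambda^\tau$, and then $\langle\beta,\beta\rangle\in\frac{2}{\ell}\bZ$ forces $m\ell\beta\in\Lambda^\tau_\beta$, whose minimal multiple is $N_0$ by the index count $[L:\Lambda^\tau_\beta]=n/|\tau|$), which nothing in the paper supplies ready-made. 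Second, your generating pieces are slightly off: the relevant sublattice of $(\Lambda^\tau)^*$ is cut out by integral pairing against $\ell\beta$, not against $\beta$; the set $(\Lambda^\tau)^*_\beta$ you name is in general a \emph{proper} sublattice of it, and if the forward inclusion is only verified on a proper sublattice of $\sqrt{\ell}L^*$, the equality of covolumes of $\sqrt{\ell}L^*$ and $N^\tau$ cannot close the argument. Both issues are repairable, but repairing them essentially reproduces the paper's direct computation.
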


\pr  Since $\tau$ fixes $\varphi(\sqrt{\ell}\beta)$, we may assume that $\tau$ acts on $N=\Lambda^{[\varphi(\sqrt{\ell}\beta)]}$ and  $N^\tau = \Lambda^\tau_{\tilde{\beta}} +\bZ \tilde{\beta}$. Note also that $\langle \beta, \beta \rangle \in (2/\ell) \bZ$.  

  By Lemma \ref{Lbeta}, we have $L= \Lambda_\beta^\tau +\bZ \ell \beta$. Thus, 
\[
\begin{split}
L^* & = \{ x\in (\Lambda_\beta^\tau)^*\mid \langle x, \ell \beta \rangle \in \bZ\}\\
&= \{ x\in (\Lambda^\tau)^*+\bZ \beta \mid \langle x, \ell \beta \rangle \in \bZ\}\\
& = < \{ x\in (\Lambda^\tau)^* \mid \langle x, \ell \beta \rangle \in \bZ\}, \beta >.
\end{split}
  \]

Then  
\[
\begin{split}
\sqrt{\ell} \varphi(L^*)  & = < \{ \sqrt{\ell} \varphi(x) \in \sqrt{\ell} \varphi(\Lambda^\tau)^*\mid \langle \sqrt{\ell} \varphi(x) , \sqrt{\ell} \varphi(\beta) \rangle \in \bZ\}, \sqrt{\ell} \varphi( \beta) >\\
&=<  \{ y\in \Lambda^\tau \mid   \langle y, \tilde{\beta} \rangle \in \bZ\}
, \tilde{\beta} >\\
&= \Lambda^\tau_{\tilde{\beta}} +\bZ \tilde{\beta} = N^\tau
\end{split}
  \]
as desired. \prend

\begin{lmm}\label{nL}
Under the above conditions, we have $\Lambda^{[\varphi(\sqrt{\ell}\beta)]}\not\cong \Lambda$.
\end{lmm}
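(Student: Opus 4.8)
The plan is to distinguish $N$ from $\Lambda$ by a numerical invariant attached to the isometry $\tau$, rather than by exhibiting a root of $N$ (which, as I explain at the end, would be circular at this stage). Since $\tau$ fixes $\tilde{\beta}=\sqrt{\ell}\varphi(\beta)$, it acts as a lattice automorphism of $N=\Lambda^{[\tilde\beta]}$, with $N^\tau=\Lambda^\tau_{\tilde\beta}+\bZ\tilde\beta$ as in the proof of Theorem \ref{duality}. Writing $r=\rank \Lambda^\tau=\rank L$ and combining Theorem \ref{duality} with the determinant identities $\det(\Lambda^\tau)=\ell^{r/2}$ and $\det L=\det(\Lambda^\tau)\,|\tau|^2=\ell^{r/2}|\tau|^2$, I compute
$$
\det(N^\tau)=\det\bigl(\sqrt{\ell}\,L^*\bigr)=\ell^{\,r}\det(L^*)=\frac{\ell^{\,r}}{\det L}=\frac{\ell^{\,r/2}}{|\tau|^2}.
$$
In particular $\det(N^\tau)=\ell^{r/2}/|\tau|^2$, which is strictly smaller than $\det(\Lambda^\tau)=\ell^{r/2}$ as soon as $|\tau|>1$.

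First I would record that indeed $|\tau|>1$. If $\tau=1$, then $\tilde{g}=\exp(2\pi i\beta(0))$ is an inner automorphism of $V_\Lambda$, so the orbifold $V=V_\Lambda^{[\tilde g]}$ is isomorphic to $V_\Lambda$ and its weight one space is abelian, contradicting our standing hypothesis that $V_1$ is non-abelian. Hence $|\tau|\ge 2$ and the strict inequality $\det(N^\tau)<\det(\Lambda^\tau)$ holds.

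Next, suppose for contradiction that there is an isometry $\psi\colon N\to\Lambda$, and set $\sigma:=\psi\tau\psi^{-1}\in O(\Lambda)$. Then $\sigma$ has the same characteristic polynomial, hence the same frame shape, as $\tau$, and $\psi$ carries the $\tau$-fixed subspace of $\bR N$ onto the $\sigma$-fixed subspace of $\bR\Lambda$; consequently $\psi$ restricts to an isometry $N^\tau\cong\Lambda^\sigma$, whence $\det(\Lambda^\sigma)=\det(N^\tau)=\ell^{r/2}/|\tau|^2$. To reach a contradiction it then suffices to prove that every element of $O(\Lambda)$ with the frame shape of $\tau$ has fixed-point lattice of determinant exactly $\ell^{r/2}$, so that the value $\ell^{r/2}/|\tau|^2<\ell^{r/2}$ cannot occur.

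I expect this last step to be the main obstacle. A priori a conjugacy class of $O(\Lambda)$ different from that of $\tau$ could share its frame shape while possessing a denser coinvariant lattice, and hence a strictly smaller fixed-point determinant --- precisely the value we must rule out. I would settle it using the Harada--Lang determination of the lattices $\Lambda^\sigma$ \cite{HaLa}: for each frame shape occurring for $\tau\in\CP_0$ one reads off that $\det(\Lambda^\sigma)=\ell^{r/2}$ for all classes of that frame shape --- equivalently, that the identity $(1-\sigma)(\Lambda^*_\sigma)=\Lambda_\sigma$ used in the first lemma of this section persists across these classes --- so that the determinant is a frame-shape invariant over the relevant range. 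This reduces the claim to a finite verification rather than a conceptual difficulty. Finally, I would note that the conceptual reason for $N\not\cong\Lambda$ is that $N$ ought to carry roots coming from the non-abelian algebra $V_1$; but producing such a root directly is equivalent to the assertion that $\tilde\beta$ has a translate of squared length $2$, which is only established later, so the determinant argument above is the appropriate self-contained route here.
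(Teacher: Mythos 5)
Your main line of argument (for $|\tau|\ge 2$) is essentially the paper's own proof, reorganized: the paper likewise assumes $N\cong\Lambda$, transports $\tau$ to an isometry of $\Lambda$ with the same frame shape, and combines Theorem \ref{duality} with $\det(L)=\det(\Lambda^\tau)\cdot|\tau|^2$ to reach the same determinant clash (the paper phrases it as $\Lambda^\tau\cong L$, you phrase it as $\det(N^\tau)=\ell^{r/2}/|\tau|^2\neq\ell^{r/2}$; these are the same contradiction). The one point where you are more careful is the frame-shape step: the paper flatly asserts that elements of $O(\Lambda)$ are determined up to conjugacy by their frame shapes, whereas you only need the weaker statement that $\det(\Lambda^\sigma)$ is constant on the classes sharing a frame shape with some $\tau\in\CP_0$, verified against the Harada--Lang tables. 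That is a legitimate (and arguably safer) finite verification, of the same nature as the one the paper already relies on for Theorem \ref{dual}.

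However, your exclusion of $\tau=1$ contains a genuine error. It is false that orbifolding $V_\Lambda$ by an inner automorphism returns $V_\Lambda$: the orbifold of $V_\Lambda$ by $\exp(2\pi i\beta(0))$ is the lattice VOA $V_{N}$ of the neighbor $N=\Lambda_\beta+\bZ\beta$, and this is precisely how the $23$ Niemeier lattice VOAs with roots (all with non-abelian, indeed semisimple, weight-one algebras) arise from $V_\Lambda$. This is why $1A$ is deliberately included in $\CP_0$; the paper itself uses this fact, e.g.\ in the $2C$ case of the proof of Theorem \ref{deephole}, where $(V_\Lambda)^{[\tilde g^2]}=V_P$ with $P\neq\Lambda$ even though $\tilde g^2$ is inner. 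Consequently your conclusion ``$|\tau|\ge2$'' is false within the paper's framework: $\tau=1$ genuinely occurs (for the level-one cases) and cannot be ruled out; instead the lemma must be proved in that case as well. The repair is immediate, but the logic must run in the opposite direction: if $\tau=1$ \emph{and} $N\cong\Lambda$, then $V=V_\Lambda^{[\tilde g]}\cong V_N\cong V_\Lambda$, whose weight-one space is abelian, contradicting the standing hypothesis that $V_1$ is semisimple; hence $N\ncong\Lambda$. That is, ``$V\cong V_\Lambda$'' may only be inferred conditionally from $N\cong\Lambda$, not unconditionally from $\tau=1$ as you did. With this substitution your proof is sound; it is worth noting that the paper's own determinant contradiction is likewise vacuous at $|\tau|=1$, so it implicitly makes the same case distinction that you need to make explicitly and correctly.
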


\pr 
Since $\tau$ fixes $\varphi(\sqrt{\ell}\beta)$, we may assume that $\tau$ acts on $N=\Lambda^{[\varphi(\sqrt{\ell}\beta)]}$.  Then $\tau$ induces an isometry $\tau'$ of $N=\Lambda^{[\widetilde{\beta}]}$. 

Suppose $N \cong \Lambda$.   
Since all elements in $O(\Lambda)$ are determined by its frame shape up to conjugate, 
we may assume $\tau'$ is conjugate to $\tau$. 
In particular,  $N^{\tau'}$ is isometric to $\Lambda^{\tau}$ and so 
$\sqrt{\ell}(N^{\tau'})^{\ast} \cong N^{\tau'}$. 
By Theorem \ref{duality}, $N^{\tau'}\cong \sqrt{\ell}L^{\ast}$ and so 
$(N^{\tau})^{\ast}= \frac{1}{\sqrt{\ell}}L$.  Then $\Lambda^{\tau} \cong \sqrt{\ell}(\Lambda^{\tau})^{\ast}\cong 
\sqrt{\ell}(N^{\tau'})^{\ast}\cong L$. It is not possible since $\det(L)=\det(\Lambda^{\tau}). |\tau|^2$. 
\prend

As a consequence of the above lemma and Theorem \ref{indexn}, we have the following lemma. 
\begin{lmm}
Let $h$ be the Coxeter number of $N= \Lambda^{[\varphi(\sqrt{\ell}\beta)]}$ and $n= |g|=|\tilde{g}|$. Then we have $n \geq h$. 
\end{lmm}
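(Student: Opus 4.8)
The plan is to exploit the cyclic quotient $N/\Lambda_{\tilde\beta}$ together with the fact that $\Lambda_{\tilde\beta}$ contains no roots. First I would record the structural input: by Lemma \ref{nL} the lattice $N=\Lambda^{[\tilde\beta]}$ is a Niemeier lattice not isometric to $\Lambda$, so its set of norm-$2$ vectors is a non-empty, necessarily simply-laced root system $R=\bigsqcup_j R_j$, and all irreducible components $R_j$ share the common Coxeter number $h$ (this is exactly what makes ``the Coxeter number of $N$'' well defined). On the other side, $\Lambda_{\tilde\beta}\subseteq\Lambda$ inherits the Leech minimum $\langle x,x\rangle\ge 4$, hence $\Lambda_{\tilde\beta}$ is rootless. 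By Theorem \ref{indexn} we have $[N:\Lambda_{\tilde\beta}]=n$, and since $N=\Lambda_{\tilde\beta}+\bZ\tilde\beta$ by Definition \ref{N=Lbeta}, the quotient map $\pi\colon N\to N/\Lambda_{\tilde\beta}\cong \bZ/n\bZ$ is surjective with cyclic image generated by $\tilde\beta$.

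The key point is then that every root of $N$ lies outside $\Lambda_{\tilde\beta}$, so $\pi(\alpha)\neq 0$ for all $\alpha\in R$. Fix one component, with root lattice $Q_0$ and associated simple Lie algebra $\g_0$ of Coxeter number $h$. Identifying $\Hom(Q_0,\bZ/n\bZ)$ with $\tfrac1n Q_0^{\ast}/Q_0^{\ast}$, the restriction $\pi|_{Q_0}$ is represented by a vector $v\in \tfrac1n Q_0^{\ast}$ with the property that $\pi(\alpha)=0$ precisely when $\langle v,\alpha\rangle\in\bZ$. Thus the condition $\pi(\alpha)\neq 0$ for every root translates into $\langle v,\alpha\rangle\notin\bZ$ for all $\alpha\in R_j$; equivalently, the inner automorphism $\sigma=\exp(2\pi i\,\mathrm{ad}_v)$ of $\g_0$ has finite order dividing $n$ and fixes only the Cartan subalgebra, i.e.\ $v$ is a regular element.

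To conclude I would invoke the theory of regular elements (Kostant, Springer): an element defining a grading of a simple Lie algebra whose zero-graded piece is exactly the Cartan subalgebra must have order at least the Coxeter number, the minimum $h$ being realized by the principal (Coxeter) grading. Since the order $d$ of $\sigma$ satisfies $d\mid n$ and $d\ge h$, we obtain $n\ge d\ge h$, as claimed.

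I expect the main obstacle to be this last step --- turning the purely combinatorial statement ``$\pi$ is non-vanishing on all roots'' into the sharp numerical bound $h$. For a self-contained treatment one can argue type by type: for a component of type $A_{h-1}$ the roots $e_i-e_j$ force the coboundary values $\pi(e_i-e_j)=t_i-t_j$ to be pairwise distinct, giving $n\ge h$ at once, while the $D$ and $E$ cases require the analogous explicit check (or, uniformly, the regular-element fact above). The remaining steps are routine once the rootlessness of $\Lambda_{\tilde\beta}$ and the index computation of Theorem \ref{indexn} are in hand.
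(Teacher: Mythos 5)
Your proof is correct, and it is worth noting that the paper does not actually write one: the lemma is stated as an immediate consequence of Lemma \ref{nL} and Theorem \ref{indexn}, with the deduction left implicit. Your argument uses exactly those two ingredients --- existence of roots in $N$ (since $N\ncong\Lambda$) and $N/\Lambda_{\tilde{\beta}}\cong\bZ/n\bZ$ generated by $\tilde{\beta}$ --- together with the rootlessness of $\Lambda_{\tilde{\beta}}\subseteq\Lambda$, and then supplies the genuinely nontrivial step: a rootless kernel of a homomorphism from an irreducible root lattice $Q_0$ of Coxeter number $h$ into a cyclic group of order $n$ forces $n\geq h$. That step is the mathematical core, and your appeal to Kostant/Springer (equivalently, Kac's classification of torsion inner automorphisms) is a valid way to get it: the character $\alpha\mapsto\langle v,\alpha\rangle$ kills no root, so the fixed subalgebra of $\exp(2\pi i\,\mathrm{ad}_v)$ is exactly the Cartan; in Kac coordinates this means all labels $s_i\geq 1$, so the order $d=\sum a_i s_i$ satisfies $d\geq\sum a_i=h$, while $d\mid n$ because $nv\in Q_0^{\ast}$. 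If you want the argument self-contained (and to avoid the type-by-type checks you sketch for $D$ and $E$), the elementary substitute is the affine-alcove computation: moving $v$ by the affine Weyl group into the open fundamental alcove gives $\langle v,\alpha_i\rangle\geq 1/n$ for every simple root and $\langle v,\theta\rangle<1$ for the highest root $\theta=\sum m_i\alpha_i$, whence $1>\sum m_i/n=(h-1)/n$, i.e. $n\geq h$. So your route is, in substance, the justification the paper omits rather than a departure from it.
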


\subsection{Roots of $V_1$}
Next we study the structure of the root lattice of $V_1$. 
Let $V_1=\oplus_{j=1}^t \CG_{j,k_j}$. If $u\in L^*$ is a root of $\CG_j$, then 
\[
\langle u,u\rangle=
\begin{cases}
\frac{2}{k_j} & \text{ if } u \text{ is a long root,}\\
\frac{2}{r_{j}k_j} & \text{ if } u \text{ is a short root.} 
\end{cases}
\]

As a corollary of $\ell$-duality, we have: 

\begin{lmm}\label{ell=n}
$|\tau|(K_0-N_0)$ divides $\ell$. 
\end{lmm}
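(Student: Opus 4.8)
The plan is to extract a divisibility constraint from the evenness of $\sqrt{\ell}L^*$ established in Lemma \ref{Lbeta}, and then match it against the identity $\LCM(\{r_ik_i\mid i\})=(K_0-N_0)|\tau|$ from Lemma \ref{rk}. So the whole argument reduces to translating a lattice statement about $L^*$ into a statement about the levels $k_j$ and lace numbers $r_j$ of the simple factors of $V_1$.

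First I would recall that, by Lemma \ref{Lbeta}, $\sqrt{\ell}L^*$ is an even lattice; equivalently, $\ell\langle u,u\rangle\in 2\bZ$ for every $u\in L^*$. Since $L$ (and hence $L^*$) encodes the root system of $V_1=\oplus_{j=1}^t\CG_{j,k_j}$, I would apply this condition to the roots of each simple factor, using the norm formula stated just above the lemma. For a short root $u$ of $\CG_{j,k_j}$ one has $\langle u,u\rangle=2/(r_jk_j)$, so the evenness condition $\ell\langle u,u\rangle\in 2\bZ$ forces $\ell/(r_jk_j)\in\bZ$, i.e. $r_jk_j\mid\ell$. When $\CG_j$ is simply laced we have $r_j=1$ and every root is long with $\langle u,u\rangle=2/k_j$, which gives $k_j=r_jk_j\mid\ell$ by the same computation. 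Thus $r_jk_j\mid\ell$ for every $j$, and therefore $\LCM(\{r_ik_i\mid i\})\mid\ell$.

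Finally I would invoke Lemma \ref{rk}, which identifies $\LCM(\{r_ik_i\mid i\})$ with $(K_0-N_0)|\tau|$; combining this with the divisibility just obtained yields $(K_0-N_0)|\tau|\mid\ell$, as claimed.

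The argument is short once the right ingredient is isolated, and the only point deserving care is to be sure that a root of the minimal norm $2/(r_jk_j)$ actually lies in $L^*$ for each factor; this is precisely what the earlier remark ``$L$ (or $L^*$) encoded the information of the root system of $V_1$'' guarantees, so no case analysis beyond the long/short dichotomy is needed. I expect the main (quite mild) obstacle to be phrasing the simply-laced and non-simply-laced cases uniformly, which the convention $r_j=1$ in the simply-laced types handles automatically.
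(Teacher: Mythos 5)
Your proof is correct and follows essentially the same route as the paper: both apply the evenness condition on $\sqrt{\ell}L^*$ (Lemma \ref{Lbeta}) to a short root of norm $2/(r_jk_j)$ in each simple factor and then convert the resulting divisibility $r_jk_j \mid \ell$ into $(K_0-N_0)|\tau| \mid \ell$ via the identity $\LCM(\{r_ik_i\}) = (K_0-N_0)|\tau|$, which the paper re-derives inline through $k_j/h_j^\vee=(K_0-N_0)/N_0$ while you cite Lemma \ref{rk} directly. If anything, your version is slightly tighter, since the paper's proof invokes only the integrality $\langle u,u\rangle\in\frac{1}{\ell}\bZ$ (which alone would leave a stray factor of $2$), whereas you correctly use the full evenness $\ell\langle u,u\rangle\in 2\bZ$.
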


\pr   
Let $ e^u \otimes t \in \CG_{j,k_j}$ be a short root vector. 
Then $\langle u,u\rangle=\frac{2}{r_jk_j}$ and we have 
$$\langle u,u\rangle=\frac{2}{r_jk_j}=\frac{2}{r_jh_j^{\vee}}
\times \frac{h_j^{\vee}}{k_j}
=\frac{2}{r_jh_j^{\vee}}\times \frac{N_0}{K_0-N_0}.$$
Since $\langle u,u\rangle \in \frac{\bZ}{\ell}$ and $\LCM(\{r_jh_j^{\vee}/N_0:j\})=|\tau|$, 
we have that $K_0-N_0$ divides $\ell/|\tau|$. 
\prend

\begin{remark}  \label{full0}
Let $ e^u \otimes t \in \CG_{j,k_j}$ be a root vector associated with a simple short root. 
Then $\langle \alpha,u\rangle=\frac{1}{r_jh_j^\vee}$ and  $e^u\otimes t$ belongs to $\tilde{g}^{s_j}$-twisted modules, where 
$s_jr_jh^{\vee}_j=n$. More precisely, it is belong to $s_j$-power of $\tilde{g}$-twisted module $T^1$ by fusion products. 
\end{remark}

\begin{lmm} \label{fullC}
	Suppose there is $j$ such that $r_jh^{\vee}_j=n=|\tau|N_0$. 
	Then there is a root $u\in L^*$ of $\CG_j$ such that $\langle u, u\rangle =2/\ell$. 
\end{lmm}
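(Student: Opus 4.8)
The plan is to produce the required root directly inside the $\tilde{g}$-twisted module and to read off its squared length from the conformal weight formula of Section~\ref{Sec:twist}. The key input is Remark~\ref{full0}: a root vector $e^u\otimes t$ attached to a simple short root of $\CG_j$ lies in the $\tilde{g}^{s_j}$-twisted module, where $s_j r_j h_j^\vee=n$. Under the present hypothesis $r_j h_j^\vee=n$ we have $s_j=1$, so such a short root vector already lives in the fundamental twisted module $T^1\cong V_\Lambda[\tilde{g}]$.

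I would then invoke the explicit description $V_\Lambda[\tilde{g}]\cong M(1)[\tau]\otimes \bC[-\beta+(\Lambda^\tau)^*]\otimes T$ together with the conformal weight formula: a pure-momentum vector $e^u\otimes t$ (no oscillator modes $x_i(-m_i)$) has conformal weight $\tfrac12\langle u,u\rangle+\phi(\tau)$. Since $e^u\otimes t$ is a weight-one vector of $V$, this conformal weight equals $1$. Because $\tau\in\CP_0$, the remark after Theorem~\ref{dual} gives $\phi(\tau)=1-1/\ell$ with $\ell=|\hat{\tau}|$, so solving $\tfrac12\langle u,u\rangle+(1-1/\ell)=1$ yields $\langle u,u\rangle=2/\ell$. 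As $u$ is a root of $V_1$ it lies in $L^*$, which is exactly the assertion.

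It is worth cross-checking this against a direct computation, which also supplies a matching lower bound. From $k_j/h_j^\vee=(K_0-N_0)/N_0$ and $r_j h_j^\vee=n=|\tau|N_0$ one gets $r_j k_j=|\tau|(K_0-N_0)$, hence a short root satisfies $\langle u,u\rangle=2/(r_j k_j)=2/(|\tau|(K_0-N_0))$; Lemma~\ref{ell=n} then gives $|\tau|(K_0-N_0)\mid\ell$, so $\langle u,u\rangle\ge 2/\ell$. Using the conformal weight relation merely as an inequality $\tfrac12\langle u,u\rangle\le 1-\phi(\tau)=1/\ell$ (valid even in the presence of oscillator modes, since these only raise the weight) gives the reverse bound $\langle u,u\rangle\le 2/\ell$. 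Comparing the two forces $\ell=|\tau|(K_0-N_0)$ and $\langle u,u\rangle=2/\ell$.

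The step I expect to be the main obstacle is the identification underlying the phrase ``$\langle u,u\rangle$ is computed from the momentum $u$''. One must check that the momentum label of $e^u\otimes t$ in the twisted-sector description and the root $u\in L^*$ of $V_1$ are the same vector of the Cartan $\CH$, measured with the same (Leech) form on $\bC\Lambda^\tau$; this rests on the compatibility of the two Cartan pictures of $V$ coming from Proposition~\ref{Comm}. The inequality route of the previous paragraph is designed precisely to sidestep the subtler question of whether the weight-one short root vector is genuinely pure momentum, reducing the argument to the safe direction that oscillator contributions are nonnegative.
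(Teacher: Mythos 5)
Your proof is correct and takes essentially the same route as the paper's: place the simple short root vector in $T^1_1$ via Remark \ref{full0}, then combine the twisted-sector conformal weight formula (with $\phi(\tau)=1-1/\ell$) with the direct computation $\langle u,u\rangle = 2/(|\tau|(K_0-N_0))$ and the divisibility from Lemma \ref{ell=n}. The only difference is cosmetic: the paper states the twisted-module constraint as the congruence $\langle u,u\rangle/2 \in 1/\ell + \bZ$, whereas you use the inequality $\langle u,u\rangle/2 \le 1/\ell$ (oscillator modes only raise the weight), and both, played against the lower bound $\langle u,u\rangle \ge 2/\ell$, force $\ell = |\tau|(K_0-N_0)$ and $\langle u,u\rangle = 2/\ell$.
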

\pr Let  $u$ be a simple short root of $\CG_j$. Since $r_jh^{\vee}_j=n=|\tau|N_0$, we have    $\langle u, u\rangle = \frac{2}{|\tau|(K_0-N_0)}$ and $e^u \otimes t \in T^1_1$. It implies $\langle u,u\rangle/2 \in 1/\ell +\bZ$. 
Note also that $\ell/ |\tau|= 1$ or $2$ and $ |\tau|(K_0-N_0)$ divides $\ell$. 
Therefore, $\ell = |\tau|(K_0-N_0)$ and $\langle u,u\rangle =2/\ell$. 
\prend

\begin{defn}\label{fullCandSroots}
We call a root $\delta$ of $V_1$ satisfying 
$\langle \delta,\delta\rangle=\frac{2}{\ell}$ a shortest root and call a simple component $\CG_j$ of $V_1$ containing a shortest root 
a full component. Note that a shortest root exists if $r_jh_j^{\vee}=n$ for some $j$.	
\end{defn}

\section{Deep hole}
Let $V$ be a holomorphic VOA of central charge $24$ and 
$\alpha$ a W-element of $V_1$. Let 
$g=\exp(2\pi i\alpha(0))\in \Aut(V)$ and let 
$\tilde{g}=\hat{\tau}\exp(2\pi i\beta(0))\in \Aut(V_{\Lambda})$ be 
the reverse automorphism of $g$, where $\beta\in \bC\Lambda^{\tau}$.  

In this section, we assume that $\tau \in \CP_0$ and try to relate the automorphism 
$\tilde{g}=\hat{\tau}\exp(2\pi i \beta(0)\in \Aut(V_\Lambda)$ 
to a deep hole of the Leech lattice. 
In Section \ref{sec:6}, we will prove that $\tau\in \CP_0$.

Since $\tau\in \CP_0$, there is an isometry $\varphi: \sqrt{\ell} (\Lambda^\tau)^* \to \Lambda^\tau$.  
Set $\tilde{\beta}=\sqrt{\ell}\varphi(\beta)$ and  $N=\Lambda^{[\tilde{\beta}]}= \Lambda_{\tilde{\beta}}+\bZ \tilde{\beta}$. 
Note that $\langle \beta,\beta\rangle\in 2\bZ/\ell$ 
and $\langle \tilde{\beta},\tilde{\beta}\rangle \in 2\bZ$. 
Moreover, the root sublattice $R$ of $N$ is non-zero by Lemma \ref{nL}.  
 
One of the  main aims in this paper is to prove the following theorem. 

\begin{thm}\label{deephole}
The vector  $\tilde{\beta}=\varphi(\sqrt{\ell}\beta)$ 
is a deep hole of $\Lambda$. 
\end{thm}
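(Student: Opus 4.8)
The plan is to show that $\tilde{\beta}=\varphi(\sqrt{\ell}\beta)$ is a deep hole of $\Lambda$ by establishing that its covering radius squared is $2$, equivalently that the associated Niemeier lattice $N = \Lambda^{[\tilde{\beta}]}$ has a root system of full rank $24$. Recall from Definition \ref{N=Lbeta} that $N$ is an even unimodular lattice (a Niemeier lattice), and by Lemma \ref{nL} its root sublattice $R$ is non-zero. A standard fact about deep holes of the Leech lattice (the theory of ``holy constructions'' of Conway--Sloane) is that the deep holes of $\Lambda$ are in bijection with the Niemeier lattices with roots, via $v \mapsto \Lambda_v + \bZ v$; the point $v$ is a deep hole precisely when this neighbour lattice has a root system spanning the full $24$-dimensional space. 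So the essential task reduces to proving that $R$ has rank $24$, i.e. that $N$ is one of the $23$ Niemeier lattices with non-trivial root system.

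First I would record what is already known about $N$ and its fixed-point sublattice. By Theorem \ref{duality} we have the key identification $N^\tau \cong \sqrt{\ell}L^*$, and by Lemma \ref{nL} we know $N \not\cong \Lambda$, so $N$ is forced to be one of the Niemeier lattices with roots rather than the Leech lattice itself. The next step is to transport the root structure of $V_1$ through the $\ell$-duality. Since $L$ (or $L^*$) encodes the root system of $V_1$ and, by the analysis in Section \ref{S:4.2} together with Lemma \ref{fullC}, the shortest roots of $V_1$ satisfying $\langle u,u\rangle = 2/\ell$ correspond under $\varphi$ to vectors of squared length $2$ in $N^\tau \subset N$. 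Concretely, if $u \in L^*$ is a root with $\langle u, u\rangle = 2/\ell$, then $\sqrt{\ell}\varphi(u)$ lies in $N^\tau$ and has squared length $2$, hence is a root of $N$. This is the mechanism by which the weight-one Lie algebra produces roots in the Niemeier lattice.

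The heart of the argument is then a rank count: I must show the roots of $N$ span all of $\bR\Lambda = \bR^{24}$. The natural approach is to combine the $\tau$-fixed roots coming from $V_1$ (as above) with their $\tau$-orbits. Writing $\rank(\Lambda^\tau) = d$ and using the decomposition of $\bR\Lambda$ into $\tau$-eigenspaces, one shows that the $\langle\tau\rangle$-span of the fixed roots recovered from $V_1$ fills out the complement of $\Lambda^\tau$ as well. I expect the cleanest route is to verify that the rank of the root system $R$ equals $24$ by checking that $R$ contains enough independent roots; since $N$ is unimodular of rank $24$ and already known to be a Niemeier lattice, it suffices to rule out $N \cong \Lambda$, which is exactly Lemma \ref{nL}, and to confirm $R \ne 0$, which is guaranteed by the shortest-root construction. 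In fact, once $N$ is a non-Leech Niemeier lattice its root system automatically has rank $24$, so the combinatorial content collapses to the two facts $R \ne 0$ and $N \not\cong \Lambda$.

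The main obstacle will be making the correspondence between deep holes and root-Niemeier lattices fully rigorous in the present setting, namely verifying that ``$N = \Lambda_{\tilde\beta} + \bZ\tilde\beta$ is a Niemeier lattice with roots'' is genuinely equivalent to ``$\tilde\beta$ is a deep hole of covering radius $\sqrt 2$.'' One must confirm that $\tilde\beta$ is actually a \emph{deep} hole and not merely a hole of smaller radius; this requires that the glue vector $\tilde\beta$ attains the covering radius, which follows from $\langle\tilde\beta,\tilde\beta\rangle = 2$ together with the holy-construction characterization, but the verification that $\tilde\beta$ sits at the correct distance from all of $\Lambda_{\tilde\beta}$ needs the unimodularity and evenness of $N$ established in Definition \ref{N=Lbeta}. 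I would therefore finish by invoking the Conway--Sloane dictionary: an even unimodular lattice $N$ of rank $24$ with $N \not\cong \Lambda$ has a root system of rank $24$, and the corresponding glue vector is a deep hole of $\Lambda$ at distance $\sqrt 2$, which is precisely the assertion that $\tilde\beta$ is a deep hole.
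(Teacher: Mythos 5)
Your reduction fails at its central step: the claim that $\tilde\beta$ is a deep hole \emph{precisely when} $N=\Lambda_{\tilde\beta}+\bZ\tilde\beta$ is a Niemeier lattice whose root system has full rank. Only one direction of this is true (a deep hole yields, via the neighbour/holy construction, a Niemeier lattice with roots); the converse, which is the direction you need, is false. Here is an explicit counterexample. Choose $\lambda,\nu\in\Lambda$ with $\langle\lambda,\lambda\rangle=4$, $\langle\nu,\nu\rangle=6$, $\langle\nu,\lambda\rangle=-1$ (such pairs exist in $\Lambda$), and set $\mu=\nu+2\lambda$ and $\tilde\beta=\mu/3$. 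Then $\langle\mu,\mu\rangle=18$, so $\langle\tilde\beta,\tilde\beta\rangle=2$; moreover $\mu\notin 3\Lambda$, one checks that $\tilde\beta$ has order exactly $3$ modulo $\Lambda_{\tilde\beta}$, and hence $N=\Lambda_{\tilde\beta}+\bZ\tilde\beta$ is an even unimodular lattice of rank $24$ containing the root $\tilde\beta$. So $N\ncong\Lambda$ and $N$ is one of the $23$ Niemeier lattices with a rank-$24$ root system. Nevertheless $\tilde\beta$ is \emph{not} a deep hole, because $\langle\mu,\lambda\rangle=7$ gives $\langle\tilde\beta-\lambda,\tilde\beta-\lambda\rangle=2-\tfrac{14}{3}+4=\tfrac43<2$. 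The point you overlooked is that evenness of $N$ only forces vectors of $\Lambda_{\tilde\beta}$ to stay at distance at least $\sqrt2$ from $\tilde\beta$; the danger comes entirely from the cosets $\Lambda\setminus\Lambda_{\tilde\beta}$, which are invisible to $N$. Your norm-$2$ condition gives the \emph{upper} bound $\mathrm{dist}(\tilde\beta,\Lambda)\le\sqrt2$, which is the trivial inequality; the content of the theorem is the lower bound.

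Because of this, the ``combinatorial collapse'' to the two facts $R\neq0$ and $N\ncong\Lambda$ (Lemma \ref{nL}) discards essentially all of the actual proof. What the paper proves is quantitative: first, a Lorentzian lemma (via Leech roots in $\Lambda+\Pi_{1,1}$) showing that $\tilde\beta$ is a deep hole provided the norm-$2$ vectors in $\Lambda_{\tilde\beta}-\tilde\beta$ — the \emph{level-one} roots of $N$, not arbitrary roots — contain an affine fundamental root system; second, a reduction of this to the inequality $h\ge n$ between the Coxeter number $h$ of $N$ and $n=|\tilde g|$ (the reverse inequality $n\ge h$ following from Theorem \ref{indexn} and Lemma \ref{nL}); and third, the proof of $h\ge n$ itself, which occupies the bulk of Section 5: it needs the existence of a shortest root of $V_1$ (Proposition \ref{shortestroot}), the structure of $N_\tau$ and $N^\tau$ coming from $\ell$-duality, and the case-by-case elimination of all hypothetical counterexamples in Lemma \ref{counterexample}, treated separately for $\tau\neq 2C$ and $\tau=2C$. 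In my counterexample above the failure is exactly a level mismatch: the roots of $N$ do not sit in the coset $\pm\tilde\beta+\Lambda_{\tilde\beta}$ in the required way, which is precisely what the Coxeter-number comparison rules out. The Conway--Sloane correspondence you invoke is a bijection between \emph{equivalence classes} of deep holes and isometry classes of Niemeier lattices with roots; it is not, and cannot be, a criterion for recognizing which individual vectors of $\bR\Lambda$ are deep holes, so it cannot substitute for this part of the argument.
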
 


\subsection{Coinvariant sublattices $\Lambda_\tau$ and $N_\tau$} 

In this subsection, we will discuss another main observation of this article, which is related to the structure of the coinvariant sublattice $N_\tau$.

Since $\tau$ fixes $\tilde{\beta}=\varphi(\sqrt{\ell} \beta)$, we have $(\Lambda_{\tilde{\beta}})_\tau =\Lambda_{\tau}$ and $N_\tau > \Lambda_\tau $. Moreover, $[N_\tau: \Lambda_{\tau}]=|\tau|$ because $\det(N_\tau)=\det(N^\tau) =\det(\Lambda^\tau)/ |\tau|^2=\det(\Lambda_\tau)/ |\tau|^2$.  For $\tau\in \CP_0$, we note that $\Lambda_\tau \cong L_B(c)$ as defined in Appendix \ref{appA}.

\begin{lmm}
	For $\tau\in \CP_0\setminus \{2C\}$, we have $N_\tau \cong  L_A(C)$, where $C=\langle c\rangle$ is defined as in Table 2 in Appendix A.  
\end{lmm}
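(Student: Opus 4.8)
The plan is to realize $N_\tau$ as an explicit overlattice of $\Lambda_\tau\cong L_B(c)$ of index $|\tau|$ and to match this overlattice with the lattice $L_A(C)$ attached to the code $C=\langle c\rangle$. The preceding lemma already supplies $\Lambda_\tau\cong L_B(c)$, and the determinant identity $\det(N_\tau)=\det(\Lambda_\tau)/|\tau|^2$ forces $[N_\tau:\Lambda_\tau]=|\tau|$. So the whole problem reduces to identifying the glue group $N_\tau/\Lambda_\tau$ together with explicit coset representatives, and then checking that adjoining them to $L_B(c)$ produces $L_A(C)$.

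First I would pin down the coinvariant projection. Writing $\pi_\tau$ for the orthogonal projection onto $\bQ\Lambda^\tau$, every element of $N=\Lambda_{\tilde{\beta}}+\bZ\tilde{\beta}$ has the form $x+m\tilde{\beta}$ with $x\in\Lambda_{\tilde{\beta}}$ and $m\in\bZ$. Since $\tilde{\beta}$ is $\tau$-fixed, $x+m\tilde{\beta}\in N_\tau$ precisely when $\pi_\tau(x)=-m\tilde{\beta}$, and then $x+m\tilde{\beta}=x-\pi_\tau(x)$; thus
\[
N_\tau=\{\,x-\pi_\tau(x)\mid x\in\Lambda_{\tilde{\beta}},\ \pi_\tau(x)\in\bZ\tilde{\beta}\,\},
\]
with $\pi_\tau(x)=0$ recovering exactly $\Lambda_\tau$. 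Because $n\tilde{\beta}\in\Lambda_{\tilde{\beta}}$ (here $\langle\tilde{\beta},\tilde{\beta}\rangle\in2\bZ$ and $n\tilde{\beta}\in\Lambda^\tau$ by Theorem \ref{indexn}), the assignment $x+m\tilde{\beta}\mapsto m\bmod n$ is well defined on $N_\tau$ and embeds $N_\tau/\Lambda_\tau$ into $\bZ/n\bZ$; hence $N_\tau/\Lambda_\tau$ is cyclic, and it has order $|\tau|$ by the index above. A generator is then an explicit vector $x_0-\pi_\tau(x_0)$, which I would compute from $\pi_\tau(\Lambda)=(\Lambda^\tau)^*$ together with the structure $\Lambda^\tau=L^{[\alpha]}+\bZ N_0\alpha$ recorded in Proposition \ref{Comm}.

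It then remains to translate this glue vector into the coordinates of Appendix A and to recognize it as the code-theoretic glue that converts $L_B(c)$ into $L_A(C)$. By the definitions there, $L_A(C)$ is the index-$|C|$ overlattice of $L_B(c)$ glued by the cyclic code $C=\langle c\rangle$, so that $L_A(C)/L_B(c)\cong C$; the content of the proof is that the isometry $\Lambda_\tau\cong L_B(c)$ carries the coset of $x_0-\pi_\tau(x_0)$ onto a generator of $C$. I expect this to be verified case by case over the ten frame shapes in $\CP_0\setminus\{2C\}$, using the explicit Gram matrices of $\Lambda^\tau$ from \cite{HaLa} and the tabulated codewords $c$ of Table 2.

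The main obstacle should be precisely this last identification. The fixed vector $\tilde{\beta}$ contributes nothing to the coinvariant lattice on its own, so the order-$|\tau|$ glue appears only indirectly, through the fractional projections $\pi_\tau(x)$ of vectors $x\in\Lambda_{\tilde{\beta}}$; one must check that these fractions reproduce the code $C=\langle c\rangle$ and not merely some abstract cyclic group of the same order. A uniform shortcut that sidesteps most of the casework is to argue with discriminant forms: Theorem \ref{duality} gives $N^\tau\cong\sqrt{\ell}L^*$, and since $N$ is unimodular the discriminant form of $N_\tau$ is the negative of that of $N^\tau$; comparing rank, determinant $\det(\Lambda^\tau)/|\tau|^2$, and discriminant form against those of $L_A(C)$, and using that these small coinvariant lattices are unique in their genus, would yield $N_\tau\cong L_A(C)$. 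Finally, the exclusion of $2C$ is expected because there $\ell=2|\tau|$, so the glue is rescaled and the clean cyclic Construction-A description breaks down; that case must be treated separately.
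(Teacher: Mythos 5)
Your reduction is sound as far as it goes: the determinant identity giving $[N_\tau:\Lambda_\tau]=|\tau|$ is exactly the paper's starting point (it is recorded in the sentence preceding the lemma), and your embedding of $N_\tau/\Lambda_\tau$ into $\bZ/n\bZ$ via the coefficient of $\tilde{\beta}$ is a clean way to see that the glue group is cyclic, a point the paper leaves implicit. The gap is in the final identification, which is the entire content of the lemma, and neither of your two routes closes it. Route (a) is only a plan: you never carry out the case-by-case matching, and a literal matching is in any case not what is needed, because the isometry $\Lambda_\tau\cong L_B(C)$ is abstract, so your generator $x_0-\pi_\tau(x_0)$ lands on \emph{some} singular element of order $|\tau|$ in $\mathcal{D}(\Lambda_\tau)$, not necessarily on the class of $\lambda_c$. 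Route (b) replaces this by the assertion that the relevant lattices are ``unique in their genus,'' which is itself a nontrivial claim requiring verification; you give none, and nothing in the paper supports it. What the paper actually uses to finish is the Appendix A transitivity lemmas (MAGMA-verified): for $\tau\in\CP_0\setminus\{2C\}$ the centralizer $C_{O(\Lambda_\tau)}(\tau)$ acts transitively on the non-zero singular elements of the relevant order in $\mathcal{D}(\Lambda_\tau)$. Since an even overlattice corresponds to an isotropic subgroup of the discriminant form, and both $N_\tau/\Lambda_\tau$ and $L_A(C)/L_B(C)$ are cyclic and generated by singular elements of order $|\tau|$, a centralizer element carrying one generator to the other extends to an isometry $N_\tau\cong L_A(C)$. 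This uniqueness of the singular glue element up to isometry is the missing key step; with it, no explicit coordinates and no genus theory are needed.

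Your stated reason for excluding $2C$ is also not the right one. The classes $6G$ and $10F$ likewise have $\ell=2|\tau|$, yet they are covered by the lemma, so the rescaling of $\ell$ cannot be the obstruction. The actual reason, visible in the paper's treatment immediately after this lemma, is that for $\tau\in 2C$ (where $\Lambda_\tau\cong\sqrt{2}D_{12}^+$) there are \emph{two} orbits of non-zero singular elements in $\mathcal{D}(\Lambda_\tau)$, represented by $\sqrt{2}(1,0,\dots,0)$ and $\frac{\sqrt{2}}{2}(1^40^8)$, hence two non-isometric candidate overlattices, one of which must be eliminated by a separate argument. In other words, what fails for $2C$ is precisely the uniqueness statement that your proof never establishes in the other cases.
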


\pr  It follows from the fact that $\Lambda_{\tau} \cong L_B(C)$ and $\mathcal{D}(\Lambda_{\tau})$ has only one  non-zero singular element of order $|\tau|$, up to isometry if  $\tau\in \CP_0\setminus \{2C\}$ (see Appendix A).  
\prend
 
\medskip

Next we consider the case when $\tau=2C$. In this case, $\Lambda_{\tau}\cong \sqrt{2}D_{12}^+$. 
We use the standard model for root lattices of type $D$, i.e, 
\[
D_{12}=\left \{ (x_1, \cdots, x_{12})\in \bZ^{12}\mid \sum_{i=1}^{12} x_i =0 \mod 2\right\} 
\]
and $D_{12}^+ = \mathrm{Span}_\bZ\{D_{12}, \frac{1}2 (1, \cdots, 1) \}$. Note that $D_{12}^+$ is an odd lattice. 
\begin{lmm}
	For $\tau\in 2C$, there are two classes of non-zero singular elements, up to isometry; they correspond to vectors of the form $\sqrt{2}(1,0, \cdots, 0)$ and $\frac{\sqrt{2}}{2}(1^4 0^8)$, respectively.   
	There are two index $2$ overlattices of $\Lambda_{\tau}$. They are isometric to $L_A(1^{12})\cong \sqrt{2}D_{12}^*$ or  the overlattice $\mathrm{Span}_\bZ\{\sqrt{2}D_{12}^+, \frac{\sqrt{2}}{2}(1^4 0^8) \}$,  which has the root sublattice $A_1^4$. 
\end{lmm}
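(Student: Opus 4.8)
The plan is to pass to the discriminant form of $\Lambda_\tau\cong\sqrt{2}D_{12}^+$ and classify its non-zero isotropic (singular) classes under a sufficiently large subgroup of $O(\Lambda_\tau)$. Since $D_{12}^+$ is unimodular, $(\Lambda_\tau)^*=\frac{1}{\sqrt{2}}D_{12}^+$, and hence
$$\mathcal{D}(\Lambda_\tau)=(\Lambda_\tau)^*/\Lambda_\tau\cong D_{12}^+/2D_{12}^+\cong(\bZ/2)^{12},$$
where the class of $\frac{1}{\sqrt{2}}x$ carries $q(\frac{1}{\sqrt{2}}x)=\frac{1}{2}\langle x,x\rangle\bmod 2\bZ$. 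First I would note that a class coming from the spinor coset $s+D_{12}$, $s=\frac{1}{2}(1^{12})$, has odd norm, so its $q$-value lies in $\frac{1}{2}+\bZ$ and is never singular; thus every singular class has a representative $x\in D_{12}$. Writing $\bar{x}=x\bmod 2$ and using $\langle x,x\rangle\equiv\wt(\bar{x})\pmod 4$, the class of $\frac{1}{\sqrt{2}}x$ is singular exactly when $\wt(\bar{x})\equiv 0\pmod 4$.

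Next I would use that $O(\Lambda_\tau)=O(D_{12}^+)$ contains the Weyl group $W(D_{12})$, realized as all coordinate permutations together with the sign changes flipping an even number of coordinates (precisely the sign changes fixing the spinor coset, hence $D_{12}^+$). Reducing $x\in D_{12}$ modulo $2D_{12}^+=2D_{12}\cup((1^{12})+2D_{12})$ and by $W(D_{12})$, the parity vector $\bar{x}$ may be normalized, up to adding $(1^{12})$ (which does not change the class), to weight $0$ or $4$, these being the only weights $\equiv 0\pmod 4$ under $w\leftrightarrow 12-w$. The reduction $\mathcal{D}(\Lambda_\tau)\to\{\text{even-weight vectors}\}/\langle(1^{12})\rangle$ is two-to-one, with kernel $\{0,[\sqrt{2}e_1]\}$ (indeed $2e_1\notin 2D_{12}^+$ while $2\cdot\sqrt{2}e_1\in\Lambda_\tau$). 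Consequently every non-zero singular class is $W(D_{12})$-equivalent to one of $\sqrt{2}e_1$, $\frac{1}{\sqrt{2}}(1^40^8)$, or $\frac{1}{\sqrt{2}}(1^40^8)+\sqrt{2}e_1=\frac{1}{\sqrt{2}}(3,1,1,1,0^8)$.

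The step requiring the most care is to show that the last two candidates, which differ by the non-trivial class $[\sqrt{2}e_1]$, nonetheless lie in one $O(\Lambda_\tau)$-orbit, even though the single sign change that would directly identify them does not preserve $D_{12}^+$. The trick is to flip coordinates $4$ and $5$ simultaneously: this is an even sign change $g\in W(D_{12})$, and since the fifth coordinate of $(1^40^8)$ is $0$, it sends $\frac{1}{\sqrt{2}}(1^40^8)$ to $\frac{1}{\sqrt{2}}(1,1,1,-1,0^8)$; as
$$\tfrac{1}{\sqrt{2}}(3,1,1,1,0^8)-\tfrac{1}{\sqrt{2}}(1,1,1,-1,0^8)=\sqrt{2}(e_1+e_4)\in\Lambda_\tau,$$
$g$ identifies the two classes in $\mathcal{D}(\Lambda_\tau)$. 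Hence there are at most two non-zero singular orbits, represented by $\sqrt{2}(1,0^{11})$ and $\frac{\sqrt{2}}{2}(1^40^8)$.

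Finally I would identify the corresponding overlattices and verify the two orbits are genuinely different. For the first, $\mathrm{Span}_\bZ\{\Lambda_\tau,\sqrt{2}e_1\}=\sqrt{2}\,\mathrm{Span}_\bZ\{D_{12}^+,e_1\}=\sqrt{2}\,\mathrm{Span}_\bZ\{\bZ^{12},s\}=\sqrt{2}D_{12}^*$, whose norm-$2$ vectors are $\pm\sqrt{2}e_i$; they form $A_1^{12}$, so this is $L_A(1^{12})$. For the second, computing $\frac{1}{2}\langle(1^40^8)+2w,(1^40^8)+2w\rangle$ for $w\in D_{12}^+$ shows that the norm-$2$ vectors of $\mathrm{Span}_\bZ\{\sqrt{2}D_{12}^+,\frac{\sqrt{2}}{2}(1^40^8)\}$ are exactly the $\frac{1}{\sqrt{2}}(\pm 1^40^8)$ with an even number of minus signs, four mutually orthogonal pairs forming $A_1^4$; here one uses that $\sqrt{2}D_{12}^+$ has minimal norm $4$ and so contributes no roots. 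Since $A_1^{12}\not\cong A_1^4$, the root sublattice is an isometry invariant separating the two overlattices; thus the two orbits are distinct and there are exactly two, proving both assertions.
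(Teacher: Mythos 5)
Your proof is correct, and it is worth noting that the paper itself offers no argument for this lemma: it is stated bare, and the analogous facts about $\Lambda_\tau$ and $\mathcal{D}(\Lambda_\tau)$ in Appendix A are explicitly attributed to MAGMA computations. So your discriminant-form argument genuinely fills a gap rather than paralleling an existing proof. Concretely, you identify $\mathcal{D}(\Lambda_\tau)\cong D_{12}^+/2D_{12}^+$, rule out the spinor coset, translate singularity into the condition $\wt(\bar{x})\equiv 0\pmod 4$ on parity vectors, and classify orbits under $W(D_{12})$ (permutations and even sign changes, exactly the signed permutations preserving $D_{12}^+$). The one delicate point --- that the two lifts $[(1^40^8)]$ and $[(3,1,1,1,0^8)]$ of the weight-$4$ parity class are isometric even though the obvious single sign change is unavailable --- is handled correctly by the even sign change in coordinates $4,5$, using that the fifth coordinate vanishes; the difference $\sqrt{2}(e_1+e_4)$ indeed lies in $\Lambda_\tau$. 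Your computation of the root systems $A_1^{12}$ and $A_1^4$ of the two overlattices is also correct and serves double duty: it identifies the overlattices as in the statement and separates the two orbits, so there are exactly two. Two cosmetic remarks: you only need the containment $W(D_{12})\subseteq O(\Lambda_\tau)$ that you use (in fact equality holds, since the norm-$2$ vectors of $D_{12}^+$ are exactly the roots of $D_{12}$), and the identification of $\sqrt{2}D_{12}^*$ with $L_A(1^{12})$ should be tied to the appendix definition of $L_A(C)$ with $C=\langle (1^{12})\rangle$, under which it is immediate since $\lambda_{(1^{12})}=\tfrac{1}{\sqrt{2}}(1^{12})$.
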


Let $X=\mathrm{Span}_\bZ\{\sqrt{2}D_{12}^+, \frac{\sqrt{2}}{2}(1^4 0^8) \}$. Then 
\[
\begin{split}
	X^* &= \{ \alpha\in \frac{\sqrt{2}}{2} D_{12}^+\mid \langle \alpha, \frac{\sqrt{2}}{2}(1^4 0^8)\rangle \in \bZ \}\\
	& =  \frac{\sqrt{2}}{2} \mathrm{Span}_\bZ\{D_{4}+D_8, \frac{1}2(1^4 0^8) \}\cong \frac{\sqrt{2}}{2} (\bZ^4+D_8). 
\end{split}
\]
Therefore,  $2X^* \cong A_1^4 + \sqrt{2}D_8$.  Then the quadratic form  of $2X^*$ is not isometric to $L$, which is not possible and hence we have the following lemma. 
 
\begin{lmm}
  For $\tau\in 2C$, 	$N_\tau\cong  L_A(1^{12})\cong \sqrt{2}D_{12}^*$.  
\end{lmm}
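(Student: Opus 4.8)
The plan is to combine the preceding two lemmas, which cut the problem down to two candidates, with the $\ell$-duality of Section \ref{Sec:twist}--\ref{S:4.2} to single out the correct one. Since $[N_\tau:\Lambda_\tau]=|\tau|=2$ and $\Lambda_\tau\cong\sqrt{2}D_{12}^+$, the lattice $N_\tau$ is an index-$2$ even overlattice of $\sqrt{2}D_{12}^+$, so by the previous lemma it is isometric either to $L_A(1^{12})\cong\sqrt{2}D_{12}^*$ (whose root system is $A_1^{12}$) or to $X=\mathrm{Span}_\bZ\{\sqrt{2}D_{12}^+,\frac{\sqrt{2}}{2}(1^40^8)\}$ (whose root sublattice is $A_1^4$). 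It therefore suffices to exclude the case $N_\tau\cong X$.

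The key step is to link $N_\tau$ to the root lattice $L$ via Theorem \ref{duality}. For $\tau\in 2C$ we have $\ell=|\widehat{\tau}|=2|\tau|=4$, and Theorem \ref{duality} gives $N^\tau\cong\sqrt{\ell}L^*$, equivalently $L\cong\sqrt{\ell}(N^\tau)^*$. Because $N$ is even unimodular, the gluing between $N^\tau$ and $N_\tau$ furnishes an anti-isometry $\mathcal{D}(N^\tau)\cong\mathcal{D}(N_\tau)$; together with the fact that for a $2C$-involution the invariant and coinvariant lattices are isometric (both are $\ell$-modular, since $\sqrt{2}D_{12}^+$ is unimodular and satisfies $\sqrt{\ell}(\sqrt{2}D_{12}^+)^*=\sqrt{2}D_{12}^+$), I would conclude $N^\tau\cong N_\tau$ and hence $L\cong\sqrt{\ell}(N_\tau)^*$. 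Feeding in the two candidates then gives $L\cong\sqrt{\ell}(\sqrt{2}D_{12}^*)^*=\sqrt{2}D_{12}$ in the first case and $L\cong\sqrt{\ell}X^*=2X^*\cong A_1^4+\sqrt{2}D_8$ in the second. Both have determinant $\det(L)=\det(\Lambda^\tau)\,|\tau|^2=2^{12}\cdot 4=2^{14}$, so the determinant alone cannot separate them.

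Finally I would derive the contradiction in the case $N_\tau\cong X$ by comparing quadratic forms. The discriminant form of $L$ is already determined, since $(\mathcal{D}(L),q)\cong(\mathrm{Irr}(V_{\Lambda_\tau}^{\widehat{\tau}}),-q')$ and the latter was computed in \cite{Lam20}; a direct calculation shows that the discriminant form of $A_1^4+\sqrt{2}D_8$ is not isometric to it (for instance, the $2$-adic parts differ: $\sqrt{2}D_{12}$ carries more elements of order $4$ than $A_1^4+\sqrt{2}D_8$, whose $A_1^4$ summand contributes only elements of order $2$). This rules out $N_\tau\cong X$ and leaves $N_\tau\cong L_A(1^{12})\cong\sqrt{2}D_{12}^*$. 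The main obstacle is the middle paragraph: making precise the transfer of Theorem \ref{duality} from the fixed lattice $N^\tau$ to the coinvariant lattice $N_\tau$, that is, establishing $N^\tau\cong N_\tau$ (or directly $L\cong\sqrt{\ell}(N_\tau)^*$) from the unimodular gluing and the $\ell$-modularity special to the $2C$-case. Once this identification is secured, the concluding non-isometry is a routine discriminant-form check.
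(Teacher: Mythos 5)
Your overall strategy is the same as the paper's: reduce to the two index-$2$ even overlattices of $\Lambda_\tau\cong\sqrt{2}D_{12}^+$ supplied by the preceding lemma, compute $\sqrt{\ell}X^*=2X^*\cong A_1^4+\sqrt{2}D_8$ for the bad candidate $X$, and rule $X$ out because this is incompatible with what is known about $L$. On the transfer step that you flag as your ``main obstacle,'' the paper is in fact no more explicit than you are: it simply asserts that ``the quadratic form of $2X^*$ is not isometric to $L$, which is not possible,'' leaving implicit precisely the identification $L\cong\sqrt{\ell}(N_\tau)^*$ (via Theorem \ref{duality} and the unimodular gluing) that you were unable to pin down. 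So your honest acknowledgment of that gap does not put you behind the paper's own proof.

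There is, however, a genuine error in your final ``routine discriminant-form check.'' The invariant you propose does not distinguish the two candidates: $\mathcal{D}(\sqrt{2}D_{12})$ and $\mathcal{D}(A_1^4+\sqrt{2}D_8)$ are isomorphic as abelian groups. In both cases the $2$-torsion subgroup (namely $\frac{1}{\sqrt{2}}D_{12}/\sqrt{2}D_{12}\cong D_{12}/2D_{12}$, resp. $\bZ_2^4\times D_8/2D_8$) has order $2^{12}$ and index $4$, so both groups are $\bZ_4^2\times\bZ_2^{10}$ and both contain exactly $3\cdot 2^{12}$ elements of order $4$; counting order-$4$ elements, or pointing out that the $A_1^4$ summand contributes only $2$-torsion, therefore proves nothing. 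What actually separates the two is the quadratic form restricted to the $2$-torsion: every order-$2$ element of $\mathcal{D}(\sqrt{2}D_{12})$ is represented by $\frac{1}{\sqrt{2}}d$ with $d\in D_{12}$ and hence has $q$-value $\frac{\langle d,d\rangle}{4}\in\frac{1}{2}\bZ/\bZ$, whereas the generators of the $A_1^4$ summand of $\mathcal{D}(A_1^4+\sqrt{2}D_8)$ are order-$2$ elements with $q=\frac{1}{4}$. This (equivalently, a comparison with $(\mathrm{Irr}(V_{\Lambda_\tau}^{\hat{\tau}}),-q')$ as computed in \cite{Lam20}) is the check you need; with it in place of the order-count argument, your proof goes through at the same level of rigor as the paper's.
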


As a consequence, for any $\tau\in \CP_0$,  $N_\tau \cong L_A(c)$ as described in Appendix A. We also note that $\tau$ acts on $L_A(c)$ as $g_{\Delta,c}$ defined in 
\eqref{Eq:gre}. In particular, $\tau$ is contained in the Weyl group of $R$. 


\begin{remark}\label{Ntau}
If the frame shape of $\tau$ is $\prod k^{m_k}$, then 
$N_{\tau}$ contains $\oplus A_{k-1}^{m_k}$ and $\tau$ acts on $\oplus A_{k-1}^{m_k}$
as a product of the Coxeter elements. In particular, 
$\tau$ preserves any irreducible component of the root system of $N$. 
\end{remark}

\begin{cry}\label{N0=1}
If $N_0=1$, then the Coxeter number of $N$ is greater than or equal to 
$|\tau|$. 
\end{cry}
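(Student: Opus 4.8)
The plan is to combine the coinvariant-lattice description from Remark \ref{Ntau} with the classical structural fact that, for a Niemeier lattice, all irreducible components of its root system share a single Coxeter number. First I would record that, since $N_0=1$, we have $n=|\tau|N_0=|\tau|$, so the assertion is equivalent to $h\geq |\tau|$, where $h$ is the Coxeter number of $N=\Lambda^{[\varphi(\sqrt{\ell}\beta)]}$. By Lemma \ref{nL} the root sublattice $R$ of $N$ is non-zero; hence, as $N$ is an even unimodular lattice of rank $24$ not isometric to $\Lambda$, the root system $R$ has full rank $24$ and every irreducible component of $R$ has the same Coxeter number $h$.

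Next I would invoke Remark \ref{Ntau}. Writing the frame shape of $\tau$ as $\prod_k k^{m_k}$, the coinvariant lattice $N_\tau$ contains $\bigoplus_k A_{k-1}^{m_k}$, on which $\tau$ acts as a product of Coxeter elements. Since these $A_{k-1}$'s are spanned by norm-$2$ vectors of $N$, they are sub-root-systems of $R$; and because each $A_{k-1}$ is irreducible (its Dynkin diagram is connected), it lies inside a single irreducible component $R_j$ of $R$.

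The key step is a comparison of Coxeter numbers. For any irreducible root system of rank $r$ the Coxeter number is at least $r+1$, with equality precisely for type $A_r$; that is, type $A$ minimizes the Coxeter number for a given rank. Applying this to a component $R_j\supseteq A_{k-1}$, we have $\rank(R_j)\geq k-1$, hence $h=h(R_j)\geq \rank(R_j)+1\geq k$. Thus $h\geq k$ for every cycle length $k$ occurring in the frame shape of $\tau$.

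Finally I would read off from the explicit frame shapes (Table 1 in Section \ref{sec:6}) that for every $\tau\in\CP_0$ the largest cycle length is equal to the order $|\tau|$; together with the previous step this gives $h\geq|\tau|=n$, as claimed. The main obstacle is exactly this last bookkeeping point: the inequality $h\geq k$ only bounds $h$ below by the \emph{maximal} part of the frame shape, whereas $|\tau|$ is a priori the least common multiple of the parts, so one must verify (case by case from Table 1) that the maximal part already coincides with $|\tau|$ for the classes in $\CP_0$. The other point requiring care is the structural input used at the start, namely that all irreducible components of a Niemeier root system have a common Coxeter number, combined with the observation that a connected sub-root-system is contained in a single component.
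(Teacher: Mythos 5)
Your proposal is correct and takes essentially the same route as the paper: both rest on Remark \ref{Ntau} together with the observation that the frame shape of every $\tau\in\CP_0$ contains a positive power of $|\tau|$, so that $N_\tau$ contains an $A_{|\tau|-1}$ root system forcing the Coxeter number of $N$ to be at least $|\tau|$. Your extra steps (locating each $A_{k-1}$ inside a single irreducible component and invoking the bound $h\geq \mathrm{rank}+1$ for irreducible root systems) simply make explicit what the paper leaves implicit.
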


\pr 
For $\tau\in \CP_0$, it is easy to check that 
its frame shape of $\tau$ contains a positive power of $|\tau|$. 
Therefore, $N_{\tau}$ contains a root system $A_{|\tau|-1}$ whose Coxeter number is $|\tau|$. 
\prend

\subsection{Affine root system and shortest roots} 

One important fact that we will prove is the following: 

\begin{lmm} 
Let $\beta\in \bC\Lambda$.
Set $S=\{ \lambda\in \Lambda \mid \langle \lambda-\beta,\lambda-\beta\rangle
=2\}$. If $\tilde{S} =\{\lambda -\beta\mid \lambda\in S \}$ contains an affine fundamental root system, then $\beta$ is a deep hole.
\end{lmm}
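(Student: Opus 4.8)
The plan is to reduce the statement to a single distance computation. By the definition of a deep hole recalled in the introduction, $\beta$ is a deep hole precisely when $\min\{\langle \mu-\beta,\mu-\beta\rangle \mid \mu\in\Lambda\}$ equals the squared covering radius of $\Lambda$, and by the theorem of Conway, Parker and Sloane this covering radius is $\sqrt 2$. Thus it suffices to show that the minimal squared distance from $\beta$ to $\Lambda$ is exactly $2$. Since $S\neq\emptyset$ (it contains the affine fundamental root system by hypothesis) and every element of $S$ sits at squared distance $2$ from $\beta$, the minimum is automatically at most $2$; so the whole content is the lower bound $\langle \mu-\beta,\mu-\beta\rangle\ge 2$ for all $\mu\in\Lambda$.

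First I would extract a barycentric description of $\beta$ from the affine root system. Write the simple roots in $\tilde{S}$ as $\alpha_i=\lambda_i-\beta$ with $\lambda_i\in S\subset\Lambda$, so that $\langle\alpha_i,\alpha_i\rangle=2$ for every $i$; in particular all roots have norm $2$, so the affine system is simply laced. The Gram matrix $(\langle\alpha_i,\alpha_j\rangle)$ is then an affine Cartan matrix of type $\tilde{A}\tilde{D}\tilde{E}$, hence positive semidefinite with radical spanned by the positive integer marks $a_i>0$. Because the form on $\bR\Lambda$ is positive definite, any vector in the radical of this Gram matrix must itself vanish, which forces the linear relation $\sum_i a_i\alpha_i=0$. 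Setting $c_i=a_i/\sum_j a_j$, this relation reads $\beta=\sum_i c_i\lambda_i$ with $c_i>0$ and $\sum_i c_i=1$; that is, $\beta$ is a convex (barycentric) combination of the lattice points $\lambda_i$.

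The key step is then a parallel-axis identity. For any $\mu\in\Lambda$, expanding $\mu-\lambda_i=(\mu-\beta)-(\lambda_i-\beta)$ and using $\sum_i c_i(\lambda_i-\beta)=0$, $\sum_i c_i=1$, and $\langle\lambda_i-\beta,\lambda_i-\beta\rangle=2$, one obtains
\[
\langle \mu-\beta,\mu-\beta\rangle=\sum_i c_i\,\langle\mu-\lambda_i,\mu-\lambda_i\rangle-2 .
\]
Now I would invoke the fact that the Leech lattice has minimal norm $4$ (it contains no roots): for $\mu\neq\lambda_i$ the vector $\mu-\lambda_i$ is a nonzero element of $\Lambda$, so $\langle\mu-\lambda_i,\mu-\lambda_i\rangle\ge 4$. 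Hence, if $\mu\notin\{\lambda_i\}$, then $\sum_i c_i\langle\mu-\lambda_i,\mu-\lambda_i\rangle\ge 4\sum_i c_i=4$ and the identity gives $\langle\mu-\beta,\mu-\beta\rangle\ge 2$; and if $\mu=\lambda_{i_0}$ for some $i_0$, then $\langle\mu-\beta,\mu-\beta\rangle=2$ directly. This proves the lower bound for every $\mu\in\Lambda$, so the minimal squared distance is exactly $2$ and $\beta$ realizes the covering radius, i.e.\ $\beta$ is a deep hole.

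All the estimates are short, and the only inputs beyond the hypothesis are the two classical facts about $\Lambda$ (minimal norm $4$ and covering radius $\sqrt 2$). Accordingly I expect the main obstacle to be conceptual rather than computational: namely, correctly justifying that an affine fundamental root system sitting inside $\tilde{S}$ produces the positive linear dependence $\sum_i a_i\alpha_i=0$, equivalently that $\beta$ is the mark-weighted barycenter of the $\lambda_i$. Once this barycentric relation is in hand, the parallel-axis identity together with the no-roots property of $\Lambda$ closes the argument immediately.
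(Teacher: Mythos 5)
Your proof is correct, and it reaches the conclusion by a more elementary route than the paper's. The paper works in the Lorentzian lattice $\Lambda+\Pi_{1,1}\cong II_{25,1}$: it encodes each $\lambda\in S$ as a Leech root $\lambda^*=(\lambda,1,\tfrac{\langle\lambda,\lambda\rangle}{2}-1)$ and $\beta$ as an isotropic vector $\hat\beta=(\beta,1,\tfrac{\langle\beta,\beta\rangle}{2})$, notes that $\langle\lambda-\beta,\lambda-\beta\rangle=2-2\langle\lambda^*,\hat\beta\rangle$, and argues by contradiction: a $\delta\in\Lambda$ with $\langle\delta-\beta,\delta-\beta\rangle<2$ would give $\langle\delta^*,\hat\beta\rangle>0$, while the minimal norm $4$ of $\Lambda$ forces $\langle\delta^*,\lambda^*\rangle\le 0$ for all $\lambda\in S$, and the affine relation $\sum_{\lambda\in S'}n_\lambda\lambda^*=m\hat\beta$ then yields $\langle\delta^*,\hat\beta\rangle\le 0$. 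Your parallel-axis identity is precisely what this Lorentzian computation unwinds to in Euclidean terms (one has $\langle\delta^*,\lambda^*\rangle=2-\tfrac{1}{2}\langle\delta-\lambda,\delta-\lambda\rangle$, so pairing $\delta^*$ against $\hat\beta=\tfrac{1}{m}\sum n_\lambda\lambda^*$ is exactly your weighted average), and the external inputs coincide: the barycentric relation coming from the marks of the affine diagram, the minimal norm $4$, and the Conway--Parker--Sloane covering radius $\sqrt{2}$ --- which the paper uses implicitly at the step ``$\beta$ not a deep hole $\Rightarrow$ there exists $\delta$ with $\langle\delta-\beta,\delta-\beta\rangle<2$'', whereas you invoke it explicitly. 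What your version buys is self-containedness and a direct, non-contradiction lower bound $\langle\mu-\beta,\mu-\beta\rangle\ge 2$ for all $\mu\in\Lambda$; what the paper's version buys is contact with Conway's theory of Leech roots in $II_{25,1}$, the natural framework for the deep-hole/Niemeier correspondence exploited elsewhere in the paper. One minor point: your statement that the radical of the affine Cartan matrix is spanned by a single vector of positive marks presumes the affine system is irreducible; if $\tilde{S}$ contains a union of affine diagrams, simply restrict to one irreducible component, which is exactly what the paper does by passing to a subset $S'\subset S$.
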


\pr 
For $\lambda\in \Lambda$, let $\lambda^* = (\lambda, 1, 
\frac{\langle \lambda,\lambda\rangle}{2}-1)\in \Lambda+\Pi_{1,1}$, which is called 
a Leech root.  Set $\hat{\beta}= (\beta,1,\frac{\langle 
\beta,\beta\rangle}{2})\in \bQ (\Lambda+ \Pi_{1,1})$. Note that $\hat{\beta}$ is an isotropic vector. 
Then $\lambda^*-\hat{\beta} = (\lambda-\beta, 0 , \frac{\langle \lambda,\lambda\rangle}{2}-1-\frac{\langle\beta,\beta\rangle}{2})$ and we have 
\[
\begin{split}
\langle \lambda-\beta,\lambda-\beta\rangle
=\langle \lambda^*-\hat{\beta}, \lambda^*-\hat{\beta}\rangle =\langle \lambda^*, \lambda^* \rangle + \langle \hat{\beta},\hat{\beta}\rangle - 2\langle \lambda^*, \hat{\beta} \rangle 
=2-2\langle \lambda^*, \hat{\beta} \rangle.
\end{split}
\]
Thus,  
$\langle \lambda-\beta,\lambda-\beta\rangle=2$ if and only if 
$\langle \lambda^*, \hat{\beta} \rangle=0$.    

Suppose that $\beta$ is not a deep hole. Then there is a $\delta\in \Lambda$ such that $\langle \delta-\beta,\delta-\beta\rangle<2$. In this case, 
$\langle \delta^*, \hat{\beta} \rangle>0$.
Since $\delta^{\ast}\not\in \{\lambda^{\ast}\mid \lambda\in S\}$ and the minimal squared norm of $\Lambda$ is $4$,
$\langle \delta^{\ast},\lambda^{\ast}\rangle \leq 0$ for all $\lambda\in S$.
On the other hand, since $\tilde{S}$ contains an affine fundamental root system, there exists a subset $S'\subset S$ and positive real numbers $n_{\lambda}\in \bR_{\geq 0}$
such that $\sum_{\lambda\in S'}n_{\lambda}\lambda=  m \beta$, where $m=\sum_{\lambda\in S'} n_{\lambda}$. By  direct calculations, we have
\[
\begin{split}
\sum_{\lambda\in S'}n_{\lambda}\left( \frac{\langle\lambda, \lambda\rangle}2 -1\right) &=
\sum_{\lambda\in S'}n_{\lambda} \frac{\langle\lambda, \lambda\rangle -\langle \lambda-\beta, \lambda -\beta\rangle}2\\ &
= \sum_{\lambda\in S'}n_{\lambda} \left( \langle\lambda, \beta\rangle -\frac{\langle \beta,\beta\rangle}2\right) =\frac{m\langle \beta, \beta\rangle}2.  
\end{split}
\]
Thus, we also have 
\[\sum_{\lambda\in S'}n_{\lambda}\lambda^*= \left ( \sum_{\lambda\in S'}n_{\lambda}\lambda, \sum_{\lambda\in S'}n_{\lambda}, \sum_{\lambda\in S'}n_{\lambda}\left( \frac{\langle\lambda, \lambda\rangle}2 -1\right)\right)= m \hat{\beta}
\]
but it implies 
$\langle \delta^{\ast}, \hat{\beta}\rangle\leq 0$, which is a  contradiction.
\prend

Therefore, in order to prove Theorem \ref{deephole}, it is enough to show that the 
Coxeter number $h$ of $N$ is greater than or equal to $n=|g|$, since 
$R_1=\{\lambda-\tilde{\beta}\mid \langle \lambda-\tilde{\beta}, \lambda-\tilde{\beta}\rangle=2, \lambda\in \Lambda_{\tilde{\beta}} \}$ 
 will contain an affine fundamental root system in this case.

\subsection{Existence of shortest roots}
The purpose of  this subsection is to show the following proposition.

\begin{prop}\label{shortestroot}
There is a root $\delta$ of $V_1$ such that $\langle \delta,\delta\rangle
=\frac{2}{\ell}$. 
\end{prop}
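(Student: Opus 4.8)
The plan is to deduce the statement from Lemma \ref{fullC}, which reduces it to producing a \emph{full component}, i.e.\ an index $j$ with $r_jh_j^\vee=n$. By Remark \ref{og} we have $n=|g|=\LCM(\{r_ih_i^\vee\mid i\})$, while $n=|\tau|N_0$ and $N_0\mid h_i^\vee$ for every $i$. Writing $b_i=r_ih_i^\vee/N_0\in\bZ$, these facts give $b_i\mid|\tau|$ and $\LCM(\{b_i\mid i\})=|\tau|$, so the existence of a full component is equivalent to the assertion that the least common multiple $|\tau|$ is \emph{attained} by one of the $b_i$.

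First I would treat the case in which $|\tau|$ is a prime power, say $|\tau|=p^a$. Then every $b_i$ is a power of $p$ dividing $p^a$, so $\LCM(\{b_i\})$ equals the largest of the $b_i$; the equality $\LCM(\{b_i\})=p^a$ therefore forces $b_j=|\tau|$ for some $j$, giving a full component and hence a shortest root by Lemma \ref{fullC}. This disposes of $\tau\in\{1A,2A,2C,3B,4C,5B,7B,8E\}$ with no further work, and I expect this part to be routine.

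The genuinely remaining cases are $\tau\in\{6E,6G,10F\}$, where $|\tau|\in\{6,10\}$ is not a prime power and the least common multiple need not be attained formally (for instance $\{b_i\}\subseteq\{2,3\}$ has $\LCM=6$ yet contains no $6$). Here I would pass to the lattice side via $\ell$-duality. By Theorem \ref{duality} the map $\sqrt\ell\varphi$ identifies $\sqrt\ell L^*$ isometrically with $N^\tau$; since passing to $\sqrt\ell L^*$ rescales squared norms by $\ell$, a shortest root $u\in L^*$, i.e.\ one with $\langle u,u\rangle=2/\ell$, corresponds under this identification exactly to a norm-$2$ vector of $N^\tau$. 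Thus a shortest root exists if and only if $N^\tau$ contains a root, and it suffices to exhibit a $\tau$-fixed root of $N$. Since $N=\Lambda^{[\tilde\beta]}\not\cong\Lambda$ (Lemma \ref{nL}), its root system $R$ is non-empty; moreover $\tau$ preserves every irreducible component of $R$ and acts on the coinvariant part $N_\tau\cong L_A(c)$ as a product of Coxeter elements (Remark \ref{Ntau}). Using the explicit frame shape of $\tau$ for these three classes together with this description of the $\tau$-action, I would locate a component of $R$ on which $\tau$ fixes a root, which yields the desired norm-$2$ vector of $N^\tau$.

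The hard part will be exactly this final step for $6E$, $6G$ and $10F$: excluding the \emph{spread} configuration, in which the two prime factors of $|\tau|$ are carried by distinct simple components so that no single $b_j$ equals $|\tau|$. This cannot be settled by the order/$\LCM$ bookkeeping of the first two paragraphs and genuinely requires the geometric input --- the structure of the Niemeier lattice $N$ and the action of $\tau$ on its roots --- which is precisely why the restriction $\tau\in\CP_0$ and the detailed analysis of $N_\tau$ in Appendix A enter at this point.
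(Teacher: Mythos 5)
Your first two paragraphs reproduce the paper's opening move: reduce to producing a full component via Lemma \ref{fullC}, and observe that when $|\tau|$ is a prime power the least common multiple $|\tau|=\LCM(\{b_i\})$ must be attained by some $b_j$. The paper's only other general input at that stage is that rank-$4$ cases are automatic (by \cite{DM2}, $V_1$ is then $B_{4,14}$, $C_{4,10}$, $D_{4,36}$ or $G_{2,24}^2$, each component being full), which is how $10F$ is eliminated, since $\rank(V_1)=\rank(\Lambda^{\tau})=4$ there. The divergence, and the gap, is in your treatment of the non-prime-power classes.

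The equivalence you rely on --- ``a shortest root exists if and only if $N^\tau$ contains a root, so it suffices to exhibit a $\tau$-fixed root of $N$'' --- is unjustified in the direction you need. Being a root of $V_1$ is not a property of a vector of $L^*$ alone: $u\in L^*$ with $\langle u,u\rangle=2/\ell$ is a root of $V_1$ only if a corresponding weight-one space of $V$ is nonzero, and unwinding the orbifold decomposition this forces the norm-$2$ vector $\sqrt{\ell}\varphi(u)$ to lie in the particular coset $-\tilde\beta+\Lambda^\tau$ of $N^\tau$ carried by the $\tilde g$-twisted module $T^1$, where $e^{u}\otimes t$ has weight $\tfrac1\ell+\phi(\tau)=1$. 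A norm-$2$ vector of $N^\tau$ sitting in a coset attached to $\tilde g^{j}$ with $|\widehat{\tau^j}|<\ell$ corresponds to a twisted-sector vector of weight $\tfrac1\ell+1-\tfrac1{|\widehat{\tau^j}|}$, which is not integral (nor repaired by oscillator modes, whose degrees lie in $\tfrac1{|\tau^j|}\bZ$), hence yields no root of $V_1$ at all; Section \ref{S:8} of the paper proves only the opposite implication (roots of $V_1$ give lattice roots of $N^\tau$), and only after $V_1$ is understood. Moreover, even the weaker statement you reduce to is exactly what you leave unproved, and it is not formal: a Weyl-group element can have nonzero fixed space yet fix no root (e.g.\ $s_{e_1-e_2}s_{e_3-e_4}\in W(A_3)$), and verifying that $\tau$ fixes a root of $N$ for $6E$, $6G$, $10F$ would in practice require the classification of the pairs $(N,\tau)$, which the paper obtains only in Section \ref{S:8}, downstream of Theorem \ref{deephole}, whose proof uses Proposition \ref{shortestroot} --- so your route risks circularity. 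The paper instead never leaves the VOA side: assuming no full component, it plays $N_0\mid h_j^\vee$ and $\LCM(\{r_jh_j^\vee/N_0\})=|\tau|$ against the rank bound $\rank(V_1)=\rank(\Lambda^\tau)$ and the dimension constraints, reducing to the candidates $G_2C_3^2$, $G_2C_3A_3$ and $A_3A_5$, each killed by a direct dimension count.
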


\pr  By Lemma \ref{fullC}, it suffices to show that there is $j$ such that $r_jh_j^\vee=n$.   

Recall that ${\rm LCM}(r_jh^{\vee}_j \mid j)=n$. 
If $|\tau|$ is a prime power or $V_1$ is simple, then 
there is $j$ such that $r_jh_j^{\vee}=n$. 
Therefore, we may assume that 
$|\tau|$ is not a prime power and $\rank(V_1)>4$, that is, 
$\tau$ is $6E=1^22^23^26^2$ or $6G=2^36^3$ and $\mathrm{rank}(V_1)=8$ or $6$. 
We have already shown  $N_0| h_j^\vee$ for all $j$. By Corollary \ref{N0=1}, we may also assume $N_0\not=1$.  Suppose there is no $j$ such that $r_jh_j^\vee =n=6N_0$. Then there exist $k$ and $l$ such that  $r_kh_k^{\vee}=3N_0$ and $r_lh_l^\vee=2N_0$. 
If one of $\CG_j$ is of type $G_2$, then $N_0=4$ and there is also a component 
$\CG_i$ such that $r_ih_i^{\vee}=8$, that is, $\CG_i=A_7$, or $C_3$. 
Another possible component is $A_3$.  
Therefore, possible choices for $\tau$ and $V_1$ are $\tau=6E$ and $V_1=G_2C_3^2$, or $G_2C_3A_3$.  However, since $\dim V_1=120\not=14+2\times 21$ nor $14+21+15$, we have a contradiction. 
Therefore, there is no component of type $G_2$. In order to get $r_kh_k^{\vee}=3N_0$, $r_k=1$, $h_k^\vee=3N_0$ and $V_1$ contains a component of type  $A_{3N_0-1}$. 
Since $N_0\not=1$, $\mathrm{rank}(V_1)\leq 8$ and $V_1$ has a component with $r_ih_i^\vee =2N_0$,  we  have  
$N_0=2$ and $V_1=A_3+A_5$ (i.e., $\mathrm{rank}(V_1)=8$ and $\tau=6E$).  
However, $\dim V_1=72\not=15+35$, which is a contradiction.  
This completes the proof of Proposition \ref{shortestroot}.
\prend

As a corollary, $(\Lambda_{\tilde{\beta}}-\tilde{\beta})_2 \not=\emptyset$ and so 
we may choose $\beta$ so that $\langle \beta,\beta\rangle=\frac{2}{\ell}$. 
In particular, $\tilde{\beta}$ has squared length $2$.

\subsection{Proof of Theorem \ref{deephole} (part 2)}

\begin{Not}
	For an even lattice $K$, we use $R(K)$ to denote the sublattice generated by $K_2=\{a\in K\mid \langle a,a\rangle =2\}$. A component lattice of $R(K)$ means a sublattice generated by an irreducible component of the roots in $K_2$.  
\end{Not}

\begin{lmm}
Let $X\subseteq R(N^{\tau})$ be a component lattice and 
$\tilde{X}\subseteq R(N)$ be a component lattice containing $X$. 
If $X\not=\tilde{X}$, then $\rank(\tilde{X})\geq \rank(X)+2$ or 
$\tau$ acts on $\tilde{X}$ trivially. 
\end{lmm}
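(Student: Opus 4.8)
The plan is to reduce the statement to a question about how $\tau$ acts on a single irreducible component, and then to the behaviour of a single Weyl reflection. First I would fix the irreducible component $\tilde{X}\subseteq R(N)$ containing $X$. By Remark~\ref{Ntau}, $\tau$ preserves $\tilde{X}$, and since $\tau$ lies in the Weyl group of the root sublattice $R$ of $N$, its restriction $w:=\tau|_{\tilde{X}}$ lies in the Weyl group $W(\tilde{X})$. Note that $X\neq\tilde{X}$ forces $w\neq 1$: if $w=1$ then every root of $\tilde{X}$ is $\tau$-fixed, hence lies in $(N^{\tau})_2$, so the connected root system $\tilde{X}$ is a single component of $R(N^{\tau})$, and the component $X$ of $R(N^{\tau})$ sitting inside it would equal $\tilde{X}$. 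Thus it suffices to prove: for a nontrivial $w\in W(\tilde{X})$, every irreducible component of the fixed-root system $\tilde{X}^{w}=\{\alpha\in\tilde{X}\mid w\alpha=\alpha\}$ has rank at most $\rank(\tilde{X})-2$.

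Next I would split on the dimension of the moving space $M$, i.e. the orthogonal complement of the fixed space $(\tilde{X}\otimes\bR)^{w}$ inside $\tilde{X}\otimes\bR$. Since all fixed roots lie in the fixed space, $\rank(X)\leq\dim(\tilde{X}\otimes\bR)^{w}=\rank(\tilde{X})-\dim M$, so if $\dim M\geq 2$ we are done immediately. The only remaining case is $\dim M=1$, in which $w$ acts as $-1$ on a line and as the identity on its orthogonal hyperplane; thus $w$ is a reflection, and because $w\in W(\tilde{X})$ it is the reflection $s_{\beta}$ in a root $\beta$. Then $\tilde{X}^{w}=\beta^{\perp}\cap\tilde{X}$, and I would invoke the classification of the sub-root-system orthogonal to a root: for $\tilde{X}$ of type $A_n$ one obtains $A_{n-2}$, and for type $D_n$ one obtains $A_1\oplus D_{n-2}$, so in both cases every component already has rank at most $\rank(\tilde{X})-2$ and the lemma holds.

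This leaves exactly the three exceptional types, where $\beta^{\perp}\cap\tilde{X}$ is the \emph{irreducible} corank-one system $A_5\subset E_6$, $D_6\subset E_7$, or $E_7\subset E_8$; these are the only configurations that could violate the claimed inequality. The heart of the argument, and the step I expect to be the main obstacle, is to rule these out: I must show that no $\tau\in\CP_0$ acts on an $E_6$, $E_7$, or $E_8$ component of an admissible Niemeier lattice $N$ with a one-dimensional moving space, equivalently with moving part a single $A_1$. For this I would use the global constraints on $\tau$, namely that $N_{\tau}$ contains $\oplus A_{k-1}^{m_k}$ on which $\tau$ acts as a product of Coxeter elements with prescribed frame shape (Remark~\ref{Ntau} and Table 1), and that $N_{\tau}\cong L_A(c)$ has the explicit structure described in Appendix A; a one-dimensional moving part on an $E$-type component would force an isolated $A_1$ block localized on that component, and one checks against the finite list of frame shapes in $\CP_0$ and the root systems of the Niemeier lattices $N\ncong\Lambda$ that can actually arise that this never occurs. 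The linear-algebra reduction of the first two paragraphs is routine; reconciling the local reflection hypothesis with the global Coxeter and frame-shape data is where the real work lies.
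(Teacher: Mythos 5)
Your reduction is sound, and it is genuinely different from the paper's argument: the paper never splits on the dimension of the moving space, but argues by connectivity, picking $\mu_1\in X$ and a non-fixed root $\mu_2\in\tilde{X}$ with $\langle \mu_1,\mu_2\rangle=-1$, then using $\rank(R(N_{\tau}))=\rank(N_{\tau})$ (Remark~\ref{Ntau}) to produce $\mu_3\in R(N_{\tau})$ with $\langle \mu_2,\mu_3\rangle=-1$, and concluding that $X,\mu_2,\mu_3\subseteq\tilde{X}$ force $\rank(\tilde{X})\geq\rank(X)+2$. However, as written your proposal has a genuine gap precisely at the step you yourself call the heart of the argument: the configuration where $\tau|_{\tilde{X}}$ is a reflection $s_{\beta}$ and $\tilde{X}$ has type $E_6$, $E_7$ or $E_8$ is never actually ruled out. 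You only assert that a check against ``the Niemeier lattices that can actually arise'' would succeed; besides being unexecuted, that check is close to circular at this point of the paper, since which $N$ arise is part of what Theorem~\ref{deephole} is establishing. A proof must exhibit the mechanism that kills this configuration.

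The gap is fillable with tools you already cite (condition (C3) and Appendix~\ref{appA}), and uniformly, with no table lookup. Suppose $\tau|_{\tilde{X}}=s_{\beta}$. Then the moving space of $\tilde{X}$ is $\bR\beta$, and $\beta=(1-\tau)\mu\in N_{\tau}$ for any root $\mu\in\tilde{X}$ with $\langle\mu,\beta\rangle=1$; moreover $\bZ\beta$ is the \emph{unique} component of $R(N_{\tau})$ lying in $\bR\tilde{X}$, every other component being contained in $\bR\tilde{X}'$ for some component $\tilde{X}'\neq\tilde{X}$ of $R(N)$, hence orthogonal to $\tilde{X}$. By (C3), $N_{\tau}\cong L_A(c)$ with every entry of $c$ coprime to the corresponding $k_i$ (Lemma~\ref{L:fpf2}), so $N_{\tau}$ contains a glue vector whose orthogonal projection to the component $\bZ\beta$ is congruent to $\tfrac{1}{2}\beta$ modulo $\bZ\beta$; since all other components of $R(N_{\tau})$ are orthogonal to $\tilde{X}$, this vector projects into $\tfrac{1}{2}\beta+\bZ\beta$ under the projection onto $\bR\tilde{X}$. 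On the other hand, $R(N)$ has full rank in the unimodular lattice $N$, so $N=N^{\ast}\subseteq R(N)^{\ast}$ projects into $\tilde{X}^{\ast}$, and $\tilde{X}^{\ast}\cap\bR\beta=\bZ\beta$ because some root of the irreducible system $\tilde{X}$ pairs to $\pm1$ with $\beta$. Contradiction. Incidentally, the configuration you isolated is exactly the one where the paper's own final inference is also not automatic: there one is forced to take $\mu_3=\mp\beta=-2\mu_2+2P_{\bR X}(\mu_2)\in\bR X+\bR\mu_2$, so the three roots span only $\rank(X)+1$ dimensions. Your dichotomy therefore locates a real subtlety that both arguments must confront---but locating it is not the same as resolving it, and your proposal stops short of a proof.
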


\pr
If $\tilde{X}\not=X$ and 
$\tau$ does not act on $\tilde{X}$ trivially, then there are $\mu_1\in X$ and $\mu_2\in \tilde{X}-(\tilde{X}^{\tau})^{\perp}$ such that 
$\langle \mu_1,\mu_2\rangle=-1$. Since $\mu_2\not\in N^{\tau}$ and 
$\rank(R(N_{\tau}))=\rank(N_{\tau})$ (see Remark \ref{Ntau}), there is a $\mu_3\in R(N_{\tau})$ such that 
$\langle \mu_2,\mu_3\rangle=-1$. Since $\mu_3\in \tilde{X}$, 
$\rank(\tilde{X})\geq \rank(X)+2$, as we desired. 
\prend

Now let's start the proof of Theorem \ref{deephole}. 
\medskip

Suppose Theorem \ref{deephole} is false and let $V$ be a counterexample. 
As we explained, it is enough to show that the Coxeter number of $N$ is 
greater than or equal to $n$.
By Proposition \ref{shortestroot}, there is a simple component $\CG_j$ of $V_1$ such that 
$r_jh^{\vee}_j=n$ and we may choose $\langle \tilde{\beta},\tilde{\beta}\rangle=2$, or 
equivalently $\langle \beta,\beta\rangle=\frac{2}{\ell}$.   
Suppose  there is a full component $\CG_j$ which is simply laced and let  $L_j$ be its root lattice. Then $\sqrt{\ell}\phi(L_j)\subseteq R^{\tau}$ and so 
$h\geq n$, which is a contradiction. 

Therefore, there are no simply laced full components in $V_1$ and  
$\GCD(6,|\tau|)\not=1$ since the lacing number divides $|\tau|$. 
We also  have $N_0\not=1$ by Corollary \ref{N0=1}. 
Furthermore, if $\rank(V_1)=4$, then we have shown $N_0=1$ in \cite{CLM} and 
we have a contradiction. Therefore, we may assume $|\tau|=2,3,4,6,8$. 
We will need a few lemmas.\\

\begin{lmm}\label{order2} 
If $|\tau|=2$, then $N_0$ is odd. 
\end{lmm}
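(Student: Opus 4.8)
The plan is to combine Proposition \ref{shortestroot} and Lemma \ref{fullC} with a case split according to the conjugacy class of $\tau$, the two order-$2$ classes in $\CP_0$ being $2A$, for which $\ell=|\tau|=2$, and $2C$, for which $\langle\tau\rangle$ contains a $2C$-element and so $\ell=2|\tau|=4$. First I would record the structural input. By Proposition \ref{shortestroot} there is a full component $\CG_j$ of $V_1$ with $r_jh^{\vee}_j=n=2N_0$; since no full component is simply laced and the lacing number $r_j$ divides $|\tau|=2$, we must have $r_j=2$ and hence $h^{\vee}_j=N_0$. Lemma \ref{fullC} then gives $\ell=|\tau|(K_0-N_0)=2(K_0-N_0)$, so the level of this component is $k_j=(K_0-N_0)h^{\vee}_j/N_0=K_0-N_0$.

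For $\tau\in 2C$ the conclusion is immediate: here $\ell=4$ forces $K_0-N_0=2$, so $K_0=N_0+2$, and since $\GCD(K_0,N_0)=1$ we get $\GCD(N_0+2,N_0)=\GCD(2,N_0)=1$, which forces $N_0$ to be odd. (For $\tau\in 2A$ the analogous computation only yields $K_0=N_0+1$, which is automatically coprime to $N_0$ and carries no parity information, so a separate argument is needed.)

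The real work is the case $\tau\in 2A$, where $\ell=2$ gives $K_0-N_0=1$ and hence every full component sits at level $k_j=1$ with $h^{\vee}_j=N_0$. Arguing by contradiction, I would assume $N_0$ is even. A non-simply-laced full component with $r_j=2$ is of type $B_m$, $C_m$, or $F_4$, with dual Coxeter numbers $2m-1$, $m+1$, and $9$; as $h^{\vee}_j=N_0$ is even, types $B_m$ and $F_4$ (odd $h^{\vee}$) are excluded, so the component is $C_m$ with $m+1=N_0$ (hence $m$ odd) at level $1$. Moreover, using $N_0\mid h^{\vee}_i$ together with $r_ih^{\vee}_i\in\{N_0,2N_0\}$ for every $i$ (a consequence of $|\tau|=\LCM(\{r_ih^{\vee}_i/N_0\})=2$), one checks that \emph{every} component of $V_1$ is forced to level $1$: a simply-laced component has $h^{\vee}_i=N_0$ (the value $2N_0$ being excluded, as it would give a simply-laced full component), while any $r_i=2$ component is full of type $C$, and $r_i=3$ is impossible since $r_i\mid|\tau|=2$.

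The main obstacle is to rule out this ``everything at level $1$ with a symplectic component'' configuration. I expect to do this through the $\ell$-duality of Theorem \ref{duality}. The short roots of the $C_m$ component have squared length $2/(r_jk_j)=1$ in $L^*$, so $\sqrt{\ell}\varphi$ carries them to squared-length-$2$ vectors of $N^\tau\cong\sqrt{\ell}L^*$, i.e. to genuine roots of the Niemeier lattice $N$ which assemble into a $D_m$-type subsystem, whereas the long roots (squared length $2$) map to squared length $4$ and are not roots of $N$. Tracking how these roots sit inside $R(N)$ — via the component-lattice rank lemma above and the description $N_\tau\cong L_A(c)$ for $\tau=2A$ — should contradict the fact that the root system of the Niemeier lattice $N$ is simply laced of $ADE$ type; equivalently, a non-simply-laced component at level $1$ cannot occur in a holomorphic VOA of central charge $24$ by Schellekens' list. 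Either route yields a contradiction, so $N_0$ must be odd.
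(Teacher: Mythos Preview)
Your argument for $\tau\in 2C$ is correct and actually cleaner than the paper's: from $\ell=|\tau|(K_0-N_0)$ (extracted from the proof of Lemma~\ref{fullC}, applicable since $|\tau|=2$ is prime so a full component exists) and $\ell=4$ you get $K_0-N_0=2$, whence $\GCD(N_0,2)=\GCD(N_0,K_0)=1$ forces $N_0$ odd. The paper does not isolate $2C$; it observes that $N_0$ even makes $K_0$ odd, hence $K_0-N_0$ odd, hence $K_0-N_0=1$, and then treats both possible ranks $12$ and $16$ together.

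The gap is in your $2A$ case. Your structural set-up is fine: under the standing counterexample hypothesis every component sits at level~$1$, the full ones are $C_{N_0-1,1}$, and the non-full ones are simply laced with $h^\vee=N_0$. But neither of your proposed finishes works. The $\ell$-duality sketch does not yield a contradiction: the short roots of $C_m$ map to a simply-laced $D_m$ inside $N^\tau\subset N$, which is perfectly compatible with $R(N)$ being of $ADE$ type---there is nothing to contradict. And invoking Schellekens' list is both circular in this paper (whose aim is an independent proof of that list) and factually wrong as stated: non-simply-laced level-$1$ components \emph{do} occur---$C_{4,1}^4$, $C_{6,1}^2B_{4,1}$, $C_{8,1}F_{4,1}^2$, $C_{10,1}B_{6,1}$ all appear (with odd $N_0$; cf.\ Lemma~\ref{counterexample}).

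What you are missing is the paper's short finite enumeration. The simply-laced non-full components with $h^\vee=N_0$ even are $A_{N_0-1}$ or $D_{N_0/2+1}$; the rank constraint $\rank V_1\in\{12,16\}$ together with $\dim V_1=24K_0=24(N_0+1)$ leaves only $C_{3,1}^aA_{3,1}^{4-a}$ ($N_0=4$) and $C_{7,1}D_{5,1}$ ($N_0=8$), and neither meets the required dimension. Supply that count (or, equivalently, note that $K_0-N_0=1$ forces $\ell=2$, hence $\tau\in 2A$ and $\rank V_1=16$, which already has no solutions) and the proof is complete.
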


\pr  
Suppose $N_0$ is even. Then  $K_0$ is odd since ${\rm GCD}(K_0,N_0)=1$. Since $K_0-N_0=1$ or $2$, $K_0=N_0+1$. 
Since $|\tau|=2$, there are no  components of type $G_2$.  
Moreover, $N_0$ is even and $n/N_0=2$; thus $4|n$ and  there are no full components of 
type $B$ and $F$. Therefore, the only full components of $V_1$ are of type $C_{N_0-1,1}$. 
Since $N_0$ divides the dual Coxeter numbers, the other components are of the type $A_{N_0-1}$ or $D_{N_0/2+1}$. 
Recall that $\rank(\Lambda^\tau)=12$ or $16$ and $N_0$ is even. By a direct calculation, it is easy to verify that the possible cases are only  
$V_1=C_{3,1}^a A_{3,1}^b$ ($N_0=4$) with $a+b=4$ and $C_7D_5$ ($N_0=8$). 
Since $\dim V_1=24(N_0+1)$, none of them is possible. Note that $\dim C_3=21, \dim A_3=15, \dim C_7=105,$ and $ \dim D_5=45$. 
\prend

\begin{lmm}\label{counterexample}
Let $V$ be a counterexample of Theorem \ref{deephole}. 
Then $\tau$ and $(V_1, N_0)$ will be one of the following: 
\begin{enumerate}
\item[(1)] $\tau=2A=1^82^8$ and $(V_1, N_0)=(C_{4}^4,5), (C_6^2B_4,7), (C_8F_4^2, 9), (C_{10}B_6, 11)$;   
\item[(2)] $\tau=2C=2^{12}$ and $(V_1, N_0)=(B_2^6, 5), (C_4A_4^2,5), 
(B_3^4,5), (B_4^3,7), (F_4A_8, 9)$, $(B_6^2,11)$, $(B_{12},23)$; 
\item[(3)] $\tau=4F=1^42^24^2$ and $(V_1, N_0)=(C_{7,1}A_{3,1}, 4)$;  
\item[(4)] $\tau=6E=1^22^23^26^2$ and $(V_1, N_0)=(C_5G_2A_1,2)$;  
\item[(5)] $\tau=6G=2^36^3$ and $(V_1, N_0)=(F_{4,6}A_{2,2},3)$. 
\end{enumerate}
\end{lmm}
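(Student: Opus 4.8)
The plan is to prove this by a complete, finite case analysis over the admissible classes $\tau$, reducing ``$V$ is a counterexample'' to a handful of arithmetic constraints and then solving them. First I would collect everything already forced on a counterexample by the discussion preceding the statement: by Proposition~\ref{shortestroot} there is a \emph{full} component $\CG_j$ with $r_jh_j^{\vee}=n=|\tau|N_0$; no full component is simply laced; $N_0\neq 1$ by Corollary~\ref{N0=1}; $\rank V_1>4$; and, since the lacing number of every component divides $|\tau|$ while $\GCD(6,|\tau|)\neq 1$, one has $|\tau|\in\{2,3,4,6,8\}$, i.e. $\tau\in\{2A,2C,3B,4C,6E,6G,8E\}$. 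A non-simply-laced full component has lacing $2$ or $3$, hence is of type $B_m$, $C_m$ or $F_4$ (forcing $2\mid|\tau|$) or $G_2$ (forcing $3\mid|\tau|$), and the identity $r_jh_j^{\vee}=|\tau|N_0$ then pins its type and rank: $2(m+1)=|\tau|N_0$ for $C_m$, $2(2m-1)=|\tau|N_0$ for $B_m$, and $|\tau|N_0=18$ or $12$ for $F_4$ or $G_2$.

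Next I would assemble the numerical tools that finish each case. The key input is the Schellekens-type relation $h_i^{\vee}/k_i=N_0/(K_0-N_0)$ for every $i$ (from \cite{CLM}), combined with $K_0-N_0\in\{1,2\}$, which follows from Lemma~\ref{ell=n} since $|\tau|(K_0-N_0)\mid\ell$ and $\ell/|\tau|\in\{1,2\}$. This yields the dimension formula
$$\dim V_1=24(N_0+1)\ \ \text{if}\ K_0-N_0=1,\qquad \dim V_1=12(N_0+2)\ \ \text{if}\ K_0-N_0=2.$$
Together with $N_0\mid h_i^{\vee}$ for all $i$, with $\LCM_i(r_ih_i^{\vee})=n=|\tau|N_0$, with the rank identity $\rank V_1=\rank\Lambda^{\tau}$ (read off the frame shape of $\tau$ as the multiplicity of the eigenvalue $1$), and with the fact that $\tau$ acts on $N_\tau$ as a product of Coxeter elements (Remark~\ref{Ntau}), these constraints leave only finitely many candidate root systems, which I would then enumerate. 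For instance, for $\tau=2A$ one has $\rank V_1=16$ and $N_0$ odd (Lemma~\ref{order2}), and matching $\dim V_1$ leaves exactly the four pairs of item~(1); running the same scheme for $\tau=2C$ (rank $12$) gives item~(2), and the order-$4$ and order-$6$ classes give items~(3)--(5), while $3B$ and $8E$ survive the reductions but admit no $V_1$ satisfying all constraints and so contribute nothing.

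The step I expect to be the main obstacle is the non-prime-power orders $6E$ and $6G$. There the condition $\LCM_i(r_ih_i^{\vee})=n$ no longer forces a single component to realize all of $n$: the prime factors $2$ and $3$ of $n$ may be split across several components (e.g. a $G_2$ supplying the factor coming from lacing $3$ together with a $C$- or $F$-type component supplying the factor coming from lacing $2$), so one must track this distribution while simultaneously respecting the rank bound and the dimension formula, exactly as in the elimination of $G_2C_3^2$, $G_2C_3A_3$ and $A_3A_5$ in the proof of Proposition~\ref{shortestroot}. Ensuring the $2C$ list is complete (rank $12$ admits many candidate root systems) and discarding the spurious low-dimensional possibilities is where most of the care is needed; once the constraints above are imposed, the arithmetic within each individual case is routine.
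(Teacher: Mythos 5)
Your proposal is correct and is essentially the paper's own proof: the paper likewise reduces a counterexample to the existence of a full component with $r_jh_j^\vee=n=|\tau|N_0$, no simply laced full components, $N_0\neq 1$ and $\rank V_1>4$, and then runs exactly this finite arithmetic enumeration using $\dim V_1=24(N_0+1)$ (resp.\ $12(N_0+2)$) for $K_0-N_0=1$ (resp.\ $2$), $\rank V_1=\rank \Lambda^\tau$, and $N_0\mid h_i^\vee$, the only difference being that its outer case split is over the type of the full component ($F_4$, $G_2$, $C_m$, $B_m$) rather than over the class of $\tau$. Incidentally, your identification of the order-$4$ class as $4C$ is the correct reading: the ``$4F=1^42^24^2$'' appearing in the statement is a typo, since that class is not in $\CP_0$ and the printed symbol is not even a valid frame shape of an isometry of $\Lambda$.
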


\pr 
We may assume that $\CG_1$ is a full component. \\
\noindent
{\bf Case 1}: If $\CG_1=F_4$,  then $\dim F_4=52$ and $n=18$ and so  
$(\tau, \rank\Lambda^{\tau},\dim V_1)$ are $(2A,16,240), (2C,12,132)$, 
$(6G,6,60), (6E,8,72)$ and the possible choices for $(\CG_j, \dim \CG_j)$ are $(F_4,52)$,  $(C_8,136)$, $(A_8, 80)$ and $(A_2,8)$.   
Hence the possible cases are $\tau=2A$ and $V_1=F_4^2C_8$; $\tau=2C$ and $V_1=F_4A_8$; and $\tau=6G$ and $V_1=F_4 A_2$.  

From now on, we may assume that there are no full components of type $F_4$. \\ 
{\bf Case 2}: If $\CG_1=G_2$, then $n=12$ and $|\tau|=3$ $(N_0=4, \dim V_1=120)$ or 
$|\tau|=6$ ($N_0=2, \dim V_1=72$ or $60$). If $|\tau|=3$, then $\tau=3B$, $4|h^\vee_i$, $h_i^\vee< 12$ and $\rank(\Lambda^\tau)=12$. 
Then $V_1=G_2^a+A_3^b$ and $2a+3b=12, \dim V_1=14a+15b=120$, which has no solution. 
If $|\tau|=6$, then $V_1=G_2^a+C_5^b+A_1^c+A_3^d+A_5^e$. 
Using dimensions and rank$=2a+5b+c+3d+5e=6$ or $8$, the solution is only $V_1=C_5G_2A_1$. 

From now on, we may assume that 
all full components are of type $C_m$ or of type $B_m$. \\
{\bf Case 3}: If $\CG_1=C_8$, then $n=18$ and $|\tau|=2$ or $6$. \\
(3.1) If $|\tau|=2$, then $N_0=9$ and so possible components are 
$C_8, B_5, A_8$ and so $V_1=C_8^2, C_8A_8$ but $\dim(V_1)=24K_0=240$.  Neither one is possible.  \\ 
(3.2) If $|\tau|=6$, then $N_0=3$ and $V_1=C_8$, but $\dim V_1=96\not=\dim C_8$, a contradiction. \\
{\bf Case 4}: If $\CG_1=C_{2m}$ and $0<m\not=4$, then $n=2(2m+1)$.  
First we show that $m<7$. 
Suppose $m\geq 7$. Then $\rank(V_1)\geq \rank(C_{2m})\geq 14$ and so $\tau=2A$ and
$N_0=2m+1$ and $m=7$ or $8$.
In this case, $\dim V_1=24(2m+2)$ and $\dim C_{2m}=2m(4m+1)$.
If $m=7$, then there is no components $\CG_j$ of rank 2 (or less than 2) with
$h_j^\vee$ divisible by $N_0=15$.
So, $m=8$ and $V_1=C_{16}$; however, $\dim V_1=24(N_0+1)=24\times 18\not=
\dim C_{16}=16\times 33$.

For $m<7$ and $m\neq 4$,
we have $(2m+1,3)=1$, $N_0=2m+1$ and $|\tau|=2$. So, the possible components of $V_1$ 
are $C_{2m}, A_{2m}, B_{m+1}$.  
Since $K_0-N_0=1$ for $\tau=2A$ and $K_0-N_0=2$ for $\tau=2C$, 
$\dim V_1=24(2m+2)$ for $\tau=2A$ and $\dim V_1=12(2m+3)$ for $\tau=2C$. 
If $\tau=2A$, then by solving the relation  
$2ma+2mb+(m+1)c=16$ and $2m(4m+1)a+2m(2m+2)b+(m+1)(2m+3)c=24(2m+2)$ 
and $a>0$ and $b,c\geq 0$, 
we have  $V_1=C_{10}B_6, C_6^2B_4, C_4^4$. \\ 
If $\tau=2C$, then by solving the relation 
$2ma+2mb+(m+1)c=12$ and $2m(4m+1)a+2m(2m+2)b+(m+1)2m+3)=12(2m+3)$, 
we have $V_1=C_{4,2}A_{4,2}^2$ or $C_{2,2}^6$. \\
We next assume $\CG_1=C_{2m+1}$. Then since $2m+2$ is even, $|\tau|\not=2$. \\ 
{\bf Case 5}: If $\CG_1=C_3$, then $n=8$ and $|\tau|=4$ and $N_0=2$ and 
$\CG$ is a direct sum of $C_3$ or $A_1$, or $A_3$. 
Since $\dim C_3=21$, $\dim A_3=15$, $\dim A_1=3$ 
 and $\rank(V_1)=10$, it is impossible to have $\dim V_1=72$ and we have a contradiction. \\
{\bf Case 6}: If $\CG_1=C_5$, then $n=12$ and $(\tau,N_0)=(4F,3),(6,2)$. 
If $\tau=4F$, then the other possible  components are $A_2, D_4, A_5, C_2$ since there is no component of type $G_2$. Since $\rank(V_1)=10$ is even, it has another 
component of type $C_5$ or $A_5$, but since $\dim V_1=96$, $\dim C_5=55$, and  $\dim A_5=35$, 
we have a contradiction.   
If $|\tau|=6$, then $\rank(V_1)=8$ or $6$ and possible components 
are $A_1, A_3$. Since $\dim V_1=72$ or $60$ and $\dim A_1=3$, 
$\dim A_3=15$, we have a contradiction. \\
{\bf Case 7}: If $\CG_1=C_7$, then $n=16$ and $(\tau,N_0)=(4F,4),(8,2)$. 
If $N_0=2$, then $\dim V_1=72$ and $\dim C_7=105$, a contradiction. 
If $\tau=4F$, then $N_0=4$ and the only solution is $V_1=C_7A_3$. \\ 
{\bf Case 8}: If $\CG_1=C_{2m+1}$ and $m\geq 4$, then $\rank(V_1)\geq 2m+1\geq 9$ and so $\CG_1=C_9$. However, since $\rank(V_1)=10$ and 
$N_0=5$, we have a contradiction.  \\ 
{\bf Case 9}: We may assume that all full components are of type $B$. Say, $\CG_1=B_m$, then $n=2(2m-1)$.  
Since $2m-1$ is odd, $|\tau|=2, 6$.  

If  $|\tau|=6$, then  $N_0=1/3(2m-1)$ and $m\leq 8$. Since $N_0\neq 1$ is odd, the only possible values for $(N_0,m)$ are  $(3,5)$ and $(5,8)$.   
If $\tau=6G$, then $\rank V_1=6$ and $(N_0, m)=(3,5)$. The only possible choice  for $V_1$ is $B_5+A_1$, but since the coxeter number of $A_1$ does not divide $3$, a contradiction. \\
If $\tau=6E$, then $\rank V_1=8$, the possible choice of $V_1$ is $B_8$. 
Since $2C\not\in <6E>$, $K_0=N_0+1=4$ and $\dim V_1=24K_0=96\not=\dim B_8=8\times 17$, which is a contradiction. \\
Therefore, we have $|\tau|=2$ and $N_0=2m-1$. 
Since $2m-1$ is odd, the non-full components are all of type $A_{2m-2}$ 
and so $V_1=B_m^{\otimes a}\oplus A_{2m-2}^{\otimes b}$.  
We note $\dim B_m=m(2m+1)$ and $\dim A_{2m-2}=4m(m-1)$. 
If $K_0=2m$, then $am+b(2m-2)=16$ and $\dim V_1=48m=am(2m+1)+4bm(m-1)$ 
It is easy to see that there is no solution for $a\in \bZ_{> 0}$ and $b\in \bZ_{\geq 0}$. 
Hence $K_0-N_0=2$ and $\tau=2C$. In this case, 
$am+b(2m-2)=12$ and $\dim V_1=12(2m+1)=am(2m+1)+4bm(m-1)$. 
The solutions are $b=0$ and $am=12$; therefore, 
$V_1=(B_{2,2})^{\oplus 6}$, $(B_{3,2})^{\oplus 4}$, 
$(B_{4,2})^{\oplus 3}$, $(B_{6,2})^{\oplus 2}$, or  $B_{12,2}$, as we desired.  \\
This completes the proof of Lemma \ref{counterexample}.
\prend

We will show that none of the Lie algebras in the list in Lemma \ref{counterexample} 
satisfy the desired condition and we will get a contradiction.

First we note that $V_1$ contains a full component of type $C_m$ or $F_4$ unless $\tau=2C$. 

\subsubsection{The case $\tau\not=2C$}

We note the short root lattices of $C_m$ are $D_m$.  Namely, if $C_m$ is a full component of $V_1$, then $D_m$ is a sublattice of $N^{\tau}$. 
Let $X$ be a component of the root lattice $R(N)$ of $N$ containing the above $D_m$. 
For $m\geq 4$, a lattice containing $D_m$ is of type $D$ or $E$, that is, 
$X$ is of type $D$ or $E$.   
Recall that  $n=r_jh_j^{\vee}=2(m+1)$ for $C_m$ and the Coxeter numbers of 
$D_m$ and $E_k$ are $2(m-1)$ and $12$, $18$ and $30$. Since $V$ is a counterexample, the Coxeter number of $N$ is strictly less than $n$ and  
the possible choice of $X$ is $D_m$ or $D_{m+1}$.  

\begin{lmm} 
If $X=D_{m+1}$, then $\tau$ acts on $X$ trivially. 
In particular, if $V_1$ is a direct sum of full components, then 
$X=D_m$.  
\end{lmm}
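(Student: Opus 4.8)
The plan is to deduce the first assertion directly from the rank‑comparison lemma preceding this statement, and to obtain the ``in particular'' clause by a short maximality argument; both rest on the identification of $R(N^\tau)$ furnished by $\ell$‑duality.

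First I would record that the short root lattice $D_m$ of the full component $C_m$ is a \emph{component lattice} of $R(N^\tau)$. Through the isometry $N^\tau\cong\sqrt{\ell}L^*$ of Theorem \ref{duality}, a norm‑$2$ vector of $N^\tau$ corresponds to a vector of $L^*$ of squared length $2/\ell$, i.e. to a shortest root of $V_1$; the shortest roots coming from $C_m$ are exactly its short roots, and these generate $D_m$ (here $m\geq 4$, so $D_m$ is genuinely irreducible). Taking $D_m\subseteq R(N^\tau)$ and our $X\subseteq R(N)$ (the component of $R(N)$ containing $D_m$) in the roles of the ``$X$'' and ``$\tilde X$'' of the rank‑comparison lemma, and assuming $X=D_{m+1}$, I note $X\neq D_m$ but $\rank X=m+1<m+2=\rank D_m+2$. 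Hence the rank alternative of that lemma is impossible, and $\tau$ must act trivially on $X$, which is the first assertion.

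For the ``in particular'' clause, suppose $V_1$ is a direct sum of full components and, for contradiction, $X=D_{m+1}$. By the first part $\tau$ acts trivially on $X$, so $X\subseteq N^\tau$; every norm‑$2$ vector of $X$ then lies in $N^\tau$ and is therefore a root of $N^\tau$, giving $X\subseteq R(N^\tau)$. But the direct‑sum hypothesis makes $R(N^\tau)$ the orthogonal direct sum of the short root lattices of the full components (again through $N^\tau\cong\sqrt{\ell}L^*$, since only shortest roots scale to squared length $2$, and non‑full components, having $r_ik_i$ a proper divisor of $\ell$, contribute none), so $D_m$ is a \emph{maximal} irreducible component lattice of $R(N^\tau)$. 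The irreducible lattice $X=D_{m+1}$ would then strictly contain the maximal component $D_m$ of $R(N^\tau)$, a contradiction. Therefore $X=D_m$.

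The step I expect to be the main obstacle is the structural claim that $D_m$ is a \emph{maximal} irreducible component of $R(N^\tau)$ — equivalently, that no further norm‑$2$ vector of $N^\tau$ enlarges $D_m$. This is where the hypothesis that $V_1$ is a direct sum of full components does real work: it lets me argue, via the $\ell$‑duality isometry, that the norm‑$2$ vectors of $N^\tau$ are exhausted by the short roots of the full factors $\CG_j$, which split orthogonally according to the simple ideals of $V_1$. Once the orthogonal decomposition $R(N^\tau)=\bigoplus_j(\text{short root lattice of }\CG_j)$ is in hand, the maximality of $D_m$ is immediate and both assertions follow as above; verifying that no spurious norm‑$2$ vectors arise in $N^\tau$ is the only point requiring genuine care.
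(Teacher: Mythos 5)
The paper offers no proof of this lemma at all: it is stated bare, right after the rank-comparison lemma, so your treatment of the first assertion is exactly what the authors intend. One technicality: that lemma requires its ``$X$'' to be a \emph{component} lattice of $R(N^{\tau})$, and you certify that $D_m$ is one only by appealing to your structural claim below. This is harmless for the first assertion: apply the lemma instead to the full component $Y$ of $R(N^{\tau})$ containing $D_m$; then $D_m\subseteq Y\subseteq X$, so $\rank(Y)\geq m$, and the dichotomy ($Y=X$, or $\rank(X)\geq\rank(Y)+2\geq m+2$, or $\tau$ trivial on $X$) still forces $\tau$ to act trivially whenever $X=D_{m+1}$.

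The genuine gap is in your proof of the ``in particular'' clause, and it sits precisely at the point you yourself flag: the claim that $R(N^{\tau})=\bigoplus_j(\text{short root lattice of }\CG_j)$, i.e.\ that $N^{\tau}$ has no ``spurious'' norm-$2$ vectors. Your justification (under $\ell$-duality only shortest roots scale to squared length $2$, and non-full components have none) proves only one inclusion: the \emph{roots of $V_1$} contribute exactly $\bigoplus_j D_{m_j}$. It says nothing about the converse. A norm-$2$ vector of $N^{\tau}$ corresponds to some $\delta\in L^{*}$ with $\langle\delta,\delta\rangle=2/\ell$, and such a $\delta$ is a root of $V_1$ only if the (nonzero, by holomorphy) module $U(\delta)$ in the decomposition $V=\bigoplus_{\delta\in L^{*}}U(\delta)\otimes M(\CH)e^{\delta}$ has minimal conformal weight \emph{exactly} $1-1/\ell$; a priori one only has the lower bound $\geq 1-1/\ell$. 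Closing this requires genuine input (e.g.\ the twisted-module decomposition of $V=V_{\Lambda}^{[\tilde g]}$ together with $\phi(\tau^{j})=1-1/|\widehat{\tau^{j}}|$), none of which appears in your proposal; length-scaling alone cannot do it.

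The clause can, however, be proved without that claim. Since all components of $V_1$ are full and (as established just before the lemma) none is simply laced, the short roots of each $\CG_j$ span its Cartan subalgebra; hence the images $S_j=\sqrt{\ell}\,\varphi(\text{short roots of }\CG_j)$ are norm-$2$ roots of $N$ lying in $N^{\tau}$ with $\bQ N^{\tau}=\bigoplus_j \bQ S_j$ (orthogonally), where $S_1=D_m\subseteq X$. For $j\geq 2$, each root of $S_j$ is either in $X$ or orthogonal to $X$, since distinct components of $R(N)$ are orthogonal. The first case is impossible: roots of distinct simple ideals of $V_1$ are orthogonal, so such a root would be a root of $D_{m+1}$ orthogonal to the whole sublattice $D_m$, and $D_{m+1}$ contains no such root when $m\geq 2$. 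Hence $X\perp S_j$ for all $j\geq 2$; combined with $X\subseteq N^{\tau}$ from the first assertion, this gives $\bQ X\subseteq \bQ S_1$, of dimension $m$, contradicting $\rank(D_{m+1})=m+1$. This route uses only facts already established in the paper, whereas your route stands or falls with the unproven identification of $R(N^{\tau})$.
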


If $\tau=2A$, then the list of lemma 5.15 says that $V_1$ is a direct sum of 
full components. Therefore, if $C_m$ is a full component, then $D_m$ is a connected component of $R(N)$.  
Therefore, the possible cases are as follows:
\begin{center}
\begin{tabular}{|c|c|c|c|c|}
\hline
$V_1$ & $C_{4,1}^4$ & $C_{6,1}^2B_{4,1}$ & $C_{8,1}F_{4,1}^2$ & $C_{10,1}B_{6,1}$ \\ \hline
$R(N)$ & $D_{4}^6$  & $D_6^4$   & $D_8^3$ & $D_{10} E_7^2$ \\ \hline
$R(N^\tau)$ contains &  $D_{4}^4$  & $D_6^2 A_1^4$   & $D_8 D_4^2$ & $D_{10} A_1^6$\\ \hline
\end{tabular}
\end{center}
It contradicts the fact that $N_\tau=L_A(1^8)$. 

If $\tau=4F$, then 
$R(N)$ contains $D_7\oplus A_3^4A_1^2$. 
The possible choice of $R(N)$ is $D_7E_6A_{11}$ or $D_8^3$, but 
neither of them contains $D_7A_3^4A_1^2$, which is a contradiction. 

If $\tau=6E$, then $R(N)$ contains $D_5A_2A_5^2A_2^2A_1^2$. 
The possible choice of $R(N)$ is $D_5^2A_7^2$ or $D_6^4$ or $D_6A_9^2$, 
but none of them contains $D_5A_5^2A_2^3A_1^2$, which is a contradiction. 

If $\tau=6G=2^36^3$, then $n=18$ and $V_1=F_{4,6}A_{2,2}$.  
Hence $X$ contains $D_4A_5^3A_1^3$. Since $h<18$, the possible choice of 
$R(N)$ is $D_4^6, D_5^2A_7^2, D_6A_9^2, D_6^4, E_6^4, A_{11}D_7E_6, D_8^3$, \\ $D_9A_{15}$. 
However, none of them contains $D_4A_5^3A_1^3$, which is a contradiction.

\subsubsection{The case $\tau=2C$}
By the previous lemma, $V_1=B_2^6, C_4A_4^2, B_3^4,B_4^3, F_4A_8, B_6^2, B_{12}$. \\
Since $\tau=2C$, we have $\ell=4$. 
Since $v+v^{\tau}\in \Lambda^{\tau}$ and 
$\{\frac{v+v^{\tau}}{2} \mid v\in \Lambda\}=\pi(\Lambda)=(\Lambda^{\tau})^{\ast}$ 
and $\sqrt{\ell}\phi( (\Lambda^{\tau})^{\ast})=\Lambda^\tau$, 
we have $2(\Lambda^{\tau})^{\ast}=\Lambda^\tau$, that is, we can take $\phi=1$ and 
$\tilde{\beta}=2\beta$. 

By the reverse construction, there is a Niemeier lattice $P$ such that 
\[
V^{[N_0\alpha]}=(V_{\Lambda})^{[(\widehat{\tau}\exp(2\pi i\beta(0)))^{2}]}=V_P. 
\]
Since $N_0\not=1$, $P\not=\Lambda$. 
Set $\widehat{\tau}^2=\exp(2\pi i\delta(0))$, which has order $2$.  
Therefore, 
\[
V_P=(V_{\Lambda})^{[\tilde{g}^2]}
=(V_{\Lambda})^{[\exp(2\pi i(\delta(0)+2\beta(0))]}. 
\]
Since $N_0$ is odd, 
$(\widehat{\tau}\exp(2\pi i\beta(0)))^{2N_0}=\exp(\delta+2N_0\beta)=1$, 
there is a $\mu\in \Lambda^{\tau}$ such that $\delta=2N_0\beta+\mu$ and so 
$V_P=(V_{\Lambda})^{[\exp(2\pi i (N_0+1)\tilde{\beta}(0)]}$.  Note also that 
$V= V_P^{[\widetilde{g}]}$ and $\widetilde{g}$ acts on $V_P$ with order $2$.

\noindent 
{\bf Case $V_1=B_m^{12/m}$:} 

We first note that all components have level $2$ and $V^{\exp(2\pi iN_0\alpha)}=(V_P)^{\widetilde{g}}$; thus $V_P$ contains the Lie subalgebras generated by the long roots.  Since the long roots of $B_{m}$ is of the type $D_{m}$,  $(V_P)_1$ contains a Lie subalgebra of type  $D_{m,2}^{\oplus (12/m)} $ and the fixed point sublattice $P^\tau \supseteq \sqrt{2} D_{m}^{\oplus 12/m}$.  Then $P$ must contain $D_{m,1}^{\oplus 24/m}$ as a sublattice. 
Since $N_0=2m-1$ and $|\widetilde{g}^2|=N_0$, the Coxeter number of $P$ is less than or equal to $2m-1$ and thus the root system of $P$ is $D_{m}^{\oplus 24/m}$. 
Since $2m-1$ is odd and $2\delta\in \Lambda$, there is $\mu\in \Lambda$ 
such that 
$\delta=2(2m-1)\beta+\mu$ and $4(2m-1)\beta\in \Lambda$. Thus, 
we have $V_P=(V_{\Lambda})^{[4\beta]}$, that is, 
every element $\mu \in P$ is written $\mu=\mu_1+4s \beta$ with 
$\langle\mu_1,4\beta\rangle\in \bZ$, that is, $\mu_1\in \Lambda_{4\beta}$. 
Since $\langle4\beta,2\beta\rangle\in \bZ$,  $\mu_1\in \Lambda_{2\beta}$  if $\langle \mu,2\beta\rangle\in \bZ$ (that is, $\mu\in \Lambda_{2\beta}$).

Since $V_P=V^{[\widetilde{g}]}$ can be constructed by a $\bZ_2$-orbifold construction, we have
\[
L^*  = \{ x\in (P^\tau)^* \mid \langle x, 2\beta \rangle \in \bZ\} +\bZ \beta .
\] 
by the same argument as in Theorem \ref{duality}.  

For each $B_m\subseteq V_1$, we have a fundamental root system 
$d_1,...,d_{m-1}, d_m$ with long roots $d_j$ $(j= 1, \dots, m-1)$, i.e, $\langle d_j,\alpha\rangle 
=\frac{1}{2m-1}$ for $j= 1, \dots, m-1$ and $\langle d_m,\alpha\rangle=\frac{1}{2(2m-1)}$. Without loss, we may assume $d_m=\beta$, which is a short root. Note also that $\tilde{d}_m=d_{m-1} +2d_m$ is also a long root and $\{d_1,...,d_{m-1}, \tilde{d}_m\}$ forms a fundamental root system for the long roots.  
Therefore, in $P$, we have a root sublattice of type $D_m^2$ with the fundamental  system 
$\{a_1,\dots,a_m\}$ and $\{b_1,\dots,b_m\}$,
$$\begin{array}{rclrcl}  
 a_1- \cdots - &a_{m-2}&-a_{m-1},\qquad    &b_1-\cdots - &b_{m-2}& -b_{m-1} \cr
        & | &                          &         &|&     \cr
        &a_m&                         &      &b_m&    \cr
\end{array}$$
such that  $\tau$ permutes $a_i$ and $b_i$ and 
$\frac{a_i+b_i}{2}=d_i$ for $i=1,...,m-1$ 
and $\frac{a_m+b_m}{2}=2\beta+d_{m-1}=2\beta+\frac{a_{m-1}+b_{m-1}}{2}$. 
Therefore $2\beta=\frac{a_m-a_{m-1}+b_m-b_{m-1}}{2}$ and $
\langle 2\beta, a_i \rangle\in \bZ, \langle 2\beta, b_i\rangle \in \bZ$ 
for all $i=1,...,m$, that is, $R(P)\subseteq N=\Lambda^{[2\beta]}$. 
Since $2\beta\in N$ and has norm $2$, $N$ contains $D_{2m}$ and so the 
Coxeter number of $N$ is greater than or equal to $2(2m-1)$, which contradicts 
the choice of $V$.

\medskip

\noindent
{\bf Case $V_1=C_{4,2}A_{4,2}^2$:} 

Since $V_1= C_{4,2}A_{4,2}^2$ and $\det(L)=2^{10}4^2$,  $L=\sqrt{2}(D_4+E_8)$ and $N^\tau\cong D_4+\sqrt{2}E_8$ in this case. In particular,  $N^\tau$ contains a root sublattice of type $D_4$. Therefore, the possible types for $N$ are $D_4^6$, $D_4A_5^4$, and 
$D_5^2 A_7^2$.   However, the lattices orthogonal to a sublattice of the type $D_4+\sqrt{2}E_8$  do not contain $L_A(1^{12})$. 

\medskip

\noindent
{\bf Case $V_1=F_{4,2}A_{8,2}$:} 

In this case, $n=18$ and $N_0=9$.  

Let $a_1, a_2, b_3, b_4$ be the fundamental roots for $F_4$ such that $a_1, a_2$ are short roots.
We may assume $\beta=a_1$. Since $|\tau|=2$, there is a
Niemeier lattice $P$ such that $V_P=(V_{\Lambda})^{[\tilde{g}^2]}$, which is also
equal to $V^{[9\alpha]}$. Set $s=g^9=\exp(2\pi i 9\alpha(0))$. Then $(F_{4,2}A_{8,2})^{<s>}$ contains $B_{4,2}A_{8,2}$ and the Coxeter number of $P$ is less than or equal to $N_0=9$. Therefore, the root system of $P$ is $A_8^3$. Since $V=V_P^{[\tilde{g}]}$, 
$\tau$ acts on one $A_8$ as a diagram automorphism and permutes the other two $A_8$'s.

Since $N_0=9$ is odd,  we have
\[
P=\Lambda_{4\beta}+\bZ 4\beta \quad \text{ and }\quad \tilde{\beta}=2\beta.
\]
by the similar argument as in the case $V_1=B_m^{12/m}$.  
By the choice of $\beta$, the two $A_8$'s permuted by $\tau$ is orthogonal to $\beta$; hence, 
$N=\Lambda^{[\tilde{\beta}]}=\Lambda_{2\beta}+\bZ 2\beta$ contains $A_8^2$.

Since $F_4$ is a full component, $N$ also  contains $D_4$ as a root sublattice, which
is orthogonal to the above $A_8^2$. Therefore, $N \supset D_4+A_8+A_8$. Moreover, the  Coxeter number  
of $N$ is strictly less than $n=18$ by our assumption.  Then $N$  has the type $A_9^2 D_6$. In this case, $N_\tau$, the sublattice orthogonal to  $D_4+ \sqrt{2}A_8$, does not contain a root sublattice of the type $A_1^{12}$ and we have a contradiction.

This completes the proof of Theorem \ref{deephole}. 
\qed





\section{Classification of $\tau$}\label{sec:6}
In this section, we will show that 
$\tau\in Co.0$ 
defined by $W$-elements of holomorphic VOAs of central charge 24 
are contained in 
\[
\CP_0=1A\cup 2A\cup 2C\cup 3B\cup 4C\cup 5B\cup 6E\cup 6G\cup 7B\cup 8E\cup 10F.
\]  
Combining the results in Section 3 - 5,  one can associated  a pair of $\tau\in \CP_0$ and 
a $\tau$-invariant deep hole $\tilde{\beta}$ with any holomorphic VOA $V$ 
of central charge $24$ with non-abelian weight one Lie algebra $V_1$.

Set 
$$\CP=\left\{ \tau\in Co.0\, \left | \begin{array}{l}
{}^\exists \beta\in \bQ\Lambda^{\tau} \mbox{ s.t. }
\widehat{\tau}\exp(2\pi i\beta(0)) 
\mbox{ can be realized as the reverse  }\\
\mbox{automorphism of  some orbifold construction 
given by a $W$-element} 
\end{array}\right. \right\}. $$
The main result of this section is the following. 
\begin{prop} \label{P0}
$\CP\subseteq \CP_0$. 
\end{prop}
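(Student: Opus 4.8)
The plan is to show $\CP\subseteq\CP_0$ by ruling out every frame shape $\tau\in Co.0$ that does not appear in the list $\CP_0$, using the conformal-weight constraints developed in Sections \ref{Sec:twist}--\ref{S:4.2}. The governing idea is that if $\tilde g=\widehat\tau\exp(2\pi i\beta(0))$ is a reverse automorphism of a $W$-element orbifold, then it carries a great deal of numerical rigidity: the twisted module $V_\Lambda[\tilde g]$ must have a nonzero integer-weight subspace, and the conformal weight $\phi(\tau)$ of the $\widehat\tau$-twisted module is forced to be of the very special form $1-1/|\widehat\tau|$. Indeed, the formula $\phi(\tau)=1-\frac{1}{24}\sum_i a_i/n_i$ from the excerpt (with frame shape $\prod_j n_j^{a_j}$) gives an explicit rational number for each conjugacy class, and the requirement that $\widehat{\tau^m}$ be ``of type $0$'' (i.e.\ $\phi(\tau^m)=1-1/|\widehat{\tau^m}|$ for all $m\mid|\tau|$) is an extremely restrictive condition. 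My first step would be to write down, class by class over $Co.0$, the value of $\phi(\tau)$ from the frame shape and compare it against $1-1/\ell$ where $\ell=|\widehat\tau|\in\{|\tau|,2|\tau|\}$.

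Next I would exploit the structural consequences already proved. Recall from Section~\ref{S:2} that $n=|\tau|N_0$ with $N_0\mid h_j^\vee$ for every simple component, that $\langle\alpha,\alpha\rangle=2K_0/N_0$ with $K_0-N_0\in\{1,2\}$ forced by holomorphicity of central charge $24$, and that $\LCM(\{r_ih_i^\vee/N_0\})=|\tau|$. The second step is to translate the existence of the orbifold into the existence of an irreducible twisted module whose graded dimension is compatible with a weight-one Lie algebra of rank equal to $\rank(\Lambda^\tau)=24-\rank(\Lambda_\tau)$; the coinvariant rank is read off directly from the frame shape. Combining $\dim V_1=24(K_0/N_0)\cdot(\text{normalization})$ with the rank constraint and with the divisibility $N_0\mid h_j^\vee$ eliminates most classes on purely arithmetic grounds, exactly in the spirit of the dimension-counting arguments in Lemma~\ref{counterexample}. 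I expect the surviving classes to be precisely those for which $\phi(\tau)=1-1/\ell$ \emph{and} for which a consistent Lie algebra of the right dimension and rank can exist.

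The key technical input is the type-$0$ condition together with the bound $\phi(\tau)<1$, which forces $\sum_i a_i/n_i>0$ and, more sharply, pins down $\ell$. For each candidate $\tau$ I would verify whether $\phi(\tau^m)=1-1/|\widehat{\tau^m}|$ holds for all divisors $m$; a single failure excludes $\tau$. This is a finite check across the conjugacy classes of $Co.0$, organized by frame shape, and it is where I expect the bulk of the casework to live. The main obstacle will be handling the classes whose frame shapes give $\phi(\tau)$ numerically close to $1-1/\ell$ but which fail only at some proper power $\tau^m$, or which pass the conformal-weight test but are killed by the weight-one dimension/rank arithmetic; these borderline cases require one to juggle simultaneously the twisted-module weight condition, the identity $\LCM(\{r_ik_i\})=(K_0-N_0)|\tau|$ from Lemma~\ref{rk}, and the constraint $K_0-N_0\in\{1,2\}$. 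Once all frame shapes outside $\CP_0$ are eliminated by one of these two mechanisms, the containment $\CP\subseteq\CP_0$ follows.
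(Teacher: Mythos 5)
Your proposal has a genuine gap at its core: you treat the condition $\phi(\tau^m)=1-1/|\widehat{\tau^m}|$ (type $0$ for the \emph{standard lift}) as a necessary condition for $\tau\in\CP$, so that ``a single failure excludes $\tau$.'' But membership in $\CP$ only constrains the twisted modules of $\tilde{g}=\widehat{\tau}\exp(2\pi i\beta(0))$, whose conformal weight is $\phi(\tau)+\frac12\min\{\langle x,x\rangle \mid x\in -\beta+\pi(\Lambda)\}$; the shift by $\beta$ can a priori compensate for $\phi(\tau)$ not having the special form, so a failure of your test for $\widehat{\tau}$ excludes nothing by itself. This is exactly where the difficulty lies: the three stubborn classes $4D$, $4F$, $-6C$ all fail your naive test, yet ruling them out occupies the bulk of the paper's Section \ref{sec:6} (Lemmas \ref{0}, \ref{lmm612} and \ref{5}), precisely because one must show that \emph{no} choice of $\beta$ works. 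The mechanism the paper uses for this --- and which is absent from your proposal --- is a minimal-counterexample induction: one takes a counterexample $V$ with $\rank(V_1)$ maximal (and $\Comm(M(\CH),V)$ maximal), observes that the orbifolds $V^{[g^m]}$ for $1\neq m\mid|\tau|$ are then not counterexamples, so their associated isometries lie in $\CP_0$, constructs the intermediate Niemeier lattice VOA $V_P=V^{[g^{N_0}]}$ (Theorem \ref{N_0}), and transfers information (evenness of $\sqrt{\ell}L^{\ast}$, rank comparisons of fixed-point lattices, conjugacy of induced isometries) back down to $\tau$. For instance, $-2A$, $3C$, $5C$ as powers of $\tau$ are excluded by comparing $\rank(\Lambda^{\tau^m})$ with the ranks realized by elements of $\CP_0$ of the appropriate order (Lemma \ref{2}), not by a conformal-weight computation on frame shapes.

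A second, related problem is circularity: you invoke $K_0-N_0\in\{1,2\}$ as ``forced by holomorphicity,'' but in the paper this follows from Lemma \ref{ell=n}, whose proof uses $\langle u,u\rangle\in\frac{1}{\ell}\bZ$ for roots $u\in L^{\ast}$, i.e.\ the $\ell$-duality package of Section 4 --- which is established only under the hypothesis $\tau\in\CP_0$, the very thing being proved. The same caveat applies to the evenness of $\sqrt{\ell}L^{\ast}$. In the proof of Proposition \ref{P0} these facts may legitimately be used only for the non-counterexample orbifolds $V^{[g^m]}$ (whose isometries are already known to lie in $\CP_0$), which is exactly how the paper deploys them in Lemma \ref{5}. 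Your conformal-weight and dimension/rank arithmetic does reproduce the easier eliminations (in the spirit of Lemmas \ref{3}, \ref{6} and \ref{2.5}), but without the induction scaffold and the intermediate lattice VOA the hard cases cannot be closed, so the proposal as written does not prove the proposition.
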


Since $\dim ((V_{\Lambda})^{[\tilde{g}]})_1>24$,  it is easy to see that $\rank(\Lambda^\tau) \geq 4$ and  
$|\tau|\leq 15$. Therefore, $\tau$ is in one of the conjugacy classes in the following list (cf. \cite{CLM}).

\begin{longtable}{|c|rcllc|c|}
	\caption{ Conjugacy classes of $\tau\in Co.0$ with $\dim \Lambda^\tau \geq 4$} \label{Table:main} \\
	\hline
Class& frame shape& $\dim \CH_0$ & $\phi(\tau)$ & Power& $\Lambda^{\tau}$ & Check for $\CP_0$\\
\hline
$2A$ & $1^{8}2^{8}$ & $16 $ & $ 1-1/2$ & & & $\in\CP_0$ \\
$-2A$ & $2^{16}/1^{8}$ & $8$ & $1-0$ & & & Lemma \ref{2}\\
$2C$ & $2^{12}$ & $12$ & $ 1-1/4$ & & & $\in\CP_0$\\
\hline
$3B $ & $1^{6}3^{6}$ & $12 $ & $ 1-1/3$ & &  & $\in\CP_0$\\
$3C$ & $3^{9}/1^{3}$ & $6 $ & $ 1-0$ & $ $ & $3E_6^{-1}$ & Lemma \ref{2}\\
$3D$ & $3^{8}$ & $8 $ & $ 1-1/9$ & $ $ & $3E_8$ & Lemma \ref{6}\\
\hline
$-4A$ & $1^{8}4^{8}/2^{8}$ & $8 $ & $ 1-1/4$ & $-2A$ & $$ & Lemma \ref{2}\\
$4C$ & $1^{4}2^{2}4^{4}$ & $10 $ & $ 1-1/4$ & $ $ & $$ & $\in\CP_0$ \\
$-4C$ & $2^{6}4^{4}/1^{4}$ & $6 $ & $ 1-0$ & $2A$ &  & Lemma \ref{2}\\
$4D$ & $2^{4}4^{4}$ & $8$ & $ 1-1/8 $ & $2A $ & $2D_4+2D_4$ & Lemma \ref{5}\\
$4F$ & $4^{6}$ & $6 $ & $ 1-1/16 $ & $2C $ & $4I_6$ & Lemma \ref{5} \\
\hline
$5B$ & $1^{4}5^{4}$ & $8$ & $ 1-1/5$ & $ $ & $$ & $\in\CP_0$  \\
$5C$ & $5^{5}/1^{1}$ & $4$ & $1-0 $ & $ $ & $$ & Lemma \ref{2} \\
\hline
$6C$ & $1^{4}2^{1}6^{5}/3^{4}$ & $6 $ & $ 1-1/6$ & $-2A,3B $ & $$ & Lemma \ref{2}  \\
$-6C$ & $2^{5}3^{4}6^{1}/1^{4}$ & $6$ & $1-0$ & $2A, 3B $ & $6E_6^{-1}$ & Lemma \ref{0} \\
$-6D$ & $1^{5}3^{1}6^{4}/2^{4}$ & $6$ & $ 1-1/6$ & $2A, 3C $ & $3E_6^{-1}$ & Lemma \ref{2} \\
$6E$ & $1^{2}2^{2}3^{2}6^{2}$ & $8$ & $ 1-1/6$ & $ $ & $$ & $\in\CP_0$  \\
$-6E$ & $2^{4}6^{4}/1^{2}3^{2}$ & $4$ & $ 1-0$ & $ -2A$ & $$ & Lemma \ref{2} \\
$6F$ & $3^{3}6^{3}/1^{1}2^{1}$ & $4$ & $ 1-0$ & $ $ & $3D_4$ & Lemma \ref{3} \\
$6G$ & $2^{3}6^{3}$ & $6$ & $ 1-1/12$ &  &  & $\in\CP_0$  \\
$6I$ & $6^{4}$ & $4 $ & $ 1-1/36$ & $2C,3D $ & $6I_4$ & Lemma \ref{3} \\
\hline
$7B$ & $1^{3}7^{3}$ & $6 $ & $ 1-1/7$ & $ $ & $$ & $\in\CP_0$  \\ 
\hline
$8E$ & $1^{2}2^{1}4^{1}8^{2}$ & $6 $ & $ 1-1/8$ & $ $ & $$ & $\in\CP_0$ \\
\hline
$10D$ & $1^{2}2^{1}10^{3}/5^{2}$ & $4$ & $ 1-1/10$ & $-2A $ & $ $ & Lemma \ref{2} \\
$-10D$ & $2^{3}5^{2}10^{1}/1^{2}$ & $4$ & $ 1-0$ &  &  & Lemma \ref{3}  \\
$-10E$ & $1^{3}5^{1}10^{2}/2^{2}$ & $4$ & $ 1-1/10$ & $5C $ &  & Lemma \ref{2}  \\
$10F$ & $2^{2}10^{2}$ & $4$ & $ 1-1/20 $ & $ $ & $$ & $\in\CP_0$  \\
\hline
$-12E$ & $1^{2}3^{2}4^{2}12^{2}/2^{2}6^{2}$ & $4$ & $ 1-1/12$ & $-2A,-6E$ &  & Lemma \ref{2}  \\
$-12H$ & $1^{1}2^{2}3^{1}12^{2}/4^{2}$ & $4
$ & $ 1-1/12$ & $3C $ & $$ & Lemma \ref{3}  \\
$12I$ & $1^{2}4^{1}6^{2}12^{1}/3^{2}$ & $4
$ & $ 1-1/12$ & $-4C $ &  & Lemma \ref{3}  \\
$-12I$ & $2^{2}3^{2}4^{1}12^{1}/1^{2}$ & $4$ & $ 1-0$ &  &  & Lemma \ref{3}  \\
$12J$ & $2^{1}4^{1}6^{1}12^{1}$ & $4$ & $ 1-1/24$ & $2A,3B,6E$ & $$ & Lemma \ref{3} \\
\hline
$14B$ & $1^{1}2^{1}7^{1}14^{1}$ & $4 $ & $ 1-1/14$ & $2A,7B$ & & Lemma \ref{3}  \\
\hline
$15D $ & $ 1^1 3^1 5^1 15^1 $ & $ 4  $ & $1-1/15$ &  & & Lemma \ref{3}\\
\hline
\end{longtable}

First we treat the cases when $\rank(V_1)= 4$, which will eliminate many cases with $\dim \Lambda^\tau =4$. 

\begin{lmm}\label{3}
If $\rank V_1= 4$, then  $\tau \in \CP_0$.  
\end{lmm}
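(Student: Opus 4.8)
The plan is to turn the hypothesis $\rank V_1=4$ into a rigid numerical constraint that leaves exactly one admissible conjugacy class. First I would record the two structural reductions that are special to rank $4$. By \cite{CLM} we have $N_0=1$ in this case, so $n=|\tau|$. Moreover, since $\Lambda$ has no roots, the $\widehat{\tau}$-fixed part of the untwisted weight one space is $\bh^{\tau}=\bC\Lambda^{\tau}$, which is self-centralizing in $V_1$ and hence a Cartan subalgebra; therefore $\rank V_1=\rank\Lambda^{\tau}=\dim\CH_0=4$. Consequently $\tau$ must be one of the conjugacy classes of Table \ref{Table:main} with $\dim\CH_0=4$, and it suffices to exclude every such class except $10F$.

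Next I would pin down $V_1$ itself. From $k_j/h_j^{\vee}=(K_0-N_0)/N_0$ and $N_0=1$ one gets the identity $\dim V_1=24K_0/(K_0-1)$; since $\GCD(K_0,N_0)=1$ forces $(K_0-1)\mid 24$ and $K_0>N_0$ forces $\dim V_1>24$, the only possibilities are $\dim V_1\in\{48,36,32,30,28,27,26,25\}$. Intersecting with the dimensions of rank-$4$ semisimple Lie algebras exceeding $24$ (namely $D_4,G_2^2$ of dimension $28$, $B_4,C_4$ of dimension $36$, and $F_4$ of dimension $52$) leaves $V_1\in\{D_4,G_2^2,B_4,C_4\}$, with $K_0=7$ for the first two and $K_0=3$ for the last two; $F_4$ drops out since $24K_0/(K_0-1)=52$ has no integral solution. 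In each surviving case $V_1$ is of uniform type, so every simple component already achieves $r_jh_j^{\vee}=n$ and is a full component carrying a shortest root.

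The decisive step is a divisibility argument on $\ell=|\widehat{\tau}|$. For a full component $\CG_j$ the shortest root $\delta$ has $\langle\delta,\delta\rangle/2=1/(r_jk_j)=1/(n(K_0-1))$, using $k_j=h_j^{\vee}(K_0-1)$ and $r_jh_j^{\vee}=n$. The general discriminant-form fact $q(L^{\ast})\subseteq\frac1\ell\bZ$ (equivalently, that the exponent of $L^{\ast}/L$ is $\ell$, cf. \cite{Lam20} and Lemma \ref{ell=n}) then forces $n(K_0-1)\mid\ell$; combined with the general bound $\ell\in\{n,2n\}$, with $\ell=2n$ exactly when $\langle\tau\rangle$ contains a $2C$-element \cite{B}, this gives $n(K_0-1)\le 2n$, i.e.\ $K_0-1\le 2$. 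This discards $D_4$ and $G_2^2$ (where $K_0=7$) at once, and among the remaining $K_0=3$ cases it forces $\ell=2n$, so $\langle\tau\rangle$ must contain a $2C$-element. Finally I would run the two frame-shape power computations. The unique order-$14$ class with $\dim\CH_0=4$ is $14B=1^12^17^114^1$, whose only involution is $(14B)^7=2A\ne 2C$; this rules out $B_4$. For $C_4$ we have $n=10$, and among the order-$10$ classes with $\dim\CH_0=4$ only $10F=2^210^2$ satisfies $(10F)^5=2C$, the others giving $2A$ or $-2A$; hence $\tau=10F\in\CP_0$, completing the proof.

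The step I expect to require the most care is making sure the two ``general'' inputs are legitimately available here, since Section \ref{S:4.2} introduced $q(L^{\ast})\subseteq\frac1\ell\bZ$ and the relation $\ell\in\{n,2n\}$ while working under the standing assumption $\tau\in\CP_0$. In Section \ref{sec:6} we are precisely trying to establish $\tau\in\CP_0$, so I would want to confirm that these are statements about arbitrary standard lifts coming from \cite{B,Lam20}, independent of that assumption, and likewise that the shortest-root norm computation underlying Lemma \ref{ell=n} does not secretly use it. The remaining ingredients, the dimension enumeration and the two power computations $(14B)^7$ and $(10F)^5$, are entirely routine.
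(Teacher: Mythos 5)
Your reduction to the four candidates $B_{4,14}$, $C_{4,10}$, $D_{4,36}$, $G_{2,24}^{\oplus 2}$ with $N_0=1$ and $n=|\tau|\in\{14,10,6,12\}$ agrees with the paper (which quotes \cite{DM2} directly), your frame-shape computations $(14B)^7=2A$ and $(10F)^5=2C$ are correct, and your treatment of $B_{4,14}$ is sound. But the step you yourself flagged as needing care is a genuine gap, and it cannot be closed by checking references: the inclusion $q(L^{\ast})\subseteq\frac{1}{\ell}\bZ$ (equivalently, evenness of $\sqrt{\ell}L^{\ast}$, Lemma \ref{Lbeta}), and hence the divisibility $n(K_0-1)\mid\ell$ of Lemma \ref{ell=n}, is \emph{not} a statement about arbitrary standard lifts. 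Its derivation in Section \ref{S:4.2} uses the property $\phi(\tau^m)=1-1/|\widehat{\tau^m}|$ special to $\CP_0$, and it is genuinely false for the classes you most need it for. Take $\tau=6I=6^4$, the essential candidate in the $D_{4,36}$ case ($\ell=12$ since $(6I)^3=2C$, but $\phi(6I)=1-\frac{1}{36}$). Since $\tau$ is fixed-point free on $\Lambda_\tau$, the $\widehat{\tau}$-twisted module of $V_{\Lambda_\tau}$ has conformal weight $\equiv -\frac{1}{36}\pmod{\bZ}$, so the quoted structural statement about $\mathrm{Irr}(V_{\Lambda_\tau}^{\widehat{\tau}})$ cannot hold for $6I$, and via the duality $(\mathcal{D}(L),q)\cong(\mathrm{Irr}(V_{\Lambda_\tau}^{\widehat{\tau}}),-q')$ a hypothetical $L$ would have $q$-values $\equiv\frac{1}{36}\notin\frac{1}{12}\bZ$. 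Note that $\frac{1}{36}$ is exactly $q(\delta)$ for the shortest root of $D_{4,36}$: the numerology of the $6I$-scenario is internally \emph{consistent}, so no contradiction can be extracted from your premise there. The same failure occurs for $12J$ ($\phi=1-\frac{1}{24}$, $\ell=12$) in the $G_{2,24}^{\oplus 2}$ case, and for $10D$, $-10D$, $-10E$ in the $C_{4,10}$ case (their fifth powers are $\pm 2A$ with $\phi=1\neq 1-1/|\widehat{\tau^5}|$, so the Section \ref{S:4.2} derivation again does not apply to them).

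This is precisely why the paper does not argue this way outside of $14B$: for $14B$ every power satisfies $\phi(\tau^m)=1-1/|\widehat{\tau^m}|$, so the evenness of $\sqrt{14}L^{\ast}$ is legitimately available (and the paper uses it, concluding that a rank-$4$ even unimodular lattice would exist), whereas the order-$10$ and order-$6$ cases are handled by a different mechanism: a reverse automorphism of an actual orbifold construction must have $T^1_{\bZ}\neq 0$, but for $\tau=6I$ one checks that every weight of $V_\Lambda[\tilde{g}]$ lies in $1-\frac{1}{36}+\frac{1}{12}\bZ$, so $T^1_{\bZ}=0$ --- the type-zero/integrality argument on twisted-module conformal weights. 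Filling your gap requires importing this argument (or an equivalent one); it is a different idea, not a bookkeeping repair. As written, the eliminations of $D_{4,36}$, of $G_{2,24}^{\oplus 2}$, and of the competing order-$10$ classes all rest on a premise that is unavailable, indeed false, in exactly those scenarios, so only the $B_{4,14}$ case of your proof stands.
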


\pr 
Suppose $\rank(V_1) = 4$. By \cite{DM2}, we know that $\dim V_1>24$ and 
$h_j^{\vee}/k_j=(\dim V_1-24)/24$ for any component $\CG_j$; 
thus $V_1$ must be of the type $B_{4,14}, C_{4,10}, D_{4,36}$ or $G_{2,24}^{\oplus 2}$  and  
$|\tau|=14,10,6,12$, respectively. In these cases, $N_0=1$. 
If $|\tau|=14$, we have $\tau=14B$. Then $\ell=14$ and $\det(\Lambda^\tau)=  {14}^2$. In this case, $\det(L) =14^4$ and  $\det(\sqrt{14} L^*)=1$, which is not possible since $\sqrt{14} L^*$  is even (see Section \ref{S:4.2}). 
If $|\tau|=10$, then $V_1=C_{4,10}$ and $N_0=1$ and so 
	there are $\mu\in L^{\ast}$ such that 
	$\langle \alpha,\mu\rangle\equiv 1/10$ and $1/5$ $\pmod{\bZ}$.  
	Therefore, since we may choose $\beta$ from $L^{\ast}$, 
	both of $\tau$ and $\tau^2$ are of type zero and $\phi(\tau)<1$ and $\phi(\tau^2)<1$, which implies 
	$\tau=10F$ and $\tau\in \CP_0$. 
If $|\tau|=6$, $V_1=D_{4,36}$ and so there is $\mu\in L^{\ast}$ such that $\langle \tau,\mu\rangle\equiv 1/6 
\pmod{\bZ}$ and $\phi(\tau)<1$. Therefore, $\tau=6I$ and $N_0=1$. 
Since $6\beta\in \Lambda^{\tau}$ which has the Gram matrix $6I_4$,  
$\langle \beta,\beta\rangle\in 2\bZ/12$. However, $\phi(6I)=1/36$; thus $\tilde{g}=\widehat{\tau}\exp(2\pi i\beta(0))$ is not type zero  and 
we have a contradiction.
\prend

From now on, we may assume $\dim V^{\langle \tau \rangle} > 4$. 

\begin{lmm}\label{6}
$\tau\not\in 3D$. 
\end{lmm}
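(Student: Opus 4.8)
The plan is to eliminate $3D$ by confronting the order of the discriminant group of the root‑data lattice $L$ with its rank; the two will be incompatible. First I record the data. The frame shape of $\tau=3D$ is $3^8$, so $|\tau|=3$; since a group of order $3$ contains no $2C$‑element, the remark after Theorem~\ref{dual} gives $\ell=|\hat\tau|=3$. From Table~\ref{Table:main} the fixed lattice is $\Lambda^\tau\cong 3E_8=\sqrt3\,E_8$, which has rank $8$ and Gram matrix $3\,G_{E_8}$, hence $\det(\Lambda^\tau)=3^8$. I then argue by contradiction: suppose $\tau=3D\in\CP$, so that $\tilde g=\hat\tau\exp(2\pi i\beta(0))$ is realized as the reverse automorphism of some $W$‑element orbifold and $V=V_\Lambda^{[\tilde g]}$ is holomorphic of central charge $24$ with semisimple $V_1$.

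The main step is a counting argument. Let $\CH$ be a Cartan subalgebra of $V_1$ and $L\subseteq\CH$ the even lattice with $V_L=\Com(\Com(M(\CH),V),V)$. Because $L_\alpha\cong\Lambda^\tau_\beta$, one has $\rank L=\rank\Lambda^\tau=8$. On the other hand, the general discriminant‑form results cited in Section~\ref{S:4.2} (cf.\ \cite{Ho,Lam20}) supply $\det(L)=\det(\Lambda^\tau)\cdot|\tau|^2=3^8\cdot 3^2=3^{10}$, together with the fact that the exponent of $L^*/L$ divides $\ell=3$. Thus $L^*/L$ is a finite abelian group of order $3^{10}$ in which every element has order dividing $3$, i.e.\ $L^*/L\cong(\bZ/3\bZ)^{10}$. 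But $L^*/L$ is a quotient of the rank‑$8$ free module $L^*$, so it can be generated by at most $8$ elements, whereas $(\bZ/3\bZ)^{10}$ requires $10$. This contradiction forces $\tau\neq 3D$.

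The hard part will be to confirm that the two inputs $\det(L)=\det(\Lambda^\tau)|\tau|^2$ and $\mathrm{exponent}(L^*/L)\mid\ell$ are legitimately available here, since the parallel discussion in Section~\ref{S:4.2} runs under the standing hypothesis $\tau\in\CP_0$, which is exactly what we may not assume for $3D$. I expect both to hold for any $\tau$ that is realized as a reverse automorphism, as they encode the quadratic‑space isomorphism $(\mathcal{D}(L),q)\cong(\mathrm{Irr}(V_{\Lambda_\tau}^{\hat\tau}),-q')$ and the structure of $\mathrm{Irr}(V_{\Lambda_\tau}^{\hat\tau})$ determined in \cite{Lam20}, neither of which invokes the type‑$0$ identity $\phi(\tau)=1-1/\ell$; making this genuinely type‑$0$‑free is the point that needs care. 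As a sanity check, the same computation is consistent for $3B$ (frame shape $1^63^6$): there $\rank\Lambda^\tau=12$ and $\det(\Lambda^\tau)=3^6$, so $\det(L)=3^8$ and $L^*/L\cong(\bZ/3\bZ)^8$ needs only $8\le 12$ generators. The elimination of $3D$ therefore hinges on $\det(\Lambda^{3D})=3^8$ being too large for its rank, precisely the failure of the determinant constraint $\det(\Lambda^\tau)=\ell^{\rank/2}=3^4$ underlying the $\ell$‑duality of Theorem~\ref{dual}.
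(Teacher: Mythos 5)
Your argument has a genuine gap, and it is exactly the one you flagged yourself but did not close: the two inputs $\det(L)=\det(\Lambda^\tau)\,|\tau|^2$ and $\mathrm{exponent}(L^*/L)\mid\ell$ are \emph{not} available for $\tau\in 3D$, and the second one is in fact false there. These statements are quoted from Section \ref{S:4.2}, where they are derived under the standing hypothesis $\tau\in\CP_0$, and they are not ``type-$0$-free'': the quadratic form $q'$ on $\mathrm{Irr}(V_{\Lambda_\tau}^{\hat\tau})$ records conformal weights modulo $\bZ$, and the $\hat{\tau}$-twisted sector contributes an element whose $q'$-value is $\phi(3D)=1-\tfrac19=\tfrac89$. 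In any finite quadratic space, an element $x$ with $q(x)\equiv \tfrac89 \pmod{\bZ}$ must have order divisible by $9$ (if $3x=0$ then $0=b(x,3x)=3\bigl(q(2x)-2q(x)\bigr)=6q(x)\equiv \tfrac{48}{9}\not\equiv 0$, a contradiction). So under the genus correspondence $(\mathcal{D}(L),q)\cong(\mathrm{Irr}(V_{\Lambda_\tau}^{\hat\tau}),-q')$ that you invoke, $L^*/L$ would contain elements of order $9$, not exponent dividing $\ell=3$. Once exponent $9$ is allowed, your counting collapses: a group of order $3^{10}$ and exponent $9$, e.g.\ $(\bZ/9\bZ)^2\oplus(\bZ/3\bZ)^6$, needs only $8$ generators, which is perfectly compatible with $\rank L=8$. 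In short, the exponent bound you need is essentially equivalent to the type-$0$ property $\phi(\tau)=1-1/\ell$, whose failure for $3D$ is precisely what distinguishes $3D$ from $3B$; assuming it is circular.

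The paper's proof turns this same quantity $\phi(3D)=8/9$ into the contradiction directly, at the level of conformal weights rather than lattice invariants. If $\gcd(N_0,3)=1$, then $3N_0\beta\in\Lambda^{3D}\cong\sqrt3E_8$ forces $\langle\beta,\beta\rangle$ to have $9$-free denominator, so the weights $\phi(3D)+\langle\beta,\beta\rangle/2\in \tfrac89+\tfrac{2\bZ}{3N_0^2}$ of the relevant twisted-sector vectors can never be integers, contradicting the existence of the integral-weight part needed for the reverse/orbifold construction; hence $3\mid N_0$. Then $\rank(\Lambda^{3D})=8$, $N_0\mid h_j^\vee$ and $|g|=3N_0=\LCM(r_jh_j^\vee)$ force $N_0=3$ and $V_1=A_8$, and finally $K_0/N_0=\dim V_1/(\dim V_1-24)=80/56=10/7$ is not of the form $K_0/3$. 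If you want to salvage your approach, this is the missing idea: for a non-type-$0$ class the obstruction must be extracted from the twisted-sector conformal weight itself (played against the integrality constraints of the reverse automorphism), not from discriminant-form facts that presuppose type $0$.
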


\pr  
Recall that  $\Lambda^{3D}\cong \sqrt{3}E_8$ (cf. \cite{HaLa}). If $\GCD(N_0,3)=1$, then 
$3N_0\beta\in \Lambda^{3D}$ and so $\langle \beta,\beta\rangle 
\in 6\bZ/9N_0^2=2\bZ/3N_0$, which contradicts 
$\phi(3D)+\frac{\langle \beta,\beta\rangle}{2}
\in 1-1/9+\frac{2\bZ}{3N_0}\subset \bZ$. 
Therefore, $3|N_0$. Since $\dim \bC\Lambda^{3D}=8$,  
$N_0|h^{\vee}_j$ for all $j$ and $|g|=N_0|\tau|=\LCM(\{r_jh_j^\vee\})$, the only possible case is $N_0=3$ and 
$V_1=A_8$. Since $K_0/N_0=\langle\alpha,\alpha\rangle/2= \dim V_1/(\dim V_1-24)$, we have $K_0/3= 80/56=10/7$, which is not possible.  
\prend

Suppose Proposition \ref{P0} is false and let $V$ be a counterexample such that $k=\rank(V_1)$ is maximal among all counterexamples. 
We also choose $V$ so that 
$\Comm(M(\CH),V)$ is largest among all counterexamples with $\rank(V_1)=k$.   

Let $\alpha$ be a $W$-element of $V_1$ and set 
$g=\exp(2\pi i\alpha(0))$. As we discussed, 
an orbifold construction from $V$ by $g$ gives $V_{\Lambda}$.    
Let $\tilde{g}=\widehat{\tau}\exp(2\pi i\beta(0))$ be 
the reverse automorphism of $g$ corresponding to the above orbifold construction. 
Since $V$ is a counterexample, $\tau\not\in \CP_0$. 
 
If $m$ divides $|\tau|$ and $m\not=1$, then  we have 
\[ \rank(V^{[g^m]}_1)>\rank V_1\quad  \text{ or } \quad \Comm(M(\CH),V^{[g^m]})> \Comm(M(\CH),V)
\]
by Proposition \ref{Comm}. 
Therefore, $V^{[g^m]}$ is not a counterexample for any 
$1\not=m \mid |\tau|$. 
Let $L$ be an even lattice such that  $\Comm(\Comm(M(\CH),V),V)\cong V_L$, where 
$\CH$ is a Cartan subalgebra of $V_1=\mathfrak{g}_{1, k_1} \oplus \cdots \oplus \mathfrak{g}_{r,k_r}$.
Let  $\alpha=\sum_{i=1}^{r} \rho_i/h_i^\vee$  be a W-element of $V_1$.   
Suppose  $\langle \alpha,\alpha\rangle=\frac{2K_0}{N_0}$ with 
$K_0,N_0\in \bZ$ and $\GCD(K_0,N_0)=1$.

\medskip

Now consider the VOA $V^{[g^{N_0}]}$ obtained by an orbifold construction 
from $V$ by $g^{N_0}=\exp(2\pi iN_0\alpha(0))$. 
Then $V^{[g^{N_0}]}$ also contains an abelian Lie subalgebra 
$\CH+\oplus_{j=0}^{|\tau|-1}U(jN_0\alpha)_1$ of rank 24. By viewing 
it as a Cartan subalgebra of $(V^{[g^{N_0}]})_1$,  
there is an even unimodular lattice $P$ of rank $24$ such that 
$V^{[g^{N_0}]} \cong V_P$. 
Furthermore, 
there is $\mu\in \bC\Lambda^{\tau}$ such that 
$(V_{\Lambda})^{[\exp(2\pi i\mu(0))]}=V_P$ since $(V^{[g^{N_0}]})^{[g]}=V_{\Lambda}$. 
In particular,  $\tau$ fixes $\mu$ and we may view $\tau$ as an isometry of $P$.

\begin{remark}  \label{Ptau}
Let $\tilde{g}'\in \Aut(V_P)$ be the reverse automorphism of $g^{N_0}\in \Aut(V)$. By our choice, $\CH+\oplus_{j=0}^{|\tau|-1}U(jN_0\alpha)_1$ 
is  a Cartan subalgebra of $V_P$ and thus, there is a $\delta\in 
\bC P^{\tau}$ and a standard lift $\widehat{\tau}'$ of $\tau$ 
such that $\tilde{g}'=\widehat{\tau}'\exp(2\pi i\delta(0))$ and $\langle N_0\alpha,\delta\rangle\equiv 1/|\tau| \pmod{\bZ}$. 
Then  $V=(V_P)^{[\widehat{\tau}'\exp(2\pi i\delta(0))]}$ and the actions of 
	$\widehat{\tau}$ and $\widehat{\tau}'$ on $U(jN_0\alpha)$ are 
	same; namely, they both act on $U(jN_0\alpha)$ as a multiple by a scalar 
	$e^{-2\pi ij/|\tau|}$.  
\end{remark}

Since $N_0\alpha\in L^{\ast}$,  $V^{<g^{N_0}>}$ contains $V_L$ and $L\subseteq 
\CH=\bC\Lambda^{\tau}$. We may also view $V_L$ as a subspace of $V_P^{<\widehat{\tau}' \exp(2\pi i\delta(0))>}$.
As a conclusion, we have the following.  

\begin{thm}\label{N_0} 
The VOA $V^{[e^{2\pi i N_0\alpha(0)}]}$ is isomorphic to a lattice VOA $V_P$  
	for an even unimodular lattice $P$ of rank $24$. 
	Furthermore, $P^{\tau}=L+\bZ N_0\alpha$ and $|g^{N_0}|= |\tau|$.
\end{thm}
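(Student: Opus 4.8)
The plan is to assemble the facts established in the three paragraphs preceding the statement into the asserted conclusion. I would dispose of the order claim first, since it is purely arithmetic: we recorded $n=|g|=N_0|\tau|$, so $\GCD(n,N_0)=N_0$ and the order of $g^{N_0}$ equals $n/\GCD(n,N_0)=|\tau|$, giving $|g^{N_0}|=|\tau|$.

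For the identification $V^{[g^{N_0}]}\cong V_P$, I would proceed as in the text. As $V$ is holomorphic of central charge $24$ and $g^{N_0}$ has finite order $|\tau|$, the cyclic orbifold $V^{[g^{N_0}]}$ is again holomorphic of central charge $24$; this is the same orbifold machinery that produced $V^{[g]}\cong V_\Lambda$, now applied to the subgroup $\langle g^{N_0}\rangle$. The essential input is a full-rank abelian subalgebra of its weight one space. By Proposition \ref{Comm}, $\oplus_{j=0}^{|\tau|-1}U(jN_0\alpha)_1$ has dimension $24-\rank(V_1)$ and $\CH+\oplus_{j=0}^{|\tau|-1}U(jN_0\alpha)_1$ is a rank $24$ Cartan subalgebra of $V_\Lambda$; since each $U(jN_0\alpha)$ sits in the $g^{-jN_0}$-twisted sector and $g^{N_0}$ has order $|\tau|$, these same spaces occur in $V^{[g^{N_0}]}$, so it too contains $\CH+\oplus_{j=0}^{|\tau|-1}U(jN_0\alpha)_1$ as a rank $24$ abelian Lie subalgebra. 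Taking this as a Cartan subalgebra, the rank $24$ Heisenberg subVOA it generates has central charge equal to that of $V^{[g^{N_0}]}$, so its commutant has central charge $0$; hence $V^{[g^{N_0}]}$ is a holomorphic full-rank extension of this Heisenberg and $\cong V_P$ for an even unimodular lattice $P$ of rank $24$.

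To realize $\tau$ as an isometry of $P$ and to compute $P^\tau$, I would use that $(V^{[g^{N_0}]})^{[g]}=V_\Lambda$, so $V_P$ arises from $V_\Lambda$ by an inner orbifold $\exp(2\pi i\mu(0))$ with $\mu\in\bC\Lambda^\tau$; as $\tau$ fixes $\mu$, it descends to an isometry of $P$, exactly as recorded in Remark \ref{Ptau}. For $P^\tau$ I would mirror the computation of Section \ref{S:4.2}, which gave $\Lambda^\tau=L^{[\alpha]}+\bZ N_0\alpha$ for $V_\Lambda$, but carry it out for $V_P$, whose reverse automorphism is $\widehat{\tau}'\exp(2\pi i\delta(0))$ and whose distinguished direction is $N_0\alpha$. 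Since $N_0\alpha\in L^\ast$, the subVOA $V_L$ is fixed by $g^{N_0}$, hence lies in $V^{\langle g^{N_0}\rangle}=V_P^{\langle\widehat{\tau}'\exp(2\pi i\delta(0))\rangle}$, and because $L\subseteq\CH=\bC\Lambda^\tau$ is $\tau$-fixed we get $L\subseteq P^\tau$; moreover $N_0\alpha$ becomes a lattice vector of $P$ under the orbifold by $\exp(2\pi iN_0\alpha(0))$ and is $\tau$-fixed, so $L+\bZ N_0\alpha\subseteq P^\tau$. The $\tau$-fixed part of the rank $24$ Cartan is exactly $\CH$, so $P^\tau$ is the rank $\rank(V_1)$ lattice $P\cap\CH$, and the reverse inclusion follows from an index count: $N_0\alpha$ has order $|\tau|$ in $L^\ast/L$ (the $|\tau|$ modules $U(jN_0\alpha)$ are distinct while $|\tau|N_0\alpha=n\alpha\in L$), so $[L+\bZ N_0\alpha:L]=|\tau|$, which agrees with $[P^\tau:L]=|\tau|$ forced by $\det(P^\tau)=\det(\Lambda^\tau)=\det(L)/|\tau|^2$.

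I expect the main obstacle to be precisely this last determinant/index matching: confirming $\det(P^\tau)=\det(\Lambda^\tau)$, equivalently $[P^\tau:L]=|\tau|$, so that the inclusion $L+\bZ N_0\alpha\subseteq P^\tau$ is an equality. This requires knowing that $\tau$ acts on the unimodular lattice $P$ with the same invariant-lattice determinant as on $\Lambda$, which I would extract either from the frame-shape invariance of the determinant of fixed-point sublattices of unimodular lattices or directly from the simple-current structure of $\Comm(M(\CH),V_P)$; by contrast, the existence of $P$ and the order computation are routine once the rank $24$ abelian subalgebra is exhibited.
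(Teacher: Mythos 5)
Your first three steps track the paper's argument: the order computation from $n=N_0|\tau|$, the existence of $P$ from the rank $24$ abelian subalgebra $\CH+\oplus_{j=0}^{|\tau|-1}U(jN_0\alpha)_1$ supplied by Proposition~\ref{Comm}, the realization of $\tau$ as an isometry of $P$ via $(V^{[g^{N_0}]})^{[g]}=V_\Lambda$, and the inclusion $L+\bZ N_0\alpha\subseteq P^\tau$ are all as in the text. The genuine gap is in your closing step, the reverse inclusion $P^\tau\subseteq L+\bZ N_0\alpha$. You reduce it to the equality $\det(P^\tau)=\det(\Lambda^\tau)$ and propose to justify that by ``frame-shape invariance of the determinant of fixed-point sublattices of unimodular lattices.'' No such invariance holds, and the paper itself supplies a counterexample: $\tau$ acts on the Niemeier lattice $N=\Lambda_{\tilde{\beta}}+\bZ\tilde{\beta}$ as the restriction of the very same linear map, hence with the same frame shape as on $\Lambda$, and yet $\det(N^\tau)=\det(\Lambda^\tau)/|\tau|^2$ --- this is precisely how $[N_\tau:\Lambda_\tau]=|\tau|$ is proved in Section 5.1. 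Moreover $V_N$, like $V_P$, is obtained from $V_\Lambda$ by an inner orbifold along a $\tau$-invariant vector, so no argument using only that structural description of $P$ can force $\det(P^\tau)=\det(\Lambda^\tau)$; that equality carries essentially the same content as the statement you are trying to prove. Your fallback (``the simple-current structure of $\Comm(M(\CH),V_P)$'') points in the right direction but is not an argument as stated.

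What actually closes the proof, and what the paper's discussion is implicitly doing, is a direct computation of $P\cap\CH$ from the sector decomposition of $V_P=V^{[g^{N_0}]}$, with no determinant input. By Remark~\ref{Ptau} the $\tau$-fixed part of $\bC P$ is $\CH$, so $P^\tau=P\cap\CH$, and $\mu\in P\cap\CH$ exactly when $e^{\mu}\in V_P$. Now $V_P$ is assembled from the $\langle g^{N_0}\rangle$-invariant integral-weight parts of the $g^{jN_0}$-twisted $V$-modules, $0\le j<|\tau|$. In the untwisted sector, $V=\oplus_{\delta\in L^*}U(\delta)\otimes M(\CH)e^{\delta}$, and $e^{\delta}$ occurs precisely when $U(\delta)$ contains the vacuum of $\Comm(M(\CH),V)$, i.e.\ precisely for $\delta\in L$ (this is the statement $\Comm(\Comm(M(\CH),V),V)\cong V_L$). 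Since $g^{jN_0}=\exp(2\pi i jN_0\alpha(0))$ is inner, the $g^{jN_0}$-twisted module is $V$ with its $\CH$-weights shifted by $jN_0\alpha$, so it contributes $e^{\mu}$ precisely for $\mu\in L+jN_0\alpha$, and these vectors are $g^{N_0}$-invariant because $\langle N_0\alpha,\mu\rangle\in\bZ$ for such $\mu$. Hence $P\cap\CH=\bigcup_{j=0}^{|\tau|-1}(L+jN_0\alpha)=L+\bZ N_0\alpha$, which is exactly the equality your determinant count was meant to deliver; this structural step is what you need to make explicit in place of the index argument.
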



As we have shown in Theorem \ref{N_0}, 
$V^{[\exp(2\pi iN_0\alpha(0)]}$ is a Niemeier lattice VOA $V_P$ and  
there is $\delta \in \bC P^{\tau}$ such that $\widehat{\tau}\exp(2\pi i\delta(0))$ is the reverse automorphism of $\exp(2\pi iN_0\alpha(0))$. 



\begin{lmm}
By taking a suitable standard lift $\widehat{\tau}$ of $\tau$, 
we may assume $\delta\in L^{\ast}$. 
\end{lmm}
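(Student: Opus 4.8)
The plan is to identify $\delta$, up to the ambiguity in the standard lift, as an $\CH$-charge carried by a twisted sector of $V$, and then to use that every $\CH$-charge occurring in $V$ lies in $L^\ast$ by construction. I would begin by assembling the structural facts already available. By Theorem \ref{N_0} and Remark \ref{Ptau}, $V=(V_P)^{[\tilde{g}']}$ with $\tilde{g}'=\widehat{\tau}'\exp(2\pi i\delta(0))$ the reverse automorphism of $\exp(2\pi i N_0\alpha(0))$, where $P^\tau=L+\bZ N_0\alpha$ and $N_0\alpha\in L^\ast$. Since $L\subseteq P^\tau$, taking duals gives $(P^\tau)^\ast\subseteq L^\ast$, and since $P$ is even unimodular the projection satisfies $\pi(P)=(P^\tau)^\ast$. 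These two containments are exactly what will let me pass from a single realized charge to $\delta$ itself.

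The decisive step is a charge computation. By Remark \ref{Twg}, the irreducible $\tilde{g}'$-twisted $V_P$-module has lattice part $\bC[-\delta+\pi(P)]=\bC[-\delta+(P^\tau)^\ast]$; consequently the graded subspace of $V_P[\tilde{g}']$ that contributes to the orbifold $V=(V_P)^{[\tilde{g}']}$ carries $\CH$-charges lying in $-\delta+(P^\tau)^\ast$. On the other hand, from Proposition \ref{Comm} and the decomposition $V=\bigoplus_{\mu\in L^\ast}U(\mu)\otimes M(\CH)e^\mu$ recalled at the outset, every $\CH$-charge occurring in $V$ lies in $L^\ast$. Because the orbifold recovers $V$, this twisted sector is nonzero, so there is some $\mu_0\in(-\delta+(P^\tau)^\ast)\cap L^\ast$; then $\mu_0\in L^\ast$ and $\mu_0+\delta\in(P^\tau)^\ast\subseteq L^\ast$, whence $\delta=(\mu_0+\delta)-\mu_0\in L^\ast$, as desired.

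The point demanding the most care, and the role played by the phrase ``suitable standard lift,'' is to ensure that the charges read off from $V_P[\tilde{g}']$ are genuinely charges of $V$. For this I would require $\widehat{\tau}'$ to be chosen of type $0$ with the normalization $\langle N_0\alpha,\delta\rangle\equiv 1/|\tau|\pmod{\bZ}$ of Remark \ref{Ptau}, so that the $\tilde{g}'$-twisted module meets $V$ in integral degree and thus contributes a nonzero piece whose $\CH$-charges lie in $L^\ast$; verifying this type-$0$ condition for each class $\tau\in\CP_0$, where $\phi(\tau^m)=1-1/|\widehat{\tau^m}|$, is the technical heart. I would also note that the conclusion is insensitive to the residual ambiguity: replacing $\widehat{\tau}'$ by $\widehat{\tau}'\exp(2\pi i\gamma(0))$ with $\gamma\in(P^\tau)^\ast$ merely moves $\delta$ within the coset $-\delta+(P^\tau)^\ast$, and since $(P^\tau)^\ast\subseteq L^\ast$ the membership $\delta\in L^\ast$ is a property of the whole coset; thus once a single normalized type-$0$ lift is in hand, $\delta\in L^\ast$ follows.
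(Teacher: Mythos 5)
Your core argument is correct, and it is genuinely different from the paper's. The paper argues by explicit conjugation: it produces $\mu\in L^{\ast}$ with $\langle\alpha,\mu\rangle\equiv 1/N_0|\tau|\pmod{\bZ}$, observes that $\delta-\mu\in (P^{\tau})^{\ast}=\pi(P)$, writes $\delta-\mu=\pi(\xi)$ with $\xi\in P$, and then conjugates $\widehat{\tau}\exp(2\pi i\delta(0))$ by $\exp(2\pi i\tilde{h}(0))$ with $\tilde{h}\in\CH^{\perp}$ chosen via the fixed-point free action of $\tau$ on $\CH^{\perp}$, so that the automorphism becomes $\widehat{\tau}\exp(2\pi i\mu(0))$ with $\mu\in L^{\ast}$. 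You instead compare charge supports: the $\CH$-charges of $V$ lie in $L^{\ast}$ by the decomposition of Section \ref{S:2}, the $\CH$-charges of the contributing twisted sector lie in $\mp\delta+(P^{\tau})^{\ast}$ by Remark \ref{Twg}, and $(P^{\tau})^{\ast}\subseteq L^{\ast}$ because $L\subseteq P^{\tau}=L+\bZ N_0\alpha$ is a full-rank sublattice (here $N_0\alpha\in L^{\ast}$ is needed, Proposition \ref{Comm}); intersecting gives $\delta\in L^{\ast}$ outright. Your route in fact yields a slightly stronger conclusion than the paper's: since $\delta$ is only well defined modulo $(P^{\tau})^{\ast}\subseteq L^{\ast}$, membership in $L^{\ast}$ holds for \emph{every} decomposition with a standard lift, with no replacement of the automorphism needed, whereas the paper passes to a conjugate.

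However, the step you call the ``technical heart'' is both unnecessary and, in this context, circular, and as written it would invalidate the proof. The lemma sits inside the proof of Proposition \ref{P0} ($\CP\subseteq\CP_0$) and is applied to a hypothetical counterexample, i.e.\ precisely to $\tau$ that may \emph{not} lie in $\CP_0$ (its output feeds Lemmas \ref{twist}, \ref{2}, \ref{0} and \ref{5}); so you may not assume $\tau\in\CP_0$, nor invoke the identity $\phi(\tau^m)=1-1/|\widehat{\tau^m}|$, which the paper establishes only for classes in $\CP_0$. Fortunately, no type-$0$ verification is required to know that the twisted sector contributes nontrivially to $V$: by Theorem \ref{N_0}, $g^{N_0}$ has order exactly $|\tau|$ on $V$, so by simplicity of $V$ every eigenspace $V^{j}$ of $g^{N_0}$ is a nonzero irreducible $V^{g^{N_0}}$-module (this standard fact underlies the orbifold machinery used throughout the paper), and by the reverse-construction setup of Theorem \ref{N_0} and Remark \ref{Ptau} the eigenspace $V^{1}$ is a submodule of the $\tilde{g}'^{\pm 1}$-twisted $V_P$-module. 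Hence its charges lie in $L^{\ast}\cap(\mp\delta+(P^{\tau})^{\ast})\neq\emptyset$, which is all your argument needs. With your third paragraph replaced by this observation, the proof is sound.
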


\pr By the definition of W-element, 
there is $\mu\in L^{\ast}$ such that $\langle \alpha,\mu\rangle 
\equiv 1/N_0|\tau| \pmod{\bZ}$. 
Therefore, $\delta-\mu\in (P^{\tau})^{\ast}$ and 
there is $\xi\in P$ such that $\pi(\xi)=\delta-\mu$. 
Set $\xi=\xi'+\pi(\xi)$ with $\xi'\in \CH^{\perp}$
Since $\exp(2\pi i\xi(0))=1$ and $\tau$ acts on 
$(\CH)^{\perp}$ fixed point freely, there is $\tilde{h}\in 
\CH^{\perp}$ such that 
$$\begin{array}{l}
\exp(2\pi i\tilde{h}(0))^{-1}(\widehat{\tau}\exp(2\pi i\delta(0))\exp(2\pi i\tilde{h}(0)
=\exp(2\pi i\tilde{h}(0))^{-1}\widehat{\tau} 
\exp(2\pi i\tilde{h}(0))\exp(2\pi i\delta(0))\cr
=\widehat{\tau}\exp(2\pi i\xi'(0))\exp(2\pi i\delta(0))
=\widehat{\tau}\exp(2\pi i\xi(0))\exp(-2\pi i\pi(\xi)(0))\exp(2\pi i\delta(0))\cr
=\widehat{\tau}\exp(2\pi i\mu(0)).
\end{array}$$ 
Therefore, 
$\widehat{\tau}\exp(2\pi i\delta(0))=\widehat{\tau}
\exp(\mu(0))\exp(\pi \xi(0))$ 
is conjugate to $\widehat{\tau}\exp(2\pi i\mu(0)$. 
Replacing $\widehat{\tau}$ by $\exp(2\pi i\tilde{h}(0))^{-1}(\widehat{\tau}\exp(2\pi i\delta(0))\exp(2\pi i\tilde{h}(0))$, 
we may assume $\delta\in L^{\ast}$. \prend


By Theorem \ref{N_0}, we have $P^{\tau}=L+\bZ N_0\alpha$ and $rN_0\alpha\in L$ if and only if 
$|\tau|$ divides $r$ and $\langle \delta, N_0\alpha\rangle\equiv 
1/|\tau| \pmod{\bZ}$. Therefore,  
$m\delta\not\in\pi(P)$ if $|\tau|\not| m$. 
Hence if $|\tau|\not| m$, then the lowest weight of 
$\widehat{\tau}\exp(2\pi i\delta(0))$-twisted module 
is greater than $\phi(\tau)$. 
Therefore, we have:  

\begin{lmm}\label{twist}
If $\phi(\tau^m)\geq 1$, then $\widehat{\tau}^m\exp(2\pi im\delta(0))$-twisted module does not contain a weight one element. 
\end{lmm}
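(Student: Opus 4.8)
The plan is to read off the conformal weights in the $g^m$-twisted $V_P$-module and to show that the hypothesis $\phi(\tau^m)\ge 1$ leaves no room for a vector of weight exactly $1$. First I would note that, since $\tau$ fixes $\delta$, the automorphisms $\widehat{\tau}$ and $\exp(2\pi i\delta(0))$ commute, so $g^m=\widehat{\tau}^{\,m}\exp(2\pi i m\delta(0))$. Applying Remark \ref{Twg} to the isometry $\tau^m$ and the shift $v=m\delta$, the unique irreducible $g^m$-twisted $V_P$-module is
$$M(1)[\tau^m]\otimes\bC[-m\delta+\pi_{\tau^m}(P)]\otimes T_m,$$
and a general homogeneous vector has conformal weight $\sum_i m_i+\tfrac12\langle\gamma,\gamma\rangle+\phi(\tau^m)$, where the $m_i\in\tfrac1{|\tau^m|}\bZ_{>0}$ are oscillator modes and $\gamma\in-m\delta+\pi_{\tau^m}(P)$.

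The key step is a forcing argument. Suppose for contradiction that this module contains a weight one vector. Since the three summands above are nonnegative and $\phi(\tau^m)\ge1$, the equation $\sum_i m_i+\tfrac12\langle\gamma,\gamma\rangle+\phi(\tau^m)=1$ forces $\phi(\tau^m)=1$, no oscillators (so $\sum_i m_i=0$), and $\langle\gamma,\gamma\rangle=0$. Because $P$ is positive definite this gives $\gamma=0$, i.e.\ $m\delta\in\pi_{\tau^m}(P)=(P^{\tau^m})^*$.

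It then remains to contradict $m\delta\in(P^{\tau^m})^*$. From $\phi(\tau^m)=1\ne0=\phi(1)$ we get $\tau^m\ne1$, hence $|\tau|\nmid m$; by the discussion immediately preceding the lemma this yields $m\delta\notin\pi_\tau(P)=(P^\tau)^*$. Since $P^\tau\subseteq P^{\tau^m}$, dualizing reverses the inclusion to $(P^{\tau^m})^*\subseteq(P^\tau)^*$, so $m\delta\in(P^{\tau^m})^*$ would force $m\delta\in(P^\tau)^*$, a contradiction. Therefore no weight one vector exists.

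The main obstacle is the bookkeeping with the two distinct projections $\pi_\tau$ and $\pi_{\tau^m}$: one must use that $P$ is even unimodular to identify $\pi_\tau(P)=(P^\tau)^*$ and $\pi_{\tau^m}(P)=(P^{\tau^m})^*$, and then exploit the reverse inclusion of duals coming from $P^\tau\subseteq P^{\tau^m}$ in order to transport the failure $m\delta\notin(P^\tau)^*$ down to $(P^{\tau^m})^*$. A secondary point to check is that $\widehat{\tau}^{\,m}$ may be treated as the standard lift of $\tau^m$ (or differs from it only by an inner automorphism already absorbed into the shift $m\delta$), so that the conformal weight formula with the term $\phi(\tau^m)$ is genuinely the one governing the $g^m$-twisted module.
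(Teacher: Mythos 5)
Your overall strategy is exactly the paper's: the paper gives no separate proof of this lemma, deriving it directly from the preceding paragraph via the conformal weight formula of Remark \ref{Twg} together with the integrality obstruction $\langle\delta, N_0\alpha\rangle \equiv 1/|\tau| \pmod{\bZ}$, and your forcing argument (weight one $\Rightarrow$ $\phi(\tau^m)=1$, no oscillators, $\gamma=0$, hence $m\delta\in\pi_{\tau^m}(P)$) is just the contrapositive of the paper's statement that the lowest weight exceeds $\phi(\tau^m)\geq 1$. You are in fact more careful than the paper in distinguishing $\pi_\tau$ from $\pi_{\tau^m}$.

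However, the step you use to bridge the two projections is wrong as stated: it is not true that $(P^{\tau^m})^*\subseteq(P^\tau)^*$. Dualizing reverses inclusions only for full-rank sublattices of a common ambient space; here $P^\tau$ and $P^{\tau^m}$ span different subspaces (in general $\rank P^{\tau^m} > \rank P^\tau$, since $\tau^m$ fixes more), so $(P^{\tau^m})^*$, which spans $\bR P^{\tau^m}$, cannot be contained in $(P^\tau)^*\subset\bR P^\tau$. The repair is immediate, though, using a fact you already invoked, namely that $\delta$ is $\tau$-invariant and hence $m\delta \in \bR P^\tau$: if $m\delta\in(P^{\tau^m})^*$, then $\langle m\delta, P^\tau\rangle\subseteq\langle m\delta,P^{\tau^m}\rangle\subseteq\bZ$ because $P^\tau\subseteq P^{\tau^m}$, and since $m\delta\in\bR P^\tau$ this is precisely the statement $m\delta\in(P^\tau)^*$, giving your contradiction. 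Equivalently, one can skip $(P^\tau)^*$ altogether and pair $m\delta$ directly against $N_0\alpha\in P^\tau\subseteq P^{\tau^m}$: the relation $\langle\delta,N_0\alpha\rangle\equiv 1/|\tau|\pmod{\bZ}$ gives $\langle m\delta,N_0\alpha\rangle\equiv m/|\tau|\notin\bZ$ when $|\tau|\nmid m$, so $m\delta\notin(P^{\tau^m})^*$. With this one-line correction your proof is complete and coincides with the paper's argument.
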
 


\begin{lmm}\label{2}
$<\tau>$ does not contain $-2A, 3C, 5C$. 
\end{lmm}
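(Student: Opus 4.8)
The three classes $-2A$, $3C$, $5C$ are precisely the conjugacy classes in Table~\ref{Table:main} with conformal weight $\phi=1$: their frame shapes $2^{16}/1^{8}$, $3^{9}/1^{3}$, $5^{5}/1^{1}$ all give $\sum_i a_i/n_i=0$, hence $\phi=1-0=1$. The plan is to argue by contradiction: if $\langle\tau\rangle$ contained such a $\sigma$, say $\sigma=\tau^{m}$, then the twisted sector that is forced to carry a weight-one vector would be one with $\phi(\tau^{m})=1$, which is forbidden by Lemma~\ref{twist}.

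\textbf{Key steps.} First I would suppose $\sigma=\tau^{m}\in\{-2A,3C,5C\}$, so $\phi(\tau^{m})=\phi(\sigma)=1\ge 1$. In the picture set up above, $V_{P}=V^{[g^{N_0}]}$ is a Niemeier lattice VOA, $\tilde{g}'=\widehat{\tau}\exp(2\pi i\delta(0))$ with $\delta\in L^{*}$ is the reverse automorphism of $g^{N_0}$, and $V=V_{P}^{[\tilde{g}']}$. Then $\tilde{g}'^{\,m}=\widehat{\tau}^{m}\exp(2\pi i m\delta(0))$, and Lemma~\ref{twist} applies with this $m$: since $\phi(\tau^{m})\ge 1$, the irreducible $\tilde{g}'^{\,m}$-twisted $V_{P}$-module has no weight-one vector. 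In the decomposition $\Comm(M(\CH),V^{[g]})=\bigoplus_{j=0}^{|\tau|-1}U(jN_0\alpha)$ of Proposition~\ref{Comm}, the summand $U(mN_0\alpha)$ is realized inside precisely this twisted sector (it occurs in the $g^{-mN_0}$-twisted $V$-module), so $U(mN_0\alpha)_{1}=0$.

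Next I would produce the required weight-one vector from the other side. By Proposition~\ref{Comm}, $\CH+\bigoplus_{j}U(jN_0\alpha)_{1}$ is a full rank-$24$ Cartan subalgebra of $V^{[g]}\cong V_{\Lambda}$, on which $\tilde{g}$ acts as $\tau$ acts on $\bC\Lambda$, with $U(jN_0\alpha)_{1}$ the $e^{-2\pi i j/|\tau|}$-eigenspace (Remark~\ref{Ptau}). Hence $\dim U(jN_0\alpha)_{1}$ equals the multiplicity of the eigenvalue $e^{2\pi i j/|\tau|}$ of $\tau$ on $\bC\Lambda$, i.e.\ the multiplicity of that root of unity in the characteristic polynomial $\prod_k(x^{k}-1)^{a_k}$ determined by the frame shape of $\tau$. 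For $j=m$ this eigenvalue is a primitive $d$-th root of unity, where $d=|\sigma|\in\{2,3,5\}$, so I must check that the cyclotomic factor $\Phi_{d}$ divides $\prod_k(x^{k}-1)^{a_k}$. This forces $U(mN_0\alpha)_{1}\ne 0$, contradicting the previous step. When $\tau$ itself is one of the three classes the contradiction is immediate, since the leading frame-shape exponent ($16$, $9$, $5$) is positive; in general I would reduce to this by applying the same count to the generator $\tau^{|\tau|/d}$ of $\langle\sigma\rangle$.

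\textbf{Main obstacle.} The delicate point is matching the sector forbidden by Lemma~\ref{twist} with a sector whose weight-one space is \emph{forced} to be nonzero. Abstractly, $\sigma=\tau^{m}$ of order $d$ only guarantees that \emph{some} $\tau$-eigenvalue has $m$-th power a primitive $d$-th root, not that the specific value $e^{2\pi i m/|\tau|}$ occurs (a hypothetical $\tau$ of order $8$ with $\tau^{4}=-2A$ would evade the naive count, as the contributing eigenvalues would be primitive $8$-th roots rather than $-1$). The resolution is that no such $\tau$ appears among the finitely many classes of Table~\ref{Table:main} — for instance the only order-$8$ class with $\dim\Lambda^{\tau}\ge4$ is $8E\in\CP_0$, whose powers avoid $-2A$ — so the positivity of the relevant frame-shape multiplicity can be confirmed by finite inspection. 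I expect this bookkeeping, namely verifying for each $\tau$ with $\{-2A,3C,5C\}\cap\langle\tau\rangle\neq\emptyset$ that the primitive $d$-th root genuinely occurs in its frame shape, to be the only real labor in the proof.
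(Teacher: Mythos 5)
Your strategy — kill $U(mN_0\alpha)_1$ via Lemma~\ref{twist}, then force it to be nonzero by counting the multiplicity of the eigenvalue $e^{2\pi i m/|\tau|}$ of $\tau$ on the rank-$24$ Cartan subalgebra of $V_\Lambda$ — is genuinely different from the paper's, but it has a fatal gap at exactly the point you flagged as the ``main obstacle,'' and your proposed resolution of that obstacle is factually wrong. The finite inspection does \emph{not} come out in your favor: take $\tau\in -4A$, which has frame shape $1^84^8/2^8$ and $\tau^2=-2A$. Its characteristic polynomial is $(x-1)^8(x^4-1)^8/(x^2-1)^8=(x-1)^8(x^2+1)^8$, so the eigenvalue $e^{2\pi i\cdot 2/4}=-1$ has multiplicity $8-8=0$. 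Similarly for $\tau\in -12E$ (frame shape $1^23^24^212^2/2^26^2$, with $\tau^6=-2A$): the multiplicity of $-1$ is $2+2-2-2=0$. In both cases your eigenvalue count gives $\dim U(mN_0\alpha)_1=0$, perfectly consistent with Lemma~\ref{twist}, so no contradiction arises and these classes survive your argument. But $-4A$ and $-12E$ are precisely among the classes that Lemma~\ref{2} is cited to eliminate in Table~\ref{Table:main} (their cyclic groups contain $-2A$), so the proof fails exactly where it is needed. Your spot check at order $8$ was correct but looked in the wrong place: the evasion you hypothesized actually occurs at orders $4$ and $12$. (A secondary issue, which I will not press, is that the identification of the copy of $U(mN_0\alpha)$ inside $V_\Lambda$ with a submodule of the $\tilde{g}'^{\,m}$-twisted $V_P$-module also needs justification; weight content is intrinsic to the module, but the sector bookkeeping between twisted modules of $V$, $V_P$ and $V_\Lambda$ is not automatic.)

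The paper's proof avoids this trap by never requiring the eigenvalue $e^{2\pi i m/|\tau|}$ to occur in the spectrum of $\tau$. It exploits the maximal-counterexample setup of Proposition~\ref{P0}: since $\tau$ itself is not in $\{-2A,3C,5C\}$ (by \cite{CLM}, Corollary~4.9), any $m$ with $\tau^m\in\{-2A,3C,5C\}$ has $m\neq 1$, so $\tilde{V}=V^{[g^{pN_0}]}=(V_P)^{[(\widehat{\tau}')^m\exp(2\pi i m\delta(0))]}$ is \emph{not} a counterexample, and the isometry $\sigma$ attached to a $W$-element of $\tilde{V}_1$ lies in $\CP_0$. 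Lemma~\ref{twist} is used only to show $\tilde{V}_1=(V_P)^{\langle(\widehat{\tau}')^m\exp(2\pi i m\delta(0))\rangle}_1$, whose root analysis constrains $\tilde{r}_j\tilde{k}_j$ and hence $|\widehat{\sigma}|$ ($=p$ for odd $p$; $\in\{2,4\}$ for $p=2$); combined with $\bC P^{\sigma}=\bC P^{\tau^m}$ this forces $\rank\Lambda^{\sigma}=\rank\Lambda^{\tau^m}\in\{8,6,4\}$, and inspection of $\CP_0$ shows no class with the required order and rank exists. This argument goes through for $-4A$ and $-12E$ because it only uses the rank of the fixed sublattice of $\tau^m$, not any particular eigenvalue of $\tau$. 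To repair your proof you would need an input of this kind (comparison with a known-good orbifold, or some other mechanism) precisely for the classes where your eigenvalue multiplicity vanishes; the eigenvalue count alone cannot close the argument.
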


\pr 
Let $m$ be a divisor of $|\tau|$. Suppose $|\tau^m|=p$ is a prime and 
$\phi(\tau^m)\geq 1$. 

Let  
$\tilde{V}:=V^{[g^{pN_0}]}$. From the property of orbifold construction and the choice of $\widehat{\tau}'$ and $\delta$, 
we have $\tilde{V}=(V_P)^{[(\widehat{\tau}')^m\exp(2\pi im\delta(0))]}$.  
From Corollary 4.9 in \cite{CLM}, 
$\tau\not=-2A, 3C, 5C$ and so $m\not=1$ and 
$\tilde{V}$ is not a counterexample. 
 
Let $\tilde{\alpha}$ be a W-element of $\tilde{V}_1$ and 
$\widehat{\sigma}\exp(2\pi i\mu(0))$ the reverse automorphism of  
$\exp(2\pi i\tilde{\alpha}(0))$. Then $\sigma\in \CP_0$ and we can use the results in \S 3 $\sim$ 5.

Next we will show that $\bC P^{\sigma}=\bC P^{\tau^m}$. 
As we explained in Section \ref{S:2}, 
$\CH+\oplus_{j=0}^{m-1}U(jpN_0\alpha)$ is a 
Cartan subalgebra of $\tilde{V}_1$. 
Since $\langle \delta,\alpha\rangle\equiv 1/|\tau| \pmod{\bZ}$, 
$\widehat{\tau'}$ acts on $U(jN_0\alpha)$ as a multiple by 
a scalar $e^{-2\pi ij/|\tau|}$ for all $j\in \bZ$. 
In particular, 
 $\bC P^{\tau^m}=\CH+\oplus_{j=0}^{|\tau|-1}U(jpN_0\alpha)_1$ and so $\bC P^{\sigma}=\bC P^{\tau^m}$. 
 
 We next show that if the order $p$ of $\tau^m$ is an odd prime, we have $|\widehat{\sigma}|=p$  
and if the order of $\tau^m$ is $2$ we have $|\widehat{\sigma}|=2$ or $4$. 

Since $\phi(\tau^m)\geq 1$, 
$\widehat{\tau'}^m\exp(2\pi im\delta(0))$-twisted module 
contains no weight one element and so we have 
$$\tilde{V}_1=
(V_P)^{<\widehat{\tau'}^m\exp(2\pi im\delta(0))>}_1.$$

Since $|\tau^m|=p$ is a prime,  the root vectors of 
$(V_P)^{<\widehat{\tau'}^m\exp(2\pi im\delta(0))>}_1$  are given by 

(1) $e^{\mu}$ with $\mu\in P^{\tau}$ or

(2) $e^{\mu}+\cdots +(\widehat{\tau'}\exp(2\pi im\delta(0)))^{p-1}(e^{\mu})$ with $\mu\in P$.  In this case, it corresponds to the root associated with $\pi(\mu)\in (P^\tau)^*$ and $\langle \pi(\mu), \pi(\mu) \rangle =2/\tilde{r}_j\tilde{k}_j$.  Note also that 
\[
\langle \pi(\mu), \pi(\mu) \rangle = \langle \pi(\mu), \frac{1}{p} \sum_{j=0}^{p-1} \tau^{mj} \mu \rangle =
\langle \mu, \frac{1}{p} \sum_{j=0}^{p-1} \tau^{mj} \mu \rangle \in \bZ/p 
\]
\noindent  
Therefore, if $p$ is an odd prime, then $\tilde{r}_j\tilde{k}_j=1$ or $p$ 
and if $p=2$, then $\tilde{r}_j\tilde{k}_j=1,2,4$.  

On the other hand, since $\sigma\in \CP_0$, we have shown  
$|\widehat{\sigma}|={\rm LCM}(\{\tilde{r}_j\tilde{k}_j\mid j\})$. 
If $\tilde{r}_j\tilde{k}_j=1$ for all $j$, then  we can easily get a contradiction since 
$\rank(\widetilde{V}_1)=8, 6, 4$, respectively.  

Hence if the order $p$ of $\tau^m$ is an odd prime, we have $|\widehat{\sigma}|=p$  
and if the order of $\tau^m$ is $2$ we have $|\widehat{\sigma}|=2$ or $4$. 
From the choice of $\sigma$, we have 
$\rank(\bC\Lambda^{\sigma})=\rank(\bC\Lambda^{\tau^m})$.  
Therefore, if $\tau^m=-2A, then \dim \Lambda^{-2A}=8$, but 
there is no $\sigma\in \CP_0$ of order $2$ or $4$ such that 
$\Lambda^{\sigma}=8$. 
If $\tau^m=3C$, then $\rank \Lambda^{3C}=6$, but there is no $\sigma\in \CP_0$ of order $3$ with 
$\rank \Lambda^{\sigma}=6$. 
If $\tau=5C$, then $\rank \Lambda^{5C}=4$, but there is no $\sigma\in \CP_0$ of order $5$ with 
$\rank \Lambda^{\sigma}=4$.  
\prend

\begin{lmm} \label{2.5}
$\tau\not\in -4C$.
\end{lmm}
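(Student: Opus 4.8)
The plan is to exploit that $\phi(-4C)=1$ is too large to permit a weight-one root vector in any odd twisted sector, while the arithmetic of the levels forces some root into exactly such a sector. Assume for contradiction that $\tau=-4C$ occurs, so that $V=V_\Lambda^{[\tilde g]}$ with $\tilde g=\hat\tau\exp(2\pi i\beta(0))$ of order $n=|g|=|\tau|N_0=4N_0$, and $V$ decomposes into the $\tilde g^{i}$-twisted sectors $V_\Lambda[\tilde g^{i}]$, $0\le i\le n-1$. From the frame shape $1^{-4}2^{6}4^{4}$ and the formula $\phi(\tau)=1-\frac{1}{24}\sum a_i/n_i$ (Table \ref{Table:main}) one reads off $\phi(\tau)=1$; since every odd power $\tau^{s}$ again lies in the class $-4C$ and so has the same frame shape, we get $\phi(\tau^{s})=1$ for all odd $s$.

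First I would record the obstruction in the odd sectors. By Remark \ref{Twg} a general vector of $V_\Lambda[\tilde g^{s}]$ has conformal weight $\sum_i m_i+\tfrac12\langle\beta',\beta'\rangle+\phi(\tau^{s})$ with $m_i>0$ and $\beta'\in -s\beta+(\Lambda^{\tau})^{*}$. For $s$ odd, $\phi(\tau^{s})=1$, so a weight-one vector forces $\sum_i m_i=0$ and $\langle\beta',\beta'\rangle=0$, whence $\beta'=0$. As the $\bC\Lambda^{\tau}$-weight of such a vector is governed by $\langle\,\cdot\,,\beta'\rangle$, it is zero; thus no weight-one \emph{root} vector of $V_1$ can lie in an odd twisted sector.

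Next I would force a root into an odd sector. By Remark \ref{og}, $\mathrm{LCM}(\{r_jh^{\vee}_j\})=n=4N_0$, so writing $v_2$ for the $2$-adic valuation, some component $\CG_j$ satisfies $v_2(r_jh^{\vee}_j)=v_2(n)$. For a simple short root $u$ of that $\CG_j$ we have $\langle\alpha,u\rangle=1/(r_jh_j^{\vee})$, and by Remark \ref{full0} the root vector $e^{u}\otimes t$ lies in the $\tilde g^{s_j}$-twisted sector with $s_jr_jh_j^{\vee}=n$; hence $v_2(s_j)=v_2(n)-v_2(r_jh_j^{\vee})=0$, so $s_j$ is odd. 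Since $u\neq 0$ is a root, $e^{u}\otimes t$ is a weight-one element of $V_1$ of nonzero $\bC\Lambda^{\tau}$-weight sitting in the odd sector $\tilde g^{s_j}$, contradicting the previous step. Therefore $\tau\neq -4C$.

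The main point to be careful about is the legitimacy of invoking Remarks \ref{full0} and \ref{Twg} here, since they were stated under the standing hypothesis $\tau\in\CP_0$, whereas our counterexample has $\tau=-4C\notin\CP_0$. I would check that the sector assignment $s_jr_jh_j^{\vee}=n$ is a formal consequence only of the $W$-element identity $\langle\alpha,u\rangle=1/(r_jh_j^{\vee})$ together with the matching of $g$-eigenvalues to $\tilde g$-twisted sectors, and that the weight formula of Remark \ref{Twg} is merely the description of the irreducible $\tilde g^{s}$-twisted $V_\Lambda$-module; neither uses $\tau\in\CP_0$, so both remain available for the counterexample $V$. A minor companion check is that the momentum $\beta'$ attached to $e^{u}\otimes t$ is genuinely nonzero, which holds because a root vector does not centralize the Cartan $\bC\Lambda^{\tau}$. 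Once these points are confirmed, the argument above closes the case.
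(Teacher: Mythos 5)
Your argument is correct, but it takes a genuinely different route from the paper's. The paper never works with the order-$4N_0$ automorphism $\tilde{g}$ on $V_\Lambda$: it first passes to the intermediate Niemeier lattice VOA $V_P\cong V^{[g^{N_0}]}$ furnished by Theorem \ref{N_0}, so that the reverse automorphism $\widehat{\tau}\exp(2\pi i\delta(0))$ of $g^{N_0}$ has order exactly $|\tau|=4$, and then invokes Lemma \ref{twist} (which rests on $\langle \delta,N_0\alpha\rangle\equiv 1/|\tau| \pmod{\bZ}$, hence $\delta\notin\pi(P)$) to conclude that the first twisted sector of this order-$4$ automorphism contains no weight-one vectors at all; consequently no component can have $r_jh_j^{\vee}/N_0=4$, and since each $r_jh_j^{\vee}/N_0$ divides $\LCM(\{r_jh_j^{\vee}/N_0\})=|\tau|=4$, that LCM would divide $2$, a contradiction. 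You instead stay on $V_\Lambda$, note that every odd power of a $-4C$ element has frame shape $2^{6}4^{4}/1^{4}$ and hence $\phi=1$, rule out weight-one \emph{root} vectors (rather than all weight-one vectors) in every odd $\tilde{g}$-sector, and use the $2$-adic valuation of $\LCM(\{r_jh_j^{\vee}\})=n$ to force a simple short root into an odd sector --- an equivalent count, phrased without dividing by $N_0$. Your route is more self-contained: it needs only the twisted-module weight formula of Remark \ref{Twg} and the sector assignment of Remark \ref{full0}, and your check that these are independent of the $\CP_0$ hypothesis is right (Remark \ref{Twg} is stated in Section \ref{Sec:twist} for arbitrary even unimodular lattices, and the sector assignment is pure orbifold bookkeeping from Section \ref{S:2}; the one caveat, that $\hat{\tau}^{s}$ need not be a standard lift of $\tau^{s}$ so the momentum coset may be shifted, is harmless because you only use ``weight one $\Rightarrow$ zero momentum $\Rightarrow$ not a root vector''). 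What the paper's detour through $V_P$ buys is uniformity: the same $(V_P,\delta)$ machinery and Lemma \ref{twist} carry the neighbouring eliminations (Lemmas \ref{2}, \ref{0} and \ref{5}), where conformal-weight considerations on $V_\Lambda$ alone do not suffice and one genuinely needs the comparison with a second orbifold.
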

\pr 
Suppose $\tau\in -4C$. 
We note $V=(V_P)^{[\widehat{\tau}\exp(2\pi i\delta(0))]}$. 
Since $\phi(\tau)\geq 1$, 
$\widehat{\tau}\exp(2\pi i\delta(0))$-twisted module does not contain elements of weight one. 
Recall that $\widehat{\tau}\exp(2\pi i\delta(0))$ is the reverse automorphism of 
$\exp(2\pi iN_0\alpha(0))$; the above statement means that  
there is no root $\mu\in V_1$ such that $|\tau|\langle \mu,N_0\alpha\rangle=1$. 
Since $|\tau|=4$, $\LCM(\{r_jh^{\vee}_j/N_0\mid j\})$ divides $2$, which 
contradicts $\LCM(\{ r_jh^{\vee}_j/N_0 \mid j\})=|\tau|$.
\prend

Therefore the remaining possibility of $\tau\not\in \CP_0$ are 
$4D, 4F, -6C$. Then since 
$4D^2=2A, 4F^2=2C, (-6C)^2=3B, (-6C)^3=2A$, 
non-trivial powers of $\tau$ are in $\CP_0$.  

Furthermore, $\phi(4D)=1-1/8$, $\phi(4F)=1/16$, 
$\phi(-6C)=1$. Namely, 
$\widehat{\tau}$ is not of type zero or $\phi(\tau)=1$. 

We note that since $V=V_{\Lambda}^{[\widehat{\tau}\exp(2\pi i\beta(0))]}$, 
$\tilde{V}=V^{[\widetilde{g}^m]}$ is a holomorphic VOA for 
any $m||\tau|$. 
Assume $m\not=1$. Unfortunately, $\widetilde{g}^m$ 
is not necessary to be a reverse automorphism for an automorphism 
defined by W-element in $\tilde{V}$. 
Let $\tilde{\alpha}$ be a W-element of $\tilde{V}_1$ and 
$\widehat{\sigma}\exp(2\pi i\mu(0))$ be a reverse automorphism 
of $\exp(2\pi i\tilde{\alpha}(0))$.

\begin{lmm}\label{conj2a2c3b}
Let $1\not=m||\tau|$. 
If $\tau^m=2A,2C,3B$, then $\sigma$ is conjugate to $\tau^m$. 
\end{lmm}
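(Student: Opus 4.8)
The plan is to show that $\sigma$ and $\tau^m$ lie in the same conjugacy class of $Co.0$, and then to invoke the fact (already used in the proof of Lemma \ref{nL}) that an element of $O(\Lambda)$ is determined up to conjugacy by its frame shape. Since $\tau^m\in\{2A,2C,3B\}$, it suffices to match two invariants of $\sigma$: its order $|\sigma|$ and the rank $\rank\Lambda^\sigma$ of its fixed lattice. Reading off Table \ref{Table:main}, the only class in $\CP_0$ of order $3$ is $3B$; among the order-$2$ classes, $2A$ has $\rank\Lambda^{2A}=16$ while $2C$ has $\rank\Lambda^{2C}=12$; and the only class in $\CP_0$ of order $4$ is $4C$, of rank $10$. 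Hence the pair $(|\sigma|,\rank\Lambda^\sigma)$ alone will force $\sigma$ into the class of $\tau^m$.

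First I would note that $\sigma\in\CP_0$, which is immediate from the maximality of the counterexample $V$: by Proposition \ref{Comm} the passage from $V$ to $\tilde{V}=V^{[\tilde{g}^m]}$ strictly increases either $\rank\tilde{V}_1$ or $\Comm(M(\CH),\,\cdot\,)$, so $\tilde{V}$ is not a counterexample and the isometry $\sigma$ attached to its $W$-element reverse automorphism $\widehat\sigma\exp(2\pi i\mu(0))$ lies in $\CP_0$. To control $|\sigma|$ I would rerun the twisted-module and conformal-weight analysis of Lemma \ref{2}: since $\tau^m$ has prime order $p\in\{2,3\}$, the same count of the weights of the $\widehat\sigma\exp(2\pi i\mu(0))$-twisted sectors shows $|\widehat\sigma|=p$ when $p$ is odd and $|\widehat\sigma|\in\{2,4\}$ when $p=2$; in every case $|\sigma|=p=|\tau^m|$.

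The heart of the matter is the equality $\rank\Lambda^\sigma=\rank\Lambda^{\tau^m}$, and I would obtain it by computing the intrinsic invariant $\rank\tilde{V}_1$ in two ways using orbifold duality on weight-one spaces. From the $W$-element side, the common fixed sub-VOA satisfies $\tilde{V}^{\exp(2\pi i\tilde\alpha(0))}=V_\Lambda^{\widehat\sigma\exp(2\pi i\mu(0))}$; taking weight-one parts, and using that a $W$-element fixes exactly a Cartan subalgebra of $\tilde{V}_1$ while $\widehat\sigma\exp(2\pi i\mu(0))$ acts on $(V_\Lambda)_1=\mathfrak{h}$ as $\sigma$, this gives $\rank\tilde{V}_1=\dim\mathfrak{h}^\sigma=\rank\Lambda^\sigma$. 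From the $\tau^m$ side, the fixed torus $\mathfrak{h}^{\tau^m}\subseteq(V_\Lambda)_1$ is identified with a toral subalgebra of $\tilde{V}_1$; by the structure of the irreducible $\tau^m$-twisted modules (Section \ref{Sec:twist} and Remark \ref{Twg}), every root of $\tilde{V}_1$ comes from a twisted sector, so its momentum lies in $(\Lambda^{\tau^m})^\ast$ and pairs non-trivially with $\mathfrak{h}^{\tau^m}$. Thus $\mathfrak{h}^{\tau^m}$ is self-centralizing, hence a Cartan subalgebra of $\tilde{V}_1$, and $\rank\tilde{V}_1=\dim\mathfrak{h}^{\tau^m}=\rank\Lambda^{\tau^m}$. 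Equating the two computations yields the desired equality.

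With $|\sigma|=|\tau^m|$ and $\rank\Lambda^\sigma=\rank\Lambda^{\tau^m}$ in hand, Table \ref{Table:main} pins the class of $\sigma$ to $2A$ (order $2$, rank $16$), $2C$ (order $2$, rank $12$) or $3B$ (order $3$, rank $12$) exactly according to $\tau^m$, and the frame-shape criterion then gives that $\sigma$ is conjugate to $\tau^m$. I expect the main obstacle to be the rank computation of the previous paragraph: the two automorphisms $\tilde{g}^m$ and $\widehat\sigma\exp(2\pi i\mu(0))$ realize the \emph{same} orbifold $\tilde{V}$, so the coincidence of fixed-lattice ranks is not formal and must be extracted from the invariant $\rank\tilde{V}_1$. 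The precise point needing care is verifying that $\mathfrak{h}^{\tau^m}$ is genuinely a maximal toral subalgebra of $\tilde{V}_1$, i.e. that no twisted weight-one vector is orthogonal to it. The delicate sub-case is order $2$, where $2A$ and $2C$ agree in order and are separated only by the rank ($16$ versus $12$)—equivalently by the lift order $\ell=|\widehat\sigma|$, which by Section \ref{S:4.2} equals the exponent of $L^\ast/L$ of $\tilde{V}$ and must be matched against $|\widehat{\tau^m}|$.
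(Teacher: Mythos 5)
Your skeleton (maximality gives $\sigma\in\CP_0$; compute $\rank\Lambda^\sigma=\rank\Lambda^{\tau^m}$; pin down the class by a second invariant) parallels the paper, but your second invariant is where the argument genuinely breaks. You claim $|\sigma|=|\tau^m|$ by ``rerunning the analysis of Lemma \ref{2}''. That analysis is only available under the hypothesis of Lemma \ref{2}, namely $\phi(\tau^m)\geq 1$: it is this inequality that forces the $\widehat{\tau}^m\exp(2\pi i m\delta(0))$-twisted sectors to contain no weight-one vectors, so that $\tilde{V}_1$ equals the untwisted fixed-point space and every root is of type (1) or (2) with norm in $\bZ/p$ (resp.\ with $\tilde{r}_j\tilde{k}_j\in\{1,2,4\}$ when $p=2$). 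For $\tau^m\in\{2A,2C,3B\}$ one has $\phi(\tau^m)=1/2,\,3/4,\,2/3<1$, so the twisted sectors \emph{do} contribute weight-one root vectors (that is the whole point of these classes), their momenta lie in nontrivial cosets $-km\delta+\pi(P)$, and their norms are not constrained to $\bZ/p$. Hence you cannot conclude $|\widehat{\sigma}|=p$, nor $|\sigma|=p$, and in particular you have no way to decide between $2C$ and $3B$ at rank $12$ --- which is the only nontrivial decision in the whole lemma. The paper replaces your order claim by the weaker, provable divisibility $|\widehat{\tau}^m|\,\big|\,|\widehat{\sigma}|$: since $\sigma\in\CP_0$, the lattice $\sqrt{|\widehat{\sigma}|}\,\tilde{L}^{\ast}$ is even (the analogue of Lemma \ref{Lbeta} for $\tilde{V}$), while the twisted sector supplies an element $e^{m\delta}\otimes t\in\tilde{V}$ of weight $\equiv 1/|\widehat{\tau}^m|\pmod{\bZ}$; evenness then forces the divisibility. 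Note that this argument \emph{uses} exactly the twisted weight-one vectors that invalidate your count, and it separates the two candidates in both directions ($3$ is not divisible by the lift order of $2C$, and $3$ does not divide $4$).

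There is also a secondary gap in your rank computation from the $V_P$ side. You assert that every root of $\tilde{V}_1$ pairs nontrivially with $\mathfrak{h}^{\tau^m}$, but untwisted roots of the form $e^{\mu}+\widehat{\tau}^m(e^{\mu})+\cdots$ with $\mu$ a root of $P$ lying in the coinvariant part (i.e.\ $\pi_{\tau^m}(\mu)=0$) have zero momentum with respect to $\mathfrak{h}^{\tau^m}$, and such roots $\mu$ do exist ($P_{\tau^m}$ typically has a full-rank root sublattice, cf.\ Remark \ref{Ntau}). Whether their symmetrizations survive in $\tilde{V}_1$ depends on cocycle signs and the $\exp(2\pi im\delta(0))$-eigenvalues, which you never examine; self-centralization of $\mathfrak{h}^{\tau^m}$ is not formal. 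The paper sidesteps this entirely by producing the Cartan subalgebra from the $W$-element side: by Proposition \ref{Comm}, $\CH\oplus\bigoplus_j U(jpN_0\alpha)_1=\bC P^{\tau^m}$ is a Cartan subalgebra of $\tilde{V}_1$, which gives $\rank\tilde{V}_1=\rank\Lambda^{\tau^m}$ with no analysis of twisted or symmetrized roots.
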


\pr 
If $\tau^m=2A,2C,3B$, then $\tilde{V}:=V_P^{[\widehat{\tau}^m\exp(2\pi im\delta(0))]}$ 
is not a counterexample. 
Let $\tilde{\alpha}$ be a W-element of $\tilde{V}_1$ and 
$\widehat{\sigma}\exp(2\pi i\mu(0))$ a reverse automorphism for 
$\exp(2\pi i\tilde{\alpha}(0))$. In particular, $\sigma\in \CP_0$.

As we have shown,  $\rank(\bC\Lambda)^{\tau^m}=\rank \tilde{V}_1=\rank
(\bC\Lambda)^{\sigma}$. 
Let $\tilde{\CH}$ be a Cartan subalgebra of $\tilde{V}$ and 
set $\Comm(\Comm(\tilde{\CH},\tilde{V}),\tilde{V})=V_{\tilde{L}}$ 
with an even lattice $\tilde{L}$. 
 
Since $\sigma\in \CP_0$, 
$\sqrt{|\widehat{\sigma}|}(\tilde{L})^{\ast}$ 
is an even lattice by Lemma 5.1. 
On the other hand, since $\tau^m\in \CP_0$ and 
$\tilde{V}=(V_P)^{[\widehat{\tau}^m\exp(2m\pi i\delta(0))]}$, 
we have $\phi(\tau^m)=1-\frac{1}{|\widehat{\tau}^m|}$ and 
$e^{m\delta}\otimes t\in \tilde{V}$ with $\wt(e^{m\delta})\equiv 
\frac{1}{|\widehat{\tau}^m|} \pmod{\bZ}$ from an orbifold construction 
by an automorphism with positive frame shape. 
Therefore  $|\widehat{\tau}^m|$ have to divide $|\widehat{\sigma}|$. 

If $\tau^m=2A$, then $\rank(\bC\Lambda)^{\tau^m}=16$ and 
$\sigma=2A$. 
If $\tau^m=2C$, then $\rank(\bC\Lambda)^{\sigma}=12$ 
and so $\sigma=2C$ or $3B$. If $\sigma=3B$, then $|\widehat{\sigma}|=3$ 
is not a multiple of $|\widehat{\tau^m}|=2$. 
If $\tau^m=3B$, then $\rank(\bC\Lambda^{\sigma})=12$ and so 
$\sigma=2C$ or $3B$.  
Since $|\widehat{\tau}^m|=3$ does not divide $|\widehat{\sigma}|=4$ if 
$\sigma=2C$. Hence $\sigma=3B$. 
\prend

As a corollary, for $\mu\in L^{\ast}$, 
if $\langle \mu, N_0\alpha\rangle 
\in \frac{\bZ}{m}$, then 
$S(\mu)\otimes e^{\mu} \in V^{[|\tau|g/m]}$ and so we have 
$|\widehat{\tau}^m|\langle \mu,\mu\rangle\in 2\bZ$.

\begin{lmm}\label{0} 
$\tau\not=-6C$. 
\end{lmm}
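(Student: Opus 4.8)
The plan is to assume $\tau=-6C$ and derive a contradiction purely from the arithmetic of the levels $r_jk_j$ of the simple components of $V_1=\oplus_j\CG_{j,k_j}$, in the same spirit as the preceding elimination lemmas. First I would record the invariants of $-6C$: its relevant powers are $\tau^2=3B$ and $\tau^3=2A$, its conformal weight is $\phi(-6C)=1$, and since $\langle\tau\rangle$ contains no $2C$-element we have $\ell=|\widehat{\tau}|=|\tau|=6$ by \cite{B}. Because every root $u$ of $V_1$ satisfies $\langle u,u\rangle\in\frac{2}{\ell}\bZ$ (the $\ell$-fold quantization of weights in the $\widehat{\tau}$-twisted sectors, already used in the proof of Lemma \ref{ell=n} and depending only on $\widehat{\tau}$), each level satisfies $r_jk_j\mid\ell=6$. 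Combined with $\LCM(\{r_jk_j\})=(K_0-N_0)|\tau|=6(K_0-N_0)$ from Lemma \ref{rk}, this forces $K_0-N_0=1$ and $\LCM(\{r_jk_j\})=6$, so that $r_jk_j\in\{1,2,3,6\}$ and the values $2$ and $3$ must both occur.

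Next I would eliminate the two even values. For $r_jk_j=6$: such a component carries roots $u$ of norm $\langle u,u\rangle=2/(r_jk_j)=\tfrac13$, and by the sector count $s_jr_jh_j^{\vee}=n$ of Remark \ref{full0} we have $s_j=n/(r_jh_j^{\vee})=1$, so the root vector $e^{u}\otimes t$ lies in the $\widetilde g$-twisted module $V_\Lambda[\widetilde g]$. Every vector there has conformal weight at least $\phi(-6C)=1$, and a genuine root vector corresponds to a nonzero $u$, hence has conformal weight $\tfrac12\langle u,u\rangle+1>1$; thus $V_\Lambda[\widetilde g]$ contains no weight-one root vector and $r_jk_j=6$ is impossible. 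For $r_jk_j=2$ (a component of type $B$, $C$ or $F_4$ at level $1$, or a simply laced component at level $2$) there is a root $u$ with $\langle u,u\rangle=2/(r_jk_j)=1$ and $\langle N_0\alpha,u\rangle=(K_0-N_0)/(r_jk_j)=\tfrac12\in\tfrac12\bZ$. Applying the Corollary to Lemma \ref{conj2a2c3b} with $m=2$ (legitimate because $\tau^2=3B\in\{2A,2C,3B\}$ and $|\widehat{\tau}^2|=3$) would give $3=|\widehat{\tau}^2|\langle u,u\rangle\in 2\bZ$, which is absurd; so $r_jk_j=2$ is impossible as well.

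Combining the two exclusions leaves $r_jk_j\in\{1,3\}$ for every component, so $2$ divides no $r_jk_j$ and $\LCM(\{r_jk_j\})$ divides $3$ — contradicting $\LCM(\{r_jk_j\})=6$ from the first step. This contradiction proves $\tau\neq-6C$. The delicate point, and the step I would justify most carefully, is the applicability of these tools to $\tau=-6C\notin\CP_0$: the norm quantization $\langle u,u\rangle\in\frac{2}{\ell}\bZ$ and the weight bound $\phi(-6C)=1$ for $V_\Lambda[\widetilde g]$ depend only on $\widehat{\tau}$ rather than on membership in $\CP_0$, while the Corollary is available precisely because the power $\tau^2=3B$ falls into the list $\{2A,2C,3B\}$ covered by Lemma \ref{conj2a2c3b}. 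I would also double-check that the data $\langle u,u\rangle=2/(r_jk_j)$ and $\langle N_0\alpha,u\rangle=(K_0-N_0)/(r_jk_j)$ are being applied to honest roots of $V_1$, as recorded in Remark \ref{full0} and Section \ref{S:4.2}.
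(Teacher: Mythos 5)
Your Step 1 is a genuine gap, and it is the step the rest of your argument leans on. The norm quantization $\langle u,u\rangle\in\frac{2}{\ell}\bZ$ for roots of $V_1$ is precisely the evenness of $\sqrt{\ell}L^*$, i.e.\ Lemma \ref{Lbeta}, and the paper proves that statement only under the standing hypothesis $\tau\in\CP_0$ of Section \ref{S:4.2}: the derivation first produces $\beta\in L^*$ from the identity $\phi(\tau)=1-1/|\widehat{\tau}|$ (which is \emph{false} for $-6C$, where $\phi(-6C)=1\neq 5/6$) and then invokes the quadratic-space results of \cite{Lam20} under that hypothesis. Lemma \ref{ell=n}, which you cite, consumes this quantization and is therefore equally unavailable for $\tau=-6C$. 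Your defense that the quantization ``depends only on $\widehat{\tau}$ rather than on membership in $\CP_0$'' is exactly the point that cannot be granted; indeed, the whole architecture of Section \ref{sec:6} (maximal counterexample, Lemma \ref{conj2a2c3b} and its corollary) exists because the Section 4 results may be applied only to proper powers of $\tau$ that land in $\CP_0$, never to $\tau$ itself. (Nor does a direct twisted-sector argument rescue it: conformal weights quantize the norms of the lattice parts $\mu\in -k\beta+(\Lambda^{\tau^k})^*$, while a root is the projection of $\mu$ to $\bC\Lambda^{\tau}$, so root norms are not immediately controlled.) Since your exclusion of $r_jk_j=6$ needs $r_jh_j^\vee=n$, i.e.\ $K_0-N_0=1$, and your final LCM contradiction needs $\LCM(\{r_jk_j\})=6$, both of which come from Step 1, the proof as written does not close.

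The salvageable part is your Step 2, and once it is rephrased in terms of $r_jh_j^\vee/N_0$ it gives a complete proof that is actually shorter than the paper's. By Section \ref{S:2}, $\LCM(\{r_jh_j^\vee/N_0\})=|\tau|=6$ with no $\CP_0$ assumption, so each value lies in $\{1,2,3,6\}$; the value $6$ is excluded by $\phi(-6C)=1$ exactly as in your Step 2a (this is also the paper's opening move, via Lemma \ref{twist}), hence the values $2$ and $3$ both occur. For a simple short root $u$ of a component with $r_jh_j^\vee/N_0=2$ one has $\langle N_0\alpha,u\rangle=1/2$ and $\langle u,u\rangle=2/(r_jk_j)=1/(K_0-N_0)$, so the corollary to Lemma \ref{conj2a2c3b} with $m=2$ (legitimate, since $\tau^2=3B$ and $|\widehat{\tau}^2|=3$) forces $3/(K_0-N_0)\in 2\bZ$, which fails for every positive integer $K_0-N_0$ because $K_0-N_0$ would have to divide $3$ and the quotient, being $1$ or $3$, is odd. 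This is your Step 2b freed from the unproven assumption $K_0-N_0=1$, and it already contradicts the occurrence of the value $2$. By contrast, the paper uses both powers $\tau^2=3B$ and $\tau^3=2A$ to establish evenness of $\sqrt{6}L^*$ component by component, deduces $K_0-N_0=1$, and then eliminates $N_0=1,2,3,4,>4$ by rank ($\rank V_1=6$) and dimension counting; your $m=2$ argument, once properly grounded, bypasses that case analysis entirely.
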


\pr 
Since $\phi(-6C)=1$, $\widehat{\tau}\exp(2\pi i\delta(0))$-twisted 
module does not contain weight one elements. 
Therefore, $r_jh^{\vee}_j/N_0=2$ or $3$ and the both have to appear. 
Recall that $(-6G)^3=2A$ and $(-6G)^2=3B$. 
If $ r_jh^{\vee}_j/N_0=2$, then $\exp(2\pi iN_0\alpha(0))^3$ fixes 
all elements in $\CG_j$ and so 
$\sqrt{3}L_j^{\ast}$ is an even lattice since $2A\in \CP_0$, where 
$L_j^{\ast}$ is the co-root lattice of $\CG_j$. 
If $r_jh^{\vee}_j/N_0=3$, then $\exp(2\pi iN_0\alpha(0))^2$ fixes 
all elements in $\CG_j$ and so 
$\sqrt{2}L_j^{\ast}$ is an even lattice. 
Therefore $\sqrt{6}L^{\ast}$ is an even lattice and 
$K_0-N_0=1$ and $h_j^{\vee}=k_jN_0$. 
If $N_0=1$, then $r_jh^{\vee}_j=2$ and $3$ and so 
$\CG_j=A_1$ or $A_2$, which contradicts 
to $\dim V_1>24$ since $\rank V_1=6$. 
If $N_0=2$, $r_jh^{\vee}_j=4$ and $6$ and so 
$\CG_1=A_3$ and $\CG_2=A_5$, which contradicts $\rank V_1=6$. 
If $N_0=3$, then one of $r_2h_2^{\vee}=9$ and so $\CG_2=A_8$, which  contradict $\rank V_1=6$. 
If $N_0=4$, then one of $r_1h_1^{\vee}=8$ and $\CG_2=G_2$. 
So, $\CG_1=A_7,C_3,D_5$.  Since $\rank V_1=6$ and $\CG_j\not=A_1$, 
we have a contradiction.  
If $N_0>4$, $r_1h_2^{\vee}\geq 10$ and $r_2h_2^{\vee}\geq 15$, 
which contradicts $\rank V_1=6$. 
 \prend
 
So the remaining cases are $\tau=4D$ or $4F$. 
Recall that  a short root in $\CG_j$ is called a shortest root if 
$r_jh_j^\vee= \LCM(r_ih_i^{\vee}|i)=  |\tau| N_0$ (or equivalently  $r_jk_j=\LCM(r_ik_i|i)$). 
Since $|\tau|=4$ and $\LCM(r_ih_i^{\vee}/N_0\mid i)=|\tau|=4$,
there is a shortest root. Say, $\CG_1$ contains a shortest root.

\begin{lmm}\label{lmm612}
If $\tau=4D$ or $4F$, then $\exp(2N_0\pi i\alpha(0))^2$ fixes a root vector associated with a shortest root.  
\end{lmm}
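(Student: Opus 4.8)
The plan is to read off the action of $\exp(2N_0\pi i\alpha(0))^2=\exp(2\pi i\,2N_0\alpha(0))=g^{2N_0}$ on the root vectors of the full component $\CG_1$ (the one containing the shortest root) directly from Remark \ref{full0}, and then to exhibit a fixed shortest root vector by hand. Since $g=\exp(2\pi i\alpha(0))$ scales a root vector $e^u\otimes t$ of $\CG_1$ by $\exp(2\pi i\langle\alpha,u\rangle)$, and the remaining summands $\rho_l/h_l^{\vee}$ of $\alpha$ are orthogonal to the roots of $\CG_1$, the operator $g^{2N_0}$ acts on $e^u\otimes t$ by $\exp(2\pi i\langle 2N_0\alpha,u\rangle)$. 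For a simple short root $\alpha_k$ of $\CG_1$, Remark \ref{full0} together with $r_1h_1^{\vee}=n=4N_0$ gives $\langle\alpha,\alpha_k\rangle=\frac{1}{r_1h_1^{\vee}}=\frac{1}{4N_0}$, so that $\langle 2N_0\alpha,\alpha_k\rangle=\tfrac12$ and hence $g^{2N_0}$ acts by $-1$ on every simple short root vector.

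The key step is that $\langle 2N_0\alpha,\cdot\rangle$ is linear in the root, so a sum of an even number of simple short roots is fixed. Concretely, I would take $u=\alpha_i+\alpha_{i+1}$ for two adjacent short simple roots $\alpha_i,\alpha_{i+1}$. Then $\langle 2N_0\alpha,u\rangle=\tfrac12+\tfrac12=1\in\bZ$, so $g^{2N_0}(e^u\otimes t)=e^u\otimes t$; moreover $u$ is a short root of $\CG_1$, hence a shortest root of $V_1$ by Lemma \ref{fullC}. The vector $e^u\otimes t$ (equivalently the bracket of the two simple root vectors) is a nonzero weight one element of $\CG_1\subset V_1$, which is the required fixed shortest root vector.

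The only genuine work is to guarantee that $\CG_1$ possesses two adjacent short simple roots, i.e. that its short simple roots span a connected chain of rank at least $2$; this is where I expect the main obstacle to lie. Here the plan is to use that the lacing number $r_1$ divides $|\tau|=4$ together with $r_1h_1^{\vee}=4N_0$. If $\CG_1$ is non-simply laced then $r_1\in\{2,3\}$; but $r_1=3$ ($G_2$) is impossible as $3\nmid 4$, so $r_1=2$ and $h_1^{\vee}=2N_0$. Among the types $B_m,C_m,F_4$ this excludes $B_m$ (where $h_1^{\vee}=2m-1$ is odd) and $F_4$ (where $h_1^{\vee}=9$), leaving $\CG_1\cong C_{2N_0-1}$ with $2N_0-1\ge 3$ since $\CG_1\ne A_1$. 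If instead $\CG_1$ is simply laced, then $h_1^{\vee}=4N_0\ge 4$ forces rank at least $3$. In either surviving case the short simple roots form a connected type $A$ chain of rank $\ge 2$, so the adjacent pair $\alpha_i,\alpha_{i+1}$—and hence the fixed shortest root $\alpha_i+\alpha_{i+1}$—exists, completing the argument. I expect this type-elimination, rather than the eigenvalue computation, to be the only real difficulty.
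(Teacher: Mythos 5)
Your proposal is correct and takes essentially the same route as the paper: both arguments observe that $g^{2N_0}$ acts by $-1$ on each simple short root vector of the full component $\CG_1$ (via Remark \ref{full0} and $r_1h_1^{\vee}=n=4N_0$), so the root vector attached to a sum of two adjacent short simple roots is fixed and is a shortest root, and both then rule out the types lacking such an adjacent pair ($G_2$ because $3\nmid 4$, and $B_m$ because $r_1h_1^{\vee}=2(2m-1)$ is not divisible by $4$ --- your parity argument on $h_1^{\vee}$ is the same computation). Your additional eliminations ($F_4$ and the low-rank simply-laced cases) are harmless refinements of what the paper leaves implicit.
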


\pr  Since $\LCM(\{r_jh_j^{\vee}/N_0\mid i\}=|\tau|$ and $h_j^\vee$ is divisible by $N_0$, $r_j$ divides $|\tau|$. Therefore, $\CG_j\not=G_2$.  

If $\CG_1\not\cong B_m$, then there is a shortest root which is a sum of two shortest roots in the fundamental root system and thus $\exp(2N_0\pi i\alpha(0))^2$ fixes a root vector associated with a shortest root in $\CG_1$.  So we may assume $\CG_1=B_m$; however, $r_1h_1^{\vee}=2(2m-1)$ in this case, which contradicts that $|\tau|=4$ divides $r_1h_1^\vee$.   
\prend

\begin{lmm}\label{5}
$\tau\not=4D$ nor $4F$.
\end{lmm}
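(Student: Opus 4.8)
The plan is to push the shortest root produced by Lemma~\ref{lmm612} into the order-two orbifold $\tilde V:=V^{[g^{2N_0}]}$, where the associated isometry is forced to lie in $\CP_0$, and then extract a numerical contradiction from the $\ell$-duality length bound available there. Throughout, $V$ is the (maximal-rank) counterexample fixed above, so $|\tau|=4$ and $n=|g|=4N_0$.

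First I would record the relevant root length. By Lemma~\ref{lmm612} the order-two automorphism $g^{2N_0}=\exp(2N_0\pi i\alpha(0))^2$ fixes a root vector $x_\delta$ attached to a shortest root $\delta$ of the full component $\CG_1$. As $\delta$ is a short root, $\langle\delta,\delta\rangle=2/(r_1k_1)=2/(|\tau|(K_0-N_0))=1/(2(K_0-N_0))$, so $q(\delta):=\tfrac12\langle\delta,\delta\rangle=1/(4(K_0-N_0))\le\tfrac14$. Since $g^{2N_0}$ fixes $x_\delta$ and $\CH\subseteq(V^{\langle g^{2N_0}\rangle})_1\subseteq\tilde V_1$, the vector $x_\delta$ is a root vector of the semisimple Lie algebra $\tilde V_1$, with root $\delta$ of this same squared length.

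Next I would identify the isometry of $\tilde V$. By Theorem~\ref{N_0} one has $V^{[g^{N_0}]}=V_P$ with reverse automorphism $\hat\tau\exp(2\pi i\delta(0))$ of $g^{N_0}$; squaring, $\tilde V=V^{[g^{2N_0}]}=V_P^{[\hat\tau^2\exp(4\pi i\delta(0))]}$. As $(4D)^2=2A$ and $(4F)^2=2C$ both lie in $\CP_0$, Lemma~\ref{conj2a2c3b} with $m=2$ shows that $\tilde V$ is not a counterexample and that its associated isometry $\sigma$ is conjugate to $\tau^2$; thus $\sigma=2A$ for $\tau=4D$ and $\sigma=2C$ for $\tau=4F$. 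Since $\sigma\in\CP_0$, every statement proved for members of $\CP_0$ applies to $\tilde V$; in particular, by Lemma~\ref{Lbeta} the lattice $\sqrt{\ell_\sigma}\,\tilde L^\ast$ is even, so every root $u$ of $\tilde V_1$ (which lies in $\tilde L^\ast$) satisfies $q(u)\in\tfrac1{\ell_\sigma}\bZ$, where $\ell_\sigma=|\hat\sigma|$ equals $2$ for $2A$ and $4$ for $2C$.

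For $\tau=4D$ this closes the argument at once: $q(\delta)\in(0,\tfrac14]$ cannot lie in $\tfrac12\bZ$. For $\tau=4F$ the constraint $q(\delta)\in\tfrac14\bZ$ together with $q(\delta)\le\tfrac14$ forces $q(\delta)=\tfrac14$, i.e.\ $K_0-N_0=1$; this residual case is the one the length bound cannot kill, and it is the main obstacle. I would dispose of it by a finite check. When $K_0-N_0=1$ we have $k_i=h_i^\vee/N_0$, $\dim V_1=24(N_0+1)$, and $\rank V_1=\dim\Lambda^{4F}=6$, while the full component $\CG_1$ (with $r_1h_1^\vee=4N_0$, $r_1\mid4$ and $\CG_1\ne B_m$) is either $C_{2N_0-1}$ or a simply-laced algebra of dual Coxeter number $4N_0$. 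The conditions $N_0\mid h_i^\vee$, $\LCM(r_ih_i^\vee)=4N_0$ and $\rank\CG_1\le6$ leave only $N_0\le3$ together with finitely many rank-six component systems, and a direct enumeration shows that none attains the required dimension $24(N_0+1)$. This contradiction completes the proof that $\tau\ne4D,4F$, and with it the proof of Proposition~\ref{P0}.
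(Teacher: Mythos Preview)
Your argument has a genuine gap at the step ``the vector $x_\delta$ is a root vector of the semisimple Lie algebra $\tilde V_1$, with root $\delta$.'' The Cartan subalgebra $\tilde{\CH}$ of $\tilde V_1$ strictly contains $\CH$ (indeed $\rank\tilde V_1=\dim\Lambda^{\tau^2}>\dim\Lambda^{\tau}=\rank V_1$), so the fact that $x_\delta$ is an $\CH$-eigenvector of weight $\delta$ does \emph{not} make it a $\tilde{\CH}$-eigenvector. In general $x_\delta$ is a sum of root vectors of $\tilde V_1$ for roots $\mu'\in\tilde L^{\ast}$ whose restriction to $\CH$ equals $\delta$; your bound $q(u)\in\tfrac1{\ell_\sigma}\bZ$ applies to each such $\mu'$, not to $\delta$. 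The paper confronts exactly this point with its case split: either (1) $\delta$ itself is a root of $\tilde V_1$, or (2) $x_\delta=s'\otimes e^{\mu'}+\tau(s')\otimes e^{\tau(\mu')}$ with $\delta=\tfrac12(\mu'+\tau(\mu'))$, and in case~(2) one needs the extra observation $\langle\mu',\tau(\mu')\rangle\in\langle\mu',\mu'\rangle\,\bZ$ to control $\langle\delta,\delta\rangle$.

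This gap also undermines your ``immediate'' contradiction for $4D$: in case~(2) one only obtains $q(\delta)\in\tfrac1{2\ell_\sigma}\bZ=\tfrac14\bZ$, which merely forces $K_0-N_0=1$ and would require a finite check just as in your $4F$ treatment. The paper avoids any such enumeration by a different contradiction mechanism: it proves $\ell_\tau\langle\mu,\mu\rangle\in 2\bZ$ (so $\sqrt{\ell_\tau}L^{\ast}$ is even) and then notes that, since $\phi(\tau)=1-\tfrac1{2\ell_\tau}$ for $\tau\in\{4D,4F\}$, the conformal weights of the $\hat\tau\exp(2\pi i\delta(0))$-twisted module of $V_P$ lie in $1-\tfrac1{2\ell_\tau}+\tfrac1{\ell_\tau}\bZ$ and hence are never integral---contradicting that the orbifold produces $V$.
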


\pr 
Suppose false.  Set $\tilde{V}=V^{[\exp(4N_0\pi i\alpha(0))]}$. 
We may view that $\tau$ is an automorphism of $\tilde{V}$ of order $2$.
Let $\tilde{\CH}$ be a Cartan subalgebra of $\tilde{V}_1$. Then there is an even lattice $\tilde{L}$ such that 
$\Comm(\Comm(M(\tilde{\CH}),\tilde{V}),\tilde{V})\cong V_{\tilde{L}}$. 
Let $\tilde{\alpha}$ be a W-element of $\tilde{V}_1$ and $\widehat{\sigma}\exp(2\pi i\xi(0))$ the reverse automorphism of $\exp(2\pi i\tilde{\alpha}(0))$.  
Since $4D^2=2A$ and $4F^2=2C$, we have $\sigma=2A$ and $2C$, respectively by the same argument as in the proof of Lemma \ref{conj2a2c3b}. 

Since $\tilde{V}^{[\exp(2\pi i\alpha(0)]}=V_{\Lambda}$, 
there is no root orthogonal to $\alpha$ in $\tilde{V}$. 
Therefore, we can define positive roots by the condition $\langle \mu,\alpha\rangle>0$.  
Note that $\alpha$ is fixed by $\tau$. Therefore, if $\mu$ is a positive root, then so is $\tau^i(\mu)$ for any $i$. By Lemma \ref{lmm612}, there is a shortest root $\mu\in L^{\ast}$  
such that 
$s\otimes e^{\mu}\in 
V^{<\exp(2\pi iN_0\alpha(0))^2>}\subseteq 
\tilde{V}=V^{[\exp(4\pi i N_0\alpha(0))]}$. 

Since $\tau$ has positive frame shape, we may choose $\delta$ in $L^{\ast}$. 
Note that $\phi(\tau)=1-\frac{1}{2\ell}$ for $\tau\in 4C$ or $4D$. If $\langle \delta, \delta\rangle/2\in \frac{1}{\ell}\bZ$,   then  the irreducible $\widehat{\tau}\exp(2\pi i\delta(0))$-twisted module has no subspaces with integral weights, which is not possible. 
Therefore, it suffices to show that 
$\sqrt{\ell}L^{\ast}$ is an even lattice or equivalently, $\ell \langle \mu,\mu\rangle\in 2\bZ$ since $\mu$ is a shortest root.  
Without loss, we may assume $\langle \mu,\alpha\rangle>0$. 
Then we have one of the following two cases:\\
(1) $\mu$ is still a root in $\tilde{V}=V^{[\exp(4N_0\pi i\alpha(0))]}$ or \\
(2) $s\otimes e^{\mu}=s'\otimes e^{\mu'}+\tau(s')\otimes e^{\tau(\mu')}$ 
with a positive root $\mu'\in \bC\Lambda^{<\tau^2>}$ and  
$\mu=\frac{\mu'+\tau(\mu')}{2}$.  In particular, $\mu'+\tau(\mu')$ is not a root. 
Note that $\mu'$ and $\tau(\mu')$ are both positive roots of the same length and hence
$\frac{2\langle \mu',\tau(\mu')\rangle}{\langle \mu',\mu'\rangle} = 0, -1$ or $2$ but $-1$ is not possible since $\mu'+\tau(\mu')$ is not a root. We have $t=\frac{\langle \mu',\tau(\mu')\rangle}{\langle \mu',\mu'\rangle}\in \bZ$.

By Lemma \ref{conj2a2c3b},  $\tau^2$ is conjugate to $\sigma$ and thus  
$|\widehat{\tau}|=2|\widehat{\sigma}|$.  
Since $\tilde{V}$ is not a counterexample, 
$\sigma\in \CP_0$ and so 
$|\widehat{\sigma}|\langle \mu,\mu\rangle\in 2\bZ$ for the case (1) and 
$|\widehat{\sigma}|\langle \mu',\mu'\rangle\in 2\bZ$ for the case (2). 
For the case (1), 
$\ell \langle \mu,\mu\rangle\in 4\bZ$. 
For the case (2), 
$|\widehat{\sigma}|\langle \tau(\mu'),\mu'\rangle=t|\widehat{\sigma}|\langle \mu',\mu'\rangle\in 2\bZ$. 
Then we have 
$\ell \langle \mu,\mu\rangle=
2|\widehat{\sigma}| 
\langle \frac{\mu'+\tau(\mu')}{2}, \frac{\mu'+\tau(\mu')}{2}
\rangle
=|\widehat{\sigma}|\langle \mu'+\tau(\mu'), \mu'\rangle
\in 2\bZ$ as desired. 
\prend

This completes the proof of $\CP\subseteq \CP_0$.

\section{Reverse construction}\label{S:7}
As we have already shown, one can define a pair of $\tau\in \CP_0$ and 
a $\tau$-invariant deep hole $\tilde{\beta}$ for any holomorphic VOA $V$ 
of central charge $24$ with non-abelian $V_1$ by choosing a $W$-element $\alpha$ of $V_1$ and considering the reverse automorphism $\tilde{g}\in \Aut(\Lambda)=Co.0$ of $g=\exp(2\pi i\alpha(0))\in \Aut(V)$.  The pair $(\tau, \tilde{\beta})$ satisfies the following conditions:
\begin{itemize}
	\item[(C1)] $\tau\in \CP_0$ and $\tilde{\beta}$ is a $\tau$-invariant deep hole  of 
	Leech lattice $\Lambda$ with $\langle \tilde{\beta},\tilde{\beta}\rangle=2$; 
	\item[(C2)] the Coxeter number $h$ of $N=\Lambda_{\tilde{\beta}}+\bZ \tilde{\beta}$ is divisible by $|\tau|$; 
	\item[(C3)] $N_{\tau}=L_A(c_{\tau})$ with $c_{\tau}$ as defined in Appendix,
\end{itemize}
where $\Lambda_{\tilde{\beta}}=\{\lambda\in \Lambda \mid \langle \lambda, \tilde{\beta}\rangle\in \bZ\}$  and 
$N_{\tau}$ denotes an orthogonal complement of $N^{\tau}$ in $N$. We also use the same notation $\tau$ to denote the isometry of $N$ induced from $\tau$ on $\Lambda$. 

Let $\CT$ be the set of pairs satisfying the conditions  (C1) to (C3).  In this section, we will study the reverse construction.  Take a pair $(\tau, \tilde{\beta})\in \CT$. Assume that $\tau$  has the frame shape $\prod m^{a_m}$. Then $N_\tau > \oplus_{m\neq 1} A_{m-1}^{a_m}$ and the conformal weight of the $\widehat{\tau}$-twisted module of $V_\Lambda$ is given by $\phi(\tau)=1-\frac{1}{\ell}$, where $\ell=|\widehat{\tau}|$.   

Let $\beta=\frac{1}{\sqrt{\ell}}\varphi^{-1}(\tilde{\beta})$ and define $\tilde{g}=\widehat{\tau}\exp(2\pi i\beta(0))\in \Aut(V_{\Lambda})$. 
Note that $\langle \beta,\beta\rangle=\frac{2}{\ell}$.  
Set $N=\Lambda_{\tilde{\beta}}+\bZ \tilde{\beta}$ and let $h$ be the 
Coxeter number of $N$.  
Since $-\tilde{\beta}$ is a deep hole of $\Lambda$, 
$N^{(1)}$ contains an affine fundamental root system of rank $24$ and 
$N=\oplus_{k=0}^{h-1} N^{(k)}$, where $N^{(k)}=\Lambda_{\tilde{\beta}}-k\tilde{\beta}$.  

We may assume that $ \hat{\tau}^{|\tau|} =\exp(2\pi i \delta(0))$ for some $\delta$ fixed by $\tau$ and take $\delta=0$ if  $\hat{\tau}^{|\tau|} =1$.  
For $i\equiv k \mod 2|\tau|$, the irreducible $\tilde{g}^k$-twisted modules are as follows: 
\[
T^k =V_\Lambda[\tilde{g}^k]= 
\begin{cases}
\bC[-k\beta +(\Lambda^{\tau^i})^*]\otimes M(1)[\tau^i]\otimes T_{\tau^i} & \text{ if }   0  \leq  i < |\tau|, \\
\bC[-k\beta + \delta +(\Lambda^{\tau^i})^*]\otimes M(1)[\tau^i]\otimes T_{\tau^i} & \text{ if }   |\tau|  \leq  i < 2|\tau| 
 \\
\end{cases}
\] 
 (see \cite[Propositions 6.1 and 6.2]{Le} and \cite[Remark 4.2]{DL} for detail). 
In particular, $T^1$ contains a weight one element $e^{\beta}\otimes t$ with $t\in T_\tau$, 
which is the lowest weight element of $T^1$, and 
so $T^1_{\bZ}\not=0$ and the orbifold construction gives a holomorphic VOA $V:=V^{[\tilde{g}]}$ of central charge $24$ 
and $V_1$ is non-abelian since 
$V_1$ contains a root vector $e^{\beta}\otimes t$.  

One of the main purposes  of this section is to determine an equivalent relation $\sim $ on $\CT$ so that two pairs $(\tau, \tilde{\beta})$ and $(\tau', \tilde{\beta}')$ define isomorphic VOAs if and only if $(\tau, \tilde{\beta})\sim (\tau', \tilde{\beta}')$.  
It is clear that $\tau$ and $\tau'$ are conjugates in $O(\Lambda)$ and $\tilde{\beta}$ and $\tilde{\beta}'$ are equivalent deep holes of the Leech lattice $\Lambda$ if 
$(\tau, \tilde{\beta})$ and $(\tau', \tilde{\beta}')$ define isomorphic VOAs.
 
For two equivalent deep holes $\tilde{\beta}$ and $\tilde{\beta}'$, there are $\sigma\in O(\Lambda)$ and $\lambda\in \Lambda$ such that $\tilde{\beta}'= \sigma( \tilde{\beta}-\lambda)$.
Since $\tilde{\beta}'$ is $\tau'$-invariant, $\tilde{\beta}-\lambda$ is  $\sigma^{-1} \tau' \sigma$-invariant. Moreover, $\langle \tilde{\beta}, \lambda \rangle \in \bZ$ since $\langle \tilde{\beta}, \tilde{\beta}\rangle = \langle \tilde{\beta}', \tilde{\beta}'\rangle =2$. 
In this case,  $\Lambda_{\tilde{\beta}} +\tilde{\beta} = \Lambda_{\tilde{\beta}-\lambda} +(\tilde{\beta}-\lambda)$. Therefore, up to the action of $O(\Lambda)$, we may identify 
$N= \Lambda_{\tilde{\beta}} +\bZ \tilde{\beta}$ with $N' =\Lambda_{\tilde{\beta}'} +\bZ \tilde{\beta}'$.    We define an equivalent relation on $\CT$ as follows: 

$(\tau,\tilde{\beta})\sim (\tau',\tilde{\beta}')$ if and only if \\
(1)  
$\tilde{\beta}$ and $\tilde{\beta}'$ are equivalent deep holes of the Leech lattice $\Lambda$, i.e., there are $\sigma\in O(\Lambda)$ and $\lambda\in \Lambda$ such that $\tilde{\beta}'= \sigma( \tilde{\beta}-\lambda)$;\\
(2) $\tau$ is conjugate to $\sigma^{-1} \tau' \sigma$ in $O(N)$.\\
Note that $ (\sigma^{-1} \tau' \sigma,\tilde{\beta}-\lambda )\in \CT$ and $N= \Lambda_{\tilde{\beta}} +\bZ \tilde{\beta}= \Lambda_{\tilde{\beta}-\lambda} +\bZ(\tilde{\beta}-\lambda)$. Moreover,  $\tau$ and $\tau'$ are conjugate in $O(\Lambda)$ since they have the same frame shape by (2).  

\medskip
We will prove the following main theorem. 

\begin{thm}\label{Main}
	There is a one-to-one correspondence between the set of isomorphism classes of holomorphic VOA $V$ of central charge $24$ having non-abelian $V_1$ and 
	the set $\CT/\sim$ of equivalence classes  of  pairs $(\tau, \tilde{\beta})$ by $\sim$. 
\end{thm}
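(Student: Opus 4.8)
The plan is to exhibit the two maps explicitly and to prove that they are mutually inverse bijections. The forward map $\Phi$ sends a holomorphic VOA $V$ with non-abelian $V_1$ to the pair $(\tau,\tilde{\beta})$ constructed in Sections \ref{S:2}--\ref{sec:6}: choose a Cartan subalgebra $\CH$ of $V_1$ and a $W$-element $\alpha$, form $g=\exp(2\pi i\alpha(0))$ and its reverse automorphism $\tilde{g}=\widehat{\tau}\exp(2\pi i\beta(0))$ of $V_\Lambda=V^{[g]}$, and set $\tilde{\beta}=\sqrt{\ell}\,\varphi_\tau(\beta)$. By Proposition \ref{P0} we have $\tau\in\CP_0$; by Theorem \ref{deephole} together with Proposition \ref{shortestroot} (existence of a shortest root), $\tilde{\beta}$ is a $\tau$-invariant deep hole with $\langle\tilde{\beta},\tilde{\beta}\rangle=2$; and conditions (C2)--(C3) hold by the results on the Coxeter number of $N$ and on $N_\tau\cong L_A(c_\tau)$. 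Hence $(\tau,\tilde{\beta})\in\CT$. The reverse map $\Psi$ sends $[(\tau,\tilde{\beta})]\in\CT/\!\sim$ to the isomorphism class of $V:=V_\Lambda^{[\tilde{g}]}$, where $\beta=\frac{1}{\sqrt{\ell}}\varphi_\tau^{-1}(\tilde{\beta})$ and $\tilde{g}=\widehat{\tau}\exp(2\pi i\beta(0))$; the computation in the preamble of Section \ref{S:7} shows that $T^1$ has lowest weight vector $e^{\beta}\otimes t$ of weight one, so $V$ is holomorphic of central charge $24$ with non-abelian $V_1$.

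The identity $\Psi\circ\Phi=\mathrm{id}$ is immediate from the reverse-automorphism theory of \cite{CLM,ELMS}, since the $\tilde{g}$ attached to $V$ satisfies $V_\Lambda^{[\tilde{g}]}\cong V$. To make $\Phi$ well defined on isomorphism classes I would use that any two Cartan subalgebras of $V_1$ are inner-conjugate and any two $W$-elements differ by a Weyl-group element, again inner; hence $g$, and therefore $\tilde{g}$, is determined up to conjugacy in $\Aut(V_\Lambda)$. Conjugating $\tilde{g}$ by a lift of $\sigma\in O(\Lambda)$ replaces $(\tau,\beta)$ by $(\sigma\tau\sigma^{-1},\sigma\beta)$, while conjugating by an inner automorphism $\exp(2\pi i\gamma(0))$ with $\gamma\in\Lambda$ replaces $\beta$ by $\beta+(\tau^{-1}-1)\gamma$; translating both operations through the $\ell$-duality $\varphi_\tau$ (using its naturality $\varphi_{\sigma\tau\sigma^{-1}}\circ\sigma=\sigma\circ\varphi_\tau$) yields precisely clauses (1) and (2) defining $\sim$. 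This simultaneously shows that $\Phi$ descends to $\CT/\!\sim$ and that $\Psi$ is constant on $\sim$-classes, because a conjugacy of $\tilde{g},\tilde{g}'$ in $\Aut(V_\Lambda)$ induces an isomorphism $V_\Lambda^{[\tilde{g}]}\cong V_\Lambda^{[\tilde{g}']}$.

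The crux, and the step I expect to be the main obstacle, is the reverse composite $\Phi\circ\Psi=\mathrm{id}$: the pair recovered from $V=V_\Lambda^{[\tilde{g}]}$ via a $W$-element must be $\sim$-equivalent to the input $(\tau,\tilde{\beta})$. The content here is that the $\tilde{g}$ built from a pair in $\CT$ really is the reverse automorphism of some $W$-element of its own orbifold $V$, and conditions (C2) and (C3) are exactly what guarantee this: (C2) forces $|\tau|$ to divide the Coxeter number $h$ of $N$, placing $e^{\beta}\otimes t$ at the correct twisted level so that it generates a full shortest-root component, and (C3) pins down the $\tau$-action on $N$ through $N_\tau\cong L_A(c_\tau)$, which by Remark \ref{Ntau} identifies $\tau$ with a product of Coxeter elements on the root system of $N$. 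Using Theorem \ref{duality} in the form $N^{\tau}\cong\sqrt{\ell}L^{*}$, the datum of $\tau$ acting on $N$ reconstructs the root lattice $L$ and the levels of $V_1$, hence a $W$-element; feeding this $W$-element back into $\Phi$ returns $\tilde{g}$ up to the conjugacy already accounted for.

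The genuinely delicate sub-point is the appearance of $O(N)$ rather than $O(\Lambda)$ in clause (2) of $\sim$. An isomorphism $V\cong V'$ produces, via compatible $W$-elements, an automorphism of $V_\Lambda$ conjugating $\tilde{g}$ to $\tilde{g}'$, whose linear part $\sigma\in O(\Lambda)$ realizes the deep-hole equivalence $\tilde{\beta}'=\sigma(\tilde{\beta}-\lambda)$; the hard part will be to upgrade the resulting relation between $\tau$ and $\sigma^{-1}\tau'\sigma$ to a conjugacy inside $O(N)=O(\Lambda^{[\tilde{\beta}]})$, allowing the extra $O(N)$-freedom that corresponds to the inner choices on the $V$-side invisible to $O(\Lambda)$. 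Since $\varphi_\tau$ depends on $\tau$, this forces one to track how $\varphi_\tau$ and $\varphi_{\tau'}$ transform under $\sigma$; I would handle it entirely on the $N$-side, using that $\tau$ lies in the Weyl group of the root system of $N$ (the discussion following Remark \ref{Ntau}), so that its $O(N)$-conjugacy class is detected by the fixed and coinvariant sublattices $N^{\tau}\cong\sqrt{\ell}L^{*}$ and $N_\tau\cong L_A(c_\tau)$, which are precisely the invariants carried by the Lie algebra structure of $V_1$.
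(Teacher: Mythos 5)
Your skeleton coincides with the paper's — the same two maps, the same easy composite ($\Psi\circ\Phi=\mathrm{id}$ from reverse-automorphism theory), and you correctly locate the crux at $\Phi\circ\Psi=\mathrm{id}$ — but at that crux your plan omits the idea that carries the paper's entire Section \ref{S:7}. You assert that (C2)--(C3) "place $e^{\beta}\otimes t$ at the correct twisted level" and that the $\tau$-action on $N$ "reconstructs\dots a $W$-element", but the step that makes any of this meaningful is Theorem \ref{order}: the order of $\tilde{g}$ equals the Coxeter number $h$ of $N$. Nothing in your proposal produces this, and it is not a formality: for $\tau\in\{2C,6G,10F\}$ (where $\ell=2|\tau|$) the paper needs Lemma \ref{odd} (that $h/|\tau|$ is odd) and the mod-$4$ norm statement of Proposition \ref{mod4} to bound the number of weights of $T^1$ modulo $\bZ$. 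Only with $|\tilde{g}|=h$ in hand can one compute $L$ as in \eqref{L}, prove $\Comm_V(\CH)=V_{\Lambda_\tau}^{\hat{\tau}}$ (Lemma \ref{comm}), and then actually construct the $W$-element as $\alpha=\sqrt{\ell}\varphi^{-1}(\pi(\rho)/h)$ from a Weyl vector $\rho$ of the affine root system $(\Lambda_{\tilde{\beta}}+\tilde{\beta})_2$, checking $\langle\alpha,\beta\rangle=1/h$ and that $\alpha$ is regular, so that $\beta$ becomes a simple short root of a full component and $\tilde{g}$ is genuinely the reverse automorphism of $\exp(2\pi i\widetilde{\alpha}(0))$. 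Saying that $N^\tau\cong\sqrt{\ell}L^*$ determines the levels is a description of the answer, not a proof that a $W$-element with reverse automorphism $\tilde{g}$ exists.

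Second, your resolution of the $O(N)$-versus-$O(\Lambda)$ issue rests on the claim that the $O(N)$-conjugacy class of $\tau$ is detected by the pair $(N^\tau,N_\tau)$. That is unproved and is not automatic for isometries of a unimodular lattice: conjugacy also requires matching the glue between $\mathcal{D}(N^\tau)$ and $\mathcal{D}(N_\tau)$, which is precisely why the paper's appendix records transitivity of $C_{O(\Lambda_\tau)}(\tau)/\langle\tau\rangle$ on singular elements. The paper's own argument avoids your claim entirely: given $\tau=\mu\tau'\mu^{-1}$ with $\mu\in O(N)$, it notes $\mu\tilde{\beta}'\in\Lambda_{\tilde{\beta}}+k\tilde{\beta}$ with $(k,h)=1$, uses $\langle\tilde{g}^k\rangle=\langle\tilde{g}\rangle$ (hence $V^{[\tilde{g}]}\cong V^{[\tilde{g}^k]}$), conjugates $\tau$ to $\tau^k$ by an element of $\mathrm{Weyl}(R(N_\tau))$ fixing $\mu\tilde{\beta}'$ — possible exactly because (C3) forces $\tau$ to act as the Coxeter-type element $g_{\Delta,c}$ — and closes with Lemma \ref{samecoset}. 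Finally, a small but real slip in your well-definedness argument: conjugating by $\exp(2\pi i\gamma(0))$ with $\gamma\in\Lambda$ is conjugating by the identity of $\Aut(V_\Lambda)$, so it shifts nothing; the shifts of $\beta$ modulo $\pi(\Lambda)$ needed for clause (1) come from conjugation by $\exp(2\pi i\gamma(0))$ with $\gamma\in\bC\Lambda_\tau$, as in \cite[Lemma 4.5 (2)]{LS}.
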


\subsection{W-element and the automorphism $\tilde{g}$} 
Let $(\tau, \tilde{\beta})\in \CT$. Set $\beta=\frac{1}{\sqrt{\ell}}\varphi^{-1}(\tilde{\beta})$ and define $\tilde{g}=\widehat{\tau}\exp(2\pi i\beta(0))\in \Aut(V_{\Lambda})$. 
Let $V=V^{[\tilde{g}]}$ be the holomorphic VOA obtained by an orbifold construction from $V_\Lambda$ and $\tilde{g}$.  We will show that there is a $W$-element $\alpha$ of $V_1$ such that $\tilde{g}$ can be viewed as a reverse automorphism of $\exp(2\pi i \alpha(0)) \in \Aut(V)$.

First we will prove the following theorem. 

\begin{thm}\label{order}
	Let $h$ be the Coxeter number of $N=\Lambda_{\tilde{\beta}} +\bZ\tilde{\beta}$. Then 
$|\tilde{g}|=h$. 
\end{thm}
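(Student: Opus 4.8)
The plan is to compute the order of $\tilde{g}$ directly as an automorphism of $V_\Lambda$ and to match it against $h=[N:\Lambda_{\tilde{\beta}}]$, which is exactly the least positive integer $m$ with $m\tilde{\beta}\in\Lambda$ (this is what the coset decomposition $N=\oplus_{k=0}^{h-1}N^{(k)}$ records, since the $N^{(k)}$ are distinct). First I would fix the standard-lift data: write $\widehat{\tau}^{|\tau|}=\exp(2\pi i\delta(0))$ with $\delta\in(\Lambda^{\tau})^{\ast}$ (as in the proof of Theorem \ref{indexn}, $\delta\in(\Lambda^{\tau})^{\ast}$ because $\widehat{\tau}$ fixes $e^{\gamma}$ for $\gamma\in\Lambda^{\tau}$), with $\delta=0$ precisely when $\ell=|\tau|$. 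Since $\beta\in\bQ\Lambda^{\tau}$ is $\tau$-fixed, $\widehat{\tau}$ commutes with $\exp(2\pi i\beta(0))$, so $\tilde{g}^{m}=\widehat{\tau}^{m}\exp(2\pi i m\beta(0))$, and this is the identity iff $\tau^{m}=1$ (so $|\tau|\mid m$, say $m=|\tau|s$) and $s\delta+m\beta\in\Lambda^{\tau}$, using that for $v\in\bQ\Lambda^{\tau}$ one has $\exp(2\pi i v(0))=1$ iff $v\in\Lambda^{\tau}$ (as $\Lambda$ is unimodular). Hence $|\tilde{g}|=|\tau|\,s^{\ast}$, where $s^{\ast}$ is the order of $\delta+|\tau|\beta$ in $\bQ\Lambda^{\tau}/\Lambda^{\tau}$. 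It then suffices to prove the two divisibilities $h\mid|\tilde{g}|$ and $|\tilde{g}|\mid h$.

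For $h\mid|\tilde{g}|$: from $\tilde{g}^{|\tilde{g}|}=1$ we get $s^{\ast}\delta+|\tilde{g}|\beta\in\Lambda^{\tau}$, and since $\delta\in(\Lambda^{\tau})^{\ast}$ this forces $|\tilde{g}|\beta\in(\Lambda^{\tau})^{\ast}$. By the equivalence recorded in the proof of Theorem \ref{indexn} (namely $m\beta\in(\Lambda^{\tau})^{\ast}$ iff $m\tilde{\beta}=\sqrt{\ell}\varphi(m\beta)\in\Lambda$), this is the same as $|\tilde{g}|\tilde{\beta}\in\Lambda$. As $h$ is the least positive $m$ with $m\tilde{\beta}\in\Lambda$, we conclude $h\mid|\tilde{g}|$.

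For $|\tilde{g}|\mid h$ I must show $\tilde{g}^{h}=1$. By (C2) we have $|\tau|\mid h$; writing $h=|\tau|s_{h}$, the first paragraph reduces this to proving $s_{h}\delta+h\beta\in\Lambda^{\tau}$. Here the deep hole hypothesis enters: since $-\tilde{\beta}$ is a deep hole, $N^{(1)}$ carries an affine fundamental root system, and the null relation among its nodes produces $\lambda_{i}\in\Lambda_{\tilde{\beta}}$ and marks $m_{i}$ with $\sum m_{i}=h$ and $\sum m_{i}\lambda_{i}=h\tilde{\beta}$; in particular $h\tilde{\beta}\in\Lambda$, i.e. $h\beta\in(\Lambda^{\tau})^{\ast}$, and of course $s_{h}\delta\in(\Lambda^{\tau})^{\ast}$. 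The genuine difficulty is that this only places $w:=s_{h}\delta+h\beta$ in $(\Lambda^{\tau})^{\ast}$, whereas $\tilde{g}^{h}=\exp(2\pi i w(0))$ is trivial only when $w\in\Lambda^{\tau}$; the two differ by the discriminant group $(\Lambda^{\tau})^{\ast}/\Lambda^{\tau}$, whose exponent is $\ell$ by the $\ell$-duality. The plan is to kill this discriminant contribution by checking $\langle w,\lambda\rangle\in\bZ$ on a generating set of $\Lambda$ — on $\Lambda_{\tilde{\beta}}$ and on a vector $\lambda_{\ast}$ with $\langle\lambda_{\ast},\tilde{\beta}\rangle=1/h$ generating $\Lambda/\Lambda_{\tilde{\beta}}\cong\bZ/h$ — rewriting $\langle\beta,\lambda\rangle=\tfrac{1}{\ell}\langle\tilde{\beta},\sqrt{\ell}\varphi(\pi_{\tau}(\lambda))\rangle$ through the isometry $\varphi$ and feeding in the affine relation $h\tilde{\beta}=\sum m_{i}\lambda_{i}$ together with the standard-lift phases and condition (C3) on $N_{\tau}$. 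I expect this last step — proving that $w$ is actually trivial in $(\Lambda^{\tau})^{\ast}/\Lambda^{\tau}$ rather than merely lying in $(\Lambda^{\tau})^{\ast}$ — to be the main obstacle, since it is exactly the point at which the cheap lattice bookkeeping giving $h\mid|\tilde{g}|$ must be upgraded by using the full geometry of the deep hole and the compatibility of $\widehat{\tau}$ with the affine root system.
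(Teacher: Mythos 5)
Your first half is sound, and it matches the paper's (largely implicit) argument: using $\delta\in(\Lambda^{\tau})^{\ast}$ and the $\ell$-duality equivalence of Lemma \ref{hdividess} ($m\beta\in\pi(\Lambda)=(\Lambda^{\tau})^{\ast}$ iff $h\mid m$), the relation $\tilde{g}^{|\tilde{g}|}=1$ does force $h\mid|\tilde{g}|$. The gap is the other inequality, and you have named it yourself: everything after ``the genuine difficulty is'' is a plan, not a proof. What you would need is that $w=s_{h}\delta+h\beta$ is trivial in $(\Lambda^{\tau})^{\ast}/\Lambda^{\tau}$, a group of exponent $\ell$; when $2C\in\langle\tau\rangle$ (i.e.\ $\tau\in\{2C,6G,10F\}$, where $\ell=2|\tau|$) this is precisely a factor-of-two problem that no soft pairing computation on a generating set of $\Lambda$ will resolve, because the statement genuinely depends on fine arithmetic of the deep hole and not just on memberships in $(\Lambda^{\tau})^{\ast}$.

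The paper, in fact, does \emph{not} close this gap by proving $\tilde{g}^{h}=1$ directly. It first proves only $\tilde{g}^{2h}=1$, by an orbit-sum trick: condition (C3) supplies a root sublattice $S\cong A_{|\tau|-1}$ of $N_{\tau}$, a root $\mu=\lambda+m\tilde{\beta}\in S$ satisfies $0=\sum_{i}\tau^{i}\mu$, whence $\frac{h}{|\tau|}\tilde{\beta}\in\pi(\Lambda)$ and $\frac{\ell h}{|\tau|}\beta\in\Lambda$; this settles $\tilde{g}^{h}=1$ when $\ell=|\tau|$, but only $\tilde{g}^{2h}=1$ otherwise. For the three remaining classes it switches tools entirely: it shows $h/|\tau|$ is odd (Lemma \ref{odd}, by classifying the possible root systems $R(N)$), proves the congruence $\langle v,v\rangle\equiv 2\pmod{4}$ for $v\in(\Lambda_{\tilde{\beta}}+sp\tilde{\beta})^{\tau}$ (Proposition \ref{mod4}, via an auxiliary orbifold construction down to a Niemeier lattice with root system $A_1^{24}$ and an explicit Golay-code computation), and then uses this congruence to show that the conformal weights of the twisted module $T^{1}=\bC[-\beta+(\Lambda^{\tau})^{\ast}]\otimes M(1)[\tau]\otimes T_{\tau}$ take at most $h=2sp$ values modulo $\bZ$; since for a holomorphic VOA the number of such values equals $|\tilde{g}|$, this gives $|\tilde{g}|\le h$, which combined with $h\mid|\tilde{g}|$ finishes the proof. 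None of these inputs --- the oddness of $h/|\tau|$, the mod-$4$ norm congruence, and the weight count in the twisted module --- appears in your plan, and they are the real content of the theorem; so as it stands your proposal establishes only the divisibility $h\mid|\tilde{g}|$.
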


We need  several lemmas. 

\begin{lmm} \label{hdividess} 
	$s\beta\in \pi(\Lambda)$ if and only if $h|s$. 
\end{lmm}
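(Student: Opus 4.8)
The plan is to use the $\ell$-duality $\varphi$ to transport the statement from $\beta$ to the deep hole $\tilde{\beta}$, and then to prove the purely lattice-theoretic fact that the order of $\tilde{\beta}$ in $\bR\Lambda/\Lambda$ is exactly the Coxeter number $h$ of $N$. First the reduction. Since $\Lambda$ is unimodular, $\pi(\Lambda)=(\Lambda^{\tau})^{\ast}$. By definition $\beta=\tfrac{1}{\sqrt{\ell}}\varphi^{-1}(\tilde{\beta})$, so $\varphi(\sqrt{\ell}\,s\beta)=s\tilde{\beta}$ for every $s$. Because $\varphi$ extends to a linear isometry of $\bR\Lambda^{\tau}$ carrying the lattice $\sqrt{\ell}(\Lambda^{\tau})^{\ast}$ bijectively onto $\Lambda^{\tau}$, we have $s\beta\in(\Lambda^{\tau})^{\ast}$ iff $\sqrt{\ell}\,s\beta\in\sqrt{\ell}(\Lambda^{\tau})^{\ast}$ iff $s\tilde{\beta}\in\Lambda^{\tau}$. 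Since $\tilde{\beta}\in\bR\Lambda^{\tau}$ and $\Lambda^{\tau}=\Lambda\cap\bR\Lambda^{\tau}$, this is the same as $s\tilde{\beta}\in\Lambda$. Writing $d$ for the order of $\tilde{\beta}$ modulo $\Lambda$, it remains to show $d=h$.

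For $d\mid h$ I would use the affine geometry of the deep hole. As recalled above, $R_1=\{\lambda-\tilde{\beta}\mid \langle\lambda-\tilde{\beta},\lambda-\tilde{\beta}\rangle=2,\ \lambda\in\Lambda_{\tilde{\beta}}\}$ contains an affine fundamental root system of rank $24$; fix an irreducible component with affine simple roots $\gamma_0,\dots,\gamma_r$, write $\gamma_j=\lambda_j-\tilde{\beta}$ with $\lambda_j\in\Lambda_{\tilde{\beta}}$, and let $a_0=1,\dots,a_r$ be its marks. Since the root system of a Niemeier lattice is simply laced, the Gram matrix $(\langle\gamma_i,\gamma_j\rangle)$ is the affine Cartan matrix and $(a_j)$ spans its kernel, while $\sum_j a_j=h$. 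Hence $v:=\sum_j a_j\gamma_j=\sum_j a_j\lambda_j-h\tilde{\beta}\in\bR\Lambda$ is orthogonal to every $\gamma_{j'}$ of its own component and, by orthogonality of distinct components, to every root of $N$. As $N\not\cong\Lambda$ (Lemma \ref{nL}) its roots span $\bR\Lambda$, so $v=0$ and $h\tilde{\beta}=\sum_j a_j\lambda_j\in\Lambda$; therefore $d\mid h$.

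For $d\ge h$ I would grade $N$ by layers: the assignment $\Lambda_{\tilde{\beta}}-k\tilde{\beta}\mapsto k$ is a surjective homomorphism $N\to\bZ/d\bZ$ with kernel $N^{(0)}=\Lambda_{\tilde{\beta}}$. Deleting the affine nodes leaves finite simple roots all lying in layer $1$, so the layer of a positive root equals its height, and the heights of the positive roots of each (simply laced) component fill $\{1,\dots,h-1\}$. But $N^{(0)}=\Lambda_{\tilde{\beta}}\subseteq\Lambda$ contains no norm-$2$ vector, since $\Lambda$ has minimum $4$. If $d\le h-1$, a positive root of height $d$ would sit in layer $d\equiv 0\pmod d$, i.e.\ in $\Lambda_{\tilde{\beta}}$, which is impossible; hence $d\ge h$, and together with $d\mid h$ we obtain $d=h$, as required.

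The main difficulty is not computational but structural: one must legitimately import the deep-hole facts — that $R_1$ realises a complete affine fundamental root system all of whose components share the Coxeter number $h$, and that root height coincides with the layer grading — after which the argument splits into the two clean observations that the marks annihilate the null direction (forcing $d\mid h$) and that layer zero is root-free (forcing $d\ge h$).
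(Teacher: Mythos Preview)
Your proof is correct, and the reduction via the $\ell$-duality to the statement ``$s\tilde{\beta}\in\Lambda^{\tau}$ iff $h\mid s$'' is exactly the paper's one-line argument. The only difference is scope: the paper does not re-prove the second equivalence but simply invokes the decomposition $N=\bigoplus_{k=0}^{h-1}N^{(k)}$, $N^{(k)}=\Lambda_{\tilde{\beta}}-k\tilde{\beta}$, stated in the paragraph immediately preceding the lemma as a known consequence of $\tilde{\beta}$ being a deep hole (this is the standard Conway--Sloane fact that the index $[N:\Lambda_{\tilde{\beta}}]$ equals the common Coxeter number). You instead supply a self-contained proof of that fact, using the marks of the affine diagram to get $h\tilde{\beta}\in\Lambda$ (hence $d\mid h$) and the height/layer grading together with $\Lambda_2=\emptyset$ to get $d\ge h$. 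Both steps are correct; what you have written is essentially the classical argument behind the deep-hole decomposition that the paper is quoting. So the approaches agree at the structural level, with your version simply unpacking a result the paper takes as given.
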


\pr Since  $\sqrt{\ell}\varphi((\Lambda^\tau)^*)= \Lambda^\tau$, we have $s\beta\in \pi(\Lambda)=(\Lambda^{\tau})^{\ast}$ if and only if 
$s\tilde{\beta}\in \Lambda^{\tau}$ if and only if $h|s$.
\prend 

\begin{lmm}
We have $\tilde{g}^{2h}=1$. Furthermore, $\tilde{g}^h=1$ if $2C\not\in <\tau>$.   
\end{lmm}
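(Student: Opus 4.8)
The plan is to reduce the whole statement to a single lattice–membership fact and then read off the two cases from the order of $\widehat\tau$. Since $\tau(\beta)=\beta$, the automorphisms $\widehat\tau$ and $\exp(2\pi i\beta(0))$ commute, so $\tilde g^{\,s}=\widehat\tau^{\,s}\exp(2\pi i s\beta(0))$ for every $s$. The key elementary observation is that for $v\in\bR\Lambda^{\tau}$ one has $\exp(2\pi i v(0))=1$ on $V_{\Lambda}$ iff $\langle v,\lambda\rangle\in\bZ$ for all $\lambda\in\Lambda$; writing $\langle v,\lambda\rangle=\langle v,\pi(\lambda)\rangle$ and using $\pi(\Lambda)=(\Lambda^{\tau})^{*}$ (unimodularity of $\Lambda$), this is equivalent to $v\in\bigl((\Lambda^{\tau})^{*}\bigr)^{*}=\Lambda^{\tau}$. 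Next I dispose of the $\widehat\tau$–factor: since $|\tau|\mid h$ by (C2) and $\ell=|\widehat\tau|\in\{|\tau|,2|\tau|\}$, we always have $\ell\mid 2h$, hence $\widehat\tau^{\,2h}=1$; and if $2C\notin\langle\tau\rangle$ then $\ell=|\tau|\mid h$, so already $\widehat\tau^{\,h}=1$. Therefore $\tilde g^{\,2h}=\exp(2\pi i\,2h\beta(0))$ in general and $\tilde g^{\,h}=\exp(2\pi i\,h\beta(0))$ when $2C\notin\langle\tau\rangle$, and by the membership criterion the lemma is equivalent to
\[
2h\beta\in\Lambda^{\tau}\ \ (\text{always}),\qquad h\beta\in\Lambda^{\tau}\ \ (\text{if }2C\notin\langle\tau\rangle).
\]

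By Lemma \ref{hdividess} we already know $h\beta\in(\Lambda^{\tau})^{*}$, so the task is precisely to \emph{upgrade} this from the dual lattice to $\Lambda^{\tau}$ itself (resp. after doubling). Equivalently, I must control the class $\bar x:=h\beta+\Lambda^{\tau}$ in the discriminant group $(\Lambda^{\tau})^{*}/\Lambda^{\tau}$, proving $\bar x=0$ in the non-$2C$ cases and $2\bar x=0$ in the $2C$ cases. Two inputs are available. First, a direct computation gives $\langle h\beta,h\beta\rangle=2h^{2}/\ell$, which lies in $2\bZ$ in every case of $\CP_0$ (either $\ell\mid h$, or $\ell=2|\tau|$ with $|\tau|$ even), so $\bar x$ is isotropic for the discriminant form. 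Second, the $\ell$-duality $\varphi$ of Theorem \ref{dual} identifies $h\beta$ with $h\tilde\beta=\sqrt{\ell}\,\varphi(h\beta)\in\Lambda^{\tau}$, and $h\tilde\beta=\sum_{\lambda}n_{\lambda}\lambda$ is the affine relation carried by the deep hole $\tilde\beta$, with the $n_{\lambda}$ the marks of the fundamental affine system of $N$ and $\sum_\lambda n_\lambda=h$.

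The main obstacle is exactly this last upgrade, because isotropy alone does not force $\bar x=0$: for $\ell$ an odd prime the discriminant form of $\Lambda^{\tau}$ has nonzero isotropic classes, so the norm computation must be supplemented by the explicit structure of $(\Lambda^{\tau})^{*}/\Lambda^{\tau}$ for $\tau\in\CP_0$ (the Gram matrices of Harada--Lang and the quadratic space $\mathrm{Irr}(V_{\Lambda_\tau}^{\widehat\tau})$ used earlier in \S\ref{S:4.2}). I expect the dichotomy $\ell=|\tau|$ versus $\ell=2|\tau|$—that is, the presence of a $2C$-element in $\langle\tau\rangle$—to be precisely what governs the $2$-part of $\bar x$: when $\ell=|\tau|$ the class $\bar x$ vanishes outright (giving $\tilde g^{\,h}=1$), while a $2C$-element introduces exactly an order-$2$ ambiguity in $\bar x$, so that only $2h\beta\in\Lambda^{\tau}$ survives and one gets $\tilde g^{\,2h}=1$ but in general $\tilde g^{\,h}\ne1$. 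Carrying out this discriminant-class bookkeeping, case by case over the eleven frame shapes in $\CP_0$, is where the real work lies.
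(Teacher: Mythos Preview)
Your reduction is correct: after noting that $\widehat\tau$ and $\exp(2\pi i\beta(0))$ commute, and that $\widehat\tau^{\,h\ell/|\tau|}=1$, the lemma becomes the single lattice statement $\frac{h\ell}{|\tau|}\beta\in\Lambda^{\tau}$, which via $\ell$-duality is equivalent to $\frac{h}{|\tau|}\tilde\beta\in(\Lambda^{\tau})^{*}$. The problem is that you stop here and defer the substance to an unspecified ``discriminant-class bookkeeping, case by case''. As you yourself observe, isotropy of $\bar x=h\beta+\Lambda^{\tau}$ is not enough: for instance when $\tau\in 3B$ the discriminant form of $\Lambda^{\tau}\cong K_{12}$ is $3^{6}$ and has plenty of nonzero isotropic classes. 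You give no mechanism that singles $\bar x$ out among them, and the affine relation $h\tilde\beta=\sum n_\lambda\lambda$ you quote only reproduces $h\tilde\beta\in\Lambda^{\tau}$, i.e.\ Lemma~\ref{hdividess}, not the stronger statement you need. So the proposal is not a proof; it is a restatement of what remains to be shown.

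The paper does \emph{not} proceed by discriminant analysis. It uses condition (C3) directly: since $N_{\tau}\cong L_A(c_{\tau})$ contains a root sublattice $S\cong A_{|\tau|-1}$ on which $\tau$ acts as a Coxeter element, pick a root $\mu\in S\subset N_{\tau}$ and write $\mu=\lambda+m\tilde\beta$ with $\lambda\in\Lambda$. Because $\mu\in N_{\tau}$ is orthogonal to $\bC\Lambda^{\tau}$, projecting gives $m\tilde\beta=-\pi(\lambda)\in\pi(\Lambda)=(\Lambda^{\tau})^{*}$. Summing over the $\tau$-orbit (which kills $\mu$) shows $|\tau|m\tilde\beta\in\Lambda^{\tau}$, forcing $m$ to be a multiple of $h/|\tau|$; in particular $\frac{h}{|\tau|}\tilde\beta\in(\Lambda^{\tau})^{*}$. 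Translating back through $\varphi$ yields $\frac{h\ell}{|\tau|}\beta\in\Lambda^{\tau}$ and hence $\tilde g^{\,h\ell/|\tau|}=1$. The whole point is that (C3) supplies a concrete witness in $N_{\tau}$; this is the missing idea in your outline, and it replaces any case-by-case computation.
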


\pr
By (C3) and $\tau\in \CP_0$, $N_{\tau}$ contains a root sublattice $S\cong A_{|\tau|-1}$.  
For a root $\mu$ of $S$, there is $\lambda\in \Lambda$ and $m\in \bZ$ such that 
$\mu=\lambda+m\tilde{\beta} \in N_{\tau}\subseteq \bC\Lambda_{\tau}$. 
Then 
\[
0=\sum_{i=0}^{|\tau|-1} \tau^i \mu = \sum_{i=0}^{|\tau|-1} \tau^i \lambda +|\tau|m\tilde{\beta} \in \Lambda^\tau. 
\] 
Since $h$ is the smallest positive integer satisfying $h\tilde{\beta}\in \Lambda^{\tau}$ and $\tau$ is fixed point free on $S$ with order $|\tau|$, we have $m=h/|\tau|$.    
Moreover, we also have 
$\frac{h}{|\tau|}\tilde{\beta}\in \pi(\Lambda)\in (\Lambda^{\tau})^{\ast}$ and so 
$\langle \frac{h\tilde{\beta}}{|\tau|}, \Lambda^{\tau}\rangle\subseteq \bZ$. 
Hence  
$\bZ \supseteq \frac{h}{|\tau|}\langle \sqrt{\ell}\varphi(\beta), 
\sqrt{\ell}\varphi((\Lambda^{\tau})^{\ast})\rangle
=\langle \frac{h\ell}{|\tau|}\beta,(\Lambda^{\tau})^{\ast}\rangle$  
and so $\frac{\ell}{|\tau|}h\beta\in \Lambda$, which means   
$(\exp(2\pi \beta(0))^{\frac{\ell h}{|\tau|}}=1$.
 In particular, if $2C\not\in <\tau>$, then $\ell=|\tau|$ and 
$\exp(2\pi i\beta(0))^h=1$.  
\prend

From now on, we may assume $\tau=2C, 6G, 10F$. 
In particular, $|\tau|=2s$ for some $s=1,3$ or $5$. Note that 
$\sigma=\tau^s$ is a $2C$-element. 

\begin{lmm} \label{odd}
Let $\tau\in2C, 6G$, or $10F$. Let $\tilde{\beta}$ be a $\tau$-invariant deep hole satisfying (C1)-(C3). Let $h$ be the Coxeter number of $N=\Lambda_{\beta} +\bZ\beta$. Then $h/|\tau|$ is odd.    
\end{lmm}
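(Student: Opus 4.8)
The plan is to convert the statement into a purely $2$-adic one. Writing $|\tau|=2s$ with $s\in\{1,3,5\}$ odd (as in the paragraph preceding the lemma) and using that $|\tau|\mid h$ by (C2), the assertion ``$h/|\tau|$ is odd'' is equivalent to $h\equiv 2\pmod 4$; so the whole question is to determine the $2$-part of the Coxeter number $h$ of $N=\Lambda_{\tilde\beta}+\bZ\tilde\beta$ (write $v_2$ for the $2$-adic valuation, so the goal is $v_2(h)=1$). By Lemma~\ref{hdividess}, $h$ is the least positive integer with $h\beta\in(\Lambda^{\tau})^{*}$, and the preceding lemma gives $2h\beta\in\Lambda^{\tau}$; hence $h\beta$ represents a class of order dividing $2$ in $(\Lambda^{\tau})^{*}/\Lambda^{\tau}$. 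The relevance of the $2C$-obstruction is made precise by the identity $\tilde g^{h}=\widehat{\tau}^{\,h}\exp(2\pi i\,h\beta(0))=\exp\!\big(2\pi i\,(m_0\delta+h\beta)(0)\big)$, where $m_0=h/|\tau|$ and $\widehat{\tau}^{|\tau|}=\exp(2\pi i\delta(0))$ is the order-$2$ central element with $\delta\in(\Lambda^{\tau})^{*}\setminus\Lambda^{\tau}$ and $2\delta\in\Lambda^{\tau}$ (this is exactly the case $\ell=2|\tau|$): the parity of $m_0$ is precisely whether $\delta$ enters $\tilde g^{h}$, so the parity is governed by how the two $2$-torsion classes $h\beta+\Lambda^{\tau}$ and $\delta+\Lambda^{\tau}$ sit in $(\Lambda^{\tau})^{*}/\Lambda^{\tau}$.

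Next I would bring in the involution $\sigma=\tau^{s}$, which is a $2C$-element, and exploit the root geometry of $N$. By (C3) and Remark~\ref{Ntau}, $N_{\tau}=L_A(c)$ contains $A_{|\tau|-1}$ and $\tau$ acts on it as a Coxeter element while preserving every irreducible component $X$ of the root system $R(N)$. I would fix the component $X$ containing this $A_{|\tau|-1}$ and the affine relation supplied by the deep hole $\tilde\beta$, namely $\sum_j n_j\lambda_j=h\tilde\beta$ with $\sum_j n_j=h$ (the marks of $\tilde X$), where the affine node $-\tilde\beta$ is fixed by $\tau$. Taking the root $\mu=\lambda+m_0\tilde\beta\in A_{|\tau|-1}\subseteq N_{\tau}$ from the preceding lemma and applying $1+\sigma$ to $\sum_{i=0}^{|\tau|-1}\tau^{i}\mu=0$, together with the norm condition $\langle\mu,\mu\rangle=2$ and the index $[N_\tau:\Lambda_\tau]=|\tau|$, I would try to read off $v_2(m_0)$; the evenness of $\sqrt{\ell}\,L^{*}$ from Lemma~\ref{Lbeta} should supply the final integrality constraint that forces $v_2(h)=1$.

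The main obstacle is exactly this last extraction: every coarse invariant produced so far ($\tilde g^{2h}=1$, $2h\beta\in\Lambda^{\tau}$, the order of $\exp(2\pi i\beta(0))$ lying in $\{h,2h\}$) is blind to whether $v_2(h)$ equals $1$ or exceeds it, so one must use genuinely $2$-adic data. The cleanest route, I expect, is to compare the classes $h\beta+\Lambda^{\tau}$ and $\delta+\Lambda^{\tau}$ via the explicit discriminant form of $\Lambda^{\tau}$ for the three classes $2C,6G,10F$ determined in \cite{Lam20}: showing these two order-$2$ classes coincide forces $m_0\equiv 1\pmod 2$. The delicate point is that a component $X$ of type $D$ or $E$ can a priori have Coxeter number divisible by $4$, so one must invoke $N_\tau=L_A(c)$ (equivalently, that the coinvariant root system is $\oplus_{m\neq 1} A_{m-1}^{a_m}$) to exclude the configurations with $4\mid h$; this is where a short finite case-check over the admissible component types $X$ compatible with a Coxeter-acting $A_{|\tau|-1}$ would enter.
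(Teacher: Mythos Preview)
Your proposal does not close, and the elaborate $2$-adic route is largely orthogonal to what is actually needed.

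The paper's argument is a one-line finite enumeration. Condition (C3) says $N_\tau\cong L_A(c_\tau)$, whose root sublattice is $\oplus_{m\mid |\tau|,\,m\neq 1}A_{m-1}^{a_m}$ (for $2C$ this is $A_1^{12}$; for $6G$ it is $A_5^3A_1^3$; for $10F$ it is $A_9^2A_1^2$). Since $\tau$ preserves each irreducible component of $R(N)$ (Remark~\ref{Ntau}), one simply lists the Niemeier lattices $N$ with even Coxeter number which admit $L_A(c_\tau)$ as the coinvariant lattice; the answer is $R(N)\in\{A_1^{24},D_4^6,A_5^4D_4,D_6^4,A_9^2D_6,D_8^3,A_{17}E_7,D_{12}^2,D_{24}\}$ for $2C$, $\{A_5^4D_4,A_{17}E_7\}$ for $6G$, and $\{A_9^2D_6\}$ for $10F$. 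In every case $h/|\tau|$ is odd by inspection. That is the entire proof.

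Your discriminant-form comparison has a logical gap. You write $\tilde g^{\,h}=\exp\big(2\pi i(m_0\delta+h\beta)(0)\big)$ and propose to deduce $m_0$ odd from $h\beta\equiv\delta\pmod{\Lambda^\tau}$. But even granting that congruence, it yields $m_0$ odd only if you \emph{already} know $m_0\delta+h\beta\in\Lambda^\tau$, i.e.\ $\tilde g^{\,h}=1$; and $|\tilde g|=h$ is precisely Theorem~\ref{order}, which is proved \emph{using} the present lemma. There is no independent input at this point in the argument forcing $m_0\delta+h\beta\equiv 0$, so the comparison of the two order-$2$ classes does not determine the parity of $m_0$. (For $\tau\in 2C$ the discriminant group $(\Lambda^\tau)^*/\Lambda^\tau\cong(\bZ/2)^{12}$ is large, and nothing you have written singles out $\delta$ among the many nonzero $2$-torsion classes.) You correctly sense this in your last paragraph when you say a ``short finite case-check over the admissible component types'' must enter; but that case-check \emph{is} the proof, and the preceding $2$-adic scaffolding contributes nothing to it. Drop the $\tilde g^{\,h}$ analysis and argue directly from (C3) and the list of Niemeier lattices.
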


\pr By our assumptions, $N_\tau\cong L_A(c_\tau)$ as defined in Appendix; namely, 
\[
N_\tau \supset  \bigoplus_{m_i| |\tau|, m_i\neq 1}  A_{m_i-1}^{a_i}
\]
as a full rank sublattice if the frame shape of $\tau$ is $ \prod m_i^{a_i}$.  Moreover, 
\[
[ N_\tau :  \bigoplus_{m_i| |\tau|, m_i\neq 1}  A_{m_i-1}^{a_i}  ]=|\tau|\quad  \text{ and } 
\quad R(N_\tau) = \bigoplus_{m_i| |\tau|, m_i\neq 1}  A_{m_i-1}^{a_i}.
\] 
Since there are only $16$ Niemeier lattices whose Coxeter number is even, it is straightforward to determine all Niemeier lattices $N$ such that  $L_A(c_\tau)\subset N$ as a direct summand. Indeed, $R(N)=A_1^{24}, D_4^6, A_5^4D_4$, $D_6^4, A_9^2D_6$, $D_8^3$, $A_{17}E_7$, $D_{12}^2$, or $D_{24}$ if $\tau\in 2C$; $R(N)= A_5^4D_4$ or $A_{17}E_7$ if $\tau\in 6G$; $R(N)= A_9^2D_6$ if $\tau\in 10F$. 

It turns out that $h/|\tau|$ is odd for all possible cases. \prend

The following lemma can be verified easily using the definition of $A_{m-1}$. 
\begin{lmm}\label{c2C}
	Let $m=2s$ be an even integer and let $\tau$ be a Coxeter element in $\mathrm{Weyl}(A_{m-1})$. Then the $(-1)$-eigenlattice of $\tau^s$ on $A_{m-1}$ is isometric to $A_1^s= \bZ x_1\oplus \cdots \oplus \bZ x_s$. Moreover, the vector $\frac{1}2( x_1+\cdots + x_s)\in s\gamma +A_{m-1}$, where $\gamma+ A_{m-1}$ is a generator of $A_{m-1}^*/A_{m-1}$. 
\end{lmm}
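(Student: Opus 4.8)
The plan is to work entirely in the standard coordinate realization of the root lattice. I would write $A_{m-1}=\{(a_1,\dots,a_m)\in\bZ^m\mid \sum_i a_i=0\}$, with $\mathrm{Weyl}(A_{m-1})\cong\Sym_m$ acting by permuting the standard basis vectors $e_1,\dots,e_m$, and I would take the Coxeter element to be the single $m$-cycle $\tau\colon e_i\mapsto e_{i+1}$ (indices read modulo $m$); since any two Coxeter elements are conjugate in the Weyl group, and the Weyl group fixes $A_{m-1}$ and acts trivially on $A_{m-1}^*/A_{m-1}$, no generality is lost for either assertion. Because $m=2s$, the power $\tau^s$ sends $e_i\mapsto e_{i+s}$, and as $i+2s\equiv i$ it is the fixed-point-free involution equal to the product of the $s$ disjoint transpositions pairing $i$ with $i+s$ for $1\le i\le s$.

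First I would pin down the $(-1)$-eigenlattice $K=\{x\in A_{m-1}\mid \tau^s(x)=-x\}$. Writing $x=\sum_i a_ie_i$, the equation $\tau^s(x)=-x$ reads $a_{i+s}=-a_i$ for all $i$, and this relation forces the coordinate-sum to vanish automatically; hence $K$ is freely generated by the vectors $x_i:=e_i-e_{i+s}$ for $1\le i\le s$. These satisfy $\langle x_i,x_j\rangle=2\delta_{ij}$, being mutually of disjoint support and each of norm $2$, so $K=\bigoplus_{i=1}^s\bZ x_i\cong A_1^s$. The one point requiring an argument rather than a computation is that the $x_i$ span the \emph{full} eigenlattice and not merely a finite-index subgroup: this holds because $K=\ker(\tau^s+1)\cap A_{m-1}$ is the kernel of a lattice endomorphism and is therefore a primitive (saturated) sublattice.

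For the second assertion the cleanest route avoids fixing a convention for $\gamma$ altogether. I would observe that $\sum_{i=1}^s x_i=e_1+\cdots+e_s-e_{s+1}-\cdots-e_{2s}$ has coordinate-sum $0$, hence lies in $A_{m-1}$; so $\tfrac12\sum_i x_i$ is a $2$-torsion element of $A_{m-1}^*/A_{m-1}\cong\bZ/m\bZ$, and it is nonzero there because its coordinates are genuine half-integers. Since $m=2s$ is even, the unique element of order $2$ in $\bZ/m\bZ$ is $s\gamma=(m/2)\gamma$, and this element is independent of the choice of generator $\gamma$ (any two generators differ by an odd unit $c$, and $c\cdot(m/2)\equiv m/2\pmod m$). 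This already gives $\tfrac12\sum_i x_i\in s\gamma+A_{m-1}$. As a concrete cross-check one may identify $\tfrac12\sum_i x_i$ with the fundamental weight $\omega_s=\tfrac1m(\,\underbrace{m{-}s,\dots,m{-}s}_{s},\underbrace{-s,\dots,-s}_{s}\,)$, which under $m=2s$ reduces to $\tfrac12(\underbrace{1,\dots,1}_s,\underbrace{-1,\dots,-1}_s)$, and note that $\omega_s\equiv s\omega_1\pmod{A_{m-1}}$ with $\omega_1$ a generator. There is no serious obstacle here: the whole lemma is a direct calculation once the coordinate model is fixed, and the only genuinely non-mechanical points are the saturation argument for $K$ and the observation that $s\gamma$ is well defined independently of the generator.
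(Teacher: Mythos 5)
Your proof is correct, and it is essentially the argument the paper has in mind: the paper gives no explicit proof, stating only that the lemma ``can be verified easily using the definition of $A_{m-1}$,'' and your computation in the standard coordinate model $A_{m-1}=\{(a_1,\dots,a_m)\in\bZ^m\mid\sum a_i=0\}$ with $\tau$ the $m$-cycle is precisely that verification, with the reduction to this case justified correctly by conjugacy of Coxeter elements and triviality of the Weyl-group action on $A_{m-1}^*/A_{m-1}$. (Your saturation remark is in fact superfluous, since solving $a_{i+s}=-a_i$ over $\bZ$ already exhibits every element of the eigenlattice as an integral combination of the $x_i=e_i-e_{i+s}$.)
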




\begin{lmm}
For $\tau=2C, 6G, 10F$ and $v,u\in \Lambda^{\tau}$, 
$\langle v,u\rangle\in 2\bZ$. 
\end{lmm}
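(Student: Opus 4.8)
The plan is to reduce the general statement to the single case $\tau=2C$ and then to prove that case by showing $\Lambda^{2C}=2(\Lambda^{2C})^{*}$.

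For the reduction, I would use that among $2C,6G,10F$ only $2C$ is an involution, but that $\tau^{|\tau|/2}$ is a $2C$-element whenever $\tau\in 6G$ or $10F$. Indeed, cubing a $6$-cycle splits it into three transpositions and taking the fifth power of a $10$-cycle splits it into five transpositions, so the frame shape of $\tau^{3}$ for $\tau\in 6G=2^{3}6^{3}$ is $2^{3}\cdot 2^{9}=2^{12}$, and that of $\tau^{5}$ for $\tau\in 10F=2^{2}10^{2}$ is $2^{2}\cdot 2^{10}=2^{12}$; in either case this is exactly the frame shape of $2C$, and since elements of $O(\Lambda)$ are determined up to conjugacy by their frame shape, $\tau^{|\tau|/2}$ lies in the class $2C$. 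Because $\Lambda^{\tau}\subseteq\Lambda^{\tau^{|\tau|/2}}$ and the larger lattice is isometric to $\Lambda^{2C}$, the property ``all inner products lie in $2\bZ$'' for $\Lambda^{2C}$ passes verbatim to the sublattice $\Lambda^{\tau}$. Hence it suffices to treat $\tau=2C$.

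For $\tau=2C$ the crucial observation is the identity $(1+\tau)\Lambda=2(\Lambda^{\tau})^{*}$. Since $\tau$ is an involution, $\pi_{\tau}=\tfrac12(1+\tau)$, so $(1+\tau)\Lambda=2\pi_{\tau}(\Lambda)=2(\Lambda^{\tau})^{*}$, using that $\pi_{\tau}(\Lambda)=(\Lambda^{\tau})^{*}$ because $\Lambda$ is unimodular (orthogonal projection of a unimodular lattice onto a rational subspace gives the dual of the primitive sublattice there). On the other hand $(1+\tau)\Lambda\subseteq\Lambda^{\tau}$ trivially, and it is a full-rank sublattice since $1+\tau$ acts as $2$ on $\bR\Lambda^{\tau}$. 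I would then compare covolumes: by the determinant lemma $\det(\Lambda^{2C})=\ell^{\rank(\Lambda^{\tau})/2}=4^{6}=2^{12}$, while
\[
\det\bigl(2(\Lambda^{\tau})^{*}\bigr)=2^{2\cdot 12}\det\bigl((\Lambda^{\tau})^{*}\bigr)=\frac{2^{24}}{\det(\Lambda^{\tau})}=\frac{2^{24}}{2^{12}}=2^{12}.
\]
Thus $2(\Lambda^{\tau})^{*}\subseteq\Lambda^{\tau}$ with equal determinant, which forces $\Lambda^{\tau}=2(\Lambda^{\tau})^{*}$. Consequently, for $v,u\in\Lambda^{\tau}$ I write $v=2x$ with $x\in(\Lambda^{\tau})^{*}$, whence $\langle v,u\rangle=2\langle x,u\rangle\in 2\bZ$, as desired.

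The argument is essentially a covolume comparison, so I do not expect a real obstacle. The two facts it leans on, namely $\pi_{\tau}(\Lambda)=(\Lambda^{\tau})^{*}$ and the exact value $\det(\Lambda^{2C})=2^{12}$, are both already available in the excerpt (the projection description of $\pi_{\tau}$ and the determinant lemma $\det(\Lambda^{\tau})=\ell^{\rank(\Lambda^{\tau})/2}$). The only place that demands care is the frame-shape bookkeeping in the reduction step: one must check that the relevant half-power of $\tau$ genuinely lands in the class $2C$ (frame shape $2^{12}$) rather than in another involution class such as $2A$ or $-2A$, which the cycle computation above confirms.
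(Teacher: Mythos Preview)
Your proof is correct and follows essentially the same approach as the paper: reduce to the $2C$ case via $\Lambda^{\tau}\subseteq\Lambda^{\sigma}$ with $\sigma=\tau^{|\tau|/2}\in 2C$, then use $\Lambda^{2C}=2(\Lambda^{2C})^{*}$ to conclude. The only difference is cosmetic: the paper obtains $\Lambda^{\sigma}=2(\Lambda^{\sigma})^{*}$ by invoking the $\ell$-duality theorem with $\ell=4$ (together with the observation, made elsewhere in the paper, that for $2C$ one may take $\varphi=1$), while you prove the same equality directly by the containment $(1+\tau)\Lambda=2(\Lambda^{\tau})^{*}\subseteq\Lambda^{\tau}$ plus a determinant comparison.
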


\pr 
We note $\Lambda^{\tau}\subseteq \Lambda^{\sigma}$. 
Since $\ell(\sigma)=4$, $\Lambda^{\sigma}=2(\Lambda^{\sigma})^{\ast}$. 
Hence $\frac{v}{2}\in (\Lambda^{\sigma})^{\ast}$
and $\langle v,u\rangle=2\langle \frac{v}{2},u\rangle\in 2\bZ$. 
\prend

Now now on, we set $p=h/|\tau|$, which is an odd integer. 

\begin{prop} \label{mod4}
For $\tau=2C, 6G, 10F$ and $v\in (\Lambda_{\tilde{\beta}}+sp\tilde{\beta})^{\tau}$, 
we have $\langle v,v\rangle \equiv 2 \pmod{4}$. 
\end{prop}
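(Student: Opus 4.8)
The plan is to reduce the assertion to a mod-$2$ congruence on the fixed sublattice and then exploit the $2C$-structure coming from $\sigma=\tau^s$. Since $v$ and $\tilde{\beta}$ are both $\tau$-invariant, writing $v=\lambda+sp\tilde{\beta}$ forces $\lambda=v-sp\tilde{\beta}\in\Lambda^{\tau}$; and as $v\in\Lambda_{\tilde{\beta}}+sp\tilde{\beta}$ we also get $\lambda\in\Lambda_{\tilde{\beta}}$, so $\lambda\in\Lambda^{\tau}_{\tilde{\beta}}$ and in particular $\langle\lambda,\tilde{\beta}\rangle\in\bZ$. Expanding and using $\langle\tilde{\beta},\tilde{\beta}\rangle=2$ gives
\[
\langle v,v\rangle=\langle\lambda,\lambda\rangle+2sp\langle\lambda,\tilde{\beta}\rangle+2s^2p^2 .
\]
By Lemma \ref{odd} the integer $p=h/|\tau|$ is odd and $s\in\{1,3,5\}$ is odd, so $sp$ is odd; hence $2sp\equiv 2s^2p^2\equiv 2\pmod 4$ and $\langle v,v\rangle\equiv\langle\lambda,\lambda\rangle+2\langle\lambda,\tilde{\beta}\rangle+2\pmod 4$. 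Since $\langle v,v\rangle$ ranges over the whole coset as $\lambda$ ranges over $\Lambda^{\tau}_{\tilde{\beta}}$, the proposition is equivalent to the congruence $\tfrac12\langle\lambda,\lambda\rangle\equiv\langle\lambda,\tilde{\beta}\rangle\pmod 2$ for every $\lambda\in\Lambda^{\tau}_{\tilde{\beta}}$ (note $\langle\lambda,\lambda\rangle$ is even).

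The key simplification I would use is that both sides of this congruence are $\bZ/2$-linear in $\lambda$. Because $\sigma=\tau^s$ is a $2C$-element we have $\Lambda^{\sigma}=2(\Lambda^{\sigma})^{*}$, and the preceding lemma then gives $\langle\mu,\mu'\rangle\in2\bZ$ for all $\mu,\mu'\in\Lambda^{\tau}\subseteq\Lambda^{\sigma}$. Consequently $q(\lambda):=\tfrac12\langle\lambda,\lambda\rangle\bmod 2$ satisfies $q(\lambda+\mu)-q(\lambda)-q(\mu)=\langle\lambda,\mu\rangle\equiv 0$, so $q$ is a linear functional $\Lambda^{\tau}_{\tilde{\beta}}\to\bZ/2$ killing $2\Lambda^{\tau}_{\tilde{\beta}}$; the functional $\ell(\lambda):=\langle\lambda,\tilde{\beta}\rangle\bmod 2$ is linear for the same reason. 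Hence it suffices to check $q=\ell$ on a generating set of $\Lambda^{\tau}_{\tilde{\beta}}/2\Lambda^{\tau}_{\tilde{\beta}}$.

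To produce generators I would take $\tau$-orbit sums $\Lambda_w=\sum_{i=0}^{|\tau|-1}\tau^i\lambda_w$ of the $\Lambda_{\tilde{\beta}}$-parts $\lambda_w$ of the norm-$2$ roots $w=\lambda_w+k\tilde{\beta}$ of $N$; these, together with the glue vectors supplied by Lemma \ref{c2C}, generate $\Lambda^{\tau}_{\tilde{\beta}}$. For such a generator one finds $\ell(\Lambda_w)=|\tau|\langle\lambda_w,\tilde{\beta}\rangle\equiv 0$ (as $|\tau|=2s$ is even) and $q(\Lambda_w)\equiv\langle\lambda_w,\lambda_w\rangle+\langle\lambda_w,\sigma\lambda_w\rangle\pmod 2$, so the equality $q=\ell$ amounts to showing that $\lambda_w+\sigma\lambda_w\in\Lambda^{\sigma}$ has norm divisible by $4$. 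The main obstacle is precisely this last step: since $\Lambda^{\sigma}=2(\Lambda^{\sigma})^{*}\cong\sqrt{2}\,D_{12}^{+}$ does contain vectors of norm $\equiv 2\pmod 4$, one must rule those out for the generating roots. Here I would invoke condition (C3), which gives $N_{\tau}\cong L_A(c_{\tau})\supseteq\bigoplus_m A_{m-1}^{a_m}$ with $\tau$ acting as a product of Coxeter elements, together with Lemma \ref{c2C}, which identifies the half-vectors $\tfrac12(x_1+\dots+x_s)$ with the glue class $s\gamma+A_{m-1}$; this controls the parities of $\langle\lambda_w,\tilde{\beta}\rangle$ and of the layer index $k$ for the relevant roots and reduces the claim to a finite verification over the three frame shapes $2C=2^{12}$, $6G=2^{3}6^{3}$ and $10F=2^{2}10^{2}$.
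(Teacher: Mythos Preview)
Your reduction to the congruence $\tfrac12\langle\lambda,\lambda\rangle\equiv\langle\lambda,\tilde{\beta}\rangle\pmod 2$ for all $\lambda\in\Lambda^{\tau}_{\tilde{\beta}}$, together with the observation that both sides are $\bZ/2$-linear (using $\langle\Lambda^{\tau},\Lambda^{\tau}\rangle\subseteq 2\bZ$), is correct and is in fact what the paper also ends up checking on explicit generators. But your argument breaks down exactly where you say it does. First, the claim that $\tau$-orbit sums of the $\lambda_w$ together with the glue vectors of Lemma~\ref{c2C} generate $\Lambda^{\tau}_{\tilde{\beta}}$ is asserted, not proved; for $6G$ and $10F$ the orbit-sum lattice $|\tau|\,\pi(\Lambda_{\tilde{\beta}})$ need not coincide with $\Lambda^{\tau}_{\tilde{\beta}}$, and you give no argument for the quotient. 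Second, and more importantly, you identify the core obstruction---showing $\|\lambda_w+\sigma\lambda_w\|^2\equiv 0\pmod 4$ for the generating roots---and then simply declare that (C3) and Lemma~\ref{c2C} ``reduce the claim to a finite verification over the three frame shapes''. That is not a proof: carrying this out would mean running through every Niemeier lattice $N$ admitting the required embedding of $N_\tau$ (nine lattices for $2C$, two for $6G$, one for $10F$ by the list in Lemma~\ref{odd}) and checking the parity condition in each.

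The paper bypasses this case analysis with a single idea you are missing. It passes from $V_N$ to $(V_N)^{[\xi^2]}\cong V_M$ with $\xi=\exp(2\pi i\rho(0)/h)$; since $[\tau,\xi]=1$ this orbifold is compatible with $\tau$, and $M$ has root system $A_1^{24}$. Using Lemma~\ref{c2C} one sees $M_{\sigma}\cong L_A(c_{2C})$, so $(\Lambda_{\tilde{\beta}}+sp\tilde{\beta})^{\tau}\subseteq M^{\tau}\subseteq M^{\sigma}=(L_A(c_{2C}))^{\perp}$. This collapses \emph{all three} cases $2C,6G,10F$ to the single explicit model $\tau\in 2C$, $N\cong N(A_1^{24})$, where $(\Lambda_{x_1})^{\tau}$ is generated by $\tfrac12(x_1+\dots+x_{12})$ and the $x_1+x_i$, and the congruence is a two-line computation. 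Your approach could in principle be completed, but only by a long case check; the orbifold reduction to $A_1^{24}$ is the missing uniform step.
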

 
\pr  We first note that  $(\Lambda_{\tilde{\beta}}+\tilde{\beta})_2$ forms an affine fundamental system of roots of $N$.  Choose a fundamental root system $X$ from 
$\Lambda_{\tilde{\beta}}+\bZ\tilde{\beta}$ such that $\tilde{\beta}\in X$. 
Let $\rho$ be the corresponding Weyl vector and consider 
$\xi=\exp(2\pi i\rho(0)/h)\in \Aut(V_N)$. 
 
Although $\tau$ does not fix $X$, $\tau$ preserves $(\Lambda_{\tilde{\beta}}+\tilde{\beta})_2$
and $\xi$ acts on $\{e^\alpha \mid \alpha \in (\Lambda_{\tilde{\beta}}+\tilde{\beta})_2 \}$ with the same eigenvalue; hence the commutator 
$[\tau, \xi]=1$. Therefore we can induce $\tau$ to an automorphism of 
$(V_N)^{[\xi^j]}$ for any $j$.  
As it is well-known, $(V_N)^{[\xi]}\cong V_{\Lambda}$. 
Now consider the VOA obtained by an orbifold construction using  $V_N$ and $\xi^2$. 
Then  $(V_N)^{[\xi^{2}]} 
\cong V_M$ for some Niemeier lattice $M$  and the Coxeter number of $M$ is $2$, i.e,  $R(M)=A_1^{24}$. It is clear that  
\[
\Lambda_{\tilde{\beta}} + \bZ sp\tilde{\beta} = \Lambda_{\tilde{\beta}} \cup (\Lambda_{\tilde{\beta}} + sp\tilde{\beta}) \subset  M.
\]  
By Lemma \ref{c2C}, it is easy to check that 
\[L_A(c_{2C})\cong M_\sigma  <  \Lambda_{\tilde{\beta}} + \bZ sp\tilde{\beta} = \Lambda_{\tilde{\beta}} \cup (\Lambda_{\tilde{\beta}} + sp\tilde{\beta}) \subset  M. 
\]
Then $(\Lambda_{\tilde{\beta}} + sp\tilde{\beta})^\tau < M^\tau < M^{\tau^s}= (L_A(c_{2C}))^\perp$. 

Therefore, it suffices to check for the case $\tau\in 2C$ and $N\cong N(A_1^{24})$. 
In this case,  we may assume $N$ contains $A_1^{24}=\oplus_{i=1}^{24} \bZ x_i$ and $N/(A_1^{24})$ is the binary Golay code $G_{24}$ of length $24$ and $\tau(x_i)=x_i$ for $i=1,...,12$ and $\tau(x_i)=-x_i$ for $i=13,...,24$.  
We may also assume $\tilde{\beta}=x_1$. 
By (C3), $N_{\tau}=<x_{13},...,x_{24}, \frac{x_{13}+...+x_{24}}{2}>$.  
Then $\{13,...,24\}$ is a dodecad of $G_{24}$ and so 
$N^{\tau}=(N_{\tau})^{\perp}=<x_1,...,x_{12}, \frac{x_1+...+x_{12}}{2}>$. 

For a Weyl vector $\rho$ of $N$, we may choose $x_i$ such that 
$\exp(2\pi i\rho(0)/2)(x_i)=\sqrt{-1}x_i$ and so 
$\exp(2\pi i\rho(0)/2)(\frac{x_i+...+x_{12}}{2})=\sqrt{-1}^{12}(\frac{x_i+...+x_{12}}{2})=(\frac{x_i+...+x_{12}}{2})$. 
Namely, 
\[(\Lambda_{x_1})^{\tau}= 
<\frac{x_1+...+x_{12}}{2}, x_i+x_1\mid i=1,...,12>.
\] 
We note 
$\langle  \frac{x_1+...+x_{12}}{2}, \frac{x_1+...+x_{12}}{2}\rangle +2\langle \beta, \frac{x_1+...+x_{12}}{2}\rangle=6+2\equiv 0 \pmod{4}$ 
and 
$\langle x_1+x_i, x_1+x_i\rangle+2\langle \beta,  x_1+x_i\rangle=4+4 \equiv 0 \pmod{4}$. 
Since $(\Lambda_{x_1})^{\tau}$ is spanned by such elements and 
$\Lambda_{x_1}$ is self orthogonal modulo $2$, 
$\langle v,v\rangle\equiv 2 \pmod{4}$ for all 
$v\in \Lambda_{\tilde{\beta}}+\tilde{\beta}$, as we desired. 
\prend

We come back to the proof of Theorem \ref{order}. We still assume 
$\tau=2C, 6G$, or $10F$ and $s,p$ are odd. 
The main idea is to study the possibilities of weights in $T^1$ modulo $\bZ$.   Recall that 
\[
T^1 =V_\Lambda[\tilde{g}]= 
\bC[-\beta +(\Lambda^{\tau})^*]\otimes M(1)[\tau]\otimes T_{\tau} 
\] 
and the weights of $M(1)[\tau]\otimes T_{\tau}$ are in $1-\frac{1}{\ell}+\frac{1}{|\tau|} \bZ$.  Therefore, we will focus on the set $-\beta +(\Lambda^{\tau})^*$.  

Since $\Lambda_{\tilde{\beta}}+p\tilde{\beta}$ contains 
a root $\mu$ in $N_{\tau}$, $p\tilde{\beta}\in \pi(\Lambda)$ as we explained 
and we have  $|\Lambda^\tau /\Lambda_{\tilde{\beta}}^\tau |=| (\Lambda_{\tilde{\beta}}^\tau)^*/ (\Lambda^\tau)^*|= p$.  
For $\lambda \in \Lambda$, 
set $\mu=\sqrt{\ell}\varphi(\pi(\lambda))\in \Lambda^{\tau}$. 
We assume $\langle \mu, \tilde{\beta}\rangle=\frac{t}{p}$, that is, 
$\langle \pi(\lambda),\beta\rangle=\frac{t}{4sp}$.  
Since $p\mu \in \Lambda^{\tau}_{\tilde{\beta}}$, 
$$\langle p\mu+\tilde{\beta}, p\mu+\tilde{\beta}\rangle=
p^2\langle \mu,\mu\rangle+2p\langle \mu, \tilde{\beta}\rangle+2 
=\langle p \mu,p\mu\rangle+2t+2$$
and  $\langle p \mu,p\mu\rangle+2t\equiv 0 \mod 4$  by Proposition \ref{mod4}. 
Since $p$ is odd, we have $\langle \mu, \mu\rangle+2t=4m$  for some $m\in \bZ$; namely, 
$\langle \pi(\lambda),\pi(\lambda)\rangle =\frac{4m-2t}{4s}$.   
Hence 
$$
\langle \pi(\lambda)+\beta,\pi(\lambda)+\beta\rangle 
=\frac{4m-2t}{4s}+2\frac{t}{4sp}+\frac{2}{4s} =
\frac{2mp+t(1-p)+p}{2sp} $$
and so 
$\wt(e^{\pi(\lambda)+\beta})=\frac{2mp+t(1-p)+p}{4sp}$. 
Since $2mp+t(1-p)+p$ is always odd, the possible 
weights modulo $\bZ$ are $\frac{\mbox{odd}}{4ps}$, that is, 
there are $h=2sp$ distinct weights modulo $\bZ$ at most.  
That means $|\tilde{g}|\leq h$ and this 
completes the proof of Proposition \ref{order}. 
\prend

We next show that if we start the orbifold construction 
from $V$ by using a $W$-element, we will come back to the original 
$(\tau, \tilde{\beta})$ and $N$.

Let $V=V_\Lambda^{[\tilde{g}]}$. Then we have:
$$V=\bigoplus_{j=0}^{|\tilde{g}|-1} (T^j_{\bZ})^{<\tilde{g}>}.$$

Define $L$ by $\Comm(\Comm(\CH),V),V)\cong V_L$. 
Since $V$ is holomorphic, as a $V_L$-module, $V$ contains a submodule 
isomorphic to $V_{L+\mu}$ for all $\mu\in L^{\ast}$.  

Note that $\tilde{g}^{|\tau|} =\exp(2\pi i (\delta + |\tau|\beta)(0))$ is an inner automorphism. Then 
\begin{equation}\label{L}
L=\bigcup_{j=0}^{|\tilde{g}|/ |\tau| -1}  j(\delta +|\tau|\beta)+\Lambda_\beta^\tau
\end{equation}
Recall that $|\tilde{g}|=h$ and  then $  (h/|\tau|) (\delta +|\tau|\beta) \in \Lambda_\beta^\tau$. Since  $p=h/|\tau|$ is odd and  $2\delta \in \Lambda^\tau$, we have 
$\delta \equiv h\tilde{\beta} \mod \Lambda_\beta^\tau$.

\begin{prop}
$\sqrt{\ell} L^{\ast} \cong \Lambda_{\tilde{\beta}}^{\tau}+\bZ \tilde{\beta}=N^{\tau}$.   
\end{prop}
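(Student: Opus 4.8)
The plan is to reduce the proposition to Theorem \ref{duality}, whose proof shows $\sqrt{\ell}\varphi(L^*)=N^\tau$ using \emph{only} the presentation $L=\Lambda_\beta^\tau+\bZ\ell\beta$ of Lemma \ref{Lbeta}. Granting that presentation, $\beta\in L^*$ because $\langle\beta,\ell\beta\rangle=\ell\langle\beta,\beta\rangle=2$, so
\[
L^*=\langle\{x\in(\Lambda^\tau)^*\mid \langle x,\ell\beta\rangle\in\bZ\},\ \beta\rangle,
\]
and, exactly as in Theorem \ref{duality}, the isometry $\varphi$ of Theorem \ref{dual} — which carries $\sqrt{\ell}(\Lambda^\tau)^*$ onto $\Lambda^\tau$, hence $\sqrt{\ell}\beta$ to $\tilde\beta$, and turns $\langle x,\ell\beta\rangle\in\bZ$ into $\langle y,\tilde\beta\rangle\in\bZ$ for $y=\sqrt{\ell}\varphi(x)\in\Lambda^\tau$ (note $\langle x,\ell\beta\rangle=\langle\sqrt{\ell}\varphi(x),\tilde\beta\rangle$) — gives $\sqrt{\ell}\varphi(L^*)=\Lambda_{\tilde\beta}^\tau+\bZ\tilde\beta=N^\tau$. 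As $\varphi$ preserves the form of $\bC\Lambda^\tau$, this yields $\sqrt{\ell}L^*\cong N^\tau$. Thus the entire task is to re-establish Lemma \ref{Lbeta} in the reverse direction, i.e.\ to prove $L=\Lambda_\beta^\tau+\bZ\ell\beta$ for the $L$ arising from \eqref{L}.

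By \eqref{L} and Theorem \ref{order} we have $L=\Lambda_\beta^\tau+\bZ\gamma$ with $\gamma=\delta+|\tau|\beta$, where $p\gamma\in\Lambda_\beta^\tau$ and $[L:\Lambda_\beta^\tau]=p=h/|\tau|$. When $2C\notin\langle\tau\rangle$ we have $\ell=|\tau|$ and may take $\widehat{\tau}^{|\tau|}=1$, so $\delta=0$, $\gamma=\ell\beta$, and $L=\Lambda_\beta^\tau+\bZ\ell\beta$ at once.

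The case $\tau\in\{2C,6G,10F\}$ is where the work lies. Here $\ell=2|\tau|$ and $p$ is odd by Lemma \ref{odd}. I would write $\ell\beta=2|\tau|\beta=2\gamma-2\delta$ and reduce everything to the single integrality statement $2\delta\in\Lambda_\beta^\tau$. Since $\widehat{\tau}^{|\tau|}=\exp(2\pi i\delta(0))$ has order two we already know $2\delta\in\Lambda^\tau$, so only $\langle 2\delta,\beta\rangle\in\bZ$ remains. Granting it, $\ell\beta=2\gamma-2\delta\in L$, and because $\GCD(2,p)=1$ the class of $2\gamma$ has the same order $p$ as $\gamma$ in $L/\Lambda_\beta^\tau\cong\bZ/p\bZ$, hence generates it; thus $\ell\beta\equiv 2\gamma$ generates $L/\Lambda_\beta^\tau$ and $L=\Lambda_\beta^\tau+\bZ\ell\beta$, closing the argument.

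I expect the integrality $\langle 2\delta,\beta\rangle\in\bZ$ in the $2C$-type case to be the main obstacle. It cannot be obtained formally from $p\gamma\in\Lambda_\beta^\tau$ alone, since that only forces $p\langle\delta,\beta\rangle\in\bZ$, which for odd $p$ does not bound the denominator of $\langle\delta,\beta\rangle$ by $2$. Pinning down $\langle\delta,\beta\rangle\bmod\bZ$ will require the arithmetic of the standard lift $\widehat{\tau}^{|\tau|}$ of the $2C$-power $\tau^s$ together with the $\pmod 4$ norm control of Proposition \ref{mod4} and the oddness of $p$; once that local computation is in hand, the remainder is the cyclic-group bookkeeping above and the formal import from Theorem \ref{duality}.
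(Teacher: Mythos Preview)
Your strategy is exactly the paper's: first show $L=\Lambda_\beta^\tau+\bZ\ell\beta$, then import the computation of Theorem~\ref{duality}. Your case split is also the paper's: when $2C\notin\langle\tau\rangle$ one has $\delta=0$ and $\gamma=\ell\beta$, so the presentation is immediate; the content lies in $\tau\in\{2C,6G,10F\}$, where $p=h/|\tau|$ is odd by Lemma~\ref{odd}.

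In that case the paper records, just before the Proposition, the congruence $\delta\equiv h\beta\bmod\Lambda_\beta^\tau$ (derived from $p$ odd and $2\delta\in\Lambda^\tau$), and its proof then writes $\gamma\equiv(k+1)\ell\beta\bmod\Lambda_\beta^\tau$ with $p=2k+1$; since $(2k+1)\ell\beta=2h\beta\in\Lambda_\beta^\tau$ (note $\langle 2h\beta,\beta\rangle=4h/\ell=2p\in\bZ$) and $\gcd(k+1,2k+1)=1$, one gets $L=\Lambda_\beta^\tau+\bZ\ell\beta$. This is precisely your ``$\ell\beta=2\gamma-2\delta$ plus cyclic bookkeeping'' rephrased.

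The step you isolate as the obstacle---passing from $2\delta\in\Lambda^\tau$ to $2\delta\in\Lambda_\beta^\tau$ (equivalently $\langle 2\delta,\beta\rangle\in\bZ$)---is exactly the step the paper does not spell out either: from $p\gamma\in\Lambda_\beta^\tau$ and $2h\beta\in\Lambda_\beta^\tau$ one only gets $p\langle 2\delta,\beta\rangle\in\bZ$, so the passage to $\Lambda_\beta^\tau$ genuinely requires an extra input about $\delta$ and $\beta$. Your proposal thus matches the paper's argument and is candid about the one place where both are brief. Your instinct that the missing input comes from the lift arithmetic of $\widehat{\tau}^{|\tau|}$ and the $\pmod 4$ control of Proposition~\ref{mod4} is the right direction, but until that local computation is supplied your proof is incomplete at the same point the paper's is terse.
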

\pr Set $p=h/|\tau|=2k+1$. Then  $(\delta +|\tau|\beta) +\Lambda^\tau_{\beta} = (k+1)\ell \beta + \Lambda^\tau_{\beta}$. Since $2h\beta=(2k+1)\ell\beta \in \Lambda^\tau$, $
L= \bigcup_{j=0}^{2k}  j(k+1)\ell\beta+\Lambda_\beta^\tau=\Lambda^\tau_{\beta}+\bZ \ell \beta$. 
 By the same argument as in Theorem \ref{duality}, we have  $\sqrt{\ell}L^* = \Lambda_{\tilde{\beta}}^{\tau}+\bZ \tilde{\beta}=N^{\tau}$.  
\prend

\begin{lmm} \label{comm}
$\Comm_V (\CH) = V_{\Lambda_\tau}^{\hat{\tau}}$. 
\end{lmm}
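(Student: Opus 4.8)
The plan is to make the choice of $\CH$ explicit and then to evaluate the commutant $\Comm_V(\CH)=\Comm(M(\CH),V)$ sector by sector in the twisted decomposition $V=\bigoplus_{j}(T^j_\bZ)^{\langle\tilde g\rangle}$ used in Section~\ref{S:7}. Since $L\subseteq\CH$, since $\bC L=\bC\Lambda^\tau$ (because $L=\Lambda_\beta^\tau+\bZ\ell\beta$ has full rank in $\bC\Lambda^\tau$), and since $\dim\CH=\rank V_1=\rank\Lambda^\tau$, I would first record that $\CH=\bC\Lambda^\tau$; as the Cartan subalgebras of $V_1$ are conjugate under inner automorphisms, this entails no loss of generality. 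Under the embedding $V_\Lambda^{\tilde g}\hookrightarrow V$ the Heisenberg $M(\CH)$ is the image of $M(\bC\Lambda^\tau)\subseteq V_{\Lambda^\tau}$, and it lies in $V_\Lambda^{\tilde g}$ because $\widehat{\tau}$ fixes $\bC\Lambda^\tau$ pointwise while $\exp(2\pi i\beta(0))$ acts trivially on the pure Heisenberg vectors.

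Next I would exploit the orthogonal splitting $\bC\Lambda=\bC\Lambda^\tau\perp\bC\Lambda_\tau$ together with $\beta\in\bC\Lambda^\tau$. This yields two facts used throughout: $\exp(2\pi i\beta(0))$ acts trivially on $V_{\Lambda_\tau}$ (since $\langle\beta,\lambda\rangle=0$ for $\lambda\in\Lambda_\tau$ and $\beta$ contributes no oscillators in the $\Lambda_\tau$ directions), so that $\tilde g|_{V_{\Lambda_\tau}}=\widehat{\tau}$; and $\Lambda_\tau$ is $\tau$-stable, so $\widehat{\tau}$ preserves $V_{\Lambda_\tau}$.

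The core step is a charge computation. For $h\in\Lambda^\tau\subseteq\mathfrak{h}_{(0)}$ the zero mode $h(0)$ acts on a vector $e^\nu\otimes u\otimes t\in T^j$ with $\nu\in-j\beta+(\Lambda^{\tau})^*\subseteq\bC\Lambda^\tau$ by the scalar $\langle h,\nu\rangle$. The defining condition $h(0)v=0$ for all $h\in\Lambda^\tau$ forces $\nu\perp\Lambda^\tau$, hence $\nu=0$, that is $j\beta\in(\Lambda^\tau)^*$; by Lemma~\ref{hdividess} this holds only when $|\tilde g|\mid j$, so $j=0$. Thus $\Comm(M(\CH),V)\subseteq(T^0)^{\langle\tilde g\rangle}=V_\Lambda^{\tilde g}$. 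Inside $V_\Lambda=\bigoplus_{\lambda\in\Lambda}M(\bC\Lambda)e^\lambda$ with $M(\bC\Lambda)=M(\bC\Lambda^\tau)\otimes M(\bC\Lambda_\tau)$, the conditions $h(0)v=0$ and $h(n)v=0$ $(n\ge1,\ h\in\Lambda^\tau)$ force $\lambda\in\Lambda\cap\bC\Lambda_\tau=\Lambda_\tau$ and kill all $\bC\Lambda^\tau$-oscillators, giving $\Comm(M(\CH),V_\Lambda)=\bigoplus_{\lambda\in\Lambda_\tau}M(\bC\Lambda_\tau)e^\lambda=V_{\Lambda_\tau}$. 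Taking $\tilde g$-invariants and using $\tilde g|_{V_{\Lambda_\tau}}=\widehat{\tau}$ then yields $\Comm_V(\CH)=V_{\Lambda_\tau}^{\widehat{\tau}}$, as claimed.

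I expect the main obstacle to be the bookkeeping in the first step: verifying that $\CH=\bC\Lambda^\tau$ is genuinely a Cartan subalgebra of $V_1$ (equivalently, that no extra toral directions arise from the twisted sectors) and that the coset conformal vector $\omega-\omega_{\CH}$ coincides with the Heisenberg conformal vector of $\bC\Lambda_\tau$, so that the commutant is the lattice VOA $V_{\Lambda_\tau}$ on the nose rather than merely an isomorphic copy. Once $\CH$ is identified, the essential point, namely that $\CH$-charge neutrality alone confines the commutant to the untwisted sector $T^0$ via Lemma~\ref{hdividess}, is short.
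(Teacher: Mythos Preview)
Your approach is essentially the paper's own: both arguments pivot on the observation that a contribution to $\Comm_V(\CH)$ from the sector $T^j$ would force the $\bC\Lambda^\tau$-charge to vanish, hence $j\beta\in(\Lambda^\tau)^*$, which Lemma~\ref{hdividess} excludes for $0<j<h$. The paper states this as a two-line proof by contradiction; you spell out the inclusion $V_{\Lambda_\tau}^{\widehat\tau}\subseteq\Comm_V(\CH)$ and the identification $\tilde g|_{V_{\Lambda_\tau}}=\widehat\tau$ explicitly, which is helpful.

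One imprecision to fix: for general $j$ the twisted module $T^j$ has lattice part indexed by $-j\beta+(\Lambda^{\tau^i})^*\subseteq\bC\Lambda^{\tau^i}$ with $i\equiv j\pmod{|\tau|}$ (and a possible $\delta$-shift), not $-j\beta+(\Lambda^\tau)^*$ as you wrote. This does not break the argument, since for $h\in\bC\Lambda^\tau$ the eigenvalue $\langle h,\nu\rangle$ depends only on the $\bC\Lambda^\tau$-component $\pi_\tau(\nu)=-j\beta+\pi_\tau(\mu)$, and $\pi_\tau$ maps $(\Lambda^{\tau^i})^*=\pi_{\tau^i}(\Lambda)$ into $\pi_\tau(\Lambda)=(\Lambda^\tau)^*$; so $\pi_\tau(\nu)=0$ still yields $j\beta\in(\Lambda^\tau)^*$. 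Just correct the description of the charge lattice and insert this projection step.
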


\pr
Suppose false. Then there exits  $ 0<j< |\tilde{g}|=h$ such that  $V_{\Lambda_\beta^\tau} $ appears as a submodule 
of  $T^j_\bZ$. It implies   $j\beta \in (\Lambda^\tau)^*$. It contradicts that $h$ is the smallest integer such that $h\tilde{\beta}\in \Lambda^\tau$ (or equivalently, $h\beta\in \pi(\Lambda)=(\Lambda^{\tau})^{\ast}$).  
\prend

As a corollary, we have 
\begin{lmm}
	$\rank(V_1)=\rank(\Lambda^{\tau})$. 
\end{lmm}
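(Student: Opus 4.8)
The plan is to reduce the statement to a central charge computation and read it off from Lemma \ref{comm}. Since $\CH$ is a Cartan subalgebra of $V_1$, we have $\rank(V_1) = \dim_\bC \CH$, so it suffices to prove $\dim_\bC\CH = \rank(\Lambda^\tau)$.

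First I would observe that the vertex subalgebra of $V$ generated by $\CH$ is exactly the Heisenberg VOA $M(\CH)$, which has central charge $\dim_\bC\CH$, so that $\Comm_V(\CH)=\Comm(M(\CH),V)$. As $V$ is a holomorphic VOA of central charge $24$ of CFT type, $M(\CH)$ and its commutant form a dual (mutually commutant) pair, and hence their central charges add to $24$; therefore $\Comm_V(\CH)$ has central charge $24-\dim_\bC\CH$.

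Next I would invoke Lemma \ref{comm}, which gives $\Comm_V(\CH)\cong V_{\Lambda_\tau}^{\hat\tau}$. Passing to $\hat\tau$-fixed points does not alter the conformal vector, so the central charge of $V_{\Lambda_\tau}^{\hat\tau}$ equals that of $V_{\Lambda_\tau}$, namely $\rank(\Lambda_\tau)$. Comparing the two expressions yields $24-\dim_\bC\CH=\rank(\Lambda_\tau)$, and since $\rank(\Lambda^\tau)+\rank(\Lambda_\tau)=24$ (because $\Lambda$ has rank $24$), we conclude $\dim_\bC\CH=\rank(\Lambda^\tau)$, i.e. $\rank(V_1)=\rank(\Lambda^\tau)$.

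The step requiring the most care is the central charge bookkeeping in the second paragraph: one must confirm that $\CH$ generates precisely $M(\CH)$, so that $\Comm_V(\CH)=\Comm(M(\CH),V)$, and that the central charges of a Heisenberg subVOA and its commutant in a holomorphic VOA genuinely sum to the total; both are standard facts but should be cited. I note that the same conclusion also follows from the preceding Proposition: $\sqrt{\ell}L^*\cong N^\tau$ forces $\rank(L)=\rank(N^\tau)$, and, combined with $\rank(V_1)=\rank(L)$ (since $L$ is a full-rank lattice in $\CH$) together with the observation that $N$ and $\Lambda$ span the same rational space on which $\tau$ acts identically, so that $\rank(N^\tau)=\rank(\Lambda^\tau)$, this gives an independent route to the result.
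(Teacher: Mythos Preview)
Your opening step assumes what the lemma asserts. In Section~\ref{S:7} the symbol $\CH$ denotes the $\tau$-fixed Heisenberg subspace $\bC\Lambda^\tau\subset (V_\Lambda)_1$, not an abstractly chosen Cartan subalgebra of $V_1$; this is clear from equation~\eqref{L}, from the proof of Lemma~\ref{comm}, and from the first line of the paper's own proof of the present lemma (``Since $\CH\subseteq V_\Lambda^{\langle\tilde g\rangle}$\ldots''). Thus $\dim_\bC\CH=\rank(\Lambda^\tau)$ holds \emph{by definition}, and the content of the lemma is precisely that this $\CH$ is a Cartan subalgebra of $V_1$, i.e.\ that $\rank(V_1)=\dim_\bC\CH$. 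Your first sentence takes this for granted, so the argument is circular. The central-charge bookkeeping you carry out is correct but only reconfirms the tautology $24-\rank(\Lambda^\tau)=\rank(\Lambda_\tau)$; it never engages with $\rank(V_1)$.

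The paper uses Lemma~\ref{comm} in a different way: from $\Comm_V(\CH)=V_{\Lambda_\tau}^{\hat\tau}$ one reads off $(\Comm_V(\CH))_1=0$, because $\Lambda_\tau\subset\Lambda$ has no roots and $\tau$ is fixed-point free on $\bC\Lambda_\tau$. Hence the $\CH$-weight-zero part of $V_1$, which equals $\CH\oplus(\Comm_V(\CH))_1$, is exactly $\CH$; so $\CH$ is self-centralizing in $V_1$ and therefore a Cartan subalgebra. Your alternative route via $\sqrt{\ell}L^*\cong N^\tau$ has the same circularity: the claim ``$\rank(V_1)=\rank(L)$ since $L$ is full rank in $\CH$'' already presupposes that $\CH$ is a Cartan subalgebra of $V_1$; without that, $\rank(L)$ equals $\dim_\bC\CH=\rank(\Lambda^\tau)$ rather than $\rank(V_1)$, and the chain of equalities collapses to the same tautology.
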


\pr
Since $\CH\subseteq V_{\Lambda}^{<\tilde{g}>}$, we may assume 
$\CH\subseteq V_1$. By the lemma above, we have 
$\Comm_V (\CH)\cap V_1= 0$
. 
\prend

As we have shown, $(\Lambda_{\tilde{\beta}}+\tilde{\beta})_2$ is
a fundamental affine root system $\tilde{X}$ of $R(N)$. 
Note that  $\tilde{\beta}\in \tilde{X}$ by our convention. 
Let $\rho$ be a Weyl vector of $X$. Then  
$\exp(2\pi i\rho(0)/h)$ is an automorphism of $N$ of order $h$ and $N_{\rho/h}=\Lambda_{\tilde{\beta}}$. 
If $v$ is an extended root in $\Lambda_{\tilde{\beta}}+\tilde{\beta}$, 
then $\langle v,\rho\rangle=-(h-1)$.  Therefore, 
$\rho^{\perp}\subseteq \Lambda_{\tilde{\beta}}= \{x\in N\mid \langle \rho, x\rangle\in h\bZ\}$. 
Furthermore, 
for any $w\in R(N)\cap N^{(k)}$, $\langle \rho,w\rangle\cong k \pmod{h}$.

Set $\alpha=\sqrt{\ell}\varphi^{-1}(\pi(\rho)/h)$. 
Then 
\[
\langle \alpha,\beta\rangle
=\langle \sqrt{\ell}\varphi^{-1}(\pi(\rho)/h), \frac{1}{\sqrt{\ell}}\varphi^{-1}(\tilde{\beta})\rangle
= \frac{1}{h}\langle \pi(\rho), \tilde{\beta}\rangle=
\frac{1}{h}\langle \rho, \tilde{\beta}\rangle=\frac{1}{h}.
\] 
More generally, 
$\langle \alpha,L^{\ast}\rangle
=\langle \varphi^{-1}(\pi(\rho)/h), \varphi^{-1}(N^{\tau})\rangle
=\frac{1}{h}\langle \rho, N^{\tau}\rangle < \frac{\bZ}{h}$. 

Since $\CH$ is a Cartan subalgebra of $V_1$ and 
$V_1$ contains a root vector associated with $\beta$, i.e.,  a weight one element 
$e^{\beta}\otimes T_\tau\in T^1$ with 
$\langle \beta,\beta\rangle=\frac{2}{\ell}$. As we have shown in the previous 
sections, the isometry in $Co.0$ determined by  the reverse automorphism defined by a $W$-element of $V_1$ is $\tau$ itself and $\beta$ is one of shortest roots of $V_1$.

\begin{lmm} 
	Let $\gamma$ be a root of $V_1$. Then $\langle \alpha, \gamma\rangle \neq 0$. 
\end{lmm}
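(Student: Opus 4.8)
The plan is to transport the statement, via the $\ell$-duality $\varphi$, to the classical fact that no root of a finite simply-laced root system is orthogonal to its Weyl vector. Set $w=\sqrt{\ell}\varphi(\gamma)\in N^{\tau}$, the image of the root $\gamma\in L^{*}$ under $\sqrt{\ell}\varphi$ (recall $\sqrt{\ell}\varphi(L^{*})=N^{\tau}$ by Theorem~\ref{duality}). Since $\varphi$ extends to an isometry of $\bC\Lambda^{\tau}$ and $\alpha=\sqrt{\ell}\varphi^{-1}(\pi(\rho)/h)$, I would first apply $\varphi$ to both arguments to get
$$\langle \alpha,\gamma\rangle=\langle \sqrt{\ell}\,\pi(\rho)/h,\,\varphi(\gamma)\rangle=\tfrac{1}{h}\langle \pi(\rho),\,w\rangle=\tfrac{1}{h}\langle \rho,\,w\rangle,$$
the last equality holding because $w$ is $\tau$-fixed and $\pi$ is the orthogonal projection onto $\bC\Lambda^{\tau}$. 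Thus it suffices to prove $\langle \rho,w\rangle\neq 0$.

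The key step is to pin down the \emph{level} of $w$ in the decomposition $N=\bigsqcup_{k=0}^{h-1}(\Lambda_{\tilde{\beta}}-k\tilde{\beta})$. Because the Leech lattice has no vectors of squared length $2$, the untwisted sector $T^{0}=V_{\Lambda}$ contributes only the Cartan subalgebra $\CH=\bC\Lambda^{\tau}$ to $V_1$; hence every root $\gamma$ of $V_1$ arises from a twisted module $T^{j}$ with $j\not\equiv 0\pmod h$. Consequently the $\CH$-charge of the corresponding root vector satisfies $\gamma+j\beta\in(\Lambda^{\tau})^{*}$, where the $\delta$-shift occurring for the $2C$-type twists lies in $(\Lambda^{\tau})^{*}$ by the proof of Theorem~\ref{indexn} and so is harmless. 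Applying $\sqrt{\ell}\varphi$ yields $w+j\tilde{\beta}\in\sqrt{\ell}\varphi((\Lambda^{\tau})^{*})=\Lambda^{\tau}$. Since $\Lambda^{\tau}$ is an even lattice containing $\tilde{\beta}$, one has $\Lambda^{\tau}\subseteq\Lambda_{\tilde{\beta}}$, whence $w\in\Lambda_{\tilde{\beta}}-j\tilde{\beta}$, i.e. $w$ sits at level $j\not\equiv 0\pmod h$. (Equivalently, $j=0$ would force $\gamma\in(\Lambda^{\tau})^{*}$, so $j\tilde{\beta}\in\Lambda^{\tau}$ and $h\mid j$ by Lemma~\ref{hdividess}, contradicting $0<j<h$.)

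To conclude, I would invoke the height relation: $\Lambda_{\tilde{\beta}}=\{x\in N\mid \langle\rho,x\rangle\in h\bZ\}$, and an extended root satisfies $\langle\rho,\tilde{\beta}\rangle=-(h-1)\equiv 1\pmod h$. Writing $w=w_{0}-j\tilde{\beta}$ with $w_{0}\in\Lambda_{\tilde{\beta}}$ gives $\langle\rho,w\rangle\equiv -j\pmod h$; since $0<j<h$ this residue is nonzero, so $\langle\rho,w\rangle$ is not a multiple of $h$ and in particular $\langle\rho,w\rangle\neq 0$. Therefore $\langle\alpha,\gamma\rangle=\tfrac1h\langle\rho,w\rangle\neq 0$, as desired.

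The main obstacle is the bookkeeping in the middle paragraph: justifying precisely that the $\CH$-charge of a root vector in $T^{j}$ lands in $-j\beta+(\Lambda^{\tau})^{*}$ with $j\not\equiv 0$, correctly absorbing the $\delta$-shift coming from the $2|\tau|$-periodicity of the irreducible twisted modules, and verifying the inclusion $\Lambda^{\tau}\subseteq\Lambda_{\tilde{\beta}}$. Once the level of $w$ is identified, the remaining argument is just the standard statement that heights of roots (here, pairings with the Weyl vector) are never divisible by the Coxeter number unless the element lies in the level-zero sublattice.
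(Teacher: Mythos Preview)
Your overall strategy—transport via $\sqrt{\ell}\varphi$ to $N^{\tau}$, identify the level of $w$ in $N$, and use that $\langle\rho,\cdot\rangle$ detects this level mod $h$—is exactly the route the paper takes. However, there is a genuine error in your middle paragraph. You assert that ``$\Lambda^{\tau}$ is an even lattice containing $\tilde{\beta}$, so $\Lambda^{\tau}\subseteq\Lambda_{\tilde{\beta}}$.'' Both claims are false: $\tilde{\beta}$ has squared length $2$, while $\Lambda$ (hence $\Lambda^{\tau}$) has minimum $4$; and in fact $[\Lambda^{\tau}:\Lambda^{\tau}_{\tilde{\beta}}]=h/|\tau|$ (see the discussion just before Proposition~\ref{mod4}), which is $>1$ whenever $h>|\tau|$. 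So from $w+j\tilde{\beta}\in\Lambda^{\tau}$ you cannot directly conclude $w\in\Lambda_{\tilde{\beta}}-j\tilde{\beta}$.

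The gap is easily repaired. Since $w\in N^{\tau}\subseteq N$, write $w=w_{0}-k\tilde{\beta}$ with $w_{0}\in\Lambda_{\tilde{\beta}}$ and $0\le k<h$. Because $w$ and $\tilde{\beta}$ are $\tau$-fixed, so is $w_{0}$; hence $w_{0}\in\Lambda^{\tau}_{\tilde{\beta}}\subseteq\Lambda^{\tau}$. Combining with $w+j\tilde{\beta}\in\Lambda^{\tau}$ gives $(j-k)\tilde{\beta}\in\Lambda^{\tau}$, so $h\mid(j-k)$ by Lemma~\ref{hdividess} and thus $k=j\neq 0$. The paper's own proof packages the same idea as the single containment $\rho^{\perp}\cap N^{\tau}\subseteq\Lambda^{\tau}$ (equivalently $\alpha^{\perp}\cap L^{*}\subseteq\pi(\Lambda)$) and then uses that no root of $V_{1}$ has $\CH$-weight in $\pi(\Lambda)=(\Lambda^{\tau})^{*}$—which is precisely your observation that $\gamma+j\beta\in(\Lambda^{\tau})^{*}$ with $0<j<h$. (A minor point: in the paper's convention $\tilde{\beta}$ is a \emph{simple} root of $X$, so $\langle\rho,\tilde{\beta}\rangle=1$ rather than $-(h-1)$; the residue mod $h$ is of course the same.)
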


\pr 
Since $\rho^{\perp}\cap N^{\tau} \subseteq \Lambda^{\tau}$,  we have 
 $\alpha^{\perp}\cap L^{\ast}\subseteq \pi(\Lambda)$.
  and
we have the desired result. 
\prend

In other words, $\alpha$ is a regular element in $\CH$ and we can  define a positive root $\mu$ by $\langle \mu,\alpha\rangle >0$. 
Note that there is a correspondence between $L^*$ and $N^{\tau}$ through the map $\sqrt{\ell} \varphi_\tau$.  
Since $\langle \rho, v\rangle\in \bZ$ for all $v\in N$, 
$\langle \alpha, \mu\rangle\in \frac{1}h \bZ$ for all $\mu \in L^{\ast}$. 
Since $\langle \alpha,\beta\rangle=\frac{1}{h}$, we may assume 
$\beta$ is a simple short root of a full component.
Therefore, there is a $W$-element $\widetilde{\alpha}$ such that 
$\langle \widetilde{\alpha},\beta\rangle=\frac{1}{h}$. 
In this case, 
$\widehat{\tau}\exp(2\pi i\beta(0))$ is a reverse automorphism of 
$V_\Lambda$ for $\exp(2\pi i\widetilde{\alpha}(0))$ and 
we can recover the pair $(\tau, \tilde{\beta})$ and a Niemeier lattice 
$N=\Lambda_{\tilde{\beta}}+\bZ \tilde{\beta}$.  

 \medskip
\subsection{The relation $\sim$} 
Next we will study the relation $\sim$ on $\CT$. Let  $(\tau, \tilde{\beta})$ and $(\tau', \tilde{\beta}')$ be two pairs in $\CT$. Suppose $(\tau, \tilde{\beta}) \sim (\tau', \tilde{\beta}')$. As we mentioned, up to the action of $O(\Lambda)$, one can assume  $N=N'$ and $\tilde{\beta}'= \tilde{\beta}-\lambda$ for some $\lambda\in \Lambda_{\tilde{\beta}}$. 

\begin{lmm}\label{samecoset}
	Suppose $\tilde{\beta}'= \tilde{\beta}-\lambda$ is also fixed by $\tau$. Then the VOAs defined by $(\tau, \tilde{\beta})$ and $(\tau,\tilde{\beta}')$  are isomorphic. 
\end{lmm}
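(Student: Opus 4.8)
The plan is to show that the two automorphisms $\tilde{g}=\widehat{\tau}\exp(2\pi i\beta(0))$ and $\tilde{g}'=\widehat{\tau}\exp(2\pi i\beta'(0))$ used to build the two VOAs are conjugate in $\Aut(V_{\Lambda})$; since an orbifold construction along conjugate automorphisms produces isomorphic VOAs, the lemma follows at once. Here $\beta=\frac{1}{\sqrt{\ell}}\varphi^{-1}(\tilde{\beta})$ and $\beta'=\frac{1}{\sqrt{\ell}}\varphi^{-1}(\tilde{\beta}')$, and the whole argument takes place inside $\bC\Lambda$ and its $\tau$-eigenspace decomposition.

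First I would record that $\lambda\in\Lambda^{\tau}$: indeed $\lambda=\tilde{\beta}-\tilde{\beta}'$ lies in $\Lambda$ and is fixed by $\tau$ because both $\tilde{\beta}$ and $\tilde{\beta}'$ are. Applying the $\bC$-linear isometry $\varphi^{-1}$ then gives $\beta-\beta'=\frac{1}{\sqrt{\ell}}\varphi^{-1}(\lambda)\in(\Lambda^{\tau})^{\ast}$, using $\varphi^{-1}(\Lambda^{\tau})=\sqrt{\ell}(\Lambda^{\tau})^{\ast}$ from Theorem \ref{dual}. Since $(\Lambda^{\tau})^{\ast}=\pi(\Lambda)$, I may write $\beta-\beta'=\pi(v)$ for some $v\in\Lambda$.

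Next, a direct computation with the commuting Cartan exponentials shows $\tilde{g}(\tilde{g}')^{-1}=\widehat{\tau}\exp(2\pi i(\beta-\beta')(0))\widehat{\tau}^{-1}=\exp(2\pi i\,\tau(\beta-\beta')(0))=\exp(2\pi i(\beta-\beta')(0))$, the last equality because $\beta-\beta'$ is $\tau$-fixed. Hence $\tilde{g}=\exp(2\pi i\pi(v)(0))\,\tilde{g}'$. Because $v\in\Lambda$ forces $\exp(2\pi iv(0))=\mathrm{id}$ on $V_{\Lambda}$ and $v=\pi(v)+(1-\pi)(v)$, I obtain $\exp(2\pi i\pi(v)(0))=\exp(-2\pi i(1-\pi)(v)(0))$, where $(1-\pi)(v)$ lies in the coinvariant space $\bC\Lambda_{\tau}=(1-\tau)\bC\Lambda$. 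The crucial point is that this remaining factor can be absorbed into a conjugation: since $1-\tau$ is invertible on $\bC\Lambda_{\tau}$, I can solve $(1-\tau)(u)=-(1-\pi)(v)$ for some $u\in\bC\Lambda_{\tau}$, and the identity $\exp(2\pi iu(0))\,\widehat{\tau}\,\exp(-2\pi iu(0))=\exp(2\pi i(1-\tau)(u)(0))\,\widehat{\tau}$ then yields $\exp(2\pi iu(0))\,\tilde{g}'\,\exp(-2\pi iu(0))=\exp(2\pi i(1-\tau)(u)(0))\,\tilde{g}'=\exp(-2\pi i(1-\pi)(v)(0))\,\tilde{g}'=\tilde{g}$. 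Thus $\tilde{g}$ and $\tilde{g}'$ are conjugate in $\Aut(V_{\Lambda})$ by the inner automorphism $\sigma=\exp(2\pi iu(0))$.

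I expect the only genuinely delicate step to be the passage from conjugacy of the automorphisms to isomorphism of the orbifold VOAs. The intertwiner $\sigma$ carries each $\tilde{g}'$-twisted module to the corresponding $\tilde{g}$-twisted module preserving conformal weights and the $\bZ/|\tilde{g}|$-grading, so it matches the integral-weight submodules $(T^{j}_{\bZ})^{\langle\,\cdot\,\rangle}$ appearing in the two orbifold sums and respects their extension structure; this is the standard fact that the orbifold VOA depends only on the conjugacy class of the automorphism (cf. the references on orbifold construction cited in the introduction). Everything preceding this is routine linear algebra on $\bC\Lambda$ together with the isometry $\varphi$, so I would keep those verifications brief and concentrate the write-up on the conjugation identity and the invocation of the orbifold-invariance principle.
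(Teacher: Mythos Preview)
Your proof is correct and follows essentially the same approach as the paper: both arguments observe that $\lambda\in\Lambda^{\tau}$, translate via $\varphi^{-1}$ to get $\beta-\beta'\in(\Lambda^{\tau})^{\ast}=\pi(\Lambda)$, and then conclude that $\tilde{g}$ and $\tilde{g}'$ are conjugate in $\Aut(V_{\Lambda})$, whence the orbifold VOAs are isomorphic. The only difference is that where the paper invokes \cite[Lemma~4.5(2)]{LS} for the conjugacy step, you unwind that lemma explicitly by solving $(1-\tau)u=-(1-\pi)v$ on $\bC\Lambda_{\tau}$ and conjugating by $\exp(2\pi iu(0))$.
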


\pr
 By our assumption, $\lambda \in \Lambda^\tau$ and thus $\bar{\lambda}=\frac{1}{\sqrt{\ell}} \varphi^{-1}(\lambda)\in (\Lambda^\tau)^*$. Then  
$\widehat{\tau}\exp(2\pi i(\beta -\bar{\lambda})(0))=\tilde{g}\exp(-2\pi i \bar{\lambda}(0))$ is conjugate to $\tilde{g}$ by \cite[Lemma 4.5 (2)]{LS}.  Therefore, they define isomorphic VOAs by orbifold constructions. 
\prend

In other words, the choice of $\tilde{\beta}$ is somewhat ambiguous and  
we can choose any $\tau$-invariant root in the affine fundamental root system 
as the starting point.

\begin{prop}
If $(\tau, \tilde{\beta})\sim (\tau', \tilde{\beta}')$, then the VOAs obtained by an orbifold constructions from $V_\Lambda$ associated with  $\tilde{g}=\widehat{\tau}\exp(2\pi i\beta(0))$ and $\tilde{g}'=\widehat{\tau'}\exp(2\pi i\beta'(0))$ are isomorphic, 
where $\beta=\frac{1}{\sqrt{\ell}}\varphi^{-1}(\tilde{\beta})$ and 
 $\beta'=\frac{1}{\sqrt{\ell}}\varphi^{-1}(\tilde{\beta}')$. 
\end{prop}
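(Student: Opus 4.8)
The plan is to move the entire construction to the lattice vertex operator algebra $V_N$ of the Niemeier lattice $N$, where $O(N)$ acts by automorphisms, and then to absorb the residual freedom by Lemma~\ref{samecoset}. I begin with the reduction recorded just before the statement: applying a standard lift $\widehat{\sigma}\in\Aut(V_\Lambda)$ of $\sigma\in O(\Lambda)$ to $\tilde g'$ produces a conjugate automorphism of $V_\Lambda$, hence an isomorphic orbifold VOA, and it replaces the pair $(\tau',\tilde\beta')$ by $(\sigma^{-1}\tau'\sigma,\ \tilde\beta-\lambda)$ (here one uses the equivariance $\sigma^{-1}\varphi_{\tau'}\sigma=\varphi_{\sigma^{-1}\tau'\sigma}$ of the $\ell$-duality). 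Thus I may assume $N=N'$, that $\tilde\beta'=\tilde\beta-\lambda$ with $\lambda\in\Lambda_{\tilde\beta}$, and, by the second condition defining $\sim$, that $\tau'=\eta^{-1}\tau\eta$ for some $\eta\in O(N)$. The task is then to show that $(\tau,\tilde\beta)$ and $(\tau',\tilde\beta')$ determine isomorphic VOAs.

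The key step disposes of the $O(N)$-conjugacy. Recall from the proof of Proposition~\ref{mod4} that $V_\Lambda\cong (V_N)^{[\xi]}$, where $\xi=\exp(2\pi i\rho(0)/h)$ is the automorphism attached to a Weyl vector $\rho$ of a fundamental system of $N$, that $\tau$ (viewed in $O(N)$) commutes with $\xi$, and that $|\tilde g|=h$ by Theorem~\ref{order}. Using these facts I would realize the orbifold $V=V_\Lambda^{[\tilde g]}$ as a $V_N$-orbifold $V\cong (V_N)^{[\zeta]}$, where $\zeta$ is the combined automorphism built from the standard lift $\widehat\tau$ and the chamber data of $\tilde\beta$; likewise $V'\cong (V_N)^{[\zeta']}$. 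Now lift $\eta$ to $\widehat\eta\in\Aut(V_N)$. Since $\eta\in O(N)$, conjugation by $\widehat\eta$ preserves the isomorphism class of the $V_N$-orbifold and carries $\zeta'$ to the combined automorphism of the pair $(\eta\tau'\eta^{-1},\eta(\tilde\beta'))=(\tau,\eta(\tilde\beta'))$, the point being that $\widehat\eta$ sends the Weyl-vector automorphism $\xi'$ to the one of the $\eta$-image chamber. Hence $V'\cong (V_N)^{[\zeta'']}$, where $\zeta''$ has the same $\tau$-part as $\zeta$ but is built from the $\tau$-invariant root $\eta(\tilde\beta')$ instead of $\tilde\beta$.

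It remains to compare $(V_N)^{[\zeta]}$ and $(V_N)^{[\zeta'']}$, which share the same $\tau$ and differ only in the $\tau$-invariant base root. After a Weyl transformation commuting with $\tau$ I may assume $\eta(\tilde\beta')$ lies in the affine fundamental root system $(\Lambda_{\tilde\beta}+\tilde\beta)_2$, which $\tau$ preserves; as this system is contained in the single coset $\tilde\beta+\Lambda_{\tilde\beta}$, the difference $\tilde\beta-\eta(\tilde\beta')$ lies in $\Lambda_{\tilde\beta}$ and is $\tau$-fixed, so it belongs to $\Lambda^\tau$. Lemma~\ref{samecoset}, together with the remark that any $\tau$-invariant root of the affine fundamental root system may serve as the starting point, then identifies $\zeta''$ with $\zeta$ up to conjugacy in $\Aut(V_N)$, whence $V'\cong V$ and the proof is complete.

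The main obstacle is the middle step. Because $O(N)$ is in general not a subgroup of $O(\Lambda)=Co.0$, the conjugacy $\tau'=\eta^{-1}\tau\eta$ cannot be realized directly inside $\Aut(V_\Lambda)$ and must be implemented on the $V_N$-side. Making this rigorous requires establishing the realization $V\cong (V_N)^{[\zeta]}$ from $V_\Lambda\cong (V_N)^{[\xi]}$, $[\tau,\xi]=1$ and $|\tilde g|=h$, and verifying its $\widehat\eta$-equivariance; one must be careful that $\eta(\tilde\beta')$ is only a root of $N$ and need not be a deep hole of $\Lambda$, so the comparison genuinely has to be carried out over $V_N$ and only then descended to the orbifolds of $V_\Lambda$.
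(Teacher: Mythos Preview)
Your route is genuinely different from the paper's, and the gap you flag in your last paragraph is real rather than cosmetic.

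The paper never passes to $V_N$. After the same reduction ($N=N'$, $\tilde\beta'=\tilde\beta-\lambda$, $\tau=\mu\tau'\mu^{-1}$ with $\mu\in O(N)$), it uses the coset decomposition $N=\bigcup_{j=0}^{h-1}(\Lambda_{\tilde\beta}+j\tilde\beta)$ to write $\mu\tilde\beta'\in\Lambda_{\tilde\beta}+k\tilde\beta$ with $\gcd(k,h)=1$, whence $\langle\tilde g^{\,k}\rangle=\langle\tilde g\rangle$ and $V_\Lambda^{[\tilde g]}=V_\Lambda^{[\tilde g^{\,k}]}$. It then invokes condition (C3): since $\tau$ acts on $N_\tau\cong L_A(c_\tau)$ as a product of Coxeter elements of type $A$, for each $k$ coprime to $|\tau|$ there is $s\in\mathrm{Weyl}(R(N_\tau))$ with $s\tau s^{-1}=\tau^{k}$, and this $s$ fixes $N^\tau$ pointwise, hence fixes $\mu\tilde\beta'$. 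Lemma~\ref{samecoset}, applied to the $\tau$-invariant difference $\mu\tilde\beta'-k\tilde\beta\in\Lambda_{\tilde\beta}^{\tau}\subset\Lambda^{\tau}$, then closes the argument. In short, the paper replaces your abstract $O(N)$-action on a $V_N$-model by two concrete moves that stay on the $V_\Lambda$ side: passage to the power $\tilde g^{\,k}$, and the Coxeter conjugacy $\tau\sim\tau^{k}$ supplied by (C3).

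By contrast, your realization $V\cong(V_N)^{[\zeta]}$ together with its $\widehat\eta$-equivariance would need an iterated-orbifold compatibility between the reverse of $\xi$ on $V_\Lambda$ and $\tilde g$; this is not established anywhere in the paper and is not a routine consequence of $[\tau,\xi]=1$ and $|\tilde g|=h$. Without it the argument does not go through, and your final reduction (moving $\eta(\tilde\beta')$ into the affine fundamental system by a $\tau$-commuting Weyl element) is also left unjustified. What the paper's approach buys is exactly the avoidance of this machinery, at the price of relying on the specific Coxeter-element structure of $N_\tau$ furnished by (C3); this is in fact why (C3) is built into the definition of $\CT$.
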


\pr 
Up to the action of $O(\Lambda)$, one may assume  $N=N'$ and $\tilde{\beta}'= \tilde{\beta}-\lambda$ for some $\lambda\in \Lambda_{\tilde{\beta}}$. 
Since $\tau$ is conjugate to $\tau'$ in $O(N)$, there is a $\mu \in O(N)$ such that 
 $\tau = \mu\tau' \mu^{-1}$. Then $\mu\tilde{\beta}'$ is fixed by $\tau$.  
 Note that $h$ is still the smallest integer such that $h\mu \tilde{\beta}' \in \Lambda^{\tau}$.
 Since  $N= \cup_{j=1}^{h-1}(\Lambda_{\tilde{\beta}}+j\tilde{\beta})$, 
 $\mu \tilde{\beta}' \in (\Lambda_{\tilde{\beta}}+k\tilde{\beta})$ for some $k$ with 
$(k,h)=1$.  Therefore, $<\tilde{g}^k> = <\tilde{g}>$ and $V^{[\tilde{g}]}\cong V^{[\tilde{g}^k]}$.

Recall that  $N_\tau \cong L_A(c_\tau)>R=\oplus A_{m_i-1}^{a_i}$ and $\tau$ acts on $N_\tau$ as a Coxeter element of the root system $R$. Therefore, for $ (k, |\tau|)=1$, $\tau $ is conjugate to 
$\tau^k$ by an element $s\in Weyl(R)$.  In this case, $s\mu\tilde{\beta}' =\mu \tilde{\beta}'$ and 
$s\tau s^{-1} =\tau^k$. Therefore, $(\tau, \mu\tilde{\beta}')$ and $(\tau^k,\mu\tilde{\beta}')$ define isomorphic VOAs. %
Moreover,  by Lemma \ref{samecoset},  the VOAs obtained by an orbifold constructions from $V_\Lambda$ associated with  $\tilde{g}^k=\widehat{\tau}^k\exp(2\pi i k\beta(0))$ and $\widehat{\tau}^k\exp(2\pi i \mu\beta'(0))$ are isomorphic and we have the desired result. 
\prend 

Therefore, there is a one-to-one correspondence between the set of isomorphic 
classes of holomorphic VOA $V$ of central charge $24$ with non-abelian 
$V_1$ and the set of equivalent classes $(\sim)$ of pairs 
$(\tau, \tilde{\beta})$ with $\tau\in \CP_0$ and 
$\tau$-invariant deep hole $\tilde{\beta}$ of $\Lambda$ satisfying 
the conditions in Theorem \ref{Main}.

\begin{remark}
	Since there is a correspondence between deep holes, up to equivalence, and Niemeier lattices, each pair in $\CT$ will define a unique pair $(N, \tau)$,  where $N$ is a Niemeier lattice with $R(N)\neq \emptyset$ and $\tau \in O(N)$ with a positive frame shape.  Therefore, the classification of holomorphic VOAs of central charge $24$ with non-abelian 
	weight one Lie algebras can also  be reduced to a classification of the possible pairs of $(N, \tau)$, up to some equivalence.   
\end{remark}

\section{H\"ohn's observation and Lie algebra structure of $V_1$} \label{S:8}

In \cite{Ho}, H\"ohn observed that there is an interesting bijection 
between certain equivalence classes of cyclic subgroups of the glue codes of the Niemeier lattices with roots 
and the Lie algebra structures of the weight one subspace of a holomorphic vertex operator algebras 
of central charge $24$ corresponding to  the $69$ semisimple cases in 
Schellekens' list. In this section, we will provide an explanation for H\"ohn's observation using our main theorem. In particular,  we will give a pure combinatorial classification of holomorphic vertex operator algebras 
of central charge $24$ with non-trivial weight one spaces. Indeed, the orbit lattice $N(Z)$ described by H\"ohn is essentially  the lattice $L= \sqrt{\ell}(N^\tau)^*$ and the glue vector $v$ is the vector $\lambda_c$ as we defined in the appendix. The rescaling of the levels of the Lie algebras are handled uniformly using the $\ell$-duality map $\sqrt{\ell} \varphi_{\tau}$.

\subsection{Roots of $V_1$ and $N^\tau$} 

First  we study the relationship between the root system of $V_1$ and the lattice 
$N^\tau \cong \sqrt{\ell}L^*$. Recall from \cite{DW} that 
\begin{equation}\label{Q/Ql}
	L_{\g_i}(k_i,0)= \bigoplus_{j\in Q_i/k_iQ^i_{l} } V_{\sqrt{k_i}Q^i_{l} + \frac{1}{\sqrt{k_i}} \beta_j} \otimes M^{0, \beta_j}, 
\end{equation}
where $Q^i$ and $Q^i_{l}$ denote the root lattice and long root lattice of $\g_i$, respectively and 
$\beta_j$ is a representative of $j\in Q^i/k_iQ^i_{l}$.

By \eqref{Q/Ql}, the roots of the Lie algebra $V_1=\bigoplus_{i=1}^t\g_i$ can be represented by elements in $L^*$. Indeed, we can view them as roots of the lattice $\sqrt{\ell}L^*$. 

First we recall the definition for the root system of an even lattice $K$ (cf. \cite{SB,SV}). 
\begin{defn}
	A vector $v\in K$ is \textit{primitive} if the sublattice spanned by $v$ is a direct summand of $L$. A primitive vector $v$ is called a \textit{root} if $2\langle v, K\rangle/\langle v,v\rangle < \bZ$. The set of roots 
	\[
	R(K) = \{ v\in K\mid v \text{ is primitive},\ 2\langle v, K\rangle/\langle v,v\rangle < \bZ\}
	\]
	is called the \textit{root system} of $K$.
\end{defn}

\begin{defn}
	Let $K$ be an even lattice. The level $\ell$ of $K$ is the smallest positive integer $\ell$ such that $\sqrt{\ell}K^*$ is still even. 
\end{defn}

As in \cite{SB}, we denote the scaled root system as follows.
\begin{center}
	\begin{tabular}{|cc|l|}
		\hline
		${^\alpha}\!A_n , {^\alpha}\!D_n , {^\alpha}\!E_n$ && roots of length $2\alpha$ \\
		${^\alpha}\!B_n$ &&  short roots of length $\alpha$ \\
		${^\alpha}\!C_n$ &&  short roots of length $2\alpha$ \\
		${^\alpha}\!G_2$ &&  short roots of length $2\alpha$ \\
		${^\alpha}\!F_4$ &&  short roots of length $2\alpha$\\
		\hline 
	\end{tabular}
\end{center}
We also consider the reduced discriminant group of an arbitrary scaled root system $R$, 
which is a subgroup of the discriminant group of the sublattice generated by $R$ 
(see \cite[Definition 2.1]{SB}). 
For an irreducible root system  $^\alpha\! X$ of type $A$, $D$ or $E$, the reduced discriminant group of $^\alpha\! X$ is simply the discriminant group $\mathcal{D}(X)$. 

For non-simply laced root system, the corresponding root lattices and reduced discriminant groups are as follows. 
\begin{center}
	\begin{tabular}{|cc|l|c|}
		\hline
		Root system  && root lattice &  Reduced discriminant \\ \hline 
		${^\alpha}\!B_n$ &&  $\sqrt{\alpha} \bZ^n$  & $\bZ_2$\\
		${^\alpha}\!C_n$ &&  $\sqrt{\alpha} D_n$&   $\bZ_2$ \\
		${^\alpha}\!G_2$ &&  $\sqrt{\alpha} A_2\cong \sqrt{3\alpha} A_2^*$ & $1$\\
		${^\alpha}\!F_4$ &&  $\sqrt{\alpha} D_4\cong \sqrt{2\alpha} D_4^*$ & $1$\\
		\hline 
	\end{tabular}
\end{center}

For explicit description of the reduced discriminant groups  for 
the root systems of ${^\alpha}\!B_n$ and ${^\alpha}\!C_n$, we use the following standard
model for their root lattices, namely, 
\[
{^\alpha}\!B_n = \bigoplus_{i=1}^n \bZ e_i, \quad \text{ where }  (e_i,e_j)=\alpha \delta_{i,j},
\]
roots: $e_i$ of norm $\alpha$ and $\pm e_i\pm e_j$ of norm $2\alpha$; and 
\[
{^\alpha}\!C_n = \{ \sum_{i=1}^n x_i e_i \mid \sum_{i=1}^n x_i \equiv 0 \mod 2\} \quad \text{ where }  (e_i,e_j)=\alpha \delta_{i,j},
\]
roots: $\pm e_i\pm e_j$ of norm $2\alpha$ and $2e_i$ of norm $2\alpha$.

The reduced discriminant group of $^{\alpha}\! B_n$ is $0$ if $\alpha$ is odd and 
the reduced discriminant group of $R$  is given by  $\langle \bar{g} \rangle \cong \bZ_2$
where 
\[
\begin{cases}
	g=\frac{1}2 (e_1+\cdots +e_n) & \text{ if }  R= ^{2\alpha}\!\! B_n, \\
	g= e_1  & \text{ if }  R= ^{\alpha}\!\! C_n. 	
\end{cases}
\]
\begin{remark}[see \cite{SB}]
	An element $v\in K$ is a root if and only if  $v\in \langle v,v\rangle/2\cdot K^*$ and $v$ is primitive in $K$. In particular, $\langle v,v\rangle/2$ divides the exponent of $\mathcal{D}(K)=K^*/K$.   
\end{remark}

\begin{lmm}\label{roots}
	Let $v\in K$ such that $v\in \langle v,v\rangle/2\cdot K^*$. Assume that $v/n\in K$ for some $n\in \bZ$ with $n>1$. Then $n=2$. Moreover,  $\frac{v}2$ is a root and $K= \bZ(\frac{v}2) \perp  (K\cap \langle v\rangle^\perp)$. 
\end{lmm}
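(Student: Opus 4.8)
The plan is to reformulate the hypothesis $v\in(\langle v,v\rangle/2)K^{*}$ into a more usable algebraic form. Write $t=\langle v,v\rangle/2$, which is a positive integer since $K$ is even and positive definite (so in particular $v\neq 0$; the case $v=0$ is vacuous). The hypothesis then says exactly that $v=tw$ for some $w\in K^{*}$. Two consequences will drive the whole argument: first, since $w\in K^{*}$ we have $\langle v,x\rangle=t\langle w,x\rangle\in t\bZ$ for every $x\in K$; and second, expanding $\langle v,v\rangle=t^{2}\langle w,w\rangle=2t$ gives $\langle w,w\rangle=2/t$.

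To prove $n=2$, I would suppose $v/n\in K$ with $n>1$ and exploit these facts. First, evenness of $K$ forces $\langle v/n,v/n\rangle=2t/n^{2}\in 2\bZ$, hence $n^{2}\mid t$, and in particular $n\mid t$. The crux is then a self-pairing trick: on one hand $\langle v/n,v/n\rangle=2t/n^{2}$, while on the other hand $\langle v/n,v/n\rangle=(t/n)\langle w,v/n\rangle\in(t/n)\bZ$, using that $v/n\in K$ and $w\in K^{*}$. Comparing the two expressions yields $(2t/n^{2})/(t/n)=2/n\in\bZ$, i.e. $n\mid 2$, so $n=2$. The same computation applied to any divisor shows $v/m\in K$ forces $m\in\{1,2\}$, so that $v_{0}:=v/2$ is automatically \emph{primitive}.

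It then remains to check that $v_{0}$ is a root and to split off $\bZ v_{0}$. Since $4\mid t$ write $t=4a$; then $\langle v_{0},v_{0}\rangle=2a$ and $v_{0}=2aw=a(2w)$ with $2w\in K^{*}$, which exhibits $v_{0}\in(\langle v_{0},v_{0}\rangle/2)K^{*}$. Together with primitivity this is precisely the condition for $v_{0}\in R(K)$. For the orthogonal decomposition, the key observation is that the projection coefficient $\langle v_{0},x\rangle/\langle v_{0},v_{0}\rangle=\langle w,x\rangle$ is an \emph{integer} for every $x\in K$. Hence $x-\langle w,x\rangle v_{0}$ lies in $K$ and is orthogonal to $v_{0}$, so it lies in $K\cap\langle v\rangle^{\perp}$; this writes each $x\in K$ as an integral combination of $v_{0}$ and an element of $K\cap\langle v\rangle^{\perp}$, and orthogonality (with $\langle v_{0},v_{0}\rangle=2a\neq 0$) makes the sum direct, giving $K=\bZ(v/2)\perp(K\cap\langle v\rangle^{\perp})$.

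The only genuinely non-routine point is the self-pairing step producing $n\mid 2$; once the hypothesis is rewritten as $v=tw$ with $w\in K^{*}$, everything else is bookkeeping with the integrality $\langle w,x\rangle\in\bZ$ and the evenness of $K$.
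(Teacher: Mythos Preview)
Your proof is correct and follows essentially the same route as the paper's. Both arguments hinge on the same pairing trick: writing $v=tw$ with $t=\langle v,v\rangle/2$ and $w\in K^{*}$, the paper pairs $v/n$ with $v$ to get $\langle v/n,v\rangle=2t/n\in t\bZ$, while you pair $v/n$ with $w$ to get $\langle w,v/n\rangle=2/n\in\bZ$; these are the same computation. Your explicit use of $w$ makes the later steps (primitivity of $v/2$, and the projection coefficient $\langle v_0,x\rangle/\langle v_0,v_0\rangle=\langle w,x\rangle\in\bZ$) a bit cleaner than the paper's phrasing, and you actually justify the primitivity of $v/2$, which the paper simply asserts. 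The preliminary observation $n^{2}\mid t$ is not needed for the $n=2$ step itself, but it is harmless and you legitimately use $4\mid t$ afterwards.
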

\pr  Since $v/n\in K$ and $v\in \langle v,v\rangle/2\cdot K^*$, we have 
\[
\langle \frac{v}n,v\rangle \in \frac{\langle v,v\rangle}2 \bZ,  \text{ or equivalently, }  2/n\in \bZ;
\]
hence,  $n=2$. Then $\frac{v}2$ is primitive in $K$ and  $\frac{2\langle v/2,x\rangle}{\langle v/2,v/2\rangle} = \frac{4\langle v,x\rangle}{\langle v,v\rangle}\in \bZ$ for any $x\in K$; hence, it is a root.  

Let $p_v: K \to  (\bZ \frac{v}2)^*$ be the natural projection.  For any $x\in K$, $p_v(x) = r_x v$  for some $r_x\in \bQ$. Then $ \frac{2\langle v,x\rangle}{\langle v,v\rangle} = 
\frac{2\langle v,r_xv\rangle}{\langle v,v\rangle} =2r_x\in \bZ$. That means $r_x\in \frac{1}2 \bZ$ and  $p_v(K)= \bZ(\frac{v}2)$ as desired. 
\prend

\medskip

The following results can be found in \cite{SB}. 
\begin{thm}
	Let $K$ be an integral lattice and $R$ a root system contained in $K$.
	Let $\mathcal{K} = K/ (\langle R\rangle \perp (R^\perp \cap K))$  be the glue code of $K$ over $R$.  Then the elements of $R$ are actually
	roots of $K$ if and only if $\mathcal{K}$ is contained in the reduced discriminant group of $R$, and furthermore $K$ contains no elements of the form $e/2$, where $e$ is a basis vector of an $A_1$-component.
\end{thm}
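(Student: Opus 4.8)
The plan is to separate the two requirements in the definition of a root of $K$ — the arithmetic condition $2\langle v,K\rangle/\langle v,v\rangle\subset\bZ$ and primitivity — and to match them one at a time against the two conditions on the right. Throughout I would work with the orthogonal projection $p_R\colon K\to\bQ\langle R\rangle$. Because $K$ is integral and $\langle R\rangle\subseteq K$, for each $x\in K$ one has $\langle p_R(x),s\rangle=\langle x,s\rangle\in\bZ$ for all $s\in\langle R\rangle$, so $p_R(K)\subseteq\langle R\rangle^{*}$; moreover $\ker(p_R)\cap K=R^{\perp}\cap K$, so $p_R$ induces an embedding of the glue code $\mathcal K=K/(\langle R\rangle\perp(R^{\perp}\cap K))$ into $\mathcal D(\langle R\rangle)=\langle R\rangle^{*}/\langle R\rangle$, with image $p_R(K)/\langle R\rangle$.

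First I would establish the arithmetic half. For a fixed $r\in R$ and $x\in K$ one has $\langle r,x\rangle=\langle r,p_R(x)\rangle$ since $r\perp R^{\perp}$, so the condition $2\langle r,x\rangle/\langle r,r\rangle\in\bZ$ for all $x\in K$ is the same as that condition for $p_R(x)$ ranging over $p_R(K)$. For $p_R(x)\in\langle R\rangle$ it holds automatically, as $r$ is a root of the root lattice $\langle R\rangle$ (the Cartan integers are integral); hence it reduces to $2\langle r,g\rangle/\langle r,r\rangle\in\bZ$ for glue representatives $g$. Quantifying over all $r\in R$, this says precisely that the image of $\mathcal K$ lies in the subgroup $\{[g]:2\langle r,g\rangle/\langle r,r\rangle\in\bZ\ \text{for all }r\in R\}$ of $\mathcal D(\langle R\rangle)$, which is exactly the reduced discriminant group of $R$ (this is its defining property, evaluated component by component: for the simply laced types every class qualifies, giving the full $\mathcal D$, while for $B,C,F_4,G_2$ one uses the explicit generators $g$ listed in the tables). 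Thus the arithmetic condition holds for every $r\in R$ \emph{if and only if} $\mathcal K$ is contained in the reduced discriminant group of $R$.

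Next I would treat primitivity, assuming the arithmetic condition already holds, so that each $r$ satisfies $r\in\tfrac{\langle r,r\rangle}{2}K^{*}$ by the Remark preceding Lemma \ref{roots}. If some $r\in R$ fails to be primitive, Lemma \ref{roots} forces $r/2\in K$, with $r/2$ a root and $K=\bZ(r/2)\perp(K\cap\langle r\rangle^{\perp})$. Then $r/2=p_R(r/2)\in p_R(K)$ and $r/2\notin\langle R\rangle$ (roots are primitive in their own root lattice), so $[r/2]$ is a nonzero class in the image of $\mathcal K$, hence in the reduced discriminant group. Feeding $g=r/2$ into the defining inequality shows $\langle\rho,r\rangle/\langle\rho,\rho\rangle\in\bZ$ for every root $\rho$ of $R$. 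Taking $\rho$ to be a root adjacent to $r$ and of the same length, this ratio is a negative half-integer by the Cartan structure — a contradiction — \emph{unless no such neighbour exists}, i.e. unless the component of $r$ is $A_1$ and $r=e$ is its basis vector; in that case $r/2=e/2\in K$ is exactly the element forbidden by the second hypothesis. This yields both directions: $(\Rightarrow)$ if every $r$ is a root, the arithmetic condition gives the containment of $\mathcal K$ and primitivity of the $A_1$ basis vectors gives the absence of $e/2$; $(\Leftarrow)$ under the two hypotheses the arithmetic condition holds and Lemma \ref{roots} cannot be invoked, so each $r$ is primitive, hence a root of $K$.

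The step I expect to be the main obstacle is the final bookkeeping for the \emph{non-simply-laced} components. There the two root lengths make "adjacent root of the same length'' delicate, and one must verify, using the explicit generators of the reduced discriminant groups of ${}^{\alpha}B_n,{}^{\alpha}C_n,{}^{\alpha}F_4,{}^{\alpha}G_2$ together with the model root lattices recorded in the tables, that the half of a short or long root lands in the reduced discriminant group only in the $A_1$ situation already isolated, so that no exceptional condition beyond the stated "$K$ contains no $e/2$'' is required. This is the only place where the distinction between root lengths genuinely enters, and it is where the precise definition of the reduced discriminant group — as the classes respecting Cartan-integrality against \emph{every} root — does the real work.
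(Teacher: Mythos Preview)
The paper does not supply a proof of this statement; it is quoted verbatim from Scharlau--Blaschke \cite{SB} (``The following results can be found in \cite{SB}''), so there is no in-paper argument to compare against. Your architecture---splitting off the arithmetic condition $2\langle r,K\rangle/\langle r,r\rangle\subset\bZ$ from primitivity, reducing the former to the glue code via the projection $p_R$, and invoking Lemma~\ref{roots} for the latter---is the natural line, and your identification of the reduced discriminant group with $\{[g]\in\mathcal D(\langle R\rangle):2\langle r,g\rangle/\langle r,r\rangle\in\bZ\ \text{for all }r\in R\}$ is consistent with the tables the paper records for $A,D,E,B,C,F_4,G_2$.

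That said, the obstacle you flag for the non-simply-laced components is not bookkeeping but a genuine gap in the $(\Leftarrow)$ direction as you have written it. Take $R={}^{\alpha}C_n$ with $n\ge 2$ and the long root $r=2e_1$. The class $[r/2]=[e_1]$ is exactly the nontrivial generator of the reduced discriminant group of ${}^{\alpha}C_n$ as listed in the paper, yet the component is $C_n$, not $A_1$. Concretely, let $K=\bigoplus_i\bZ e_i$ with $(e_i,e_j)=\alpha\delta_{ij}$; then $\langle R\rangle=\sqrt{\alpha}D_n$, $R^\perp\cap K=0$, and $\mathcal K=\langle[e_1]\rangle$ coincides with the reduced discriminant group. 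There is no $A_1$-component, so your second hypothesis is vacuous, but $2e_1$ is imprimitive in $K$ and hence not a root. Your ``adjacent root of the same length'' contradiction cannot fire here because the long roots of $C_n$ are pairwise orthogonal (they span $A_1^n$). The same phenomenon occurs for ${}^{2\alpha}B_2\cong{}^{\alpha}C_2$. So either the theorem as transcribed into the paper is missing a hypothesis present in \cite{SB} (for instance a saturation requirement on $R$, or a narrower reduced-discriminant convention for $C$-type components), or the $A_1$-exception must be read more broadly to cover the orthogonal $A_1$'s formed by the long roots of $C$-components. To close the argument you would need to go back to Definition~2.1 of \cite{SB} rather than infer it from the tables here.
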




Recall that  $\ell$ is divisible by $|\tau|(K_0-N_0)=\LCM (\{r_ik_i\}_{i=1}^t)$ by Lemmas \ref{rk} and \ref{ell=n}. 

\begin{lmm}\label{Lem:B1roots}
	Let $\beta$ be a root of $\g_{i,k_i}$ and $M=\sqrt{\ell}L^*$. Then 
	\[
	v=\frac{\sqrt{\ell}}{\sqrt{k_i}} \beta \in \frac{\langle v,v\rangle }{2} M^*.
	\]
	In particular, $v=\frac{\sqrt{\ell}}{\sqrt{k_i}} \beta$ is a root of  $M$ if it is primitive in $M$ or $\bZ(v/2)$ is an orthogonal summand of $M$. 
\end{lmm}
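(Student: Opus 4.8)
The plan is to reduce the asserted membership $v\in \frac{\langle v,v\rangle}{2}M^*$ to a statement purely about the lattice $L$, and then to read off the "in particular" clause from the Remark of \cite{SB} together with Lemma \ref{roots}. First I would record the two lattice facts to be used. Since $M=\sqrt{\ell}L^*$, its dual is $M^*=\frac{1}{\sqrt{\ell}}L$, and by Lemma \ref{Lbeta} the lattice $M$ is even. By \eqref{Q/Ql}, a root $\beta$ of $\g_{i,k_i}$ (normalized so that the long roots of $\g_i$ have squared length $2$) is realized in $L^*$ as the vector $w=\frac{1}{\sqrt{k_i}}\beta$, and the long-root lattice embeds as $\sqrt{k_i}Q^i_l\subseteq L$. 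In particular $v=\sqrt{\ell}\,w\in \sqrt{\ell}L^*=M$, so $\langle v,v\rangle\in 2\bZ$ automatically; this is consistent with $\ell$ being divisible by $\LCM(\{r_ik_i\})$ through Lemmas \ref{rk} and \ref{ell=n}.

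The core of the argument is a short computation. Note that $v\in \frac{\langle v,v\rangle}{2}M^*$ is equivalent to $\frac{2v}{\langle v,v\rangle}\in M^*=\frac{1}{\sqrt{\ell}}L$. Writing $v=\sqrt{\ell}\,w$ and $\langle v,v\rangle=\ell\langle w,w\rangle$, I get
\[
\frac{2v}{\langle v,v\rangle}=\frac{1}{\sqrt{\ell}}\cdot\frac{2w}{\langle w,w\rangle}=\frac{1}{\sqrt{\ell}}\sqrt{k_i}\,\beta^\vee,\qquad \beta^\vee:=\frac{2\beta}{\langle\beta,\beta\rangle}.
\]
Hence the entire statement reduces to showing $\sqrt{k_i}\,\beta^\vee\in L$, and since $\sqrt{k_i}Q^i_l\subseteq L$ it suffices to prove $\beta^\vee\in Q^i_l$, i.e. that the coroot of $\beta$ lies in the long-root lattice. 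For a long root this is immediate since $\beta^\vee=\beta\in Q^i_l$; for a short root $\beta^\vee=r_i\beta$, and the standard fact that $r_i$ times a short root lies in $Q^i_l$ (valid precisely in the long-roots-have-length-$2$ normalization) gives $\beta^\vee\in Q^i_l$. This proves $\frac{2v}{\langle v,v\rangle}\in \frac{1}{\sqrt{\ell}}L=M^*$, as required.

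For the "in particular" clause I would combine this inclusion with the Remark and Lemma \ref{roots}. If $v$ is primitive in $M$, then by the Remark (an element of $M$ is a root iff it is primitive and lies in $\tfrac{\langle v,v\rangle}{2}M^*$) the vector $v$ is a root of $M$. Otherwise $v/n\in M$ for some $n>1$; since $v\in \frac{\langle v,v\rangle}{2}M^*$, Lemma \ref{roots} forces $n=2$, shows $\tfrac{v}{2}$ is a root, and yields the splitting $M=\bZ(\tfrac{v}{2})\perp(M\cap\langle v\rangle^\perp)$, so that $\bZ(v/2)$ is an orthogonal summand of $M$.

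The main obstacle I anticipate is bookkeeping of the normalization rather than any deep difficulty: I must fix the convention (long roots of squared length $2$) so that the realization $w=\frac{1}{\sqrt{k_i}}\beta$ from \eqref{Q/Ql} is consistent with the squared lengths $2/k_i$ and $2/(r_ik_i)$ recorded earlier, and so that $\beta^\vee=r_i\beta$ together with $r_i\beta\in Q^i_l$ holds; under a different scaling of $\g_i$ the identity $\beta^\vee\in Q^i_l$ would fail for short roots, so this is the one point that genuinely needs care.
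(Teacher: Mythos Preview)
Your argument is correct and is essentially the same as the paper's: both reduce the membership $v\in\tfrac{\langle v,v\rangle}{2}M^*$ to $\sqrt{k_i}\,\beta^\vee\in L$ via $M^*=\tfrac{1}{\sqrt{\ell}}L$, and then invoke $\sqrt{k_i}Q^i_l\subseteq L$ together with $\beta^\vee=\beta$ (long) or $\beta^\vee=r_i\beta\in Q^i_l$ (short). Your coroot bookkeeping packages the two cases a bit more cleanly, and your explicit unpacking of the ``in particular'' clause via the Remark and Lemma \ref{roots} makes transparent the dichotomy (either $v$ is primitive and hence a root, or $\bZ(v/2)$ splits off orthogonally) that the paper leaves implicit.
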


\begin{proof}
	Let $M=\sqrt{\ell}L^*$ and $v=\frac{\sqrt{\ell}}{\sqrt{k_i}} \beta$. Then $M^* =\frac{1}{\sqrt{\ell}} L$ and 
	\[
	\frac{\langle v,v\rangle }{2}= 
	\begin{cases}
		\frac{\ell}{r_ik_i} & \text{ if $\beta$ is a short root,}\\
		\frac{\ell}{k_i} & \text{ if $\beta$ is a long root.}
	\end{cases}
	\] 
	Therefore, 
	\[
	v=\frac{\sqrt{\ell}}{\sqrt{k_i}} \beta= 
	\begin{cases}
		\frac{\langle v,v\rangle }{2} \frac{1}{\sqrt{\ell}} \sqrt{k_i} (r_i\beta)& \text{ if $\beta$ is a short root,}\\
		\frac{\langle v,v\rangle }{2} \frac{1}{\sqrt{\ell}} \sqrt{k_i} \beta & \text{ if $\beta$ is a long root.}
	\end{cases}
	\]
	Since $L > \sqrt{k_i}Q^i_{l}$ and $r_i\beta\in Q^i_{l}$ (resp. $\beta \in Q^i_{l}$) if $\beta$ is a short root (resp., a long root),  $v \in \frac{\langle v,v\rangle }{2} M^*$ as desired. 
\end{proof}

\begin{lmm}
	Let $\g_{i,k_i}$ be a simple Lie subalgebra of $V_1$. Suppose there is a  root $\beta$  of 
	$\g_{i,k_i}$ such that $v=\frac{\sqrt{\ell}}{\sqrt{k_i}} \beta$ is not a root of $N^\tau$.  
	Then $\beta$ is a long root and the long root lattice of $\g_i$ is of the type $A_1^r$, where $r=\rank(\g_i)$.  In particular, $\g_i$ is of type $A_1$ or $C_r$. 
\end{lmm}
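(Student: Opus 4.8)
The plan is to show that the hypothesis forces $v=\frac{\sqrt{\ell}}{\sqrt{k_i}}\beta$ to be non-primitive in $N^\tau$, and then to extract from that non-primitivity a parity condition on the pairings of $\beta$ with the long roots of $\g_i$. \emph{First}, I would note that Lemma~\ref{Lem:B1roots} already provides $v\in\frac{\langle v,v\rangle}{2}(N^\tau)^*$, so by the definition of a root, $v$ fails to be a root of $N^\tau$ exactly when $v$ is not primitive. Assuming this, Lemma~\ref{roots} (applied with $K=N^\tau$) yields $v/2\in N^\tau$; in fact $\frac{v}{2}$ is then a root and $\bZ\frac{v}{2}$ is an orthogonal direct summand of $N^\tau$, though for the argument I only need the membership $v/2\in N^\tau$.

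\emph{Second}, I would transport this membership through the $\ell$-duality isometry $N^\tau\cong\sqrt{\ell}L^*$ of Theorem~\ref{duality}. Under it the root $\beta$ of $\g_{i,k_i}$ corresponds to $\frac{1}{\sqrt{k_i}}\beta\in L^*$, so $v/2\in N^\tau$ is equivalent to $\frac{1}{2\sqrt{k_i}}\beta\in L^*$. Because $L$ contains $\sqrt{k_i}Q^i_l$, where $Q^i_l$ is the long root lattice of $\g_i$ (see \eqref{Q/Ql}), pairing $\frac{1}{2\sqrt{k_i}}\beta$ against $\sqrt{k_i}\mu$ for a long root $\mu$ gives $\frac{1}{2}\langle\beta,\mu\rangle\in\bZ$. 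Normalizing long roots to squared length $2$, this is the decisive constraint $\langle\beta,\mu\rangle\in 2\bZ$ for every long root $\mu$ of $\g_i$.

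\emph{Finally}, I would read off the statement from standard root-system combinatorics. If $\beta$ were short, then $\g_i$ is non-simply laced and $\beta$ pairs non-trivially with some long root $\mu$, whence $\langle\beta,\mu\rangle=\pm1$, contradicting the constraint; hence $\beta$ is long. For a long $\beta$ one has $\langle\beta,\mu\rangle\in\{0,\pm1,\pm2\}$ over all long roots $\mu$, with $\pm2$ occurring only for $\mu=\pm\beta$; the parity constraint removes the value $\pm1$ and forces $\beta$ to be orthogonal to every long root different from $\pm\beta$. Since in every simple Lie algebra other than $A_1$ and $C_r$ the long roots form an irreducible root system of rank at least $2$, in which each root has a non-orthogonal long neighbour, this orthogonality can hold only for types $A_1$ and $C_r$, whose long root lattice is $A_1^r$ with $r=\rank(\g_i)$. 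The main obstacle is the second step: keeping track of the two independent rescalings, the $\frac{1}{\sqrt{k_i}}$ coming from \eqref{Q/Ql} and the $\sqrt{\ell}\varphi$ coming from Theorem~\ref{duality}, so that ``$v$ is divisible by $2$ in $N^\tau$'' is converted correctly into the arithmetic condition on $\langle\beta,Q^i_l\rangle$; once that condition is in hand, the remaining root-theoretic deduction is entirely routine.
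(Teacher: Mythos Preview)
Your argument is correct. Both you and the paper begin by invoking Lemma~\ref{Lem:B1roots} and Lemma~\ref{roots}, but then diverge. The paper uses the \emph{orthogonal summand} conclusion of Lemma~\ref{roots}: assuming $\beta$ is short, it (implicitly via Weyl group transitivity on short roots) argues that every short root $\beta'$ likewise gives a non-root $v'$, so that each $\bZ(v'/2)$ is an orthogonal summand of $N^\tau$, and derives a lattice-theoretic contradiction from the resulting structure of $\frac{\sqrt{\ell}}{2\sqrt{k_i}}Q_i$. You instead use only the \emph{membership} $v/2\in N^\tau$, transport it through $N^\tau\cong\sqrt{\ell}L^*$ to $\frac{1}{2\sqrt{k_i}}\beta\in L^*$, and pair against $\sqrt{k_i}Q^i_l\subset L$ to obtain $\langle\beta,\mu\rangle\in 2\bZ$ for every long root $\mu$; the conclusion is then pure root-system combinatorics. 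Your route is more self-contained---it avoids the implicit Weyl-group step and the somewhat compressed lattice argument in the paper---while the paper's route leans on the stronger geometric output of Lemma~\ref{roots}. Both reach the same endpoint, and your version is arguably cleaner to verify.
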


\pr  
Without loss, we may assume $\rank(V_1)>1$. By Lemma \ref{roots},  $\bZ(v/2)$ is an orthogonal 
summand of $N^\tau$.   Suppose $\beta$ is a short root. Then for any short root $\beta'$, 
$v'=\frac{\sqrt{\ell}}{\sqrt{k_i}} \beta'$ is not a root of $N^\tau$. Therefore,  
$\frac{\sqrt{\ell}}{2\sqrt{k_i}} Q_i \cong \sqrt{s} A_1^r$; however, 
$\bZ \frac{\sqrt{\ell}}{2\sqrt{k_i}} \alpha$ is not an orthogonal summand of $\sqrt{s} A_1^r$  
for any long root $\alpha$.  Therefore,  $\beta$ is a long root and the long root lattice is of type $A_1^r$.  
\prend 

By the lemmas above, the lattice $N^\tau=\sqrt{\ell}L^*$ contains the information of the (scaled) root system of $\g$. 

\subsection{Orbit diagrams and Lie algebra structures of $V_1$} 

Let $V$ be a holomorphic VOA of central charge $24$ and 
$\alpha$ a W-element of $V_1$. Let 
$g=\exp(2\pi i\alpha(0))\in \Aut(V)$ and let 
$\tilde{g}=\hat{\tau}\exp(2\pi i\beta(0))\in \Aut(V_{\Lambda})$ be 
the reverse automorphism of $g$, where $\beta\in \bC\Lambda^{\tau}$.  
Let $\varphi_{\tau}: \sqrt{\ell}(\Lambda^\tau)^* \to \Lambda^\tau $ be the isometry described in  Theorem \ref{dual}. Recall that the vector  $\tilde{\beta}=\varphi(\sqrt{\ell}\beta)$ 
is a deep hole of $\Lambda$ and $N=\Lambda^{[\tilde{\beta}]} \ncong \Lambda$. 
Moreover, the Coxeter number $h$  of $N$ is equal to  $n=\LCM(r_ih_i^\vee)$ and $N^\tau \cong \sqrt{\ell}L^*$. 
 
 \medskip
 
Let $R$ be the set of roots in $N$ and set 
$ 
R_k=\{\mu \in -k\tilde{\beta}+ \Lambda_{\tilde{\beta}} \mid  \langle \mu, \mu\rangle =2 \}.
$ 
Then  $R= \cup_{k=1}^{h-1} R_k$. 
Since the Leech lattice does not contain a root, 
$\langle \mu, \nu \rangle \leq 0$ 
for $\mu, \nu \in R_k$ with $\mu\neq \nu$ for each $k$.  Indeed, $\langle \mu, \nu \rangle =0$ or $-1$  for any $\mu, \nu \in R_k$ with $\mu\neq \nu$.    

One  can associate a (simply laced) Dynkin diagram  with $R_k$ for each $k$. Namely, the nodes are labeled by elements of $R_k$ and two nodes $x$ and $y$ are connected if and only if $\langle x, y\rangle =-1$.  By abuse of notations, we often use $R_k$ to denote both the Dynkin diagram and the subset of roots.  Note that  $\tau$ acts on $R_k$ for each $k$  and acts as a diagram automorphism associated with the diagram defined by $R_k$. 

Since $\tilde{\beta}$ is a deep  hole of the Leech lattice $\Lambda$,  $R_1$ is a disjoint union of the affine diagrams associated with the root system of $N$. Moreover, $\tau$ acts on $R_1$. Since $N_\tau\cong L_A(c)$ and $\tau$ acts as $g_{\Delta,c}$ on a root sublattice of $N_\tau$,  $\tau\in\mathrm{Weyl}(R)$  and preserves  all irreducible components of $R(N)$.                                        Note also that  $g_{\Delta,c}$ induces an isometry in $O(\Lambda)$ if and only 
if $\lambda_c\in N$ \cite{BLS}. Therefore,  the vector $\lambda_c$ corresponds to a codeword of the glue code $N/R$. In particular, 
$P_{Q_i}(\lambda_c)\in (Q_i)^*$ for every irreducible root sublattice $Q_i$ of $N$.  Note also that $\tau$ has a positive frame when viewing as an isometry of $N$.  

Therefore, for each irreducible component, we can consider the quotient diagram as follows:  
we identify an orbit of nodes as one node and two nodes are connected if the nodes in the corresponding orbits are connected. By removing the node associated with the extended node, 
one obtain a usual Dynkin diagram (see Table \ref{affinediagram}).  
     
Note that the fixed sublattice is the (scaled) root lattice of the quotient diagram. A fixed node (or fixed simple root) corresponds to a simple short root of a full component.

{\small
	\begin{longtable}{|c|c|c|c|c|c|c|c|}
		\caption{Diagram automorphisms of affine diagrams}\label{affinediagram}
		\\ \hline 
		Type & $A_n$ & $D_{2k}$ &$D_{2k}$ &$D_{2k+1}$ &$D_{2k+1}$ &$E_6$& $E_7$  \\ \hline 
		Root subsystem  & $ (A_{\frac{n+1}{k}-1})^{k}$ &$A_1^{k}$&$A_1^2$& $A_3 A_1^{k-1}$&$A_1^2$ &$A_2^2$& $A_1^3$ \\ \hline
		Frame Shape & $1^{-1} (\frac{n+1}{k})^{k}$&$2^{k}$& $1^{2k-4}2^2$&  $1^{-1} 2^{k-1} 4$&$1^{2k-3} 2^2$ & $3^2$ & $1^1 2^3$ \\ \hline 
		Quotient diagram &  $A_{k-1}$&$B_{k} $ &$ C_{2k-2}$ &$ C_{k-1}$ &$C_{2k-1}$ & $G_2$& $F_4$ \\ \hline 
		Fixed sublattice & $\sqrt{\frac{n+1}{k}} A_{k-1}$& $A_1^{k}$ & $D_{2k-2}$ &$A_1^{k-1}$ &$D_{2k-1}$ & $A_2$ &$D_4$ \\ \hline
		Fixed simple roots &  $\emptyset $ & $A_1$ &  $A_{2k-3}$ &  $\emptyset$ &   $A_{2k-2}$ &  $A_1$ & $A_2$\\
		\hline 
	\end{longtable}
}

\begin{remark}
	Let $\CG_i$ be a simple Lie subalgebra of $V_1$ with $r_ih_i^\vee=n=h$.  Then $\ell =r_ik_i$ and 
	$k_i/h_i^\vee= \ell/h$. Therefore, the level $k_j$ of the simple Lie subalgebra $\CG_j$  is given by $k_j= \ell h^\vee_j/h$ for any $j$.  Note also that 
	the short roots of $\CG_i$ will correspond to an irreducible (connected) component $S_i$ of $N^\tau_2$.  Moreover,  $S_i\cap R_1$ corresponds to the simple short roots of $\CG_i$.  Therefore,  $S_i$  and $S_i\cap R_1$ determines the type of $\CG_i$ uniquely.
\end{remark} 
 
\begin{remark}
In \cite{MoS2},  a notion of generalized hole diagrams is introduced. It was also shown that  a generalized hole diagram determines a generalized deep hole up to conjugacy and that there are exactly $70$ such diagrams.  This notion of generalized hole diagrams essentially corresponds to the diagram associated with simple short roots of the full components (i.e., elements in  $R_1\cap N^\tau_2$).  
\end{remark}

\subsection{Possible pairs for $(N, \tau)$}
Next we will discuss the possible choices for the pair $(N, \tau)$ for each $\tau\in \CP_0$.

\subsubsection{$\tau\in 2A$} For $\tau\in 2A$ of $O(\Lambda)$, we have $\ell=|\hat{\tau}|=|\tau|=2$. 
In this case, $\Lambda_\tau\cong \sqrt{2}E_8$.  Let $N=\Lambda^{[ \tilde{\beta}] }$. Then the Coxeter number of $N$ is divisible by $2$
 and 
 \[
 N_\tau=\mathrm{Span}_\bZ\{ A_1^8, \frac{1}2 (\alpha_1 +\cdots+\alpha_8)\}, 
 \]
 where $\bZ \alpha_1 +\cdots+\bZ\alpha_8\cong A_1^8$. The vector $v= \frac{1}2 (\alpha_1 +\cdots+\alpha_8)$ corresponds to a codeword $c\in N/R$.  The possible choices for $(N, \tau)$, the codeword $c$ and the corresponding root systems and Lie algebra structures for $V_1$ are listed in Table \ref{T:2to6}.    

\begin{longtable}[c]{|c|c|c|c|c|c|}
	\caption{$(N,\tau)$ for the case $2A$} \label{T:2to6}\\
	\hline 
	Type & Codeword $c$ &Embedding & $R(N^\tau)$ & $V_1$ \\ \hline 
	\hline 
	$A_1^{24}$& $(1^8,0^8)$ &$A_1^8 \hookrightarrow A_1^8$ &$A_1^{16}$&$A_{1,2}^{16}$  \\ 
	$A_3^8$ & $(22022000$)&$(A_1^2)^4 \hookrightarrow A_3^4$&$A_3^4(\sqrt2A_1)^4$&$A_{3,2}^4A_{1,1}^4$ \\  
	$D_4^6$ &$(233200)$&$(A_1^2)^4 \hookrightarrow D_4^4$&$D_4^2C_2^4$&$D_{4,2}^2C_{2,1}^4$ \\ 
	$A_5^4D_4$&$(3300|1)$	&$(A_1^3)^2+ A_1^2 \hookrightarrow A_5^2+D_4$ &$A_5^2C_2(\sqrt2A_2)^2$&$A_{5,2}^2C_{2,1}A_{2,1}^2$
	\\ 
	$A_7^2D_5^2$& $(44|00)$ &$(A_1^4)^2 \hookrightarrow A_7^2$ & $D_5^2(\sqrt2A_3)^2$&$D_{5,2}^2A_{3,1}^2$\\ 
	$A_7^2D_5^2$& $(20|33)$ &$(A_1^4) + (A_1^2)^2 \hookrightarrow A_7+D_5^2$  &$A_7C_3^2(\sqrt2A_3)$&$A_{7,2}C_{3,1}^2A_{3,1}$ 	\\ 
	$D_6^4$ & $(2222)$ &$(A_1^2)^4 \hookrightarrow D_6^4$&$C_4^4$&$C_{4,1}^4$ \\ 
	$D_6^4$ & $(1230)$&$(A_1^2)+ (A_1^3)^2 \hookrightarrow D_6+D_6^2$ &$D_6C_4B_3^2$&$D_{6,2}C_{4,1}B_{3,1}^2$\\ 
	$A_9^2D_6$& $(05|3)$ &$(A_1^5)+ (A_1^3) \hookrightarrow A_9+D_6$  &$A_9(\sqrt2A_4)B_3$&$A_{9,2}A_{4,1}B_{3,1}$ 	\\ 
	$A_{11}D_7E_6$& $(620)$	&$A_1^6+ A_1^2 \hookrightarrow A_{11}+D_7$ &$E_6C_5(\sqrt2A_5)$&$E_{6,2}C_{5,1}A_{5,1}$\\ 
	$D_8^3$& $(033)$&$(A_1^4)^2 \hookrightarrow D_8^2$ &$D_8B_4^2$&$D_{8,2}B_{4,1}^2$\\ 
	$D_8^3$& $(221$)&$(A_1^2)^2+ A_1^4 \hookrightarrow D_8^2+D_8$ &$C_6^2B_4$ &$C_{6,1}^2B_{4,1}$ \\ 
	$A_{15}D_9$& $(80)$&$A_1^8 \hookrightarrow A_{15}$&$D_9(\sqrt2A_7)$&$D_{9,2}A_{7,1}$\\ 
	$E_7^2 D_{10}$& $(11|2)$ &$(A_1^3)^2 +A_1^2 \hookrightarrow E_7^2+D_{10}$ &$C_8F_4^2$&$C_{8,1}F_{4,1}^2$\\
	$E_7^2 D_{10}$ & $(01|1)$ &$A_1^3 +A_1^5 \hookrightarrow E_7+D_{10}$ &$E_7B_5F_4$&$E_{7,2}B_{5,1}F_{4,1}$\\ 
	$D_{12}^2$ & $(21)$ &$A_1^2 +A_1^6 \hookrightarrow D_{12} +D_{12}$&$C_{10}B_6$&$C_{10,1}B_{6,1}$\\
	$E_8D_{16}$ &$(01)$&$A_1^8 \hookrightarrow D_{16}$&$B_8E_8$&$B_{8,1}E_{8,2}$\\ 
	\hline 
\end{longtable}

\subsubsection{$\tau\in 3B$}
For $\tau\in 3B$ of $O(\Lambda)$, we have $\ell=|\hat{\tau}|=|\tau|=3$. 
In this case, $\Lambda_\tau\cong K_{12}$ is the Coxeter-Todd lattice of rank $12$. Let $N=\Lambda^{[ \tilde{\beta}] }$. Then the Coxeter number of $N$ is divisible by $3$
and $N_\tau=\mathrm{Span}_\bZ\{ A_2^6, (\gamma_1, \dots, \gamma_6)\}$,  
where $\gamma_i +A_2$ is a generator of $A_2^*/A_2$ for each $i$.  The vector $(\gamma_1, \dots, \gamma_6)$ corresponds to a codeword $c\in N/R$.  The possible choices for $(N, \tau)$, $c$ and the corresponding root systems and Lie algebra structures for $V_1$ are listed in Table \ref{table3to8}.

\begin{longtable}[c]{|c|c|c|c|c|c|} 
	\caption{$(N,\tau)$ for the case $3B$} \label{table3to8}\\
	\hline 
	Type & Codeword $c$ & Embedding & $R(N^\tau)$ & $V_1$ \\ \hline  
	\hline 
	$A_2^{12}$& $(1^60^6)$ &$A_2^6 \hookrightarrow A_2^6$&$A_2^6$&$A_{2,3}^{6}$ \\ 
	$A_5^4D_4$	& $(2220|0)$ &$(A_2^2)^3 \hookrightarrow A_5^3$ &$A_5D_4(\sqrt3A_1)^3$ &${A_{5,3}}{D_{4,3}}A_{1,1}^{3}$  \\ 
	$A_8^3$	& $(630)$ & $(A_2^3)^2 \hookrightarrow A_8^2$ &$A_8(\sqrt{3}A_2)^2$ & $A_{8,3}A_{2,1}^2$     \\ 
	$E_6^4$	& $(0111)$ &$(A_2^3)^3 \hookrightarrow E_6^3$&$E_6 G_2^3$ & $E_{6,3}{G_{2,1}}^3$  \\
	$A_{11}D_7E_6$	& $(401)$ &$A_2^4+ A_2^2 \hookrightarrow A_{11} E_6$  &$D_7(\sqrt3A_3) G_2$ 
	& ${D_{7,3}}{A_{3,1}}{G_{2,1}}$ \\  
	$A_{17}E_7$	& $(60)$ &$A_2^6 \hookrightarrow A_{17}$ &$E_7 {(\sqrt3A_5)}$ &$E_{7,3}A_{5,1}$ \\ \hline
\end{longtable}

\subsubsection{$\tau\in 5B$}
For $\tau\in 5B$ of $O(\Lambda)$, we have $\ell=|\hat{\tau}|=|\tau|=5$ and the Coxeter number of $N=\Lambda^{[ \tilde{\beta}] }$ is divisible by $5$. In this case, $N_\tau\cong \mathrm{Span}_\bZ\{ A_4^4, (\gamma_1, \gamma_2, \gamma_3,  \gamma_4)\}$, 
where $\gamma_i +A_4$ is a generator of $A_4^*/A_4$ for each $i$.  Again, the vector $(\gamma_1, \gamma_2, \gamma_3,  \gamma_4)$ corresponds to a codeword $c\in N/R$. That means $5=|\tau|$ divides $|N/R|$, also.  The possible choices for $(N, \tau)$, $c$ and the corresponding root systems and Lie algebra structures are listed in Table \ref{table5to6}.

\begin{longtable}[c]{|c|c|c|c|c|c|} 
	\caption{$(N,\tau)$  for the case $5B$} \label{table5to6}\\
	\hline 
	Type & Codeword $c$ & Embedding & $R(N^\tau)$ & $V_1$ \\ 
	\hline \hline 
	$A_4^6$	& $(123400)$ &$A_4^4 \hookrightarrow A_4^4$ &$A_4^2$ & $ A_{4,5}^2$ \\
	$A_9^2D_{6}$& $(24|0)$ & $(A_4^2)^2 \hookrightarrow A_9^2$ &$D_6(\sqrt{5}A_1^2)$  
	& $D_{6,5}A_{1,1}^2$ \\ \hline
\end{longtable}

\subsubsection{$\tau\in 7B$}

For $\tau\in 7B$ of $O(\Lambda)$, we have $\ell=|\hat{\tau}|=7$. Let $N=\Lambda^{[ \tilde{\beta}] }$. Then the Coxeter number of $N$ is divisible by $7$
and 
\[
N_\tau=\mathrm{Span}_\bZ\{ A_6^3, (\gamma_1,\gamma_2,  \gamma_3)\}, 
\]
where $\gamma_i +A_6$ is a generator of $A_6^*/A_6$ for each $i$.  The vector $(\gamma_1,\gamma_2,  \gamma_3)$ corresponds to a codeword $c\in N/R$.  The possible choices for $(N, \tau)$, $c$ and the corresponding root systems and Lie algebra structures are listed in Table \ref{table7to5}

\begin{longtable}[c]{|c|c|c|c|c|c|} 
		\caption{$(N,\tau)$  for the case $7B$} \label{table7to5}\\
	\hline 
	Type& Codeword $c$ & Embedding & $R(N^\tau)$ & $V_1$ \\ 
	\hline \hline  
	$A_6^4$& $(0124)$ &$A_6^3 \hookrightarrow A_6^3$&$A_6$ & $ A_{6,7}$  \\ \hline
\end{longtable}

\subsubsection{$\tau\in 2C$} For $\tau\in 2C$ of $O(\Lambda)$, we have $\ell=|\hat{\tau}|=2|\tau|=4$.

Let $N=\Lambda^{[ \tilde{\beta}] }$. Then the Coxeter number of $N$ is divisible by $2$
and 
\[
N_\tau=\mathrm{Span}_\bZ\{ A_1^{12}, \frac{1}2(\alpha,\dots,  \alpha)\}, 
\]
where $\bZ\alpha \cong A_1$.   The vector $\frac{1}2(\alpha,\dots,  \alpha)$ again corresponds to a codeword $c\in N/R$.  The possible choices for $(N, \tau)$, $c$ and the corresponding root systems and Lie algebra structures are listed in Table \ref{2c1}.

\begin{longtable}[c]{|c|c|c|c|c|c|c|} 
	\caption{$(N, \tau)$ for the case $2C$} \label{2c1}\\
	\hline 
		Type& Codeword $c$ & Embedding & $R(N^\tau)$ & $V_1$ \\ 
	\hline 	\hline 
	$A_1^{24}$& $(1^{12} 0^{12})$ &$A_1^{12} \hookrightarrow A_1^{12}$ &$A_1^{12}$  &$A_{1,4}^{12}$ \\
	$D_4^{6}$&$(111111)$ &$(A_1^{2})^{6} \hookrightarrow D_4^{6}$ & $B_2^6$ &$B_{2,2}^6$ \\
	$D_6^{4}$&$(2222)$ &$(A_1^{3})^{4} \hookrightarrow D_6^{4}$ & $B_3^4$ & $B_{3,2}^4$ \\	
	$D_8^{3}$&$(111)$ &$(A_1^{4})^{3} \hookrightarrow D_8^{3}$ & $B_4^3$ &$B_{4,2}^3$  \\
	$D_{12}^2$ & $(11)$ &$(A_1^{6})^{2} \hookrightarrow D_{12}^{2}$ & $B_2^6$ &$B^2_{6,2}$ \\
	$D_{24}$ & $(1)$ &$A_1^{12} \hookrightarrow D_{24}$ &$B_{12}$ & $B_{12,2}$ \\ 
	$A_5^4D_{4}$ & $(3333|0)$ &$(A_1^{3})^4 \hookrightarrow A_5^4$ &$D_4 \sqrt{2}A_2^4$ & $D_{4,4}A^4_{2,2}$ \\ 
	$A_9^2D_{6}$ & $(55|2)$ &$(A_1^{5})^2+A_1^2 \hookrightarrow A_9^2+D_6$ &$C_4 \sqrt{2}A_4^2$  &$C_{4,2}A^2_{4,2}$\\
	$A_{17}E_7$ &$(9|1)$ &$A_1^{9}+A_1^3 \hookrightarrow A_{17}+E_7$ &$F_4 \sqrt{2}A_8$  
	&$ A_{8,2}F_{4,2}$\\
	\hline 
\end{longtable}

\subsubsection{$\tau\in 4C$} For $\tau\in 4C$ of $O(\Lambda)$, we have $\ell=|\hat{\tau}|=|\tau|=4$.  Let $N=\Lambda^{[ \tilde{\beta}] }$. Then the Coxeter number of $N$ is divisible by $4$ and  the coinvariant lattice 
\[
N_\tau = \mathrm{Span}_\bZ\{ A_3^4 A_1^2, (\lambda, \lambda, \lambda, \lambda, \frac{1}{2}\alpha, \frac{1}{2}\alpha)\},  
\]
where $\lambda= 1/2(1,1,1,-3)\in A_3^*$ and $\bZ\alpha= A_1$. Note that $N^\tau$ contains  $A_3^4 A_1^2$ as an index $4$ sublattice. 
The possible choices for $(N, \tau)$, $c$ and the corresponding root systems and Lie algebra structures are listed in Table \ref{4Ctable}.

	\begin{longtable}{|c|c|c|c|c|}  
\caption{$(N, \tau)$ for the case $4C$} \label{4Ctable} \\		\hline 
		Type& Codeword $c$ & Embedding &  $R(N^\tau)$ & $V_1$ \\ \hline \hline
		$A_3^8$ & $(32001011)$ 	& $A_3^4+A_1^2 \hookrightarrow A_3^4 +A_3$&  $A_3^3 \sqrt{2}A_1$ &  $A_{3,4}^3A_{1,2}$ 	\\ \hline	
		$A_7^2D_5^2$ & $(02|13)$ &$A_3^2+ (A_3A_1)^2 \hookrightarrow A_7 +D_5^2$  &  
		$A_72A_1 A_1^2$ &$A_{7,4}A_{1,1}^3$ \\ \hline
		$A_7^2D_5^2$ & $(22|20)$ &$A_3^2+ A_3^2+ A_1^2 \hookrightarrow A_7 + A_7+D_5$  &   
		$D_5 C_3 2A_1^2$ & $D_{5,4} C_{3,2} A_{1,1}^2$ \\ \hline
		$A_{11} D_7 E_6$ &$(310)$& $A_3^3+ A_3A_1^2 \hookrightarrow A_{11} +D_7$  &  
		$E_6 B_2 2A_2$ &  $E_{6,4}B_{2,1}A_{2,1}$ \\ \hline
		$A_{15} D_9$ & $(4|2)$ &$A_3^4+ A_1^2 \hookrightarrow A_{15}+D_9$  &  
		$C_7 2A_3$ & $C_{7,2}A_{3,1}$ \\ \hline
	  	\end{longtable}

\subsubsection{$\tau\in 6E$} For  $\tau\in 6E$ of $O(\Lambda)$, we have $\ell=|\hat{\tau}|=6$.
 Let $N=\Lambda^{[ \tilde{\beta}] }$. Then the Coxeter number of $N$ is divisible by $6$ and the coinvariant sublattice 
\[
N_\tau \cong \mathrm{Span}_\bZ \{ A_5^2A_2^2A_1^2 , (\beta, \beta, \gamma, \gamma, \frac{1}2 \alpha,  \frac{1}2 \alpha)\},  
\]
where $\beta= \frac{1}6(1^5, -5)\in A_5^*$, $\gamma= \frac{1}3 (1,1,-2)\in A_3^*$ and $\langle \alpha, \alpha\rangle=2$.  Note that $N^\tau$ is an index $6$ sublattice of $E_8\perp E_8$. 
 The possible choices for $(N, \tau)$, $c$ and the corresponding root systems and Lie algebra structures are listed in Table \ref{6Etable}. 
 
	\begin{longtable}{|c|c|c|c|c|}
		\caption{$(N, \tau)$ for the case $6E$} \label{6Etable} \\ \hline 
		Type& Codeword $c$ & Embedding &  $R(N^\tau)$ & $V_1$ \\ \hline 
		$A_5^4 D_4$ & $(0255|1)$ &$A_5^2+ A_2^2+ A_1^2 \hookrightarrow A_5^2 +A_5+ D_4$  & $A_5 
		\sqrt{3}A_1 B_2$ & $A_{5,6}B_{2,3}A_{1,1}$   \\ \hline
		$A_{11} D_7 E_6$ & $(222)$ &$A_5^2+ A_1^2+ A_2^2  \hookrightarrow A_{11} +D_7+ E_6$  & $\sqrt{6}A_1 C_5 G_2$ & $C_{5,3}G_{2,2}A_{1,1}$ \\
		\hline  
	\end{longtable}

\subsection{$\tau\in 8E$}  
For $\tau\in 8E$, $\ell=|\hat{\tau}|=8$.  Let $N=\Lambda^{[ \tilde{\beta}] }$. Then the Coxeter number of $N$ is divisible by $8$ and the coinvariant lattice  $N_\tau$ is an index $8$ over-lattice of the lattice $A_7^2A_3A_1$. 
More precisely,  
\[
N_\tau= \mathrm{Span}_\bZ\{ A_7^2A_3A_1, (\gamma_3, \gamma_1, \beta, \alpha) \},
\]
where $\gamma_3=\frac{1}8 (3^5, -5^3)$, $\gamma_1=\frac{1}8 (1^7, -7)$ are in $A_7^*$, 
$\beta\in \frac{1}2(1,1,1)\in A_3^* $ and  $\alpha =\frac{1}2 (1,1)\in A_1^*$. 
 
 The possible choices for $(N, \tau)$, $c$ and the corresponding root systems and Lie algebra structures are listed in Table \ref{6Etable}. 
 
\begin{center}
	\begin{tabular}{|c|c|c|c|c|}
		\hline 
		Type& Codeword $c$ & Embedding &  $R(N^\tau)$ & $V_1$ \\ \hline 
		$A_7^2 D_5^2$ & $(37|10)$ &$A_7^2+A_3A_1 \hookrightarrow A_7^2 +D_5$  & $D_5A_1$ & $D_{5,8} A_{1,2}$   \\ \hline
		\end{tabular}
\end{center}

\subsection{$\tau \in 6G$}  
For $\tau\in 6G$, $\ell =|\hat{\tau}| =12$.  Let $N=\Lambda^{[ \tilde{\beta}] }$. Then the Coxeter number of $N$ is divisible by $6$ and $N_\tau$ contains $A_5^3 A_1^3$ as an index $6$ sublattice. 
 The possible choices for $(N, \tau)$, $c$ and the corresponding root systems and Lie algebra structures are listed in the following table.

\begin{center}
	\begin{tabular}{|c|c|c|c|c|}
		\hline 
		Type & Codeword $c$ & Embedding &  $R(N^\tau)$ & $V_1$ \\ \hline 
		$A_5^4 D_4$ & $(31110)$ &$A_5^3+A_1^3 \hookrightarrow A_5^3 +A_5$  & $D_4\sqrt{2}A_2$ & $D_{4,12} A_{2,6}$   \\ \hline
		$A_{17} E_7$ & $(3|1)$ &$A_5^3+A_1^3 \hookrightarrow A_{17} +E_7$  & $F_4\sqrt{6}A_2$ & $F_{4,6} A_{2,2}$   \\ \hline
	\end{tabular}
\end{center}

\subsection{$\tau \in 10F$} 
For $\tau \in 10F$, $\ell =|\hat{\tau}|=20$. The Coxeter number of $N=\Lambda^{[ \tilde{\beta}] }$ is divisible by $10$ and $N_\tau$ contains $A_9^2 A_1^2$ as an index $10$ sublattice. 
 The possible choices for $(N, \tau)$, $c$ and the corresponding root systems and Lie algebra structures are listed in the following table.

\begin{center}
	\begin{tabular}{|c|c|c|c|c|}
		\hline 
		Type & Codeword $c$ & Embedding &  $R(N^\tau)$ & $V_1$ \\ \hline 
		$A_9^2 D_6$ &$(79|2)$ & $A_9^2+A_1^2 \hookrightarrow A_9^2 +D_6$  & $C_4$ & $C_{4,10}$   \\ \hline
		\end{tabular}
\end{center}

\begin{remark}
Since $N_\tau= L_A(c)$ with $c$ as  a codeword of the glue code $N/R$, we can recover the same information as in \cite[Table 3]{Ho}. In particular, there are exactly $46$ possible Lie algebra structures for $V_1$ if  $0< \rank(V_1)< 24$. This gives an alternative proof for the Schellekens list.  
\end{remark}

\appendix
\section{Properties about the lattice $\Lambda_\tau$ for $\tau\in \CP_0$}\label{appA}

In this appendix, we will review some  properties about the coinvariant sublattice $\Lambda_\tau$ for $\tau\in \CP_0$. Let $\Lambda$ be the Leech lattice and let $$\tau \in \CP_0=\{1A,2A, 2C, 3B, 5B, 7B, 4C, 6E, 6G, 8E, 10F\}.$$
Let $\Lambda_\tau$ be the coinvariant lattice of $\tau$. Then $\tau$ is fixed-point free on $\Lambda_\tau$.

The following can be verified by MAGMA.

\begin{lmm}\label{Lem:conjclass0} 
Let $\tau\in \CP_0$. Then 
\begin{enumerate}
\item $(1-\tau)\Lambda_{\tau}^*=\Lambda_\tau$.

\item The quotient group $C_{O(\Lambda_\tau)}(\tau)/\langle \tau \rangle$ acts faithfully on $\mathcal{D}(\Lambda_\tau)$. 
\end{enumerate}
\end{lmm}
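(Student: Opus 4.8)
The plan is to treat the two parts in turn, using part (1) as a stepping stone for part (2). For part (1), I would first record two structural facts valid for any isometry $\tau$ of the unimodular lattice $\Lambda$. Since $\tau$ acts fixed-point-freely on $\Lambda_\tau$, the operator $1-\tau$ is invertible on $\bQ\Lambda_\tau$; and $(1-\tau)\Lambda\subseteq\Lambda_\tau$, because $(1-\tau)\Lambda\subseteq\Lambda$ while for $y\in\Lambda^\tau$ we have $\langle(1-\tau)x,y\rangle=\langle x,y\rangle-\langle x,\tau^{-1}y\rangle=0$, so $(1-\tau)\Lambda\subseteq\Lambda\cap(\bQ\Lambda^\tau)^\perp=\Lambda_\tau$. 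The crucial step is to exploit unimodularity of $\Lambda$: if $p\colon\bQ\Lambda\to\bQ\Lambda_\tau$ is the orthogonal projection, then $p(\Lambda)=\Lambda_\tau^*$ (the projection of a unimodular lattice onto a primitive sublattice is its full dual), and $(1-\tau)x=(1-\tau)p(x)$ for every $x\in\Lambda$ since $\tau$ fixes $x-p(x)\in\bQ\Lambda^\tau$. Hence $(1-\tau)\Lambda_\tau^*=(1-\tau)p(\Lambda)=(1-\tau)\Lambda\subseteq\Lambda_\tau$, which gives one inclusion uniformly in $\tau$.

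To upgrade this to equality I would compare covolumes. Writing the frame shape of $\tau$ as $\prod_m m^{a_m}$, the characteristic polynomial of $\tau$ on $\Lambda_\tau$ is $\prod_m\big((x^m-1)/(x-1)\big)^{a_m}$, so $|\det(1-\tau)|_{\Lambda_\tau}|=\prod_{m>1}m^{a_m}$. As $1-\tau$ is injective it scales covolumes by this factor, and a direct computation gives $[\Lambda_\tau:(1-\tau)\Lambda_\tau^*]=\prod_{m>1}m^{a_m}/\det(\Lambda_\tau)$. In particular $\det(\Lambda_\tau)$ always divides $\prod_{m>1}m^{a_m}$, and the asserted equality $(1-\tau)\Lambda_\tau^*=\Lambda_\tau$ is equivalent to the numerical identity $\det(\Lambda_\tau)=\prod_{m>1}m^{a_m}$. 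This identity is the only genuinely computational input, and I would verify it for each of the eleven classes from the explicit Gram matrices of $\Lambda_\tau$ (Harada--Lang) or directly in MAGMA. I expect this to be routine but case-dependent, since $\det(\Lambda_\tau)$ is not determined by the frame shape of $\tau$ alone for a general lattice.

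For part (2), part (1) already carries the conceptual content. From $(1-\tau)\Lambda_\tau^*=\Lambda_\tau$ we get $\tau\xi\equiv\xi\pmod{\Lambda_\tau}$ for all $\xi\in\Lambda_\tau^*$, so $\tau$, and hence $\langle\tau\rangle$, lies in the kernel of the natural map $\rho\colon C_{O(\Lambda_\tau)}(\tau)\to O(\mathcal{D}(\Lambda_\tau))$. Faithfulness of the quotient is then precisely the statement $\ker\rho=\langle\tau\rangle$: any $g\in O(\Lambda_\tau)$ commuting with $\tau$ and acting trivially on $\mathcal{D}(\Lambda_\tau)$ must be a power of $\tau$. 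Conceptually I would regard $\Lambda_\tau$ as a Hermitian lattice over $\bZ[\zeta_{|\tau|}]$ (over the relevant cyclotomic components when $|\tau|$ is composite); an element of $\ker\rho$ is $\bZ[\tau]$-linear and congruent to the identity on the discriminant, and one wants to recognize such "scalar-like" unitaries as exactly $\langle\tau\rangle$.

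The hard part will be this last identification. For several classes $\Lambda_\tau$ has Hermitian rank greater than one (for instance $K_{12}$ over $\bZ[\zeta_3]$ for $3B$), so $C_{O(\Lambda_\tau)}(\tau)$ is large and $\ker\rho=\langle\tau\rangle$ does not follow from formal unitary-group considerations. I would therefore settle part (2) by the finite computation in MAGMA, using the explicit lattices $\Lambda_\tau$ together with the known structure of the centralizers of $\tau$ in $Co_0$; the reduction above guarantees that it suffices to check $\ker\rho=\langle\tau\rangle$ on the small group $O(\mathcal{D}(\Lambda_\tau))$ rather than on the full orthogonal group of $\Lambda_\tau$.
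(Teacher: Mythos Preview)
Your proposal is correct. The paper's own ``proof'' of this lemma is a single sentence stating that both parts can be verified by MAGMA, with no further argument. Your treatment is strictly more informative: for part (1) you give a clean conceptual proof of the inclusion $(1-\tau)\Lambda_\tau^*\subseteq\Lambda_\tau$ via the projection identity $p(\Lambda)=\Lambda_\tau^*$ and $(1-\tau)x=(1-\tau)p(x)$, and you correctly reduce equality to the determinant identity $\det(\Lambda_\tau)=\prod_{m>1} m^{a_m}$ (the paper in fact uses exactly this identity elsewhere, in the proof preceding Theorem~\ref{dual}). For part (2) you observe that the containment $\langle\tau\rangle\subseteq\ker\rho$ is a formal consequence of part (1), and then defer the reverse containment to a finite check, which is also what the paper does. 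So the endpoint is the same computational verification, but you have isolated precisely what must be computed; the paper simply asserts the MAGMA verification without any such reduction.
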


For $\tau\in \CP_0$, the coinvariant lattice $\Lambda_\tau$ can be constructed by the so-called generalized ``Construction B".  First, we  review the construction.  

\paragraph{Generalized ``Construction B".}

Let $R_i$ $(1\le i\le t)$ be a copy of the root lattice of type $A_{k_i-1}$, where  $k_i\in\bZ_{\ge 1}$ for $1\le i\le t$. Let $R=R_1\perp R_2\perp\dots\perp R_t$.
Then $\mathcal{D}(R_i)\cong\bZ_{k_i}$ and $\mathcal{D}(R)\cong\bigoplus_{i=1}^t\bZ_{k_i}$.
Let $\nu:R^*\to R^*/R=\mathcal{D}(R)\cong\bigoplus_{i=1}^t\bZ_{k_i}$ be the canonical surjective map.
For a subgroup $C$ of $\bigoplus_{i=1}^t\bZ_{k_i}$, let $L_A(C)$ denote the lattice defined by 
\begin{equation}
L_A(C)=\nu^{-1}(C)=\{\alpha\in R^*\mid \nu(\alpha)\in C\};\label{Eq:ConstA}
\end{equation}
we call $L_A(C)$ the lattice constructed by \emph{Construction A} from $C$.
Note that $L_A(\{\mathbf{0}\})=R$, where $\mathbf{0}$ is the identity element of $\bigoplus_{i=1}^t\bZ_{k_i}$.

We now fix a base $\Delta_i$ of the root system of $R_i$, which is of type $A_{k_i-1}$.
Then $\Delta=\bigcup_{i=1}^t\Delta_i$ is a base of the root system of $R$.
For $x=(x_i)\in\bigoplus_{i=1}^t\bZ_{k_i}$, denote 
\begin{equation}
\lambda_x=(\lambda_{x_1}^1,\dots,\lambda_{x_t}^t)\in R_1^*\perp\dots\perp R_t^*=R^*,\label{Eq:lambdac}
\end{equation}
where $x_i$ is regarded as an element of $\{0,\dots,k_i-1\}$ and $\{\lambda_{j}^i\mid 1\le j\le k_i-1\}$ is the set of fundamental weights in $R_i^*$ with respect to $\Delta_i$.
The following lemma is immediate from the definition of $L_A(C)$.

\begin{lmm}\label{L:genA} For a generating set $\mathcal{C}$ of $C$, 
the set $\{\lambda_c\mid c\in\mathcal{C}\}$ and $R$ generate $L_A(C)$ as a lattice.
\end{lmm}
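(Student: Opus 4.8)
The plan is to prove the two inclusions between $L_A(C)$ and the sublattice $M$ generated by $R$ together with $\{\lambda_c \mid c \in \mathcal{C}\}$. The crucial preliminary observation is that $\nu$ sends each $\lambda_x$ to $x$, i.e. $\nu(\lambda_x) = x$ for every $x \in \bigoplus_{i=1}^t \bZ_{k_i}$. Because $\nu$ is the orthogonal direct sum of the component maps $\nu_i \colon R_i^* \to R_i^*/R_i \cong \bZ_{k_i}$ and $\lambda_x = (\lambda_{x_1}^1, \dots, \lambda_{x_t}^t)$, it is enough to check componentwise that the $j$-th fundamental weight $\lambda_j^i$ maps to $j \in \bZ_{k_i}$.

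First I would establish this componentwise statement for a single factor of type $A_{k-1}$. In the standard model $A_{k-1} = \{x \in \bZ^k \mid \sum_i x_i = 0\}$ with simple roots $\alpha_i = e_i - e_{i+1}$ and fundamental weights $\omega_j = (e_1 + \cdots + e_j) - \tfrac{j}{k}(e_1 + \cdots + e_k)$, a one-line computation gives $\omega_j - j\,\omega_1 = \sum_{i=2}^{j}(e_i - e_1) \in A_{k-1}$. Hence $\omega_j \equiv j\,\omega_1 \pmod{A_{k-1}}$, and under the identification $\mathcal{D}(A_{k-1}) \cong \bZ_k$ sending $\bar\omega_1 \mapsto 1$ --- which is exactly the one implicit in the definition of $\lambda_x$ --- we obtain $\nu(\omega_j) = j$.

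Granting $\nu(\lambda_x) = x$, the inclusion $M \subseteq L_A(C)$ is immediate: $R = \ker\nu = \nu^{-1}(\{\mathbf{0}\}) \subseteq L_A(C)$, and for each $c \in \mathcal{C} \subseteq C$ we have $\nu(\lambda_c) = c \in C$, so $\lambda_c \in L_A(C)$; thus all generators of $M$ lie in $L_A(C)$. For the reverse inclusion I would take an arbitrary $\alpha \in L_A(C)$, so $\nu(\alpha) \in C$, and use that $\mathcal{C}$ generates $C$ to write $\nu(\alpha) = \sum_{c \in \mathcal{C}} n_c\, c$ with $n_c \in \bZ$. Then $\nu\bigl(\alpha - \sum_c n_c \lambda_c\bigr) = \nu(\alpha) - \sum_c n_c\, c = \mathbf{0}$, so $\alpha - \sum_c n_c \lambda_c \in \ker\nu = R \subseteq M$; since each $\lambda_c \in M$, this forces $\alpha \in M$. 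The two inclusions give $M = L_A(C)$.

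Both directions are formal manipulations with the exact sequence $0 \to R \to R^* \xrightarrow{\nu} \mathcal{D}(R) \to 0$, so there is no genuine obstacle. The only step needing a computation is the identity $\nu(\lambda_x) = x$, which is simply the standard description of the discriminant group of a type-$A$ root lattice via its fundamental weights; this is precisely why the authors call the statement ``immediate from the definition of $L_A(C)$.''
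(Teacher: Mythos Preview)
Your proof is correct and is precisely the unpacking of what the paper means by ``immediate from the definition of $L_A(C)$'': the paper gives no argument at all, and your two-inclusion check via the exact sequence $0\to R\to R^*\xrightarrow{\nu}\mathcal{D}(R)\to 0$ together with the verification $\nu(\lambda_x)=x$ is exactly the intended reasoning. There is nothing to add or correct.
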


Set 
\begin{equation}
\chi_\Delta=(\frac{\rho_{\Delta_1}}{k_1},\dots,\frac{\rho_{\Delta_t}}{k_t})\in \bQ\otimes _\bZ R\label{Eq:chi},
\end{equation}
where $\rho_{\Delta_i}$ is the Weyl vector of $R_i$ with respect to $\Delta_i$.

Define $
L_B(C)=\{\alpha\in L_A(C)\mid (\alpha|\chi_\Delta)\in\bZ\}
$;  
we call $L_B(C)$ the lattice constructed by \emph{Construction B} from $C$.

\begin{remark} Up to isometry,  $L_B(C)$ does not depend on the choice of a base $\Delta$.
\end{remark}

By definitions, it is easy to show  the following results (see \cite{LS21}).  

\begin{lmm}\label{Lem:enorm} Let $x=(x_i)\in \bigoplus_{i=1}^t\bZ_{k_i}$.
	Then $(\lambda_x|\lambda_x)\in 2\bZ$ if and only if $(\lambda_x|\chi_\Delta)\in\bZ$.
\end{lmm}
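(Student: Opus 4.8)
The plan is to establish the sharper pointwise identity
\[
(\lambda_x|\chi_\Delta)=\tfrac{1}{2}(\lambda_x|\lambda_x),
\]
from which the lemma is immediate, since $(\lambda_x|\lambda_x)\in 2\bZ$ if and only if $\tfrac12(\lambda_x|\lambda_x)\in\bZ$ if and only if $(\lambda_x|\chi_\Delta)\in\bZ$. Because $R=R_1\perp\dots\perp R_t$ is an orthogonal direct sum and both $\lambda_x=(\lambda^1_{x_1},\dots,\lambda^t_{x_t})$ and $\chi_\Delta=(\rho_{\Delta_1}/k_1,\dots,\rho_{\Delta_t}/k_t)$ split accordingly, this identity reduces to the single-component statement
\[
\Big(\lambda^i_{x_i}\,\Big|\,\tfrac{\rho_{\Delta_i}}{k_i}\Big)=\tfrac12\big(\lambda^i_{x_i}\,\big|\,\lambda^i_{x_i}\big)
\]
for each $i$. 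Writing $n=k_i$, this is precisely the claim that for the root lattice of type $A_{n-1}$ and $1\le j\le n-1$, the $j$-th fundamental weight $\omega_j$ and the Weyl vector $\rho$ satisfy $(\omega_j|\rho)=\tfrac{n}{2}(\omega_j|\omega_j)$; the remaining case $x_i=0$ is trivial since then $\lambda^i_0=0$.

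To prove this single-component identity I would use the standard realization $A_{n-1}=\{(a_1,\dots,a_n)\in\bZ^n\mid\sum a_i=0\}$ inside $\bR^n$ with the usual inner product, in which the roots have norm $2$. Here $\omega_j=\sum_{i=1}^j\varepsilon_i-\tfrac{j}{n}\sum_{i=1}^n\varepsilon_i$ and $\rho=\sum_{i=1}^n\tfrac{n+1-2i}{2}\,\varepsilon_i$. A direct computation gives $(\omega_j|\omega_j)=j-\tfrac{j^2}{n}=\tfrac{j(n-j)}{n}$. For the pairing with $\rho$, one notes $\sum_{i=1}^n\rho_i=0$, whence $(\omega_j|\rho)=\sum_{i=1}^j\rho_i=\tfrac12\big(j(n+1)-j(j+1)\big)=\tfrac{j(n-j)}{2}$. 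Comparing the two expressions yields $(\omega_j|\rho)=\tfrac{n}{2}(\omega_j|\omega_j)$, as required.

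There is no serious obstacle here: the lemma is an elementary computation in the weight lattice of type $A$, and the only point worth emphasizing is that the chosen coset representatives $\lambda^i_{x_i}$ are exactly the fundamental weights $\omega_j$, so that a single scalar $\tfrac{n}{2}$ relates the two bilinear expressions uniformly over all classes. Alternatively, one could avoid coordinates and argue via $(\omega_j|\rho)=\sum_{i}(\omega_j|\omega_i)$ together with the inverse-Cartan formula $(\omega_i|\omega_j)=\tfrac{\min(i,j)\,(n-\max(i,j))}{n}$ for type $A$, but the coordinate computation above is the most direct route.
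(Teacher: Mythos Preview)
Your argument is correct: the identity $(\lambda_x\mid\chi_\Delta)=\tfrac12(\lambda_x\mid\lambda_x)$ holds componentwise by the standard $A_{n-1}$ computation you give, and the lemma follows immediately. The paper does not spell out a proof here, merely stating that the result follows ``by definitions'' (with a reference to \cite{LS21}); your pointwise identity is exactly the computation one would carry out, so your approach matches the intended one.
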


Set $n=\mathrm{LCM}(\{k_1,\dots,k_t\})$. 

\begin{lmm}\label{L:indexn}
	$|L_A(C):L_B(C)|=n$ if and only if $\chi_\Delta\in (1/n)L_A(C)^*$.
\end{lmm}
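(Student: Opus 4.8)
The plan is to realize $L_B(C)$ as the kernel of an explicit homomorphism into $\bQ/\bZ$ and to identify the index with the order of its image. Define
$$\phi\colon L_A(C) \to \bQ/\bZ, \qquad \phi(\alpha) = (\alpha|\chi_\Delta) + \bZ.$$
Since $\chi_\Delta \in \bQ\otimes_\bZ R$, this is a well-defined group homomorphism, and its kernel is exactly $L_B(C)$ by definition. Its image $I = \phi(L_A(C))$ is a finite subgroup of $\bQ/\bZ$, hence cyclic, and $L_A(C)/L_B(C) \cong I$, so $|L_A(C):L_B(C)| = |I| =: m$.

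Next I would identify $m$ intrinsically. As the order of a cyclic group, $m$ is the smallest positive integer annihilating $I$; and $m\cdot\phi(\alpha) = 0$ for all $\alpha$ is equivalent to $(m\chi_\Delta|\alpha)\in\bZ$ for every $\alpha\in L_A(C)$, i.e.\ to $m\chi_\Delta \in L_A(C)^*$. Thus
$$m = \min\{s\in\bZ_{>0} \mid s\chi_\Delta \in L_A(C)^*\}.$$
With this reformulation the forward direction is immediate: if $|L_A(C):L_B(C)| = m = n$, then $n\chi_\Delta \in L_A(C)^*$, that is, $\chi_\Delta \in (1/n)L_A(C)^*$.

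The key point, and what powers the converse, is an unconditional lower bound $m \geq n$. Here I would use that $R = L_A(\{\mathbf{0}\}) \subseteq L_A(C)$ together with the structure of $\chi_\Delta$. For each $i$ and each simple root $\alpha \in \Delta_i$ of $R_i \cong A_{k_i-1}$, one has $(\rho_{\Delta_i}|\alpha) = 1$, since $A_{k_i-1}$ is simply laced and the coroot equals the root; hence $\phi(\alpha) = (\chi_\Delta|\alpha) = (\rho_{\Delta_i}/k_i \mid \alpha) \equiv 1/k_i \pmod{\bZ}$. Therefore $I$ contains $1/k_i + \bZ$ for every $i$, so $I \supseteq \langle 1/k_1,\dots,1/k_t\rangle = \tfrac{1}{\LCM(k_1,\dots,k_t)}\bZ/\bZ$, a cyclic group of order $n$, and consequently $m = |I| \geq n$.

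Combining these facts finishes the proof. For the converse, suppose $\chi_\Delta \in (1/n)L_A(C)^*$, i.e.\ $n\chi_\Delta \in L_A(C)^*$; then $n$ annihilates $I$, so $m \mid n$, and together with $m\geq n$ this forces $m = n$, i.e.\ $|L_A(C):L_B(C)| = n$, as desired. The only nontrivial ingredient is the order-$n$ lower bound on $I$, so I expect the main obstacle to be nothing more than carefully verifying that the simple roots of each $A_{k_i-1}$-block indeed contribute the value $1/k_i$; this pins the order of $I$ at a multiple of $\LCM(k_1,\dots,k_t)$ and rules out any smaller index, reducing the whole statement to whether the glue contributions of $C$ can push $\chi_\Delta$ out of $(1/n)L_A(C)^*$.
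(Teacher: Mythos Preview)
Your argument is correct. The paper itself does not supply a proof of this lemma; it simply states that the result follows easily from the definitions and refers to \cite{LS21}. Your approach---realizing $L_B(C)$ as the kernel of the homomorphism $\alpha\mapsto(\alpha|\chi_\Delta)+\bZ$ into $\bQ/\bZ$, identifying the index with the order of the image, and bounding that order below by $n$ via the simple roots of each $A_{k_i-1}$-block---is precisely the natural unpacking of that remark. The only point you leave implicit is that the image $I$ is finite; this is immediate since $L_A(C)$ is finitely generated and each generator pairs with $\chi_\Delta\in\bQ\otimes_\bZ R$ to give a rational number, hence a torsion element of $\bQ/\bZ$.
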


Next we consider some isometry of $R$. Recall that $R_i\cong A_{k_i-1}$ is a
root lattice of type $A_{k_i-1}$. Let $\Delta_i=\{\alpha^i_1, \dots, \alpha^i_{k_i-1}\}$ be a set of simple roots and let $ \alpha^i_0=-\sum_{j=1}^{k_i-1} \alpha^i_{j}$ be the negative of the highest root. Then the map $g_{\Delta_i}(\alpha^i_j)=\alpha^i_{j+1}$ if $1\le j\le k_i-2$ and $g_\Delta(\alpha^i_{k_i-1})=\alpha^i_0$ defines an isometry on $R_i$, which is a Coxeter element of the Weyl group of $R_i$. In particular, $g_{\Delta_i}$ acts on $\tilde{\Delta}_i=\Delta_i \cup \{\alpha^i_0\}$ as a cyclic permutation of order $k_i$.

For $e=(e_i)\in\bigoplus_{i=1}^t\bZ_{k_i}$, set 
\begin{equation}\label{Eq:gre}
	g_{\Delta,e}=((g_{\Delta_1})^{e_1},\dots,(g_{\Delta_t})^{e_t})\in O(L_A(C)).
\end{equation}

\begin{lmm}\label{L:gNc} $g_{\Delta,e}\in O(L_B(C))$ if and only if $\lambda_e\in L_A(C)^*$.
\end{lmm}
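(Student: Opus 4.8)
The plan is to exploit the fact, recorded just above, that $g_{\Delta,e}\in O(L_A(C))$; since $L_B(C)=\{\alpha\in L_A(C)\mid (\alpha|\chi_\Delta)\in\bZ\}$ is cut out of $L_A(C)$ by the single congruence $(\alpha|\chi_\Delta)\in\bZ$, whether $g_{\Delta,e}$ preserves $L_B(C)$ is governed entirely by how $g_{\Delta,e}$ moves the vector $\chi_\Delta$. So the central step is to compute $g_{\Delta,e}(\chi_\Delta)$.

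I would carry out this computation one orthogonal component at a time. On $R_i\cong A_{k_i-1}$ the element $g_{\Delta_i}$ is a Coxeter element cyclically permuting the affine simple roots $\alpha^i_0,\dots,\alpha^i_{k_i-1}$, and a short calculation in the standard coordinate model of $A_{k_i-1}$ shows that $g_{\Delta_i}^{e_i}$ sends the scaled Weyl vector $\rho_{\Delta_i}/k_i$ to $\rho_{\Delta_i}/k_i-\lambda^i_{e_i}$. Summing over $i$ gives the key identity $g_{\Delta,e}(\chi_\Delta)=\chi_\Delta-\lambda_e$; that is, the defect of $\chi_\Delta$ under the Coxeter power is exactly the fundamental-weight vector indexed by $e$.

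With this in hand I would pass to dual lattices. Because $L_B(C)$ is the kernel of the homomorphism $\ell_\chi\colon L_A(C)\to\bQ/\bZ,\ \alpha\mapsto (\alpha|\chi_\Delta)+\bZ$, one has $L_B(C)^*=L_A(C)^*+\bZ\chi_\Delta$, and $g_{\Delta,e}$ preserves $L_B(C)$ iff it preserves $L_B(C)^*$. As $g_{\Delta,e}$ already preserves $L_A(C)^*$, it preserves $L_B(C)^*$ iff $g_{\Delta,e}(\chi_\Delta)\in L_B(C)^*$, i.e. iff $\lambda_e\in L_B(C)^*$. This already yields the easy implication of the lemma: if $\lambda_e\in L_A(C)^*$, then since $g_{\Delta,e}(\chi_\Delta)=\chi_\Delta-\lambda_e$ and $g_{\Delta,e}$ fixes $L_A(C)^*$, the finite-order isometry $g_{\Delta,e}$ carries $L_A(C)^*+\bZ\chi_\Delta$ into itself, whence $g_{\Delta,e}\in O(L_B(C))$.

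The main obstacle is the reverse implication: upgrading the condition $\lambda_e\in L_B(C)^*$ produced above to the asserted $\lambda_e\in L_A(C)^*$. I would treat this by contraposition. Suppose $\lambda_e\notin L_A(C)^*$; since $L_A(C)$ is generated by $R$ together with the glue vectors $\{\lambda_c\mid c\in C\}$ and $(\lambda_e|R)\subseteq\bZ$, there is $c\in C$ with $(\lambda_e|\lambda_c)\notin\bZ$. I would then replace $\lambda_c$ by a representative $\beta=\lambda_c+r$ with $r\in R$ lying in $L_B(C)$; since $(r|\lambda_e)\in\bZ$, this $\beta$ still satisfies $(\beta|\lambda_e)\notin\bZ$, exhibiting an element of $L_B(C)$ moved out of $L_B(C)$ by $g_{\Delta,e}$, so $g_{\Delta,e}\notin O(L_B(C))$. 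The delicate point is the existence of such a $\beta$, i.e. that $\nu$ maps $L_B(C)$ onto $C$, equivalently that the class of $\lambda_e$ in $L_B(C)^*/L_A(C)^*$ is forced to vanish; I expect to establish this from the relation $(\lambda_c|\chi_\Delta)\equiv\tfrac12(\lambda_c|\lambda_c)\pmod{\bZ}$ underlying Lemma \ref{Lem:enorm} together with the index computation $|L_A(C):L_B(C)|=n$ of Lemma \ref{L:indexn}, and this explicit discriminant-form bookkeeping for $\bigoplus_i\bZ_{k_i}$ is where the real work lies.
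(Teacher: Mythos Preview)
Your approach coincides with the paper's: both hinge on the identity $g_{\Delta,e}(\chi_\Delta)=\chi_\Delta-\lambda_e$ (the paper only records this modulo $R$, you state it exactly), and then translate preservation of $L_B(C)$ into a pairing condition on $\lambda_e$. The paper's write-up is a single sentence after this computation; your version is more careful and rightly isolates the passage from $\lambda_e\in L_B(C)^*$ (which is what the computation actually gives) to $\lambda_e\in L_A(C)^*$ as the nontrivial step.

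That caution is warranted: as stated in full generality the lemma is \emph{false}. Take $t=1$, $k_1=2$, $R=A_1$, $C=\bZ_2$ (so $L_A(C)=A_1^*$), and $e=1$. Then $g_{\Delta,e}$ is the Coxeter element $-1$, which lies in $O(L_B(C))$ for any lattice, yet $\lambda_1=\alpha/2\notin A_1=L_A(C)^*$. The honest conclusion of the computation is only $\lambda_e\in L_B(C)^*$. Your proposed repair---show $\nu(L_B(C))=C$, so that $L_B(C)^*\cap R^*=L_A(C)^*$---is exactly the right mechanism, but it does not hold unconditionally (in the example above $L_B(C)=2A_1$ maps to $0$ under $\nu$). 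What makes it work in the paper's intended setting is the extra hypothesis that $L_A(C)$ is even: then by Lemma~\ref{Lem:enorm} every $\lambda_c$ already lies in $L_B(C)$, so $\nu(L_B(C))=C$ is immediate and your contrapositive argument goes through cleanly. All the codes $C$ appearing in the appendix satisfy this, so the applications are unaffected; but your instinct that the bare statement needs an additional hypothesis (or should read $L_B(C)^*$ instead of $L_A(C)^*$) is correct.
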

\begin{proof} 
	By definition, it is easy to see that 	
	\begin{equation}
		g_\Delta(\lambda_j)=\lambda_j-\sum_{i=1}^j\alpha_i,\quad g_{\Delta}(\rho_{\Delta})=\rho_{\Delta}-k\lambda_{1}.
		\label{Eq:grho}
	\end{equation}
	Then we have  
	$   
		g_{\Delta,e}(\chi_\Delta)\in \chi_\Delta+(e_1\lambda_1^1,\dots,e_t\lambda_1^t)+R=\chi_\Delta+\lambda_e+R.
	$  
	By the definition of $L_B(C)$,  
	$g_{\Delta,e}\in O(L_B(C))$ if and only if 
	$\lambda_e\in L_A(C)^*$.
\end{proof}
\begin{lmm}\label{L:fpf2} The isometry $g_{\Delta,e}$ is fixed-point free and of order $n$ if and only if $\gcd(e_i,k_i)=1$ for all $1\le i\le t$
\end{lmm}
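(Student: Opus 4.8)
The plan is to reduce everything to a single orthogonal summand and then run an eigenvalue computation for a Coxeter element of type $A$. Since $R=R_1\perp\cdots\perp R_t$ and $g_{\Delta,e}$ acts as $(g_{\Delta_i})^{e_i}$ on each $R_i$, the analysis is diagonal: a vector $(v_1,\dots,v_t)$ is fixed by $g_{\Delta,e}$ if and only if each $v_i$ is fixed by $(g_{\Delta_i})^{e_i}$, and the order of $g_{\Delta,e}$ equals $\LCM$ of the orders of the $(g_{\Delta_i})^{e_i}$. Thus I would first record the factor-wise reduction: $g_{\Delta,e}$ is fixed-point free (over $\bQ\otimes_\bZ R$, equivalently $1$ is not an eigenvalue over $\bC$) if and only if each $(g_{\Delta_i})^{e_i}$ is fixed-point free on $R_i$, and $|g_{\Delta,e}|=\LCM_i |(g_{\Delta_i})^{e_i}|$.

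Next I would analyze a single factor. Using the standard model $R_i\cong A_{k_i-1}=\{x\in\bZ^{k_i}\mid \sum_j x_j=0\}$, the Coxeter element $g_{\Delta_i}$ is identified with the cyclic shift $\sigma\colon e_j\mapsto e_{j+1}$ (indices mod $k_i$), which preserves the sum-zero sublattice. On $\bC^{k_i}$ the shift $\sigma$ has the simple eigenvalues $\zeta^0,\zeta^1,\dots,\zeta^{k_i-1}$ with $\zeta=\exp(2\pi i/k_i)$; the all-ones eigenvector (eigenvalue $\zeta^0=1$) spans the orthogonal complement of $A_{k_i-1}\otimes\bC$, so the eigenvalues of $g_{\Delta_i}$ on $\bQ\otimes_\bZ R_i$ are exactly the nontrivial $k_i$-th roots of unity $\zeta^1,\dots,\zeta^{k_i-1}$. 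Hence $(g_{\Delta_i})^{e_i}$ has eigenvalues $\zeta^{e_i},\zeta^{2e_i},\dots,\zeta^{(k_i-1)e_i}$, and I would extract two consequences: its order is $k_i/\gcd(e_i,k_i)$, and $1$ is among its eigenvalues if and only if $k_i\mid l e_i$ for some $1\le l\le k_i-1$, i.e. if and only if $\gcd(e_i,k_i)>1$ (take $l=k_i/\gcd(e_i,k_i)$).

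With these facts both directions follow. For the converse direction, if $\gcd(e_i,k_i)=1$ for all $i$, then no factor $(g_{\Delta_i})^{e_i}$ has eigenvalue $1$, so $g_{\Delta,e}$ is fixed-point free; and each factor then has order $k_i$, whence $|g_{\Delta,e}|=\LCM_i k_i=n$. For the forward direction, if $g_{\Delta,e}$ is fixed-point free then each $(g_{\Delta_i})^{e_i}$ is fixed-point free, so by the eigenvalue criterion $\gcd(e_i,k_i)=1$ for every $i$.

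The point to emphasize carefully is that fixed-point freeness, not the order condition, is the operative hypothesis in the forward direction: the condition $|g_{\Delta,e}|=n$ alone does \emph{not} force $\gcd(e_i,k_i)=1$ for every $i$ (for instance $(k_1,k_2)=(2,4)$ with $(e_1,e_2)=(0,1)$ gives order $\LCM(1,4)=4=n$ while $\gcd(e_1,k_1)=2$ and the first summand is fixed pointwise), so one genuinely needs the absence of the eigenvalue $1$ in each summand. The only mildly technical ingredient is the identification of $g_{\Delta_i}$ with the cyclic shift and its eigenvalues; this is standard (the exponents of $A_{k_i-1}$ are $1,\dots,k_i-1$ and its Coxeter number is $k_i$), so I would cite it rather than re-derive it, and I do not expect any genuine obstacle beyond bookkeeping.
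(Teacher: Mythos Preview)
Your argument is correct. The paper actually states this lemma without proof, treating it as an elementary fact about Coxeter elements of type $A$, so there is no proof in the paper to compare against; your eigenvalue computation via the cyclic-shift model of $g_{\Delta_i}$ on $A_{k_i-1}$ is exactly the standard justification one would supply, and your observation that it is the fixed-point-free hypothesis (not the order hypothesis) that carries the forward direction is the right emphasis.
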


It turns out that the coinvariant lattice $\Lambda_\tau$ for $\tau\in \CP_0$ can be constructed as $L_B(C)$ with $C$ generated by a single glue vector $c$. Moreover, $\tau$ can be identified with $g_{\Delta,c}$ as defined above.

\begin{prop}[\cite{BLS}] For any $\tau\in \CP_0$, the coinvariant lattice $\Lambda_\tau$ of the Leech lattice can be constructed as $L_B(C)$ with $C$ generated by a single glue vector $c$. Moreover, $\tau|_{L_B(C)}=g_{\Delta,c}$ as defined above.  
\end{prop}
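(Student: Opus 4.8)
Throughout, fix $\tau\in\CP_0$ with frame shape $\prod_m m^{a_m}$, put $R=\bigoplus_{m\neq 1}A_{m-1}^{a_m}$ and $n=|\tau|$. The plan is to build a candidate lattice $L_B(C)$ from $R$ by Construction B for a \emph{cyclic} code $C=\langle c\rangle$, and then to match it $\tau$-equivariantly with the Leech coinvariant lattice $\Lambda_\tau$ by comparing a short list of invariants; both halves of the statement (that $C$ is cyclic and that $\tau$ becomes $g_{\Delta,c}$) then fall out of this matching.

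First I would record the rational picture. Since $\tau$ acts fixed-point freely on $\Lambda_\tau$ with frame shape $\prod_m m^{a_m}$, its characteristic polynomial on $\Lambda_\tau$ is $\prod_m\big((x^m-1)/(x-1)\big)^{a_m}$, which is exactly the characteristic polynomial of a product of Coxeter elements acting on $R$. Hence $\Lambda_\tau\otimes\bQ\cong R\otimes\bQ$ as $\bQ\langle\tau\rangle$-modules, and on each isotypic block the $\tau$-invariant symmetric form is unique up to scaling; matching scales gives a $\tau$-equivariant rational isometry carrying $\tau$ to $g_{\Delta,c}$ for some $c=(c_i)$. By Lemma \ref{L:fpf2} the requirement that $g_{\Delta,c}$ be fixed-point free of order $n$ forces $\gcd(c_i,k_i)=1$ for all $i$, so each component $c_i$ generates $\mathcal D(A_{k_i-1})\cong\bZ_{k_i}$; consequently the code $C=\langle c\rangle$ is cyclic of order $\mathrm{LCM}(k_i)=n$, which is the "single glue vector" assertion at the level of the candidate. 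Moreover $\lambda_c\in L_A(C)\subseteq L_A(C)^*$ because $L_A(C)$ is integral, so Lemma \ref{L:gNc} gives $g_{\Delta,c}\in O(L_B(C))$.

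It remains to identify $\Lambda_\tau$ with $L_B(C)$ \emph{integrally}. Both are even, $\tau$-invariant lattices in $R\otimes\bQ$ of rank $\sum_{m\neq1}(m-1)a_m$ and, using Lemma \ref{L:indexn} together with $|C|=n$, both have determinant $\prod_m m^{a_m}$ (Lemma \ref{Lem:conjclass0}(1) gives $\det\Lambda_\tau=\prod_m m^{a_m}$, and the Construction-B index count gives the same for $L_B(C)$). The evenness criterion of Lemma \ref{Lem:enorm}, applied to the glue vectors $\lambda_{jc}$ running over the nonzero classes of the cyclic code $C$, together with the relation $(1-g_{\Delta,c})\chi_\Delta\equiv\lambda_c\pmod R$ read off from \eqref{Eq:grho} (using $\lambda_{c_i}^i\equiv c_i\lambda_1^i$ in $\mathcal D(A_{k_i-1})$), lets one verify that $L_B(C)$ inherits the self-duality $(1-\tau)L_B(C)^*=L_B(C)$ that $\Lambda_\tau$ enjoys by Lemma \ref{Lem:conjclass0}(1). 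Two even lattices of the same rational $\tau$-type, the same determinant, the same discriminant form, and the same $(1-\tau)$-self-duality, on which $\tau$ acts fixed-point freely by the prescribed Coxeter data, are $\tau$-equivariantly isometric; this yields $\Lambda_\tau\cong L_B(C)$ and, restricting, $\tau|_{L_B(C)}=g_{\Delta,c}$.

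The step I expect to be the genuine obstacle is the last one: the clean "same invariants $\Rightarrow$ isometric" implication is not automatic, since the discriminant form of $L_B(C)$ must be computed and shown to agree with that of $\Lambda_\tau$, and the $2C$-containing classes $2C,6G,10F$ (where $\ell=|\widehat{\tau}|=2|\tau|$ rather than $\ell=|\tau|$) behave differently from the rest and must be isolated, exactly as the main text already does. If a fully uniform discriminant-form comparison proves unwieldy, the honest route — and presumably the one taken in \cite{BLS} — is to execute the skeleton above class by class over the eleven members of $\CP_0$: take the explicit cyclic glue vector $c$ listed in Appendix \ref{appA}, form $L_B(\langle c\rangle)$, and check by the Gram matrices of \cite{HaLa} (or by \textsc{Magma}, as for Lemma \ref{Lem:conjclass0}) that it is $\tau$-equivariantly isometric to $\Lambda_\tau$; the matching of rank, determinant $\prod_m m^{a_m}$, discriminant form, and Coxeter action then pins down the isometry.
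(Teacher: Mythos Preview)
The paper does not prove this proposition; it is quoted from \cite{BLS}, and the only content the paper supplies is the explicit list of glue vectors $c$ in Table~\ref{glue}. So there is no ``paper's proof'' to compare against beyond that table plus the reference.

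Your uniform argument, however, has a real gap before you ever reach the case-by-case fallback. The claim ``on each isotypic block the $\tau$-invariant symmetric form is unique up to scaling'' is false in the presence of multiplicities, and every class in $\CP_0$ has multiplicities: already for $2A$ the coinvariant frame shape is $2^8$, so $\tau$ acts as $-1$ on an $8$-dimensional space and the $\tau$-invariant forms are \emph{all} positive-definite forms, not a line. More generally the $\bQ[\tau]$-isotypic piece for the primitive $d$-th roots carries a space of $\tau$-invariant forms parametrised by Hermitian forms on the multiplicity space over $\bQ(\zeta_d)^+$. So the rational step does not produce a canonical isometry, and in particular it does not single out a vector $c$: any $c=(c_i)$ with $\gcd(c_i,k_i)=1$ gives a $g_{\Delta,c}$ that is $O(R\otimes\bQ)$-conjugate to $\tau$, yet the lattices $L_B(\langle c\rangle)$ genuinely depend on $c$ (for $5B$ with $R=A_4^4$, compare $c=(1,1,1,1)$ and $c=(1,2,3,4)$: the cyclic groups they generate in $\bZ_5^4$ are different, and only the latter yields a root-free even lattice). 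Your invariant list ``rank, determinant, discriminant form, $(1-\tau)$-self-duality'' therefore cannot pin down the isometry class without further input, because several inequivalent $L_B(\langle c\rangle)$ share those invariants.

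You are right that the honest route is the one you describe at the end: take the \emph{specific} $c$ from Table~\ref{glue}, build $L_B(\langle c\rangle)$, and verify class by class (via the Gram matrices of \cite{HaLa} or by machine, exactly as Lemma~\ref{Lem:conjclass0} is handled) that it is isometric to $\Lambda_\tau$ with $\tau$ matching $g_{\Delta,c}$. That is precisely what \cite{BLS} does, and it is the only part of your proposal that actually establishes the proposition. The preceding ``uniform'' paragraphs give useful motivation for why $C$ should be cyclic of order $n$ and why $g_{\Delta,c}$ is the right candidate for $\tau$, but they do not constitute a proof on their own.
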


Some properties of $\Lambda_\tau$ are summarized in Table \ref{glue}; the structures of $O(\Lambda_g)$ and $C_{O(\Lambda_g)}(g)$ are computed by using MAGMA.
The symbol $\prod{a_i}^{b_i}$ for $\Lambda_\tau^*/\Lambda_\tau$ means the abelian group $\bigoplus(\bZ/a_i\bZ)^{b_i}$. For the notations of groups, see \cite{ATLAS}. 

{\small
\begin{longtable}{|c|c|c|c|c|c|}
\caption{Coinvariant lattices $\Lambda_\tau$ for $\tau\in \CP_0$}\label{glue}
\\ \hline 
Class&
$R$ & $c$&$\Lambda_{\tau}^*/\Lambda_\tau$&$O(\Lambda_\tau)$&$C_{O(\Lambda_\tau)}(\tau)$ \\ \hline
$2A$ & $A_1^8$& $(11111111)$& $ 2^{8}$ & $2^{1+8}.GO_8^+(2)$ & $2^{1+8}.GO_8^+(2)$ \\
$2C$ & $A_1^{12}$& $(1^{12})$& $ 2^{12}$ & $2^{11}. \mathrm{Sym}_{12}$ & $2^{11}. \mathrm{Sym}_{12}$ \\
$3B$ & $A_2^6$& $(111111)$& $ 3^{6}$ & $3^{1+6}PSU_4(2)$ & $3^{1+6}PSU_4(2)$\\
$5B$ & $A_4^4$& $(1234)$& $ 5^{4}$ & $(Frob_{20} \times GO^+_4(5))/2$ &  $5\times 2.(Alt_4\times Alt_4).2$\\
$7B$ & $A_6^3$& $(124)$& $ 7^{3}$ & $7.3.2.L_2(7).2$ & $7\times 2.L_2(7).2$\\
$4C$&
 $A_3^4A_1^2$ & $(1 1 1 1|11)$& $2^24^4$&
$2^{10+3}.Sym_6$&$2^{9+3}.Sym_6$\\
$6E$&
 $A_5^2 A_2^2 A_1^2$& $(11|11|11)$&
$2^43^4$&$6.(GO^+_4(2)\times GO^+_4(3)).2$&$6.(GO^+_4(2)\times GO^+_4(3))$\\
$ 6G$&
$A_5^3 A_1^3$& $(111|1 1 1) $&$2^63^3$&
$6.(GO_3(3)\times PSO^+_4(3)).2$&$6.(GO_3(3)\times PSO^+_4(3))$\\
$8E$&
$A_7^2A_3A_1$ & $(13|1|1)$&$2.4.8^2$&$2^6.(Dih_8\times Sym_4)$&$2^5.(4\times Sym_4)$\\
$10F$&
 $A_9^2A_1^2$& $(13|11)$&$2^45^2$ &$(2\times AGL_1(5)). Dih_8^2$&$10. Dih_8^2$\\
\hline 
\end{longtable}
}


For $\tau\in O(\Lambda)$ and $k\in \bZ_{>0}$,  set
\begin{equation}
\mathcal{L}_{\tau,k}=\{ \lambda+\Lambda_g\in \mathcal{D}(\Lambda_\tau)\mid q(\lambda+\Lambda_\tau)=0,\ o(\lambda+\Lambda_\tau)=k\}, \label{Def:Lg2}
\end{equation}
where $o(\lambda+\Lambda_\tau)$ is the order of $\lambda+\Lambda_\tau$ in $\mathcal{D}(\Lambda_\tau)$.
The following lemmas can be verified by using MAGMA \cite{BLS}.

\begin{lmm}
	Let $\tau$ be an isometry of $\Lambda$ whose conjugacy class is $2A$, $3B$, $5B$ or $7B$. Then  $C_{O(\Lambda_\tau)}(\tau)/\langle\tau \rangle$ acts transitively on the set of all non-zero singular elements. 
\end{lmm}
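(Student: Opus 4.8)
The plan is to translate the statement into a transitivity question for a finite orthogonal group and then bring in Witt's theorem. First I would use Lemma~\ref{Lem:conjclass0}(1): since $(1-\tau)\Lambda_\tau^*=\Lambda_\tau$, every $x\in\Lambda_\tau^*$ satisfies $\tau x\equiv x\pmod{\Lambda_\tau}$, so $\tau$ acts trivially on the discriminant group $\mathcal{D}(\Lambda_\tau)=\Lambda_\tau^*/\Lambda_\tau$. Hence $\langle\tau\rangle$ lies in the kernel of the natural map $C_{O(\Lambda_\tau)}(\tau)\to O(\mathcal{D}(\Lambda_\tau),q)$, where $q$ is the discriminant quadratic form, and by Lemma~\ref{Lem:conjclass0}(2) the induced map on $G:=C_{O(\Lambda_\tau)}(\tau)/\langle\tau\rangle$ is injective; thus $G$ may be regarded as a subgroup of $O(\mathcal{D}(\Lambda_\tau),q)$. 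In each of the four cases $\tau$ has prime order $p\in\{2,3,5,7\}$, so $\mathcal{D}(\Lambda_\tau)$ is elementary abelian, i.e. a nondegenerate $\bF_p$-quadratic space (of dimension $8,6,4,3$ respectively), and every nonzero element has order $p$; therefore a non-zero singular element is exactly a nonzero isotropic vector, and the claim becomes: $G$ acts transitively on the nonzero isotropic vectors of $(\mathcal{D}(\Lambda_\tau),q)$.

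The conceptual engine is Witt's extension theorem, which gives that the \emph{full} orthogonal group $O(\mathcal{D}(\Lambda_\tau),q)$ is transitive on isotropic vectors of each fixed norm, in particular on the nonzero isotropic ones; so it suffices to show that the image of $G$ is large enough to stay transitive. For $\tau\in 2A$ this is immediate and clean: here $\Lambda_\tau\cong\sqrt{2}E_8$ and $\tau=-1$, so $C_{O(\Lambda_\tau)}(\tau)=O(\Lambda_\tau)=W(E_8)$, whose image on $\mathcal{D}(\Lambda_\tau)\cong\bF_2^8$ (plus type) is all of $GO_8^+(2)$ with kernel exactly $\{\pm1\}=\langle\tau\rangle$; transitivity on the $135$ nonzero isotropic vectors is then Witt's theorem verbatim. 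For the remaining classes I would first read off the isometry type of $q$ from the Gram matrix of the Construction-$B$ lattice $\Lambda_\tau=L_B(\langle c\rangle)$ recorded in Table~\ref{glue}, count the nonzero isotropic vectors, and then analyse the image of $G$.

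The hard part is the odd-order cases $3B,5B,7B$, where Witt's theorem cannot be applied to $G$ directly: the image of $G$ is a \emph{proper} (indeed comparatively small) subgroup of $O(\mathcal{D}(\Lambda_\tau),q)$, as already signalled by the centralizer structures $3^{1+6}.PSU_4(2)$, $5\times 2.(\mathrm{Alt}_4\times\mathrm{Alt}_4).2$, $7\times 2.L_2(7).2$ in Table~\ref{glue}. A further subtlety is that for $p>2$ one must move a fixed isotropic vector to \emph{every} other isotropic vector, including its non-trivial scalar multiples; since the only scalars preserving $q$ are $\pm1$, these transitions cannot come from scalings and must be produced by genuine isometries inside $G$, so the transitivity here is a special feature of these particular centralizers rather than a formal consequence of Witt. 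To establish it I would compute the $G$-orbit of a single nonzero isotropic vector and verify by an orbit–stabilizer count that its length equals the total number of nonzero isotropic vectors found above. This last verification is the genuinely computational core of the lemma and is exactly what a direct MAGMA calculation carries out; all the conceptual content is contained in the reduction of the first paragraph together with Witt's theorem.
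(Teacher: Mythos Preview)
Your proposal is correct, and in the end it lands on the same argument the paper gives: the paper simply states that the lemma ``can be verified by using MAGMA'' (citing \cite{BLS}), with no further reduction. Your conceptual framing---passing via Lemma~\ref{Lem:conjclass0} to the finite quadratic space $(\mathcal{D}(\Lambda_\tau),q)$ and invoking Witt's theorem---is sound and gives a genuinely computation-free proof in the $2A$ case (where $\Lambda_\tau\cong\sqrt{2}E_8$, $\tau=-1$, and $W(E_8)/\{\pm1\}$ surjects onto the full $O_8^+(2)$), but for $3B$, $5B$, $7B$ you correctly recognise that the image of $G$ is a proper subgroup of $O(\mathcal{D}(\Lambda_\tau),q)$ and that transitivity must be checked directly; at that point your orbit--stabilizer verification is exactly the MAGMA computation the paper defers to. So the two approaches coincide on the substantive cases, with your version adding a clean conceptual explanation for why the prime-order situation reduces to isotropic-vector transitivity and why $2A$ needs no machine.
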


\begin{lmm}\label{Lem:conjclass1} Let $\tau$ be an isometry of $\Lambda$ whose conjugacy class is $4C$, $6E$, or $8E$.
Let $k$ be a divisor of $|\tau|$.
Then $C_{O(\Lambda_\tau)}(\tau)/\langle \tau\rangle$ acts transitively on $\mathcal{L}_{\tau,k}$. 
\end{lmm}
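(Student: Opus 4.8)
The plan is to transfer the action to the finite quadratic form $(\mathcal{D}(\Lambda_\tau),q)$ and then, as far as possible, to invoke the classical transitivity of orthogonal groups on isotropic vectors. First I would observe that $\tau$ acts trivially on $\mathcal{D}(\Lambda_\tau)=\Lambda_\tau^*/\Lambda_\tau$: for $x\in\Lambda_\tau^*$ we have $\tau(x)-x\in(1-\tau)\Lambda_\tau^*=\Lambda_\tau$ by Lemma \ref{Lem:conjclass0}(1), so $\tau(x)\equiv x\pmod{\Lambda_\tau}$. Hence $\langle\tau\rangle$ lies in the kernel of the natural homomorphism $C_{O(\Lambda_\tau)}(\tau)\to O(\mathcal{D}(\Lambda_\tau),q)$, and by Lemma \ref{Lem:conjclass0}(2) the induced map $G:=C_{O(\Lambda_\tau)}(\tau)/\langle\tau\rangle\hookrightarrow O(\mathcal{D}(\Lambda_\tau),q)$ is injective. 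Since $\mathcal{L}_{\tau,k}$ depends only on $(\mathcal{D}(\Lambda_\tau),q)$, transitivity of $G$ will follow once I identify the image of $G$ from the group orders in Table \ref{glue} and prove that this image is transitive on the singular elements of each order $k\mid|\tau|$.

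For $\tau\in 6E$ this reduction is clean. Here $\mathcal{D}(\Lambda_\tau)\cong 2^4\oplus 3^4$ splits as an orthogonal sum $\mathcal{D}_2\perp\mathcal{D}_3$ of nondegenerate $(+)$-type spaces of dimension $4$ over $\bF_2$ and $\bF_3$, and Table \ref{glue} shows $G\cong GO_4^+(2)\times GO_4^+(3)$ acting as the full orthogonal group on each factor and trivially on the other. As $q$ is $\bQ/2\bZ$-valued and $q|_{\mathcal{D}_2}$, $q|_{\mathcal{D}_3}$ take values in the $2$-primary and $3$-primary parts of $\bQ/2\bZ$, an element $x=x_2+x_3$ satisfies $q(x)=q(x_2)+q(x_3)$, so it is singular if and only if $q(x_2)=q(x_3)=0$, while $o(x)=o(x_2)\,o(x_3)$. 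Thus $\mathcal{L}_{\tau,k}$ is the product of the nonzero-singular sets in the components forced by $k\in\{2,3,6\}$, and Witt's theorem---each $GO_4^+(p)$ is transitive on the nonzero singular vectors of its space---yields transitivity factorwise, hence on $\mathcal{L}_{\tau,k}$.

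The cases $\tau\in 4C$ and $\tau\in 8E$ are the genuine obstacle and do not reduce to Witt's theorem over a field, since $\mathcal{D}(\Lambda_\tau)$ is a $2$-group carrying elements of order $4$ (for $4C$, $\mathcal{D}(\Lambda_\tau)\cong 2^2\oplus 4^4$) or order $8$ (for $8E$, $\mathcal{D}(\Lambda_\tau)\cong 2\oplus 4\oplus 8^2$); here $q$ is a quadratic form over $\bZ/2^a$ rather than a field. My plan would be to use a Witt-type extension theorem for finite quadratic forms on $2$-groups: an isometry between the rank-one subgroups generated by two singular elements $x,y$ of equal order and equal $q$-value should extend to an automorphism of $(\mathcal{D}(\Lambda_\tau),q)$ once their orthogonal complements are shown to be isometric. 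The hard part is precisely the control of the $2$-adic local invariants of these complements and the verification that the specific image of $G$ recorded in Table \ref{glue}---rather than the full orthogonal group---already supplies the extending automorphisms; this is the step for which the explicit computation in \cite{BLS} is used. In practice I would enumerate, for each $k$, the singular elements of order $k$ in the Construction B model $\Lambda_\tau=L_B(\langle c\rangle)$ via the fundamental-weight coordinates $\lambda_c$, use the structural reductions above to restrict attention to the $2$-part, and match the remaining elements into a single $G$-orbit, the last equivalence being the point where a direct (MAGMA) check is indispensable.
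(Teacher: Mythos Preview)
The paper does not give a conceptual proof of this lemma at all: immediately before the statement it simply records that ``the following lemmas can be verified by using MAGMA \cite{BLS}.'' So your proposal already goes further than the paper attempts.

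Your argument for the case $\tau\in 6E$ is essentially correct and genuinely more informative than the paper's bare computer reference. The one step you assert without justification is that the image of $G$ in $O(\mathcal{D}_2,q_2)\times O(\mathcal{D}_3,q_3)$ is the \emph{full} orthogonal group on each factor. This can be closed by a Sylow count: from Table~\ref{glue} one reads $|G|=72\cdot1152=2^{10}\cdot 3^4$; the $3$-Sylow of $O(\mathcal{D}_2,q_2)$ has order $3^2$ only when $q_2$ is of $+$ type (for $-$ type it is $3$), and the $2$-Sylow of $O(\mathcal{D}_3,q_3)$ has order $2^7$ only when $q_3$ is of $+$ type (for $-$ type it is $2^5$); the injection then forces both forms to be of $+$ type and $G=O(\mathcal{D}(\Lambda_\tau),q)$. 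After that, Witt transitivity on nonzero isotropic vectors gives the result as you describe.

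For $\tau\in 4C$ and $\tau\in 8E$ you correctly locate the real obstruction---the discriminant form is $\bZ/2^a$-valued rather than $\bF_2$-valued, so no field-level Witt theorem applies---and your honest conclusion that a direct machine enumeration is indispensable is precisely what the paper does. Bear in mind, though, that even your structural reductions (and the $6E$ argument above) lean on the MAGMA-computed group orders in Table~\ref{glue}, so the gain over the paper is one of transparency and partial conceptual explanation rather than independence from computation.
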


\begin{lmm}\label{Lem:conjclass6G10F}
Let $\tau$ be an isometry of $\Lambda$ whose conjugacy class is $6G$ or $10F$.
Then, the group $C_{O(\Lambda_\tau)}(\tau)/\langle \tau\rangle$ acts transitively on $\mathcal{L}_{\tau, \, |\tau|/2}$.
\end{lmm}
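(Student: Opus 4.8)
The plan is to reduce the statement to a transitivity question about a small orthogonal space over a prime field, and then to read off the required transitivity from the explicit structure of $C_{O(\Lambda_\tau)}(\tau)$ recorded in Table \ref{glue}. First I would realize $\Lambda_\tau$ concretely as $L_B(\langle c\rangle)$ via the generalized Construction B, using the glue vectors from Table \ref{glue} ($c=(111|111)$ for $6G$ and $c=(13|11)$ for $10F$); this makes the discriminant group $\mathcal{D}(\Lambda_\tau)=\Lambda_\tau^*/\Lambda_\tau$ and the discriminant quadratic form $q$ completely explicit, with $\mathcal{D}(\Lambda_\tau)\cong 2^6 3^3$ for $6G$ and $\cong 2^4 5^2$ for $10F$. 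Write $p=|\tau|/2$, which is the odd prime $3$ (resp.\ $5$). Since $\tau=g_{\Delta,c}$ is a product of Coxeter elements it lies in the Weyl group of $R$, hence acts trivially on $\mathcal{D}(R)$ and therefore on $\mathcal{D}(\Lambda_\tau)$; together with Lemma \ref{Lem:conjclass0}(2) this shows that $C_{O(\Lambda_\tau)}(\tau)/\langle\tau\rangle$ embeds faithfully into $O(\mathcal{D}(\Lambda_\tau),q)$.

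The decisive reduction is that any element of order exactly $p$ in $\mathcal{D}(\Lambda_\tau)$ must lie in the $p$-primary part $\mathcal{D}(\Lambda_\tau)_p$, which is the $\bF_p$-space $3^3$ (resp.\ $5^2$). The restriction of $q$ turns $\mathcal{D}(\Lambda_\tau)_p$ into a nondegenerate orthogonal space over $\bF_p$, and $\mathcal{L}_{\tau,p}$ is precisely its set of nonzero isotropic vectors. As the primary decomposition is orthogonal, the action of $C_{O(\Lambda_\tau)}(\tau)/\langle\tau\rangle$ preserves it, so it suffices to prove that the image $\bar G_p$ of this group under projection to $O(\mathcal{D}(\Lambda_\tau)_p,q)$ acts transitively on the nonzero isotropic vectors. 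I would determine the isometry type of $\mathcal{D}(\Lambda_\tau)_p$ directly from the Gram matrix of $L_B(\langle c\rangle)$; nonemptiness of $\mathcal{L}_{\tau,p}$ forces positive Witt index, so for $6G$ one gets a $3$-dimensional $\bF_3$-space (necessarily of Witt index $1$) and for $10F$ a $2$-dimensional $\bF_5$-space of plus type. By Witt's theorem the \emph{full} orthogonal group $O(\mathcal{D}(\Lambda_\tau)_p,q)$ acts transitively on the nonzero isotropic vectors, and it only remains to see that $\bar G_p$ already realizes this full group.

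The crux, and the reason the statement is restricted to order exactly $|\tau|/2$, is exactly this last point: for the classes $6G$ and $10F$ the image of the centralizer in the full discriminant orthogonal group is a proper subgroup, so one cannot hope for transitivity on singular elements of every order (contrast the analogous lemma for $2A,3B,5B,7B$, and Lemma \ref{Lem:conjclass1} for $4C,6E,8E$). I would read off from Table \ref{glue} that $C_{O(\Lambda_\tau)}(\tau)$ contains a direct factor acting as the full orthogonal group on the $p$-part, namely $GO_3(3)=O_3(\bF_3)$ on the $3$-part for $6G$, and a dihedral factor of $Dih_8^2$ isomorphic to $O_2^+(\bF_5)$ on the $5$-part for $10F$; in both cases the dimensions match exactly, so $\bar G_p=O(\mathcal{D}(\Lambda_\tau)_p,q)$ and transitivity follows from the previous paragraph. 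The main obstacle is establishing this identification of $\bar G_p$ with the whole orthogonal group of the $p$-part rather than with a proper (possibly intransitive) subgroup; since this depends on the fine structure of the centralizer, the argument ultimately rests on the explicit group-theoretic computation summarized in Table \ref{glue}, which I would confirm in MAGMA by checking that the orbit of a single isotropic vector under a generating set of $C_{O(\Lambda_\tau)}(\tau)$ exhausts $\mathcal{L}_{\tau,|\tau|/2}$.
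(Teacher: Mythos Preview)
Your approach is correct, but it is genuinely different from what the paper does. The paper offers no argument at all: this lemma, together with the two preceding ones, is introduced with the single sentence ``The following lemmas can be verified by using MAGMA \cite{BLS}.'' In other words, the paper's proof is a direct orbit computation in the computer, with no conceptual reduction.

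Your route is more informative. You isolate the $p$-primary part $\mathcal{D}(\Lambda_\tau)_p$ (with $p=|\tau|/2$), observe that $\mathcal{L}_{\tau,|\tau|/2}$ is exactly its set of nonzero isotropic vectors, invoke Witt's theorem for transitivity of the full $O(\mathcal{D}(\Lambda_\tau)_p,q)$, and then reduce the lemma to the claim that the projection $\bar G_p$ of $C_{O(\Lambda_\tau)}(\tau)/\langle\tau\rangle$ to this orthogonal group is surjective. This explains \emph{why} the restriction to order $|\tau|/2$ is natural here: the odd-primary discriminant is small enough ($3^3$ for $6G$, $5^2$ for $10F$) that the relevant orthogonal group ($GO_3(3)$, resp.\ $O_2^+(5)\cong Dih_8$) is visibly accounted for by a factor in the centralizer as listed in Table \ref{glue}. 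The trade-off is that your final identification $\bar G_p = O(\mathcal{D}(\Lambda_\tau)_p,q)$ still rests on the MAGMA-computed structure of $C_{O(\Lambda_\tau)}(\tau)$ in Table \ref{glue}; the table records only the abstract isomorphism type, not the action on $\mathcal{D}(\Lambda_\tau)$, so (as you yourself note) one must in the end confirm in MAGMA that the indicated factor acts on the $p$-part as claimed. Thus both proofs are ultimately computational, but yours localizes the computation to a single surjectivity check and makes transparent why the analogous statement for arbitrary divisors of $|\tau|$ need not hold.
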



\end{document}